\theoremstyle{plain}
\newtheorem{thm}{Theorem}
\newtheorem{theorem}[thm]{Theorem}
\newtheorem{corollary}[thm]{Corollary}
\newtheorem{lemma}[thm]{Lemma}
\newtheorem{prop}[thm]{Proposition}
\newtheorem{proposition}[thm]{Proposition}
\newtheorem{problem}[thm]{Problem}
\newtheoremstyle{exm}
{9pt}{9pt}{}{}{\bfseries}{}{.5em}{}
\theoremstyle{exm}
\newtheoremstyle{rmk}
{9pt}{9pt}{}{}{\bfseries}{}{.5em}{}
\theoremstyle{rmk}
\newtheorem{rmk}[thm]{Remark}
\newtheoremstyle{question}
{9pt}{9pt}{}{}{\bfseries}{}{.5em}{}
\theoremstyle{question}
\numberwithin{equation}{section}
\numberwithin{thm}{section}
\numberwithin{figure}{section}
\theoremstyle{definition}
\newtheorem{definition}[thm]{Definition}
\newtheorem{defin}[thm]{Definition}
\newtheorem{example}[thm]{Example}
\newtheorem{remark}[thm]{Remark}
\newcommand{\field}[1]{\mathbb{#1}}
\newcommand{\C}{\field{C}}
\newcommand{\R}{\field{R}}
\newcommand{\Z}{\field{Z}}
\newcommand{\g}{\mathfrak{t}}
\newcommand{\comp}{\left(M,\omega,\Phi\right)}
\title[Compact monotone tall complexity one $T$-spaces]{Compact monotone tall
  complexity one $T$-spaces}
\author[I. Charton]{Isabelle Charton}
\address{Department of Mathematics, University of Haifa,
  199 Abba Khoushy Avenue, Mount Carmel, Haifa, 3498838, Israel.}
\email{icharton@math.haifa.ac.il}
\author[S. Sabatini]{Silvia Sabatini}
\address{Department Mathematik/Informatik, Universit\"at zu K\"oln,
  Weyertal 86-90, 50931 K\"oln, Germany.}
\email{sabatini@math.uni-koeln.de}
\author[D. Sepe]{Daniele Sepe}
\address{Escuela de Matem\'aticas, Universidad Nacional de Colombia sede
  Medell\'in, Calle 59A Norte \#63-20, Edif\'icio 43, Medell\'in, Colombia.}
\address{Instituto de Matem\'atica e Estat\'istica, Departamento de
  Matem\'atica Aplicada (GMA), Universidade Federal Fluminense, Campus
  Gragoat\'a, Rua Prof. Marcos Waldemar de Freitas Reis, s/n, S\~ao
  Domingos, Niter\'oi, RJ, 24210--201, Brazil.}
\email{dsepe@unal.edu.co}
\date{\today}
\begin{document}

\begin{abstract}
  In this paper we study
  compact monotone tall complexity one $T$-spaces. We use the
  classification of Karshon and Tolman, and the
  monotone condition, to prove that any two such
  spaces are isomorphic if and only if they have equal
  Duistermaat-Heckman measures. Moreover, we show that the moment
  polytope is Delzant and reflexive, and provide a complete description of
  the possible Duistermaat-Heckman measures. Whence we obtain a finiteness result that is analogous to that
  for compact monotone symplectic toric manifolds. Furthermore, we show
  that any such $T$-action can be extended to a toric $(T \times
  S^1)$-action. Motivated by a conjecture of Fine and Panov, we prove that any
  compact monotone tall complexity one $T$-space is equivariantly symplectomorphic to a Fano manifold
  endowed with a suitable symplectic form and a complexity one $T$-action.
\end{abstract}

\maketitle

\tableofcontents

\section{Introduction}
Fano manifolds play an important role in complex algebraic geometry
and beyond. A compact complex manifold is {\bf Fano} if its anticanonical
bundle is ample. Any such manifold is simply connected (see
\cite[Corollary 6.2.18]{isko_prok} and \cite[Remark 3.12]{lindsay}),
and its Todd genus equals one (see \cite[Section 1.8]{hirze} for a definition).
Moreover, in any complex dimension there are finitely many
topological types of Fano manifolds (see \cite{kollar}). The Fano
condition can be reformulated in K\"ahler terms: A compact complex manifold $(Y,J)$ is Fano if and only if 
there exists a K\"ahler form $\omega \in \Omega^{1,1}(Y)$ such that $
c_1 = [\omega]$, where $c_1$ is the first Chern class of $(Y,J)$ -- see, for instance, \cite[Sections 5.4 and
7]{ballmann}. This motivates the following definition\footnote{In the literature
  there are slight variations on this definition and sometimes these
  manifolds are also called
  symplectic Fano (see \cite[Remark 11.1.1]{mcduff_sal}).}: A
symplectic manifold $(M,\omega)$ is {\bf (positive) monotone} if there exists
(a positive) $\lambda \in \R$ such that
$$c_1 = \lambda [\omega], $$
where $c_1$ is the first Chern class of $(M,J)$ and $J$ is any
almost complex structure that is compatible with $\omega$. If
$(M,\omega)$ is positive monotone, then $\omega$
can be rescaled so as to be equal to $c_1$ in cohomology.

A driving question in symplectic topology is to determine whether
every compact positive monotone symplectic manifold is
diffeomorphic to a Fano manifold. The answer is affirmative in
(real) dimension up to four by work of McDuff (see
\cite{mcduff_structure}), and is negative starting from dimension twelve by work of Fine and Panov (see
\cite{fp_hyp,rez}). To the best of our knowledge, this is an open
problem in the remaining dimensions. 

Motivated by a conjecture due to
Fine and Panov (see \cite[Conjecture 1.4]{fp}), which is supported by recent
results (see \cite{cho,cho2,cho3,lp}), we study the above question in
the presence of a {\bf Hamiltonian torus action}: An action of a
compact torus $T$ on a symplectic
manifold $(M,\omega)$ by symplectomorphisms that is codified by a smooth $T$-invariant map $\Phi
: M \to \g^*$, called moment map (see Section
\ref{sec:defin-first-prop} for details). If the action is effective
and $M$ is connected, the triple
$(M,\omega, \Phi)$ is called a {\bf Hamiltonian $T$-space}. We remark
that a monotone Hamiltonian $T$-space is necessarily positive
monotone (see Proposition \ref{assumptions moment map c1}). The
following is the long-term question behind this paper.

\begin{problem}\label{prob:main}
  Find necessary and sufficient conditions for a compact monotone
  Hamiltonian $T$-space to be diffeomorphic to a Fano
  variety.
\end{problem}

A starting point to attack Problem \ref{prob:main} is to consider `large'
torus symmetries. This is codified precisely by the {\bf complexity}
of a Hamiltonian $T$-space $(M,\omega,\Phi)$, which is the
non-negative integer
$$ k:=\frac{1}{2}\dim M - \dim T. $$
\noindent
Intuitively, the lower the complexity, the larger the
symmetry. A Hamiltonian $T$-space of complexity $k$ is called a
{\bf complexity $k$ $T$-space}. Problem \ref{prob:main} has been already solved in complexity zero, i.e.,
for compact monotone symplectic toric manifolds. To recall the solution, we
say that two Hamiltonian $T$-spaces are {\bf isomorphic} if they
are symplectomorphic so that the moment maps are intertwined (see
Definition \ref{def hamiltonian space} for a precise statement). Let $(M,\omega,
\Phi)$ be a compact monotone symplectic toric manifold. By Delzant's classification (see \cite{delzant}), the
isomorphism class of $(M,\omega,
\Phi)$ is determined by
the moment polytope $\Phi(M) \subset \g^*$, which is {\bf Delzant} (see Section \ref{sec:moment-polyt-monot}). Moreover, if
without loss of generality we
assume that $c_1 = [\omega]$, then, up to translation,
$\Phi(M)$ is also {\bf reflexive} (see Definition
\ref{defn:reflexive}). Reflexive polytopes were introduced by Batyrev
in \cite{batyrev} in the study of toric Fano varieties and, like Fano
manifolds, enjoy special properties. For
instance, if $\ell \subset \g$ is the standard lattice,
then there are finitely many reflexive polytopes of full dimension in $\g^*$ up to the
standard action of $\mathrm{GL}(\ell^*)$ -- see Corollary
\ref{cor:finite}.

\begin{figure}[h]
\begin{center}
\includegraphics[width=2.5cm]{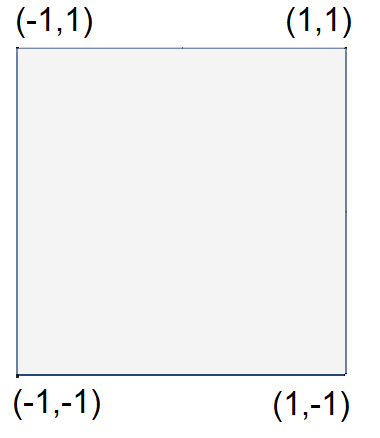}
\caption{The reflexive Delzant polytope in $\R^2$ that is the moment
  map image of the standard $T^2$-action on $\C P^2 \times \C P^2$
  endowed with the product of the Fubini-Study form on each factor.}
\label{Figure:ReflexiveSquare}
	\end{center}
\end{figure}

The combination of Delzant's classification and this
result above yields finiteness for compact monotone
symplectic toric manifolds up to the following notion of equivalence: Two Hamiltonian $T$-spaces $(M_1,\omega_1,\Phi_1)$ and
$(M_2,\omega_2,\Phi_2)$ are {\bf equivalent} if there exists a
symplectomorphism $\Psi : (M_1,\omega_1) \to (M_2,\omega_2)$ and an affine transformation $a \in \mathrm{GL}(\ell^*) \ltimes
\mathfrak{t}^*$ of $\mathfrak{t}^*$ such that $\Phi_2
\circ \Psi  = a \circ \Phi_1$. More precisely, the following holds.

\begin{theorem}\label{thm:finite_stm}
  For each $n\in \Z_{\geq 0}$, there are finitely many equivalence
  classes of compact symplectic toric manifolds of dimension $2n$ with first Chern class equal to the class
  of the symplectic form.
\end{theorem}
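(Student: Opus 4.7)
The plan is to combine Delzant's classification of compact symplectic toric manifolds with the finiteness of reflexive polytopes (Corollary \ref{cor:finite}), via the observation that the monotone condition translates into reflexivity of the moment polytope.

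First I would unpack the equivalence relation. Given a compact symplectic toric manifold $(M,\omega,\Phi)$ of dimension $2n$, Delzant's theorem tells us that its isomorphism class is uniquely determined by the Delzant polytope $\Phi(M)\subset\mathfrak{t}^*$, and that every Delzant polytope arises in this way. Two such manifolds $(M_1,\omega_1,\Phi_1)$ and $(M_2,\omega_2,\Phi_2)$ are equivalent in the sense introduced above precisely when their moment polytopes differ by an element of $\mathrm{GL}(\ell^*)\ltimes\mathfrak{t}^*$; this is the natural translation of the equivalence of Hamiltonian $T$-spaces into combinatorial data. So counting equivalence classes reduces to counting Delzant polytopes in $\mathfrak{t}^*$ of dimension $n$ up to the action of $\mathrm{GL}(\ell^*)\ltimes\mathfrak{t}^*$, subject to the constraint imposed by the hypothesis $c_1=[\omega]$.

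Second, I would exploit the monotone hypothesis. As recalled in the introduction, when $c_1=[\omega]$, the moment map $\Phi$ can be translated so that $\Phi(M)$ becomes a reflexive Delzant polytope containing the origin in its interior (this is the content of the discussion preceding Definition~\ref{defn:reflexive}). Since the freedom to translate $\Phi$ is exactly the $\mathfrak{t}^*$-factor in the equivalence relation, each equivalence class of compact monotone symplectic toric manifolds of dimension $2n$ corresponds to a unique $\mathrm{GL}(\ell^*)$-orbit of reflexive Delzant polytopes of full dimension in $\mathfrak{t}^*\cong\R^n$.

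Third, I would invoke Corollary \ref{cor:finite}, which states that there are only finitely many full-dimensional reflexive polytopes in $\mathfrak{t}^*$ up to $\mathrm{GL}(\ell^*)$. Since Delzant polytopes form a subclass of lattice polytopes, the set of $\mathrm{GL}(\ell^*)$-orbits of reflexive Delzant polytopes is, a fortiori, finite. Combining this with the bijection established in the first two steps yields the desired finiteness result.

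I do not expect any serious obstacle here: Delzant's classification and the reflexivity consequence of monotonicity are standard, and finiteness of reflexive polytopes (originally due to Lagarias--Ziegler) is quoted from Corollary~\ref{cor:finite}. The only point requiring mild care is to verify that the translation freedom in the equivalence relation absorbs exactly the choice needed to place the origin of $\mathfrak{t}^*$ at the interior point of $\Phi(M)$ that realizes reflexivity, so that passing from the isomorphism classification to the equivalence classification does not destroy the reduction to $\mathrm{GL}(\ell^*)$-orbits of reflexive polytopes.
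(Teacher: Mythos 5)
Your proposal is correct and follows essentially the same route as the paper: Delzant's classification, normalization of the moment map so that $c_1=[\omega]$ forces the moment polytope to be reflexive Delzant, and then finiteness of reflexive polytopes up to $\mathrm{GL}(\ell^*)$ (Corollary \ref{cor:finite}). The ``mild care'' point you flag about the translation part being absorbed is exactly the issue the paper handles (via the origin being the unique interior lattice point of a reflexive polytope) in the analogous complexity one argument, so nothing is missing.
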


Moreover, by Delzant's classification and the K\"ahler description of
the Fano condition, the following result solves Problem \ref{prob:main} in complexity zero.

\begin{theorem}\label{thm:fano}
  If $\comp$ is a compact monotone symplectic toric manifold, then there exists an integrable
  complex structure $J$ on $M$ that is compatible with $\omega$ and 
  invariant under the torus action such that the K\"ahler manifold $(M,J)$ is Fano.
\end{theorem}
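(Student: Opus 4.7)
The plan is to leverage Delzant's classification together with the standard correspondence between reflexive Delzant polytopes and smooth toric Fano varieties. First I would invoke Delzant's theorem to reduce the problem to a combinatorial one: the moment polytope $\Delta := \Phi(M) \subset \g^*$ is Delzant, and $\comp$ is determined up to equivariant symplectomorphism by $\Delta$. The monotone assumption, with the normalization $c_1 = [\omega]$, ensures (by the result cited just before the statement) that $\Delta$ is a reflexive Delzant polytope, up to lattice translation.

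Next I would construct a model on the algebraic side. To any Delzant polytope $\Delta$ one can associate, via the standard toric construction (symplectic cut, or equivalently GIT), a smooth projective toric variety $X_\Delta$ equipped with an ample $T_{\C}$-equivariant line bundle $L_\Delta$ whose moment polytope is $\Delta$. Endowing $L_\Delta$ with a suitable $T$-invariant Hermitian metric, the curvature produces a $T$-invariant K\"ahler form $\omega_\Delta$ on $X_\Delta$ in the class $c_1(L_\Delta)$, together with a moment map $\Phi_\Delta$ whose image is $\Delta$. The crucial input is that, when $\Delta$ is reflexive, $L_\Delta$ is isomorphic (as an equivariant line bundle) to the anticanonical bundle $-K_{X_\Delta}$, so
\[
c_1(X_\Delta) \;=\; c_1(-K_{X_\Delta}) \;=\; c_1(L_\Delta) \;=\; [\omega_\Delta].
\]
Since $-K_{X_\Delta}$ is ample, $X_\Delta$ is a Fano variety, and the K\"ahler form $\omega_\Delta$ witnesses the monotone condition on the algebraic model.

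To finish, I would transport this structure back to $M$. The uniqueness part of Delzant's theorem provides a $T$-equivariant symplectomorphism $\Psi : (M,\omega,\Phi) \to (X_\Delta,\omega_\Delta,\Phi_\Delta)$; pulling back the integrable complex structure $J_\Delta$ on $X_\Delta$ along $\Psi$ gives an integrable, $T$-invariant, $\omega$-compatible complex structure $J$ on $M$ such that $(M,J)$ is biholomorphic to $X_\Delta$, hence Fano.

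The principal technical point is the identification $L_\Delta \cong -K_{X_\Delta}$ for reflexive $\Delta$, equivalently the cohomological identity $[\omega_\Delta]=c_1(X_\Delta)$ on the toric model. This is essentially Batyrev's observation, and can be proved directly from the toric description of the anticanonical divisor $-K_{X_\Delta}=\sum_\rho D_\rho$ as the sum over primitive facet generators: the support function of $-K_{X_\Delta}$ takes value $1$ on each primitive inward normal, which is precisely the defining condition of a reflexive polytope centered at the origin. Everything else in the argument is a routine unpacking of Delzant's construction on both its symplectic and algebraic sides.
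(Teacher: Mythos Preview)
Your argument is correct and matches the paper's justification, which is not given as a formal proof but as the one-line remark that the result follows ``by Delzant's classification and the K\"ahler description of the Fano condition.'' Both you and the paper use Delzant's theorem to obtain a $T$-invariant integrable compatible complex structure $J$ making $(M,\omega,J)$ K\"ahler, and then invoke monotonicity to conclude Fano.

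The only difference is one of economy. You pass through the reflexive polytope and Batyrev's identification $L_\Delta \cong -K_{X_\Delta}$ to see that $X_\Delta$ is Fano, then transport back. The paper's implicit route is shorter: once Delzant hands you a K\"ahler structure with K\"ahler form $\omega$, the monotone hypothesis gives $c_1 = \lambda[\omega]$ with $\lambda>0$ (Proposition~\ref{assumptions moment map c1}), so $\lambda\omega$ is a K\"ahler form in the class $c_1$, and the K\"ahler characterization of Fano (recalled in the introduction) finishes the argument immediately---no need to normalize the polytope, identify the anticanonical bundle, or invoke reflexivity at all. Your detour through the algebraic side is not wrong, and it makes the Fano structure more explicit, but it is unnecessary for the bare statement.
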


In fact, Theorem \ref{thm:fano} proves the stronger result that any
compact monotone symplectic toric manifold is
{\em symplectomorphic} to a Fano manifold (endowed with a suitable
symplectic form). \\

\subsection{The results} In this paper we solve Problem \ref{prob:main} for {\bf tall}
complexity one $T$-spaces, i.e., those for which no reduced space is a point (see
Definition \ref{def tall}). Such spaces have been
classified by Karshon and Tolman in a series of papers (see
\cite{kt1,kt2,kt3}). This classification is
more involved than that of compact symplectic
toric manifolds: For instance, there are several invariants, namely
the moment polytope, the genus, the painting and the
Duistermaat-Heckman measure (see Section \ref{sec:tall-comp-compl} for
more details),
and these invariants satisfy some compatibility conditions (see
\cite{kt3}). 

In order to attack Problem \ref{prob:main} in the above setting, first we
study the isomorphism classes of compact monotone tall
complexity one $T$-spaces. Our first main result states that, for
these spaces, the Duistermaat-Heckman measure determines all other
invariants. For our purposes, we codify this measure by the unique
continuous function that represents its Radon-Nikodym derivative with
respect to the Lebesgue measure on $\g^*$,
which we call the {\bf Duistermaat-Heckman} function (see Theorem \ref{thm:DH_function_cts} and Definition
\ref{Def: DHfucntion}).  

\begin{theorem}\label{thm:DH_classifies}
  Two compact monotone tall complexity one $T$-spaces are isomorphic if and only if their
  Duistermaat-Heckman functions are equal.
\end{theorem}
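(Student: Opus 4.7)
The plan is to reduce the theorem to the Karshon--Tolman classification of tall complexity one $T$-spaces, by showing that, under the monotone hypothesis, the Duistermaat--Heckman function already determines every invariant in their list: the moment polytope, the genus (of the generic reduced space), the painting, and of course the Duistermaat--Heckman measure itself. The forward direction is immediate, since an isomorphism of Hamiltonian $T$-spaces intertwines moment maps and pushes the Liouville measure forward to the Liouville measure, hence produces equal Duistermaat--Heckman measures and therefore (by uniqueness of the continuous Radon--Nikodym representative) equal Duistermaat--Heckman functions.

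For the converse, assume two compact monotone tall complexity one $T$-spaces $(M_i,\omega_i,\Phi_i)$, $i=1,2$, have the same Duistermaat--Heckman function $f$. First I would recover the moment polytope as (the closure of) the support of $f$: continuity of $f$ (Theorem \ref{thm:DH_function_cts}) gives $\mathrm{supp}(f) = \Phi_i(M_i)$, so $\Phi_1(M_1)=\Phi_2(M_2)$. From the monotone hypothesis (and the claimed Delzant and reflexive properties of the polytope stated in the abstract), this common polytope $\Delta$ is a Delzant reflexive polytope in $\g^*$.

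Next, I would show the genus is determined. For a tall complexity one $T$-space, the reduced space over a generic interior point of $\Delta$ is a closed oriented surface $\Sigma_g$, and its symplectic area is (up to a normalization independent of the space) the value of $f$ at that point. Monotonicity propagates to the reduced spaces as a relation of the form $c_1(\Sigma_g)=\lambda[\omega_{\mathrm{red}}]+\text{(contribution of isotropy weights)}$, and positivity of the resulting Chern number forces $g=0$. Hence both spaces have genus zero, and this is read off from $f$. For the painting, the Karshon--Tolman invariant records, at each interior lattice/rational point of $\Delta$ where the reduced space is singular, a combinatorial label (an isomorphism class of painted surface). Jumps and local behaviour of $f$ at such points encode the isotropy weights of the associated exceptional orbits; combining this with the monotonicity condition (which, via a Chern class computation on the reduced space, pins the admissible weights uniquely) shows that the painting at every such point is uniquely determined by $f$. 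Thus all Karshon--Tolman invariants of $(M_1,\omega_1,\Phi_1)$ and $(M_2,\omega_2,\Phi_2)$ agree, and an application of their classification yields an isomorphism between the two spaces.

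The main obstacle will be the painting step: while the polytope, genus and Duistermaat--Heckman measure can all be extracted from $f$ by fairly transparent arguments, showing that exceptional orbits and their isotropy data are uniquely pinned down requires combining the local structure of $f$ near singular values with the global constraint imposed by monotonicity. This is the point at which the monotone hypothesis is used in an essential (rather than cosmetic) way, and I would expect the argument to lean on an explicit computation relating $c_1$, the weights at exceptional orbits, and the affine structure of the reflexive Delzant polytope $\Delta$.
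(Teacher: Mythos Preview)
Your overall strategy---reduce to Karshon--Tolman by showing the Duistermaat--Heckman function determines polytope, genus, and painting---matches the paper's, and the polytope and genus steps are essentially right (though genus zero holds for \emph{every} compact monotone tall complexity one space by Lemma~\ref{lemma:chern}, not something ``read off from $f$''). You also omit the reduction to the normalized case: two monotone spaces with equal DH functions need not be normalized, and the paper first argues that they share the same normalization data $(\lambda,v)$ of Corollary~\ref{cor:mono_normalized}, via uniqueness of the reflexive rescaling of their common Delzant polytope.

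The real gap is the painting step. Your description treats the painting as pointwise data pinned down by local jumps of $f$, but the painting is a map $M_{\mathrm{exc}}\to S^2$ taken up to homotopy through paintings and reparametrization of $S^2$; matching isotropy weights at exceptional orbits does not by itself produce an equivalence of paintings. The paper's mechanism is global, not local: using a sharp characterization of isolated fixed points (Proposition~\ref{prop:iso_fixed_pts_monotone}), it shows that each connected component of $M_{\mathrm{exc}}$ is mapped \emph{homeomorphically} by the orbital moment map onto the convex slice $\Phi(M)\cap\{\langle w,\nu_{\min}\rangle=0\}$ (Theorem~\ref{cor:exceptional_orbits}). Contractibility of this slice forces the painting class to be \emph{trivial} (Theorem~\ref{prop:trivial_painting}); the number $k$ of components is then extracted from the explicit formula $DH(w)=2-s\langle w,\nu_{\min}\rangle-k\rho(w)$ of Theorem~\ref{thm:possibilities_DH}, and the isomorphism of exceptional orbits is built component by component through the orbital moment map. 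Without the contractibility result your plan has no mechanism to conclude that two a priori different paintings are equivalent, and this is exactly where the monotone hypothesis does its heavy lifting (through the weight-sum formula constraining where isolated fixed points can sit).
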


Our second main result is {\it finiteness} of compact monotone tall complexity one
$T$-spaces up to equivalence, which is the analog of
Theorem \ref{thm:finite_stm} . To this end, we observe that the moment polytope of
a space $\comp$ with $c_1 = \lambda [\omega]$ is a reflexive Delzant polytope if and only if $c_1 =
[\omega]$ and the moment map $\Phi$ satisfies the so-called
  weight sum formula (see Proposition
\ref{prop:mom_map_image} and Lemma
\ref{lemma:normalized_complexity_preserving}). If $\comp$ is
a compact monotone tall complexity one
$T$-space, then we may rescale $\omega$ and translate $\Phi$
so as to satisfy the above conditions (see Corollary
\ref{cor:rescaling} and Proposition \ref{prop:weight_sum}). The next
result is a crucial step towards establishing finiteness.

\begin{theorem}\label{thm:finitely_many_DH}
  Given a reflexive Delzant polytope $\Delta$, there exist finitely
  many isomorphism classes of compact monotone tall
  complexity one $T$-spaces with $\Phi(M) = \Delta$.
\end{theorem}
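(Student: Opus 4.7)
The plan is to apply Theorem \ref{thm:DH_classifies} to reduce the theorem to a finiteness statement about Duistermaat--Heckman (DH) functions, and then to leverage the rigid structure of these functions on a reflexive Delzant polytope.

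By Theorem \ref{thm:DH_classifies}, the isomorphism class of a compact monotone tall complexity one $T$-space with $\Phi(M)=\Delta$ is determined by its DH function. It therefore suffices to show that only finitely many such DH functions exist. For a tall complexity one $T$-space, the DH function on $\Delta$ is continuous and piecewise affine, with non-smooth locus a finite union of codimension-one affine ``walls'' inside $\Delta$; across each wall the gradient jumps by an integer multiple of a primitive lattice vector, and the wall together with this primitive vector is part of the Karshon--Tolman classification data, arising from an exceptional $T$-invariant symplectic surface.

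I would then bound the wall configurations and the slope data. The monotone normalization $c_1=[\omega]$, together with reflexivity and Delzantness of $\Delta$, rigidifies the DH function along $\partial\Delta$: its boundary values and the inward slopes off each facet are pinned down by the primitive inward facet normals of $\Delta$ and the integer multiple prescribed by monotonicity. Starting from this boundary data and propagating inward, continuity and nonnegativity of the DH function limit both the possible primitive directions that can appear as wall normals and the integer sizes of the jumps. Since $\Delta$ is a fixed bounded polytope, a finite inductive argument then bounds the number of walls and the slopes on each region, yielding finitely many candidate DH functions.

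The main obstacle is controlling a priori the primitive lattice directions that appear as wall normals, since they range over characters of $T$. A clean alternative---available via a companion result of the paper that any compact monotone tall complexity one $T$-space extends to a compact monotone symplectic toric $(T\times S^1)$-manifold---is to reinterpret the DH function of $M$ as the length of the fiber of a projection $\pi:\g^*\oplus\R\to\g^*$ applied to the extended reflexive Delzant polytope $P\subset\g^*\oplus\R$. Corollary \ref{cor:finite} yields finitely many such $P$ (of one higher dimension) up to the lattice action, and hence finitely many possible DH functions, completing the proof.
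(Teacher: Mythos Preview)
Your first step---reducing to a finiteness statement about Duistermaat--Heckman functions via Theorem~\ref{thm:DH_classifies}---is exactly what the paper does. The divergence is in how you bound the set of DH functions.

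Your first approach (bounding walls and slopes by ``continuity and nonnegativity'') is, as you yourself note, not a proof: you have no mechanism to control the primitive wall normals, and the vague propagation argument does not pin down the DH function. The paper bypasses this entirely. Its key input is Theorem~\ref{thm:possibilities_DH}, which gives an \emph{explicit formula} for the DH function of a normalized monotone tall complexity one $T$-space: it is
\[
DH(w) = 2 - s\langle w,\nu_{\min}\rangle - k\,\rho(w),
\]
determined by a choice of minimal facet $\mathcal{F}_{\min}$ and a pair of integers $(s,k)$. Proposition~\ref{prop:necessary} then shows $(s,k)\in\{(0,0),(-1,0),(-1,1),(-1,2)\}$. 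Since $\Delta$ has finitely many facets, there are finitely many DH functions, and the theorem follows. You never invoke this formula, which is the heart of the matter.

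Your second approach via the toric extension has a genuine gap. Corollary~\ref{cor:finite} gives finitely many reflexive Delzant polytopes in $\mathfrak{t}^*\times\R$ \emph{up to} $\mathrm{GL}(\ell^*\times\Z)$, not finitely many such polytopes projecting to the fixed $\Delta$. A single $\mathrm{GL}$-orbit can contain infinitely many representatives, and there is no reason a priori that only finitely many of them project to $\Delta$ under the fixed projection $\mathrm{pr}:\mathfrak{t}^*\times\R\to\mathfrak{t}^*$, nor that those which do yield only finitely many height functions. Making this step rigorous requires exactly the kind of control over the DH function that Theorem~\ref{thm:possibilities_DH} provides---so you would be reproving the paper's machinery rather than avoiding it. There is also a structural issue: Theorem~\ref{thm:extension} is proved \emph{after} Theorem~\ref{thm:finitely_many_DH} in the paper, using Theorem~\ref{thm:possibilities_DH}, Proposition~\ref{prop:necessary}, and the explicit realizability construction; invoking it here inverts the logical development and hides the actual work.
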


The following result
is a simple consequence of
Theorem \ref{thm:finitely_many_DH} and answers a question originally posed to us by Yael Karshon.

\begin{corollary}\label{prop:finitely_many}
  For each $n\in \Z_{\geq 0}$, there are finitely many equivalence classes of compact tall
  complexity one $T$-spaces of dimension $2n$ with first Chern class equal to the class
  of the symplectic form.
\end{corollary}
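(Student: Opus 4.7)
The plan is to reduce Corollary \ref{prop:finitely_many} to Theorem \ref{thm:finitely_many_DH} via the finiteness of reflexive Delzant polytopes in each fixed dimension. Fix $n \in \Z_{\geq 0}$ and let $\comp$ be a compact tall complexity one $T$-space of dimension $2n$ with $c_1 = [\omega]$; then $\dim T = n-1$, so $\g^* \cong \R^{n-1}$. By Proposition \ref{prop:mom_map_image} together with Lemma \ref{lemma:normalized_complexity_preserving} and Proposition \ref{prop:weight_sum}, after translating $\Phi$ by a suitable element of $\mathfrak{t}^*$---an operation that preserves the equivalence class, since such a translation lies in $\mathrm{GL}(\ell^*) \ltimes \mathfrak{t}^*$---the moment polytope $\Phi(M)$ becomes a reflexive Delzant polytope of full dimension $n-1$ in $\g^*$.

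Next, I would invoke Corollary \ref{cor:finite} to obtain only finitely many reflexive polytopes of dimension $n-1$ up to the standard $\mathrm{GL}(\ell^*)$-action; restricting to the Delzant ones, fix a complete set of representatives $\Delta_1, \ldots, \Delta_N$. Applying a further element of $\mathrm{GL}(\ell^*) \subset \mathrm{GL}(\ell^*) \ltimes \mathfrak{t}^*$ to $\Phi$---again an operation internal to the equivalence relation---one can arrange that $\Phi(M) = \Delta_i$ for some $i \in \{1,\dots,N\}$. Hence every equivalence class under consideration contains a representative whose moment polytope equals one of the $\Delta_i$.

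Finally, Theorem \ref{thm:finitely_many_DH} supplies only finitely many isomorphism classes of compact monotone tall complexity one $T$-spaces with moment polytope equal to each fixed $\Delta_i$, and \emph{a fortiori} finitely many equivalence classes with this property. Summing over $i \in \{1,\dots,N\}$ yields the stated finiteness. No serious obstacle arises: the argument is a packaging of Theorem \ref{thm:finitely_many_DH} with the classical finiteness of reflexive polytopes in each dimension, and the only point to verify is that the two normalization steps on $\Phi$ (translation to make $\Phi(M)$ reflexive, then a $\mathrm{GL}(\ell^*)$ transformation to reach a chosen representative) truly correspond to choices within a single equivalence class, which is immediate from the definition of equivalence.
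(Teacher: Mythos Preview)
Your proposal is correct and follows essentially the same approach as the paper: normalize via translation (Proposition \ref{prop:weight_sum}) so the moment polytope becomes reflexive Delzant, reduce to finitely many polytopes via Corollary \ref{cor:finite} and a $\mathrm{GL}(\ell^*)$-transformation, and then invoke the finiteness of isomorphism classes over each fixed polytope. The paper packages the argument slightly differently---setting up an explicit bijection between $\sim$-classes and an auxiliary set $\mathcal{NM}_n$ of $\mathrm{GL}(\ell^*)$-classes of normalized spaces, and citing Corollary \ref{cor:finite_normalized} rather than Theorem \ref{thm:finitely_many_DH}---but the content is the same.
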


Our third main result concerns the {\it extendability} of a tall complexity one
$T$-action on a compact monotone symplectic manifold $(M,\omega)$ to a
toric $(T \times S^1)$-action. To the best of our
knowledge, there is no criterion to ensure such extendability for
compact tall complexity one $T$-spaces of dimension at least six. 

\begin{theorem}\label{thm:extension}
  If $\comp$ is a compact monotone tall
  complexity one $T$-space $\comp$, then the Hamiltonian $T$-action
  extends to a symplectic toric $(T \times S^1)$-action.
\end{theorem}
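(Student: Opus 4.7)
The plan is to leverage the classification Theorem \ref{thm:DH_classifies}: it suffices to exhibit a compact symplectic toric $(T\times S^1)$-manifold whose restriction to the subtorus $T$ is a compact monotone tall complexity one $T$-space with the same Duistermaat-Heckman function as $\comp$. The isomorphism provided by Theorem \ref{thm:DH_classifies} will then pull back the extra $S^1$-action and deliver the sought extension.

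By the normalizations established in Corollary \ref{cor:rescaling} and Proposition \ref{prop:weight_sum} we may assume that $\Delta := \Phi(M)\subset \mathfrak{t}^*$ is a reflexive Delzant polytope, and by the paper's explicit description of the possible Duistermaat-Heckman measures (cf.\ the analysis preceding Theorem \ref{thm:finitely_many_DH}) the Duistermaat-Heckman function $\rho : \Delta \to \mathbb{R}_{\geq 0}$ is a continuous, piecewise linear function whose linear pieces have slopes drawn from a controlled list of primitive lattice vectors. Build the subset
\[
\widetilde{\Delta} := \bigl\{(\alpha,t)\in \mathfrak{t}^*\times \mathbb{R} : \alpha\in\Delta,\ 0\le t\le \rho(\alpha)\bigr\}.
\]
Its facets are the lifts of the facets of $\Delta$, the ``floor'' $\{t=0\}$, and the ``ceiling'' facets coming from the maximal linear pieces of $\rho$; the resulting inward normals are primitive integer vectors of $(\mathfrak{t}\times\mathbb{R})^*$, and the compatibility between the slopes of $\rho$ and the facet normals of $\Delta$ forced by the monotone tall condition ensures that at every vertex of $\widetilde{\Delta}$ the incident edge vectors form a $\mathbb{Z}$-basis. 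Hence $\widetilde{\Delta}$ is Delzant, and it is reflexive up to translation, thanks to the reflexivity of $\Delta$ together with the boundary behaviour of $\rho$.

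Delzant's theorem then produces a compact symplectic toric $(T\times S^1)$-manifold $(\widetilde M,\widetilde\omega,\widetilde\Phi)$ which, being built from a reflexive Delzant polytope, is monotone. Restricting to the subtorus $T\le T\times S^1$ yields a complexity one Hamiltonian $T$-space with moment map $\pi\circ\widetilde\Phi$, where $\pi:\mathfrak{t}^*\times\mathbb{R}\to\mathfrak{t}^*$ denotes the projection: its image is $\Delta$, and the reduced space over each regular value $\alpha$ is the toric $S^1$-quotient of the interval slice $\pi^{-1}(\alpha)\cap\widetilde{\Delta}$, hence a symplectic $2$-sphere of area $\rho(\alpha)$. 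Therefore the restricted $T$-space is tall, has Duistermaat-Heckman function equal to $\rho$, and is monotone because $c_1$ and $[\widetilde\omega]$ are intrinsic to $(\widetilde M,\widetilde\omega)$ and independent of which subgroup of $T\times S^1$ is acting. Applying Theorem \ref{thm:DH_classifies} to $\comp$ and $(\widetilde M,\widetilde\omega,\pi\circ\widetilde\Phi)$ yields a $T$-equivariant symplectomorphism $\Psi : M\to \widetilde M$, and pulling back the toric $(T\times S^1)$-action on $\widetilde M$ through $\Psi$ extends the original $T$-action on $M$ to the required toric one.

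The principal difficulty will be the construction and Delzant verification of $\widetilde{\Delta}$: one must invoke in full the structural result on monotone Duistermaat-Heckman functions proved earlier in the paper to conclude that the inward normals of the ceiling facets, taken together with the inward normals of the facets of $\Delta$, satisfy the standard-basis condition at every vertex of the lifted polytope. A related but secondary point is to confirm that the restricted $T$-space is genuinely tall, i.e.\ that $\rho$ is strictly positive on the interior of $\Delta$; this follows from the same classification, since a tall complexity one $T$-space has non-vanishing Duistermaat-Heckman function on the interior of its moment polytope.
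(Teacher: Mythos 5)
Your proposal follows essentially the same route as the paper: after normalizing, the paper (Theorem \ref{thm:comb_real} together with Corollary \ref{cor:realizability}) constructs precisely your polytope $\widetilde{\Delta}$ --- shifted by $-1$ in the $\R$-factor so that it is honestly reflexive --- as a reflexive Delzant polytope in $\g^*\times\R$ whose height function equals the Duistermaat-Heckman function, applies Delzant's theorem, and concludes via Theorem \ref{thm:DH_classifies} exactly as you do. The Delzant/reflexivity verification you flag as the principal difficulty is where the paper's actual work lies: it proceeds case-by-case over the admissible pairs $(s,k)\in\{(0,0),(-1,0),(-1,1),(-1,2)\}$ supplied by Proposition \ref{prop:necessary}, realizing the $k\geq 1$ polytopes as combinatorial blow-ups of the $k=0$ one (so smoothness follows from Remark \ref{rmk:blow-up_smooth}), with integrality of the new ``shoulder'' vertices coming from Lemma \ref{lemma:k_greater_0} and the $k=2$ case relying on the strip condition of Corollary \ref{cor:k=2}.
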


\begin{figure}[h]
	\begin{center}
		\includegraphics[width=14cm]{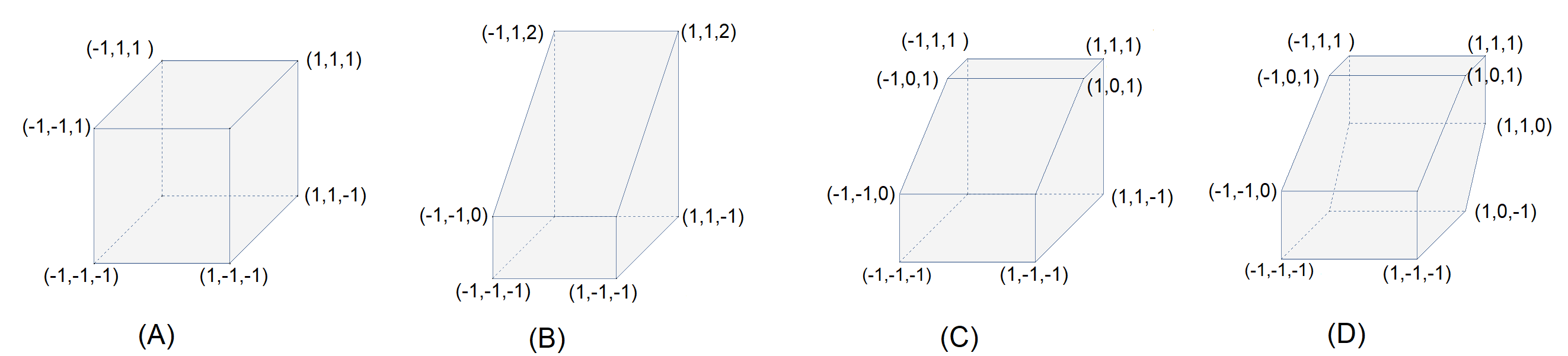}
		\caption{The reflexive Delzant polytopes that are the
                  moment map images of the toric extensions of a compact tall monotone 
                  complexity one $T$-space $\comp$ of dimension six whose moment map image is as in 
                  Figure \ref{Figure:ReflexiveSquare}. We remark that
                  the set consisting of the `height function' of each such polytope, i.e., the
                  difference between the $z$-coordinate of the `top' and
                  `bottom' boundary, is precisely the set of the possible
                  for the Duistermaat-Heckman functions of $\comp$.}
		\label{Figure: ExtensionReflexiveSquare}
	\end{center}
\end{figure}

Finally, our last main result is a {\em solution} to Problem \ref{prob:main} for tall complexity one $T$-spaces. We recall that, given a compact torus $T$, there exists a unique
complex Lie group $T_{\C}$ such that the Lie algebra of $T_{\C}$ is
the complexification of $\g$ and $T$ is a maximal compact subgroup of
$T_{\C}$ (see \cite[Proposition 4.1]{gs-kahler}). For instance, if $T = (S^1)^d$,
then $T_{\C} = (\C^*)^d$. The following result
is concerned with the existence of an integrable complex structure.

\begin{theorem}\label{thm:Fano}
  If $\comp$ is a compact monotone tall complexity one
  $T$-space, then there exists a $T$-invariant integrable
  complex structure $J$ on $M$ that is compatible with $\omega$ such that the complex manifold $(M,J)$ is Fano and the
  $T$-action extends to an effective holomorphic $T_{\C}$-action.
\end{theorem}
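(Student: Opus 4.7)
The plan is to reduce Theorem \ref{thm:Fano} to the toric (complexity zero) case already established in Theorem \ref{thm:fano}, using the extendability provided by Theorem \ref{thm:extension}.

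First, I would apply Theorem \ref{thm:extension} to extend the Hamiltonian $T$-action on $(M,\omega)$ to a symplectic toric $(T \times S^1)$-action; write $\Phi'$ for the resulting moment map. Because the underlying symplectic manifold $(M,\omega)$ is unchanged by this extension, it remains monotone. Hence $(M,\omega,\Phi')$ is a compact monotone symplectic toric manifold for the larger torus $T' := T \times S^1$.

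Next, I would invoke Theorem \ref{thm:fano} applied to $(M,\omega,\Phi')$ to obtain a $T'$-invariant integrable complex structure $J$ on $M$ that is compatible with $\omega$ and such that $(M,J)$ is Fano. Since $T \subset T'$, this $J$ is automatically $T$-invariant and compatible with $\omega$, which already yields the first half of the conclusion.

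For the holomorphic extension, I would appeal to the standard fact that a Hamiltonian action of a compact torus on a compact K\"ahler manifold by $J$-holomorphic symplectomorphisms extends uniquely to a holomorphic action of the complexified torus (see \cite{gs-kahler}). Applied to the toric $T'$-action on $(M,\omega,J)$, this produces a holomorphic $T'_{\C} = T_{\C} \times \C^*$-action; its restriction to the subgroup $T_{\C} \subset T'_{\C}$ is a holomorphic action that extends the original $T$-action. Effectiveness follows from a density argument: the kernel $K \subset T_{\C}$ of the $T_{\C}$-action is a closed complex subgroup, $K \cap T$ lies in the kernel of the effective $T$-action and is therefore trivial, and since $T$ is maximal compact in $T_{\C}$ any closed complex subgroup meeting $T$ trivially is itself trivial. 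The main obstacle is not in this deduction, which is essentially formal, but rather in the input Theorem \ref{thm:extension} itself, which is the genuinely new ingredient that makes the toric reduction available in the complexity one setting.
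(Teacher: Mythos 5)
Your reduction is essentially the paper's own proof: the paper also applies Theorem \ref{thm:extension} to get a toric $(T\times S^1)$-action, uses Delzant's classification (i.e., the content of Theorem \ref{thm:fano}, via Proposition \ref{prop:monotone_toric}) to produce a torus-invariant compatible integrable $J$ with $(M,J)$ Fano, and then invokes \cite[Theorems 4.4 and 4.5]{gs-kahler} for the effective holomorphic $T_{\C}$-extension; the only cosmetic difference is that the paper first normalizes via Corollary \ref{cor:mono_normalized}, which is not needed for your version of the argument.

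The one genuine flaw is your effectiveness argument. The claim that, since $T$ is maximal compact in $T_{\C}$, any closed complex subgroup of $T_{\C}$ meeting $T$ trivially must be trivial is false: the discrete subgroup $\{2^{n}\mid n\in\Z\}\subset\C^{*}$ is a closed (zero-dimensional complex) subgroup meeting $S^{1}$ trivially, and even in the connected positive-dimensional case $\{(e^{2\pi i z},e^{-2\pi z})\mid z\in\C\}\subset(\C^{*})^{2}$ is a closed one-dimensional complex subgroup intersecting $(S^{1})^{2}$ only in the identity. So triviality of the kernel does not follow from $K\cap T=\{1\}$ alone. The conclusion is nevertheless true and easy to repair within your setup: the complexified toric action of $(T\times S^{1})_{\C}$ has an open dense orbit on which it acts freely, hence every subgroup, in particular $T_{\C}$, acts effectively on $M$; alternatively one can simply quote effectiveness from \cite{gs-kahler} as the paper does. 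With that substitution your proof is complete and coincides with the paper's.
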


As an immediate consequence of Theorem \ref{thm:Fano}, the
following result solves a stronger version of Problem \ref{prob:main}
for compact tall complexity one $T$-spaces.

\begin{corollary}\label{cor:FP}
  Any compact monotone tall complexity one
  $T$-space is equivariantly symplectomorphic to a Fano manifold
  endowed with a suitable symplectic form and a complexity one $T$-action.
\end{corollary}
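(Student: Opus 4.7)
The plan is to deduce this corollary directly from Theorem \ref{thm:Fano}, with essentially no extra work beyond a careful reading of the data.

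First I would fix a compact monotone tall complexity one $T$-space $\comp$ and invoke Theorem \ref{thm:Fano} to produce a $T$-invariant integrable complex structure $J$ on $M$ that is compatible with $\omega$, such that $(M,J)$ is Fano and the $T$-action extends to an effective holomorphic $T_{\C}$-action. Compatibility of $J$ with $\omega$ means that $\omega$ is a $J$-K\"ahler form; in particular, it remains a genuine symplectic form on the complex manifold $(M,J)$. The $T$-action, being unchanged, is still effective and Hamiltonian with moment map $\Phi$, and its complexity equals $\tfrac{1}{2}\dim M - \dim T = 1$ by hypothesis. Thus $(M,J)$, regarded together with the symplectic form $\omega$ and the given Hamiltonian $T$-action with moment map $\Phi$, is a Fano manifold endowed with a suitable symplectic form and a complexity one $T$-action in the sense of the statement.

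Finally, the identity map $\mathrm{id}_M$ serves as the required equivariant symplectomorphism from $\comp$ onto this Fano manifold: it preserves $\omega$ tautologically, intertwines the $T$-action with itself, and satisfies $\Phi \circ \mathrm{id}_M = \Phi$. No real obstacle remains at this stage, since all of the substance of the corollary has been absorbed into Theorem \ref{thm:Fano}, whose proof is where the genuine difficulty lies; the corollary itself reduces to a single invocation of that theorem together with the observation that nothing about $M$, $\omega$, $T$ or $\Phi$ is altered in passing to the Fano description.
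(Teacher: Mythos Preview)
Your proposal is correct and matches the paper's approach exactly: the paper states that Corollary \ref{cor:FP} is ``an immediate consequence of Theorem \ref{thm:Fano}'' and gives no separate proof, so your unpacking of that immediacy via the identity map is precisely what is intended.
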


\subsection{Structure of the paper} In Section
\ref{sec:comp-hamilt-t} we recall fundamental properties of (compact)
Hamiltonian $T$-spaces. While many of the notions and results
presented therein are well-known to experts, we also set the
terminology and notation used throughout. Some basic properties of
Hamiltonian $T$-spaces are considered in Section
\ref{sec:defin-first-prop}, which describes in detail the local models
near any orbit, and introduces the notion of exceptional and regular
points and orbits. These are a generalization of the corresponding
concepts introduced by Karshon and Tolman in complexity one, which
play a key role in their classification. Moreover, we also discuss the notion of
exceptional and regular sheets, which are closely related to the
notion of x-ray (see \cite[Definition 2.1]{tolman_inven}). In Section
\ref{sec:global-invariants} we take a closer look at the invariants of
{\em compact} Hamiltonian $T$-spaces, starting with the so-called
Convexity Package and its consequences. A large part of Section \ref{sec:comp-hamilt-t}
is taken up by the {\em existence} of the Duistermaat-Heckman
function of a compact Hamiltonian $T$-space (see Theorem \ref{thm:DH_function_cts} and Definition
\ref{Def: DHfucntion}). We could not find an appropriate
reference for this result and, hence, included the material for completeness. We
focus on the complexity one case, showing that in this case there is a
polytope in $\g^* \times \R$ that encodes the Duistermaat-Heckman
function (see Corollary \ref{cor:DH_polytope}). In Section
\ref{sec:compl-pres-comp} we introduce compact complexity preserving
Hamiltonian $T$-spaces, a class of spaces that generalizes simultaneously compact symplectic toric
manifolds and compact tall complexity one $T$-spaces. For instance, we
prove that their moment polytopes are Delzant polytopes (see
Proposition \ref{prop:tall_delzant}), and their Duistermaat-Heckman
functions enjoy natural properties (see Corollary \ref{cor:DH_boundary_comp_pres}). These spaces may be of
independent interest. Finally, we review the classification
of compact tall complexity one $T$-spaces due to Karshon and Tolman in
Section \ref{sec:tall-comp-compl}.

In Section \ref{sec:monot-comp-hamilt} we prove some properties of Hamiltonian
$T$-actions on compact monotone symplectic manifolds. We show that the presence of a Hamiltonian $T$-action forces
monotonicity to be positive (see Proposition \ref{assumptions moment
  map c1}). Hence, the symplectic form of any compact monotone
Hamiltonian $T$-space $\comp$ can be rescaled so that it is equal to
$c_1$ in cohomology (see Corollary
\ref{cor:rescaling}). Moreover, we show that the moment map of any compact 
Hamiltonian $T$-space $\comp$ with $c_1 = [\omega]$ can be translated to satisfy the so-called
weight sum formula (see Proposition \ref{prop:weight_sum}). Hence, in
order to study compact monotone
Hamiltonian $T$-spaces, it suffices to consider those that satisfy both aforementioned
conditions, which we call
normalized monotone. That is the content of Section
\ref{sec:weight-sum-formula}. In Section \ref{sec:moment-polyt-tall},
we characterize normalized monotone complexity preserving Hamiltonian
$T$-spaces as being precisely those that are compact
monotone and have reflexive Delzant polytopes as moment map images
(see Proposition
\ref{prop:mom_map_image} and Lemma
\ref{lemma:normalized_complexity_preserving}).

Section \ref{sec:stuff-that-we} is the technical heart of the paper
and is where we prove our first main result, Theorem
\ref{thm:DH_classifies}. In Section \ref{sec:duist-heckm-funct}, we
recall that the genus of a compact monotone tall complexity one
$T$-space is zero, a result proved in \cite{ss}. Moreover, we show
that there is a facet of the moment polytope along
which the Duistermaat-Heckman function is constant and equal to the
minimal value (see Proposition \ref{prop:minimum_facet}). Such a
facet, which we call minimal, plays an important role in the arguments
of Sections \ref{sec:stuff-that-we} and \ref{sec:an-expl-real}. In
Section \ref{sec:the-image-of}, we characterize isolated fixed points
in normalized monotone tall complexity one
$T$-spaces (see Proposition \ref{prop:iso_fixed_pts_monotone}). This is
another fundamental result that we use extensively in Sections
\ref{sec:stuff-that-we} and \ref{sec:an-expl-real}. The space of
exceptional orbits and the painting of a normalized monotone tall complexity one
$T$-space are studied in Section \ref{sec:intern-walls-except}. We
show that the isotropy data associated to the former can be
reconstructed by `looking at the moment polytope' (see Remark
\ref{rmk:image_symplectic_slice} for a precise statement). Moreover,
we prove that the painting of a normalized monotone tall complexity one
$T$-space is trivial (see Definition
\ref{defn:trivial_painting} and Theorem
\ref{prop:trivial_painting}). In Section
\ref{sec:duist-heckm-funct-1}, we provide an explicit formula for the
Duistermaat-Heckman function of a normalized monotone tall complexity one
$T$-space (see Theorem \ref{thm:possibilities_DH}). It is given in terms of the number of connected components of the space
of exceptional orbits and an integer that can be associated to the
preimage of any vertex that lies on a given
minimal facet (see Lemma \ref{lemma:s_invariant}). Finally, in Section
\ref{sec:class-finit-result} we prove
Theorem \ref{thm:DH_classifies} by bringing the above results together. 

In Section \ref{sec:an-expl-real} we prove all our remaining main
results. Section \ref{sec:necess-cond-finit} addresses the question of
finding necessary conditions for a function to be the
Duistermaat-Heckman function of a normalized monotone tall complexity one
$T$-space (see Proposition \ref{prop:necessary}). This allows us to
prove the finiteness result, namely Theorem \ref{thm:finitely_many_DH} and Corollary
\ref{prop:finitely_many}. In Section \ref{sec:comb-real-result}, we
prove that the aforementioned necessary conditions are, in fact,
sufficient (see Theorem \ref{thm:comb_real} and Corollary
\ref{cor:realizability}). Our method to prove this result leads us
naturally to obtain the extendability result, namely Theorem
\ref{thm:extension}. Finally, in Section \ref{sec:comp-monot-tall}, we
prove Theorem \ref{thm:Fano}.

\subsection*{Acknowledgments} We would like to thank Yael
Karshon for posing the question of finiteness.
The authors were partially supported by SFB-TRR 191 grant Symplectic
Structures in Geometry, Algebra and Dynamics funded by the Deutsche
Forschungsgemeinschaft. D.S. was partially supported by FAPERJ grant JCNE E-26/202.913/2019
and by a CAPES/Alexander von Humboldt Fellowship for Experienced
Researchers 88881.512955/2020-01. D. S. would like to thank
Universit\"at zu K\"oln for the kind hospitality during a long stay. This study was financed in
part by the Coordenação de Aperfeiçoamento de Pessoal de Nível Superior -- Brazil
(CAPES) -- Finance code 001. I.C. would like to thank Instituto de Matemática Pura e Aplicada (IMPA), Rio de Janeiro,
for the support of a stay in Brazil. 

\newpage
\subsection{Conventions}\label{sec:conventions}
\subsubsection*{Tori} Throughout the paper, we identify the integral lattice of $S^1 \subset
\C$ with $\Z$. This means that the exponential map $\exp :
\mathrm{Lie}(S^1) = i\R \to S^1$ is given by $\exp(ix) = e^{2\pi i
  x}$, where $z \mapsto e^z$ is the standard complex exponential
function. Moreover, we often identify $\mathrm{Lie}(S^1)$
and its dual with $\R$ tacitly; we trust that this does not cause
confusion.

In this paper, $T$ is a compact torus  of
dimension $d$ with Lie algebra $\g$. We
denote its integral lattice by $\ell$, namely $ \ell=\ker(\exp\colon
\g \rightarrow T)$, and the dual of $\ell$ by $\ell^*$. Moreover, we denote by $\langle
\cdot, \cdot \rangle$ the standard pairing between $\g^*$ and
$\g$. Finally, we fix an inner product on $\g$ once and for all. 

\subsubsection*{Convex polytopes}
Let $\g$ be a real vector
space of dimension $d$ with full-rank
lattice $\ell \subset \g$. A {\bf (convex) polytope} $\Delta$ in
$\g^*$ is a subset that satisfies either of the following two
equivalent conditions:
\begin{itemize}[leftmargin=*]
\item $\Delta$ is the convex hull of a finite set of points, or
\item $\Delta$ is the bounded intersection of a finite set of (closed)
  half-spaces of $\g^*$,
\end{itemize}
(see \cite[Theorem 2.15]{ziegler}). Throughout the paper, we assume
that a polytope $\Delta$ has dimension equal to that of $\g^*$. We often
write $\Delta$ in its minimal representation, i.e.,
\begin{equation}
  \label{eq:3}
  \Delta=\bigcap_{i=1}^l \, \{w\in \g^* \mid \langle w,\nu_i \rangle \geq
  c_i\}\,
\end{equation}
where $\nu_i \in \g$ is the {\bf inward normal}, $c_i \in \R$, and the
affine hyperplane $\{w\in \g^* \mid \langle w,\nu_i \rangle =
c_i\}$ supports a {\bf facet} of $\Delta$ for $i=1,\ldots, l$. A finite non-empty intersection
of facets of $\Delta$ is a {\bf face} of $\Delta$. For convenience, we
think of $\Delta$ also as a face. The {\bf dimension} of a face $\mathcal{F}$ of
$\Delta$ is the dimension of the affine span of $\mathcal{F}$ in
$\g^*$. Faces that are 1-dimensional are called {\bf edges}, while
0-dimensional faces are {\bf vertices}.

%%%%%%%%%%%%%%%%%%%%%%%%%%%%%%%%%%%%%%%%%%%%

\section{(Compact) Hamiltonian $T$-spaces and their invariants}\label{sec:comp-hamilt-t}

\subsection{Definition and first properties}\label{sec:defin-first-prop}

Let $(M,\omega)$ be a symplectic manifold of dimension $2n$. 
A smooth $T$-action $\psi\colon T \times M \to M$ is
\textbf{Hamiltonian} if it admits a 
{\bf moment map}, i.e., a smooth $T$-invariant map $\Phi\colon M \to
\g^*$ that satisfies
\begin{equation}\label{def moment map}
d \langle \Phi, \xi\rangle = -\iota_{\xi^\#} \omega\, \text{ for all }
\xi \in \g,
\end{equation}
where $\xi^\# \in \mathfrak{X}(M)$ denotes the vector field associated
to $\xi$. In this case, the diffeomorphism $\psi(t,
\cdot)\colon M \to M$ is a symplectomorphism
for each $t \in T$, 
i.e., it preserves $\omega$. 
For brevity we denote $\psi(t, p)$ by $t\cdot p$.
\begin{defin}\label{def hamiltonian space}
  \mbox{}
  \begin{itemize}[leftmargin=*]
\item A \textbf{(compact) Hamiltonian $T$-space} is a (compact)
  connected symplectic manifold $(M,\omega)$ endowed with an effective Hamiltonian
  $T$-action and a moment map $\Phi\colon M \to \g^*$. We denote such a
  space by $(M,\omega,\Phi)$.
\item Two Hamiltonian $T$-spaces $(M_1,\omega_1,\Phi_1)$ and
  $(M_2,\omega_2,\Phi_2)$ are {\bf isomorphic} if there exists a symplectomorphism $\Psi: (M_1,\omega_1) \to
  (M_2,\omega_2)$ such that $\Phi_2 \circ \Psi = \Phi_1$.
\end{itemize}
\end{defin}

\begin{remark}\label{rmk:equivariant}
  Since $T$ is connected, an isomorphism of
  Hamiltonian $T$-spaces is necessarily a $T$-equivariant diffeomorphism.
\end{remark}

Definition \ref{def hamiltonian
  space} includes $\dim T = 0$ (this is used, for
instance, in Theorem \ref{thm:DH_function_cts}). In this case, a Hamiltonian $T$-space is simply a symplectic
manifold and an isomorphism is simply a symplectomorphism.

\subsubsection{Orbital moment map and reduced spaces}\label{sec:orbital-moment-map}
We endow the quotient space $M/T$ with the quotient
topology. Since $\Phi$ is $T$-invariant, it descends to a continuous
map $\bar{\Phi} : M/T \to \g^*$
that is called the {\bf orbital moment
  map}.

\begin{remark}\label{rmk:orbital_moment_map}
  If $\Psi$ is an isomorphism between $(M_1,\omega_1,\Phi_1)$ and
  $(M_2,\omega_2,\Phi_2)$ , then there is a homeomorphism $\bar{\Psi}
  : M_1/T \to M_2/T$ such that $\bar{\Phi}_1 = \bar{\Phi}_2 \circ \bar{\Psi}$.
\end{remark}

The fibers of $\bar{\Phi}$ can be canonically identified with the
quotient of the fibers of $\Phi$ by the $T$-action. These are known as
{\bf reduced spaces}. If $\alpha \in \Phi(M)$ is a regular value of
$\Phi$, then the reduced space at $\alpha$, $\Phi^{-1}(\alpha)/T$, is
an orbifold that inherits a symplectic form $\omega_{\mathrm{red}}$
(see \cite{marsden_weinstein} and \cite[Lemma 3.9]{lerman_tolman}). 

\subsubsection{Complexity of a Hamiltonian
  $T$-space}\label{sec:compl-hamilt-t}
Since the $T$-action on
$M$ is effective and since orbits are
isotropic submanifolds of $(M,\omega)$, we have that $d\leq n$. The
difference $n-d$ is a simple, but important invariant of $\comp$.

\begin{definition}\label{defn:complexity}
  The {\bf complexity} of a Hamiltonian $T$-space $\comp$ is
  $$k:=\frac{1}{2}\dim M - \dim T.$$
\end{definition}

\begin{example}\label{exm:complexity_zero=toric}
  Complexity zero Hamiltonian $T$-spaces are {\bf symplectic toric
    manifolds}. Throughout the paper, we refer to torus actions of
  complexity zero as {\bf toric}.
\end{example}

Intuitively, the complexity of a Hamiltonian $T$-space is half of the
dimension of a reduced space at a regular value. 

\subsubsection{Local model and local normal form}\label{sec:local-model-local}
Given $p \in M$, its {\bf stabilizer} is the closed
subgroup $H:=\{t \in T \mid t \cdot p = p\}$. We set $h:= \dim H$. Since $T$ is abelian, any two points on the same orbit have equal
stabilizers. Hence, the stabilizer of an orbit is well-defined. If $\mathcal{O}$ denotes the $T$-orbit containing $p$, the
infinitesimal symplectic linear action of $H$ on $(T_pM,\omega_p)$ fixes $T_p
\mathcal{O}$. Thus there is a symplectic linear action of
$H$ on the quotient vector space $(T_p
\mathcal{O})^{\omega}/T_p\mathcal{O}$ endowed with the quotient linear
symplectic structure. We call the underlying Lie group homomorphism
the {\bf symplectic slice representation of $\boldsymbol{p}$}.

\begin{remark}\label{rmk:symp_slice_orbit}
  The symplectic slice representations of two points lying on the same
  orbit are naturally isomorphic. Hence, the symplectic slice
  representation of an orbit is well-defined. This allows us to `decorate' the
  quotient space $M/T$ by attaching the symplectic slice
  representation to every orbit (this data includes the
  stabilizer of the orbit).

  Let $\Psi$ be an isomorphism between $(M_1,\omega_1,\Phi_1)$ and
  $(M_2,\omega_2,\Phi_2)$ and let $\bar{\Psi} : M_1/T \to M_2/T$ be
  the homeomorphism given by Remark \ref{rmk:orbital_moment_map}. For
  any $p \in M_1$, $\bar{\Psi}([p])$ and $[p]$ have equal stabilizers
  and symplectic slice representations.
\end{remark}

Fix a $T$-invariant
almost complex structure on $(M,\omega)$; the existence of such a
structure is proved in \cite[Lemma
5.52]{mcduff-salamon}. We observe that $(T_p
\mathcal{O})^{\omega}/T_p\mathcal{O}$ has real dimension
$2(h+n-d)=2(h+k)$, where $k$ is the complexity of $\comp$. Hence, we
use the above almost complex structure to
identify $(T_p \mathcal{O})^{\omega}/T_p\mathcal{O}$ with $\C^{h+k}$
endowed with the standard symplectic form
\begin{equation}
  \label{eq:49}
  \frac{i}{2}\,\sum_{j=1}^{h+k} dz_j \wedge d\overline{z}_j.
\end{equation}
\noindent
Moreover, under this identification, the linear $H$-action is by 
unitary transformations.

Let $\rho : H \to U(\C^{h+k})$ be the
associated homomorphism of Lie groups. Since $H$ is abelian,
$\rho(H)$ is contained in a maximal torus of $U(\C^{h+k})$. We denote
the maximal torus of $U(\C^{h+k})$ consisting of diagonal
transformations by $(S^1)^{h+k}$. Hence,
we may assume that $\rho$ factors through
a Lie group homomorphism $H \to  (S^1)^{h+k}$ that we also denote by
$\rho$ by a slight abuse of
notation. We
write $\rho_j$ for the $j$th component of $\rho$, where $j = 1,\ldots,
h+k$. Let $d_e\rho_j$ denote the derivative at the identity of
$\rho_j$ and set
$d_e\rho_1:=2\pi i \alpha_1,\ldots, d_e\rho_{h+k}:=2\pi i
\alpha_{h+k}$. Hence, $\alpha_j \in
\ell^*_{\mathfrak{h}}$ for all $j=1,\ldots, h+k$, where $\mathfrak{h}$
is the Lie algebra of $H$ and $\ell_{\mathfrak{h}}$ denotes
the integral lattice in $\mathfrak{h}$. We call
$\alpha_1,\ldots, \alpha_{h+k}$ the {\bf isotropy weights of $\boldsymbol{p}$ (for the
  $\boldsymbol{H}$-action)}. The multiset of isotropy weights of $p$ does not depend on the
choice of $T$-invariant almost complex structure on
$(M,\omega)$. Moreover, this multiset encodes the action of the
identity component of $H$ on $\C^{h+k}$. Explicitly, 
\begin{equation}\label{action identity component}
  \exp(\xi)\cdot (z_1,\ldots,z_{h+k})=(e^{ 2\pi i \langle \alpha_1,\xi
  \rangle}z_1,\ldots,
  e^{ 2\pi i \langle \alpha_{h+k},\xi \rangle}z_{h+k})\quad\text{for 
    every}\;\;\xi\in \mathfrak{h}\, .
\end{equation}
In particular, if $H$ is connected, then the multiset of isotropy
weights of $p$ determine the symplectic slice representation up to
unitary isomorphisms. Finally, by Remark \ref{rmk:symp_slice_orbit}, the multisets of
isotropy weights of two points lying on the same orbit are equal. \\

From the stabilizer $H \leq T$ of $p$ and the Lie group homomorphism $\rho :
H \to (S^1)^{h+k}$, we construct a symplectic manifold together
with a Hamiltonian $T$-action and a moment map. This is the local model for a $T$-invariant neighborhood of
$\mathcal{O}$ in $\comp$. We do this in two equivalent ways, seeing as
one is more convenient for proofs and the other is more convenient for
calculations.

\subsubsection*{The abstract construction} Let $\Omega$ denote the
symplectic form on $T^*T \times \C^{h+k}$ given
by taking the sum of the pullbacks of the canonical symplectic form on
$T^*T$ and the standard symplectic form on $\C^{h+k}$ (see equation
\eqref{eq:49}). Let $H$ act (on the right) on $T^*T \times \C^{h+k}$
as follows: On $T^*T$ it acts by
the cotangent lift of (right) multiplication, while on $\C^{h+k}$ it
acts by $ z \cdot h := \rho(h^{-1})(z)$, for $h \in H$ and $z \in
\C^{h+k}$. By construction,
the $H$-action on $(T^*T \times \C^{h+k},\Omega)$ is
Hamiltonian. Let $\hat{\Phi} : T^*T \times \C^{h+k} \to
\mathfrak{h}^*$ be the moment map that sends $(0,0) \in T^*T \times
\C^{h+k} $ to the origin in $\mathfrak{h}^*$. Since this $H$-action is
free and proper, the quotient
$$ (T^*T \times \C^{h+k}) /\! /H:= \hat{\Phi}^{-1}(0)/H $$
\noindent
is a smooth manifold that inherits a symplectic form
$\omega_{\mathrm{red}}$ (see \cite[Proposition
5.40]{mcduff-salamon}).

Let $T$ act (on the left) on $T^*T \times \C^{h+k}$ as follows: On
$T^*T$ it acts by the
cotangent lift of (left) multiplication, while on $\C^{h+k}$ it acts
trivially. This $T$-action is Hamiltonian and commutes with the above
$H$-action. Hence, it induces a Hamiltonian $T$-action on $((T^*T
\times \C^{h+k}) /\! /H, \omega_{\mathrm{red}})$. As a moment map for
this $T$-action we take the one that sends $[0,0] \in (T^*T \times
\C^{h+k}) /\! /H$ to the origin in $\g^*$ and we denote it by
$\Phi_{\mathrm{red}}$. The desired local model is the triple $((T^*T
\times \C^{h+k}) /\! /H, \omega_{\mathrm{red}}, \Phi_{\mathrm{red}})$.

\subsubsection*{The explicit construction}
The choice of inner product on $\mathfrak{t}$ induces an
inner product on $\mathfrak{t}^*$ which, in turn, determines an
isomorphism $\g^* \simeq \mathrm{Ann}(\mathfrak{h}) \oplus
\mathfrak{h}^*$. Moreover, we choose a trivialization $T^*T \cong T
\times \g^*$. With these choices, we fix an identification $T^*T
\times \C^{h+k}$ with $T \times \mathrm{Ann}(\mathfrak{h}) \times
\mathfrak{h}^* \times \C^{h+k} $. Under this identification, the above
(right) $H$-action is given by 
$$ (t,\alpha,\beta,z) \cdot h = (th,\alpha,\beta,\rho(h^{-1})z), $$ 
while the above moment map
$\hat{\Phi}$ is given by
$$ (t,\alpha,\beta,z) \mapsto \beta - \Phi_H(z), $$
\noindent
where $\Phi_H : \C^{h+k} \to \mathfrak{h}^*$ is the homogeneous
moment map for the (left) $H$-action on $\C^{h+k}$ given by $h \cdot z
= \rho(h)z$. The map 
\begin{equation*}
  \begin{split}
    T \times \mathrm{Ann}(\mathfrak{h}) \times \C^{h+k} & \to
    \hat{\Phi}^{-1}(0) \\
    (t,\alpha,z) &\mapsto (t,\alpha,\Phi_H(z),z)
  \end{split}
\end{equation*}
is an $H$-equivariant diffeomorphism, where the left hand side is
equipped with the (right) $H$-action on $T \times
\mathrm{Ann}(\mathfrak{h}) \times \C^{h+k}$ given by
\begin{equation}
  \label{eq:45}
  (t,\alpha,z) \cdot h = (th, \alpha, \rho(h^{-1})z).
\end{equation}
\noindent
Hence the quotient
\begin{equation}
  \label{eq:47}
  Y:=T \times_H
  \mathrm{Ann}(\mathfrak{h}) \times \C^{h+k}
\end{equation}
\noindent
is diffeomorphic to $(T^*T \times
\C^{h+k}) /\! /H$. We denote by $\omega_Y$ (respectively $\Phi_Y$) the pullback of
$\omega_{\mathrm{red}}$ (respectively $\Phi_{\mathrm{red}}$) under the above diffeomorphism. The (left)
$T$-action on $Y$ is given by
\begin{equation}
  \label{eq:48}
  s \cdot [t,\alpha,z] = [st,\alpha,z],
\end{equation}
\noindent
while the moment map $\Phi_Y$ takes the form
\begin{equation}
  \label{eq:43}
  \Phi_Y([t,\alpha,z]) = \alpha + \Phi_H(z).
\end{equation}
\noindent
The stabilizer of $[1,0,0] \in Y$ is $H$ and the symplectic slice
representation of $[1,0,0]$ is $\rho : H \to (S^1)^{h+k}$. Moreover, if the $T$-action is effective, the
complexity of $(Y,\omega_Y,\Phi_Y)$ is equal to $k$. We refer to
$(Y,\omega_Y,\Phi_Y)$ as the {\bf local model of
  $\boldsymbol{p}$}. By
Remark \ref{rmk:symp_slice_orbit}, the local models at two points lying
on the same orbit are equal.

\begin{remark}\label{rmk:local_model_two_interpretations}
  By a slight abuse of terminology, we also refer to $((T^*T
  \times \C^{h+k}) /\! /H, \omega_{\mathrm{red}},
  \Phi_{\mathrm{red}})$ as the local model of $p$. Moreover,
  throughout the paper we sometimes state results in terms of $Y$ but use $(T^*T
  \times \C^{h+k}) /\! /H$ in the proof. We trust that this does not
  cause confusion. 
\end{remark}

As a consequence of the local normal form
theorem for Hamiltonian actions of compact Lie groups due to Guillemin-Sternberg \cite{gs-local} and Marle 
\cite{marle}, any Hamiltonian
$T$-space is isomorphic to a local model near an orbit. More
precisely, the following holds.

\begin{thm}\label{local normal form}
Let $\comp$ be a Hamiltonian $T$-space. Given $p \in M$, let
$(Y,\omega_Y,\Phi_Y)$ be the local model of $p$. There exist
$T$-invariant open neighborhoods $U\subset 
M$ of $p$ and $V \subset Y$ of $[1,0,0]$ and an isomorphism between
$(U,\omega,\Phi)$ and $(V,\omega_Y,\Phi_Y + \Phi(p))$ that maps $p$ to $[1,0,0]$.
\end{thm}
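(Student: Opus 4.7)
The plan is to proceed in two broad steps: first construct a $T$-equivariant diffeomorphism between neighborhoods of the orbit $\mathcal{O}$ through $p$ and of the zero orbit $T\cdot [1,0,0]$ in $Y$ that identifies the relevant normal data; then upgrade it to a symplectomorphism matching the moment maps via an equivariant Moser argument.

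First I would apply the equivariant tubular neighborhood theorem. Since $T$ is compact and $\mathcal{O}\cong T/H$ is a closed $T$-invariant submanifold of $M$, a $T$-invariant open neighborhood of $\mathcal{O}$ is $T$-equivariantly diffeomorphic to a $T$-invariant neighborhood of the zero section of the $T$-equivariant normal bundle $N\mathcal{O}\to \mathcal{O}$. Using the inner product on $\mathfrak{t}$, together with a $T$-invariant compatible almost complex structure on $(M,\omega)$ (whose existence is cited in the paper), one obtains a splitting of $T_pM$ as $T_p\mathcal{O}\oplus \mathrm{Ann}(\mathfrak{h})\oplus \C^{h+k}$ as an $H$-representation, where the third factor realizes the symplectic slice. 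This identifies $N\mathcal{O}$, as a $T$-equivariant vector bundle, with the associated bundle $T\times_H (\mathrm{Ann}(\mathfrak{h})\oplus \C^{h+k})$ that appears in~\eqref{eq:47}. The coincidence of stabilizer and symplectic slice representation with those of $[1,0,0]\in Y$, together with Remark~\ref{rmk:symp_slice_orbit}, ensures that this gives a $T$-equivariant diffeomorphism $\Psi_0$ between $T$-invariant neighborhoods $U_0\ni p$ and $V_0\ni [1,0,0]$ which, by construction, makes the two symplectic forms agree pointwise along the orbit.

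Next I would run an equivariant Moser trick. Both $\omega$ and $\Psi_0^*\omega_Y$ are $T$-invariant closed non-degenerate $2$-forms on $U_0$ that agree on $\mathcal{O}$, so their difference $\sigma:=\Psi_0^*\omega_Y-\omega$ is closed and vanishes along $\mathcal{O}$. A relative Poincar\'e lemma in a tubular neighborhood, followed by averaging over $T$, produces a $T$-invariant $1$-form $\beta$ with $d\beta=\sigma$ and $\beta|_{\mathcal{O}}=0$. Shrinking the neighborhood so that $\omega_t:=\omega+t\sigma$ is non-degenerate for $t\in[0,1]$, the time-dependent vector field $X_t$ determined by $\iota_{X_t}\omega_t=-\beta$ is $T$-invariant and vanishes along $\mathcal{O}$; its flow yields a $T$-equivariant isotopy $\psi_t$ with $\psi_1^*(\Psi_0^*\omega_Y)=\omega$, fixing $\mathcal{O}$ pointwise. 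Setting $\Psi:=\Psi_0\circ\psi_1$ gives the desired equivariant symplectomorphism between a $T$-invariant neighborhood $U$ of $p$ and a $T$-invariant neighborhood $V$ of $[1,0,0]$.

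For the moment map condition, note that $\Phi_Y\circ\Psi$ and $\Phi-\Phi(p)$ are both moment maps on $(U,\omega)$ for the $T$-action and both vanish at $p$; since any two moment maps differ by a locally constant function and $U$ is connected, they agree, which rewrites as $\Phi_2=\Phi_Y+\Phi(p)$ along the identification via $\Psi$. The main obstacle is the first step: one must produce a $T$-equivariant normal bundle identification that matches the symplectic slice representation with $\rho$ not merely at $p$ but along the entire orbit. This is precisely where the naturality recorded in Remark~\ref{rmk:symp_slice_orbit}, and the fact that the construction of $Y$ in~\eqref{eq:47} is carried out over $T/H$ via the representation $\rho$, are essential; the subsequent Moser step is then routine because $T$-invariance is preserved by averaging.
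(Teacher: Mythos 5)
Your proposal is correct in outline, but note that the paper does not actually prove Theorem~\ref{local normal form}: it states it as a direct consequence of the local normal form theorem of Guillemin--Sternberg \cite{gs-local} and Marle \cite{marle}, so the paper's ``proof'' is a citation. What you have written is essentially the standard proof of that cited result: identify a $T$-invariant neighborhood of the orbit $\mathcal{O}$ with a neighborhood of the zero section of the associated bundle $T\times_H\bigl(\mathrm{Ann}(\mathfrak{h})\times\C^{h+k}\bigr)$ of \eqref{eq:47}, run an equivariant relative Moser (Darboux--Weinstein) argument, and match moment maps using the fact that two moment maps for the same torus action on a connected neighborhood differ by a constant; this last step is fine as stated, since $\Phi_Y([1,0,0])=0$ and both maps vanish at $p$. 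The one place where your sketch is thinner than the genuine argument is the claim that the equivariant tubular neighborhood makes the two symplectic forms agree along $\mathcal{O}$ ``by construction'': an arbitrary equivariant tubular neighborhood only controls the identification of normal bundles, so you must first fix, at the single point $p$, a linear isomorphism $T_pM\simeq(\mathfrak{t}/\mathfrak{h})\oplus\mathrm{Ann}(\mathfrak{h})\oplus\C^{h+k}$ respecting the symplectic data (isotropy of $T_p\mathcal{O}$ and of its complement $J\,T_p\mathcal{O}$, the canonical pairing between these two factors, and the slice form on $\C^{h+k}$), then propagate it to all of $\mathcal{O}$ by the $T$-action---possible precisely because $\mathcal{O}$ is a single orbit and all the data are $T$-invariant, which is the content of Remark~\ref{rmk:symp_slice_orbit}---and only then choose the tubular neighborhood with this prescribed derivative along $\mathcal{O}$ before invoking Moser. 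Since you explicitly flag this point and the fix is standard, I would count the proposal as correct; the paper's choice buys brevity by outsourcing the theorem to the literature, while your route makes it self-contained.
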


\begin{corollary}\label{cor:effective}
  Let $\comp$ be a Hamiltonian $T$-space. For any $p \in M$ with
  stabilizer $H$, the
  homomorphism $\rho : H \to (S^1)^{h+k}$ is injective.
\end{corollary}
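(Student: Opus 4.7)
The plan is to argue by contradiction: if $h \in H$ lies in $\ker \rho$, then $h$ acts trivially on a neighborhood of $p$ via the local model, and a connectedness argument combined with effectiveness forces $h = e$.

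First, I would analyze the $T$-action on the local model $(Y, \omega_Y, \Phi_Y)$ described in equations \eqref{eq:45}, \eqref{eq:47} and \eqref{eq:48}. Suppose $h \in H$ satisfies $\rho(h) = \mathrm{id}$. Using the equivalence relation on $Y = T \times_H (\mathrm{Ann}(\mathfrak{h}) \times \C^{h+k})$ coming from \eqref{eq:45}, one has $[th, \alpha, z] = [t, \alpha, \rho(h) z] = [t, \alpha, z]$ for every class in $Y$. On the other hand, by \eqref{eq:48} the $T$-action sends $[t,\alpha,z]$ to $[ht,\alpha,z] = [th,\alpha,z]$ since $T$ is abelian. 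Hence $h$ acts as the identity on all of $Y$.

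Next, Theorem \ref{local normal form} produces $T$-invariant open neighborhoods $U \subset M$ of $p$ and $V \subset Y$ of $[1,0,0]$ together with an isomorphism of Hamiltonian $T$-spaces between them. By Remark \ref{rmk:equivariant}, this isomorphism is $T$-equivariant, so $h$ acts trivially on $U$.

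Finally, I would promote this local triviality to global triviality. Let $K := \overline{\langle h \rangle} \subset T$ be the closed subgroup generated by $h$. A standard continuity argument shows that the fixed set $M^h$ equals $M^K$, and because $K$ is a closed subgroup of the compact Lie group $T$ acting smoothly on $M$, the set $M^K$ is a closed submanifold of $M$. Since $M^K \supset U$ contains a non-empty open subset and $M$ is connected, $M^K = M$, i.e., $h$ fixes every point of $M$. The effectiveness hypothesis on the $T$-action then forces $h = e$, proving injectivity of $\rho$. The only step that is not entirely routine is the submanifold structure of the fixed set, which is an invocation of a classical result about smooth compact group actions.
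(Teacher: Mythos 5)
Your proof is correct and follows essentially the same route as the paper: pass to the local model, observe that an element of $\ker\rho$ acts trivially on $Y$, transfer this to a neighborhood of $p$ via Theorem \ref{local normal form}, and conclude from effectiveness. The only difference is that you spell out the globalization step (the fixed-point set of a closed subgroup is a union of closed submanifolds, so triviality on an open set plus connectedness of $M$ gives triviality everywhere), which the paper leaves implicit in the assertion that effectiveness of the $T$-action on $M$ forces effectiveness on $Y$.
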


\begin{proof}
  Fix $p \in M$ and notation as above. Let $(Y,\omega_Y,\Phi_Y)$ be
  the local model of $p$. Since the $T$-action on $M$ is assumed
  to be effective, so is the $T$-action on $Y$ by Theorem \ref{local
    normal form} (see Remark \ref{rmk:equivariant}). By definition of
  $Y$ and of the $T$-action on $Y$, the $T$-action on
  $Y = T \times_H \mathrm{Ann}(\mathfrak{h}) \times \C^{h+k}$ is
  effective if and only if the Lie group homomorphism $\rho : H \to
  (S^1)^{h+k}$ is injective, as desired.
\end{proof}

\begin{remark}\label{rmk local normal form}
  Given $p \in M$, let $H$ be its stabilizer and let
  $\{\alpha_j\}$ be the multiset of isotropy weights of $p$. By
  Corollary \ref{cor:effective}, the $H$-action on $(T_p \mathcal{O})^{\omega}/T_p\mathcal{O}$ is effective. In particular, so is the action
  by its identity component. Using equation \eqref{action identity component}, it follows that the $\Z$-span of $\{\alpha_j\}$ equals
  $\ell^*_{\mathfrak{h}}$.
\end{remark}

\begin{remark}\label{rmk:fixed_point}
  The above discussion simplifies significantly if $p$ is a {\bf fixed
    point}, i.e., if $H=T$. In this case, $h =d$ so that
  $h+k = n$, and the isotropy
  weights $\alpha_1,\ldots , \alpha_n$ lie in $\ell^*$. Moreover,
  $Y = \C^n$, the symplectic form $\omega_Y$ is the standard
  symplectic form on $\C^n$, the $T$-action on $Y$ is given by
  \begin{equation}\label{action Cn}
    \exp(\xi)\cdot (z_1,\ldots,z_n)=(e^{ 2\pi i \langle \alpha_1 , \xi
    \rangle}z_1,\ldots,
    e^{ 2\pi i \langle \alpha_n , \xi \rangle}z_n)\quad\text{for 
      every}\;\;\xi\in \g\, ,
  \end{equation}
  \noindent
  and the moment map $\Phi_Y : Y \to \g^*$ is given by
  \begin{equation}
    \label{eq:33}
    \Phi_Y(z)=\pi \sum_{j=1}^n \alpha_j |z_j|^2,
  \end{equation}
  \noindent
  where $z = (z_1,\ldots,z_n) \in \C^n$.
\end{remark}

\subsubsection{Regular and exceptional local models}\label{sec:exceptional-orbits}
Theorem \ref{local normal form} allows us to understand a Hamiltonian $T$-space in a $T$-invariant open neighborhood of a
point $p$ by studying the local model of $p$. For this reason, in this subsection we
take a closer look at local models. \\

In what follows, we fix a non-negative integer $k$ and a closed subgroup $H \leq T$. As above, we set $d:= \dim T$ and $h:=\dim H$. We also fix
an injective Lie group homomorphism $\rho : H
\hookrightarrow (S^1)^{h+k}$. We denote the subspace
of fixed points of the $H$-action induced by $\rho$ by $(\C^{h+k})^H$. As in Section
\ref{sec:local-model-local}, we use $k$, $H$ and $\rho$ to construct
Hamiltonian $T$-spaces
$$((T^*T
\times \C^{h+k}) /\! /H, \omega_{\mathrm{red}}, \Phi_{\mathrm{red}})
\cong (Y,\omega_Y,\Phi_Y)$$
that we refer to as the {\bf
  local model determined by $\boldsymbol{k}$, $\boldsymbol{H}$ and
  $\boldsymbol{\rho}$} (see equations \eqref{eq:47}, \eqref{eq:48} and
\eqref{eq:43} for the definition of $Y$, the $T$-action and $\Phi_Y$
respectively). We remark that $k$ is the complexity of $
(Y,\omega_Y,\Phi_Y)$, that $H$ is the stabilizer of $p:=[1,0,0]$ and
that $\rho$ is the symplectic slice representation of $p$.

\begin{lemma}\label{lemma:max_dim_rep}
  Let $k$ be a non-negative integer, let $H \leq T$
  be a closed subgroup and let $\rho : H \hookrightarrow (S^1)^{h+k}$
  be an injective Lie group homomorphism, where $h = \dim H$. Set $s:=
  \dim_{\C} (\C^{h+k})^H$. There exists an isomorphism of
  Hermitian vector spaces $\C^{h+k} \simeq \C^{h+k-s}\oplus \C^s$
  such that
  \begin{equation}
    \label{eq:50}
    \rho = (\rho',1) : H \hookrightarrow 
    (S^1)^{h+k-s} \times 1 \hookrightarrow (S^1)^{h+k-s} \times (S^1)^s \simeq
    (S^1)^{h+k}
  \end{equation}
  \noindent
  and $(\C^{h+k-s})^H = \{0\}$. Moreover, $s \leq k$ and, if $s =
  k$, then $\rho$ is an isomorphism between $H$ and $(S^1)^h$ and
  the $H$-action on $\C^h$ is toric.
\end{lemma}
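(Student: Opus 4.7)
The plan is to exploit the explicit diagonal form of $\rho$, reading off the weight decomposition directly from the standard basis of $\C^{h+k}$. Since $\rho$ factors through the diagonal maximal torus $(S^1)^{h+k}$, each coordinate line $\C \cdot e_j \subset \C^{h+k}$ is a one-dimensional $H$-weight subspace, with weight equal to the $j$-th isotropy weight $\alpha_j$ of the point $[1,0,0] \in Y$ (see \eqref{action identity component}). Consequently, $(\C^{h+k})^H$ is exactly the span of those coordinate lines on which $\alpha_j$ vanishes, so $s$ equals the number of vanishing $\alpha_j$. A permutation of coordinates -- a unitary automorphism of $\C^{h+k}$ that normalizes the diagonal torus -- brings the zero weights into the last $s$ positions and yields the claimed orthogonal decomposition $\C^{h+k} \simeq \C^{h+k-s} \oplus \C^s$ with the block form $(\rho',1)$ of \eqref{eq:50}; by construction $(\C^{h+k-s})^H = \{0\}$, and $\rho'$ is injective because $\rho$ is.

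For the inequality $s \leq k$, I would first observe that an injective continuous homomorphism between tori has injective differential: any nonzero $\xi$ in the kernel of $d_e\rho$ would produce a non-constant curve $t \mapsto \exp(t\xi)$ inside $\ker \rho$, contradicting injectivity of $\rho$. Since the $j$-th component of $d_e\rho$ is $2\pi i \langle \alpha_j, \cdot \rangle$, injectivity of $d_e\rho$ is equivalent to $\{\alpha_1, \ldots, \alpha_{h+k}\}$ spanning $\mathfrak{h}^*$ over $\R$. Only the $h+k-s$ nonzero weights contribute, and spanning an $h$-dimensional space forces $h+k-s \geq h$, i.e.\ $s \leq k$.

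Finally, in the equality case $s = k$, the $h$ nonzero weights $\alpha_1, \ldots, \alpha_h$ form an $\R$-basis of $\mathfrak{h}^*$. To promote this to a $\Z$-basis of $\ell^*_{\mathfrak{h}}$, I would invoke Remark \ref{rmk local normal form} applied to the Hamiltonian $T$-space $(Y,\omega_Y,\Phi_Y)$ built from $k$, $H$ and $\rho$: the point $[1,0,0]$ has stabilizer $H$ and isotropy weights $\{\alpha_1, \ldots, \alpha_{h+k}\}$, and the remark asserts that these $\Z$-generate $\ell^*_{\mathfrak{h}}$. Dropping the $s$ zero weights leaves $h$ generators of a rank-$h$ lattice, forcing them to form a $\Z$-basis. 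This implies that $\rho'$ sends $\ell_{\mathfrak{h}}$ isomorphically onto $\Z^h$ and therefore descends to a Lie group isomorphism $H \cong (S^1)^h$; the induced $H$-action on $\C^h$ is then the standard diagonal torus action, which is toric since $\dim_{\R}\C^h = 2h = 2\dim H$. The main obstacle is precisely this last integrality step, since the injectivity of $\rho$ alone only yields an $\R$-spanning set; it is Remark \ref{rmk local normal form} -- resting on the effectiveness of the identity-component $H$-action -- that supplies the finer $\Z$-generation needed to rule out $\rho'$ being a proper finite cover.
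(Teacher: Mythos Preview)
Your argument has a gap when $H$ is disconnected. You assert that $(\C^{h+k})^H$ is the span of the coordinate lines with $\alpha_j = 0$, but the weight $\alpha_j = (2\pi i)^{-1} d_e\rho_j$ only sees the identity component $H^0$: the line $\C e_j$ is $H$-fixed precisely when $\rho_j$ is trivial on \emph{all} of $H$, which is strictly stronger. For example, with $h=k=1$ and $H = S^1 \times \Z/2 \hookrightarrow (S^1)^2$ via $(\theta,\epsilon)\mapsto(\theta,\epsilon)$, one has $\alpha_2 = 0$ yet $(\C^2)^H = 0$, so $s=0$ while your count of vanishing weights gives $1$. This breaks your identification ``$s$ equals the number of vanishing $\alpha_j$'' and, downstream, the claim that in the case $s=k$ there are exactly $h$ nonzero weights. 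The argument is repairable: replace ``$\alpha_j = 0$'' by ``$\rho_j \equiv 1$'' in the decomposition step, and note that if $s'$ denotes the number of vanishing weights then $s \le s'$ always, while your spanning argument shows $s' \le k$; hence $s=k$ forces $s'=k$ and your $\Z$-basis step via Remark~\ref{rmk local normal form} then goes through.

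The paper avoids this subtlety entirely by never invoking the weights for the inequality. It takes $V := (\C^{h+k})^H$ and its Hermitian orthogonal complement $V^\perp$, observes that $\rho' : H \to U(V^\perp)$ is injective and lands in a maximal torus of dimension $h+k-s$, and reads off $h = \dim H \le h+k-s$ directly. When $s=k$, the injective homomorphism $\rho' : H \hookrightarrow (S^1)^h$ has closed image of dimension $h$, hence is surjective and therefore an isomorphism---no appeal to the lattice or to Remark~\ref{rmk local normal form} is needed. Your route is more explicit about the integral structure, but the paper's dimension count is shorter and insensitive to whether $H$ is connected.
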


\begin{proof}
  To simplify notation, set $V :=  (\C^{h+k})^H$. The standard
  Hermitian product on $\C^{h+k}$ induces an isomorphism of Hermitian
  vector spaces $\C^{h+k} \simeq V^{\perp} \oplus V$, and both $V$ and
  $V^{\perp}$ are endowed with the restriction of the standard
  Hermitian product. By definition $\rho(H)$ fixes $V$ pointwise and
  is a subgroup of $U(h+k)$. Hence, $\rho$ splits as the
  direct sum of two $H$-representations that we denote by
  $\rho_V : H \to U(V)$ and $\rho': H
  \to U(V^{\perp})$. By construction,
  $\rho_V$ is the trivial representation. Moreover,
  $\rho'(H)$ is contained in a maximal torus of $U(V^{\perp})$ (see
  Section \ref{sec:local-model-local}). Since the Hermitian vector
  spaces $V$ and $V^{\perp}$ are isomorphic to $\C^s$ and to
  $\C^{h+k-s}$ respectively and since $\{0\}= (\C^{h+k-s})^H$, this proves the first statement. To prove
  the second, we observe that $\rho': H \to U(V^{\perp})$ is injective, since $\rho$ is injective. Since maximal tori in $U(V^{\perp})$
  have dimension equal to $h+k-s$ and
  since $h = \dim H$, it follows at once that $s\leq k$. Finally, if $s = k$,
  then, since $\rho'$ is injective, it follows that the map $H
  \hookrightarrow (S^1)^h$ is an isomorphism of Lie groups.
\end{proof}

Lemma \ref{lemma:max_dim_rep} allows us to `decompose' local models as
follows (see \cite[Lemma 4.4]{kt1} for a proof in the case $k=s=1$).

\begin{proposition}\label{prop:decom_local_model}
  Let $k$ be a non-negative integer, let $H \leq T$
  be a closed subgroup and let $\rho : H \hookrightarrow (S^1)^{h+k}$
  be an injective Lie group homomorphism, where $h = \dim H$. Let $s
   = \dim_{\C}(\C^{h+k})^H$ and $\rho' : H \hookrightarrow (S^1)^{h+k-s}$ be as
  in the statement of Lemma \ref{lemma:max_dim_rep}, so that
  $(\C^{h+k-s})^H=\{0\}$. The local
  model determined by $k$, $H$ and $\rho$ is isomorphic to
  $$ (Y' \times \C^s,\mathrm{pr}^*_1\omega_{Y'} + \mathrm{pr}^*_2\omega_0, \mathrm{pr}_1^*\Phi_{Y'}), $$
  \noindent
  where $(Y',\omega_{Y'},\Phi_{Y'})$ is the local model determined by
  $k-s$, $H$ and $\rho'$, $\omega_0$ is the standard symplectic form
  $\C^s$, and $\mathrm{pr}_j$ is the projection from $Y' \times \C^s$
  to its $j$th component, for $j=1,2$.
\end{proposition}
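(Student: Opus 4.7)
The plan is to use Lemma \ref{lemma:max_dim_rep} together with the fact that symplectic reduction commutes with products, provided the group acts trivially on one of the factors. I would work throughout with the abstract description of local models as symplectic reductions (see Remark \ref{rmk:local_model_two_interpretations}), which is much cleaner than the explicit associated-bundle picture.

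First, apply Lemma \ref{lemma:max_dim_rep} to obtain an identification of Hermitian vector spaces $\C^{h+k}\simeq \C^{h+k-s}\oplus\C^s$ under which $\rho = (\rho',1)$, so that the induced $H$-action on $\C^s$ is trivial and the induced $H$-action on $\C^{h+k-s}$ has no fixed vectors. Using this splitting, I would write
\begin{equation*}
T^*T\times\C^{h+k}\;\cong\;\bigl(T^*T\times\C^{h+k-s}\bigr)\times\C^s,
\end{equation*}
and note that the standard symplectic form on $\C^{h+k}$ is the sum of the standard symplectic forms on the two factors (equation \eqref{eq:49}). The $H$-action on this product preserves both factors; on the first it is the diagonal action used to define the local model determined by $k-s$, $H$ and $\rho'$, while on $\C^s$ it is trivial. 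Since the homogeneous moment map for a trivial linear action on a complex vector space vanishes, the $H$-moment map $\widehat{\Phi}$ on the product depends only on the first factor; in particular $\widehat{\Phi}^{-1}(0)=\widehat{\Phi}'^{-1}(0)\times\C^s$, where $\widehat{\Phi}'$ is the analogous moment map for the $(k-s,H,\rho')$ data.

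Next, since the $H$-action is free and proper on $\widehat{\Phi}^{-1}(0)$ and trivial on the $\C^s$ factor, taking the quotient by $H$ yields the diffeomorphism
\begin{equation*}
\bigl(T^*T\times\C^{h+k}\bigr)/\!/H\;\cong\;\bigl((T^*T\times\C^{h+k-s})/\!/H\bigr)\times\C^s\;=\;Y'\times\C^s,
\end{equation*}
and a straightforward check (using the definition of the reduced symplectic form, cf. \cite[Proposition 5.40]{mcduff-salamon}) identifies the reduced symplectic form with $\mathrm{pr}_1^*\omega_{Y'}+\mathrm{pr}_2^*\omega_0$. The residual $T$-action on the left hand side comes from the cotangent lift of left multiplication on $T$ and the trivial action on $\C^{h+k}$; under the above identification this corresponds precisely to the $T$-action on $Y'$ in the first factor and the trivial action on $\C^s$ in the second. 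Finally, using the explicit formula \eqref{eq:43}, the $T$-moment map $\Phi_Y([t,\alpha,z',z''])=\alpha+\Phi_H(z',z'')$ reduces to $\alpha+\Phi_{H,\rho'}(z')=\Phi_{Y'}([t,\alpha,z'])$, which is exactly $\mathrm{pr}_1^*\Phi_{Y'}$, and the choice $(0,0)\mapsto 0$ of normalization is preserved on the nose.

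The only conceptual point requiring care is the compatibility of the symplectic structures: one must verify that the reduced symplectic form on the product is indeed the product of the reduced symplectic form on $Y'$ with $\omega_0$. This follows from the general principle that symplectic reduction commutes with products on which the group acts trivially, but I would spell it out by computing directly that the pullback of $\mathrm{pr}_1^*\omega_{Y'}+\mathrm{pr}_2^*\omega_0$ to $\widehat{\Phi}^{-1}(0)$ agrees with the restriction of $\Omega$, which is the characterizing property of the reduced form. Everything else is bookkeeping through the definitions in Section \ref{sec:local-model-local}.
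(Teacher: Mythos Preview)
Your proposal is correct and follows essentially the same approach as the paper's proof: both use the abstract (symplectic-reduction) description of the local model, invoke Lemma~\ref{lemma:max_dim_rep} to split $\C^{h+k}\simeq\C^{h+k-s}\oplus\C^s$ with $H$ acting trivially on the second factor, and then observe that the $H$-reduction of the resulting product is the product of the reduction $Y'$ with the untouched $\C^s$. Your version spells out the moment-map and symplectic-form compatibilities a bit more explicitly than the paper does, but there is no substantive difference.
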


\begin{proof}
  In this proof we use the abstract construction of local models (see
  Section \ref{sec:local-model-local} and Remark
  \ref{rmk:local_model_two_interpretations}). Fix an isomorphism
  $\C^{h+k} \simeq \C^s \oplus \C^{h+k-s}$ as in the statement of
  Lemma \ref{lemma:max_dim_rep}. This induces a symplectomorphism between $(T^*T \times \C^{h+k}, \Omega)$
  and $((T^*T \times \C^{h+k-s}) \times \C^s,\mathrm{pr}^*_1\Omega' +
  \mathrm{pr}^*\omega_0)$, where $\Omega'$ denotes the symplectic form
  on $T^*T \times \C^{h+k-s}$ and, as above, $\mathrm{pr}_j$ is the projection from $(T^*T \times \C^{h+k-s}) \times \C^s$
  to its $j$th component, for $j=1,2$. We endow $(T^*T \times
  \C^{h+k-s}) \times \C^s$ with the following (right) $H$-action and
  (left) $T$-action:
  \begin{itemize}[leftmargin=*]
  \item ($H$-action): On $T^*T \times \C^{h+k-s}$ we consider the (right) $H$-action used
    to construct the local model determined by $k-s$, $H$ and $\rho'$,
    while $H$ acts trivially on $\C^s$, and
  \item ($T$-action): On $T^*T \times \C^{h+k-s}$ we consider the (left) $T$-action used
    to construct the local model determined by $k-s$, $H$ and $\rho'$,
    while $T$ acts trivially on $\C^s$.
  \end{itemize}
  The above symplectomorphism is both $H$- and $T$-equivariant. Hence,
  there is a $T$-equivariant symplectomorphism between the symplectic
  quotients with respect to the $H$-actions, i.e., an isomorphism
  between the resulting Hamiltonian $T$-spaces. Once the abstract
  constructions are identified with the explicit ones as in Section
  \ref{sec:local-model-local}, this yields the desired isomorphism of
  Hamiltonian $T$-spaces.
\end{proof}

\begin{remark}\label{rmk:regular_local_model}
  Proposition \ref{prop:decom_local_model} is particularly simple if
  $s = k$, i.e., if $\dim_{\C} (\C^{h+k})^H$ is maximal. In this case, $(Y', \omega', \Phi_{Y'})$ is
  {\em toric}.
\end{remark}

Motivated by Theorem \ref{local normal form} and by Remark \ref{rmk:regular_local_model}, we introduce the
following dichotomy.

\begin{definition}\label{defn:regular_exceptional}
  Let $\comp$ be a complexity $k$ $T$-space and let $p \in M$ be a
  point with stabilizer $H$. Let $\mathcal{O}$ be the orbit containing
  $p$. We say that $p$ is {\bf regular} if
  $\dim_{\C}(\C^{h+k})^H = k$ and {\bf exceptional} otherwise, where
  $H$ acts on $(T_p\mathcal{O})^{\omega}/T_p \mathcal{O} \simeq \C^{h+k}$ via the symplectic slice representation at $p$.
\end{definition}

\begin{remark}\label{rmk:exc_orbits}
  We observe that Definition \ref{defn:regular_exceptional} extends
  also to orbits (see Remark \ref{rmk:symp_slice_orbit}); hence,
  throughout the paper, we also refer to {\bf regular} and {\bf
    exceptional} orbits.
\end{remark}

\begin{definition}
  Let $\comp$ be a Hamiltonian $T$-space. We define the set of {\bf
    exceptional orbits} $M_{\mathrm{exc}}$ as the subset of $M/T$
  consisting of exceptional orbits.
\end{definition}

\begin{example}\label{exm:fixed_point_reg}
  A fixed point $p$ in a complexity $k$ $T$-space $\comp$ is regular if
  and only if it lies on a fixed submanifold of dimension $k$. We observe that the latter condition is
  equivalent to the $T$-action on the normal bundle to the fixed
  submanifold that contains $p$ being toric. 
\end{example}

The techniques of Example \ref{exm:fixed_point_reg} and Lemma
\ref{lemma:max_dim_rep}, together with Theorem \ref{local normal form}, can be used to prove the following result.

\begin{lemma}\label{lemma:regular_connected_stab}
  \mbox{}
  \begin{itemize}[leftmargin=*]
  \item Any point with
    trivial stabilizer, as well as any point in a complexity zero
    $T$-space, is regular.
  \item If a Hamiltonian $T$-space has positive complexity, then any isolated fixed
    point is exceptional.
  \item The subset of regular points in a Hamiltonian $T$-space is open.
  \item If a point $p$ is regular, then the stabilizer of $p$ is connected. 
  \end{itemize}
\end{lemma}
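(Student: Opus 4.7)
The plan is to dispatch the first, second, and fourth bullets by direct appeal to Lemma \ref{lemma:max_dim_rep}, Example \ref{exm:fixed_point_reg} and Remark \ref{rmk:fixed_point}, and then to treat the third (openness) as the main content, by exploiting the product decomposition of local models furnished by Proposition \ref{prop:decom_local_model} together with the local normal form (Theorem \ref{local normal form}).

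For the first bullet I would observe that a point $p$ with trivial stabilizer has $h=0$, so $\C^{h+k}=\C^{k}$ with trivial $H$-action, whence $s=\dim_{\C}(\C^{k})^{H}=k$. When the ambient Hamiltonian $T$-space has complexity zero, the inequality $s\leq k=0$ from Lemma \ref{lemma:max_dim_rep} forces $s=0=k$. In both cases the regularity condition of Definition \ref{defn:regular_exceptional} holds. For the second bullet, an isolated fixed point has $H=T$ and, by Remark \ref{rmk:fixed_point}, local model $\C^{n}$ with $n=h+k$; being isolated means $(\C^{n})^{T}=\{0\}$, i.e., $s=0$, whereas regularity would require $s=k>0$. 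For the fourth bullet, regularity means $s=k$, and then the last clause of Lemma \ref{lemma:max_dim_rep} asserts that $\rho\colon H\to(S^{1})^{h}$ is an isomorphism of Lie groups, so $H$ is connected.

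The third bullet is where the real content sits, and the hard part will be keeping the stabilizers and symplectic slice representations straight across the product decomposition. My plan is to fix a regular point $p$ and use Theorem \ref{local normal form} to identify a $T$-invariant neighborhood of $p$ with a neighborhood of $[1,0,0]$ in its local model $(Y,\omega_Y,\Phi_Y)$. Regularity says $s=k$ in Lemma \ref{lemma:max_dim_rep}, so Proposition \ref{prop:decom_local_model} gives an isomorphism
\[
(Y,\omega_Y,\Phi_Y)\;\cong\;\bigl(Y'\times\C^{k},\,\mathrm{pr}_{1}^{*}\omega_{Y'}+\mathrm{pr}_{2}^{*}\omega_{0},\,\mathrm{pr}_{1}^{*}\Phi_{Y'}\bigr),
\]
where $(Y',\omega_{Y'},\Phi_{Y'})$ is the local model determined by complexity $k-s=0$, $H$ and $\rho'$; in particular $Y'$ is toric by Remark \ref{rmk:regular_local_model}, and $T$ acts trivially on the $\C^{k}$ factor. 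Since being regular only depends on the orbit (Remark \ref{rmk:symp_slice_orbit}), it suffices to check that every point $(q',w)\in Y'\times\C^{k}$ is regular.

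To finish, let $H'\leq H$ be the stabilizer of $q'$ in $Y'$. Because $T$ acts trivially on $\C^{k}$, the stabilizer of $(q',w)$ equals $H'$ and the $T$-orbit of $(q',w)$ is $\mathcal{O}_{q'}\times\{w\}$. The two summands of the product symplectic form are mutually symplectically orthogonal, so the symplectic slice at $(q',w)$ splits $H'$-equivariantly as the symplectic slice of $q'$ in $Y'$ (of complex dimension $h'$, since $Y'$ has complexity zero) direct sum $\C^{k}$ with the trivial $H'$-action. Applying the first bullet to $q'\in Y'$ shows that the $H'$-fixed part of the first summand is zero-dimensional, while the second summand is entirely fixed, so $\dim_{\C}(\C^{h'+k})^{H'}=k$ and $(q',w)$ is regular. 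Pulling this back through the isomorphism of Theorem \ref{local normal form} exhibits an open $T$-invariant neighborhood of $p$ in $M$ consisting of regular points, completing the proof.
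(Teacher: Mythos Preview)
Your proof is correct and follows essentially the route the paper indicates: the paper does not give a detailed proof of this lemma but merely points to Example \ref{exm:fixed_point_reg}, Lemma \ref{lemma:max_dim_rep}, and Theorem \ref{local normal form} as the relevant tools, and you have fleshed out precisely that argument, invoking in addition Proposition \ref{prop:decom_local_model} (itself an immediate consequence of Lemma \ref{lemma:max_dim_rep}) to handle openness cleanly.
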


We conclude this section with two results in complexity one. 

\begin{lemma}\label{lemma:fixed_point_exceptional}
  Let $\comp$ be a complexity one $T$-space. A point $p \in M^T$ is
  isolated if and only if it is exceptional.
\end{lemma}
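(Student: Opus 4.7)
The plan is to apply the local normal form at the fixed point $p$ and translate both properties ``isolated'' and ``exceptional'' into conditions on the multiset of isotropy weights $\{\alpha_1,\ldots,\alpha_n\}\subset \ell^*$ of $p$. Since $p\in M^T$, its stabilizer is $H=T$, so $h=d$ and $h+k=n$ (using that the complexity is $k=1$). By Remark~\ref{rmk:fixed_point} and Theorem~\ref{local normal form}, a $T$-invariant neighborhood of $p$ is equivariantly symplectomorphic to a neighborhood of $0\in\C^n$, with $T$ acting linearly by the weights $\alpha_j$ via the formula \eqref{action Cn}.

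In this linear model, the set of $T$-fixed points is the coordinate subspace
\[
(\C^n)^T=\bigoplus_{j:\,\alpha_j=0}\C\cdot e_j,
\]
so $s:=\dim_{\C}(\C^n)^T$ equals the number of vanishing isotropy weights at $p$. First, I would observe that $p$ is isolated in $M^T$ if and only if $[1,0,0]$ is isolated in the fixed set of the local model, which happens if and only if $\alpha_j\neq 0$ for every $j$, i.e., if and only if $s=0$. Second, by Definition~\ref{defn:regular_exceptional}, $p$ is regular precisely when $s=k=1$, and exceptional otherwise.

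To conclude, I would invoke Lemma~\ref{lemma:max_dim_rep} applied to the injective slice representation $\rho:T\hookrightarrow (S^1)^n$ (injectivity follows from Corollary~\ref{cor:effective}). That lemma gives $s\leq k=1$, so the only possibilities are $s=0$ and $s=1$. Combining this dichotomy with the two characterizations above yields the equivalence: $p$ exceptional $\iff s\neq 1 \iff s=0 \iff p$ isolated.

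There is no real technical obstacle here; the statement is a direct consequence of the complexity-one constraint $s\leq 1$ from Lemma~\ref{lemma:max_dim_rep} together with the explicit description of the local model at a fixed point. The only subtlety worth writing carefully is the identification of $(\C^{h+k})^H$ with the span of the coordinates corresponding to zero weights, so that the ``exceptional'' condition literally reads ``at least one isotropy weight vanishes'' — which is manifestly the obstruction to $p$ being an isolated fixed point.
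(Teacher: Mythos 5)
Your argument is correct and takes essentially the same route as the paper: the paper's proof simply cites Lemma \ref{lemma:regular_connected_stab} and Example \ref{exm:fixed_point_reg} together with Theorem \ref{local normal form}, and both of those amount to exactly the weight count you carry out explicitly in the local model (isolated $\iff$ no zero weight $\iff s=0$; regular $\iff s=k=1$; $s\le k$ from Lemma \ref{lemma:max_dim_rep} via Corollary \ref{cor:effective}). One slip in your closing remark, though: the exceptional condition does \emph{not} read ``at least one isotropy weight vanishes''---by your own chain of equivalences, exceptional means $s=0$, i.e.\ \emph{no} weight vanishes, whereas ``some weight vanishes'' (so $s=1$) is precisely the regular case; as written, that last sentence contradicts the lemma, even though the body of your proof has the equivalences right.
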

\begin{proof}
  By Lemma \ref{lemma:regular_connected_stab}, if $p \in M^T$ is
  isolated, then it is exceptional. Conversely, if $p \in M^T$
  is regular, then, by Theorem \ref{local normal form}
  and Example \ref{exm:fixed_point_reg}, $p$ is not isolated.
\end{proof}

Finally, as an immediate consequence of Theorem \ref{local normal
  form} and of Proposition \ref{prop:decom_local_model}, the following
characterization of exceptional points holds (cf. \cite[Lemma 4.4]{kt1}).

\begin{corollary}\label{cor:exc=exc}
  Let $\comp$ be a complexity one $T$-space. A point $p \in M$ is
  exceptional if and only if every nearby orbit in the
  same moment fiber has a strictly smaller stabilizer.
\end{corollary}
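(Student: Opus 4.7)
The plan is to unwind Definition \ref{defn:regular_exceptional} in the local model provided by Theorem \ref{local normal form} and compute stabilizers of nearby orbits in the moment fiber through $p$ directly from the explicit description \eqref{eq:45}--\eqref{eq:43}. Without loss of generality assume $\Phi(p) = 0$, let $H$ denote the stabilizer of $p$, and let $\rho\colon H \hookrightarrow (S^1)^{h+1}$ be its symplectic slice representation; by Theorem \ref{local normal form} it suffices to work in the local model $(Y,\omega_Y,\Phi_Y)$ with $Y = T \times_H \mathrm{Ann}(\mathfrak h) \times \C^{h+1}$ and $p$ identified with $[1,0,0]$.

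First I would identify the moment fiber through $p$. Using the splitting $\g^* \simeq \mathrm{Ann}(\mathfrak h) \oplus \mathfrak h^*$ fixed in Section \ref{sec:local-model-local} together with formula \eqref{eq:43}, the condition $\Phi_Y([t,\alpha,z]) = 0$ is equivalent to $\alpha = 0$ and $\Phi_H(z) = 0$; so the fiber is $T \times_H (\{0\} \times \Phi_H^{-1}(0))$. A direct calculation from the $H$-action \eqref{eq:45} and the $T$-action \eqref{eq:48} shows that the stabilizer in $T$ of a point $[t,0,z]$ equals the $H$-stabilizer $H_z := \{s \in H \mid \rho(s)z = z\}$ of $z$.

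Next I would observe the key linear-algebra fact: $H_z = H$ if and only if $z \in (\C^{h+1})^H$, and moreover $(\C^{h+1})^H \subseteq \Phi_H^{-1}(0)$, since from \eqref{eq:33} a vector whose nonzero components have only weight-zero directions contributes nothing to the homogeneous moment map. Consequently, in an arbitrarily small $H$-invariant neighborhood of $0 \in \C^{h+1}$, the set of points of $\Phi_H^{-1}(0)$ whose $T$-orbits in $Y$ have stabilizer equal to $H$ is exactly $(\C^{h+1})^H$.

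Finally I would split into the two cases. If $p$ is exceptional then, by Definition \ref{defn:regular_exceptional} and the complexity-one hypothesis, $(\C^{h+1})^H = \{0\}$, so every nearby $z \ne 0$ in $\Phi_H^{-1}(0)$ yields $H_z \subsetneq H$, i.e., every nearby orbit distinct from $\mathcal{O}_p$ in the moment fiber has strictly smaller stabilizer. Conversely, if $p$ is regular then $\dim_{\C}(\C^{h+1})^H = 1$; applying Proposition \ref{prop:decom_local_model} (with $s = k = 1$), the local model splits as $Y' \times \C$ with $H$ acting trivially on the $\C$-factor, and moving $p$ along the $\C$-factor produces arbitrarily close orbits in the moment fiber whose stabilizer is still $H$, contradicting the right-hand condition. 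There is no real obstacle here beyond the bookkeeping of the splitting $\g^* = \mathrm{Ann}(\mathfrak h) \oplus \mathfrak h^*$ when reading off the fiber from \eqref{eq:43}.
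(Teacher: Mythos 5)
Your argument is correct and follows the same route the paper intends: it states the corollary as an immediate consequence of Theorem \ref{local normal form} and Proposition \ref{prop:decom_local_model}, and your proof simply makes explicit the bookkeeping (fiber $T\times_H(\{0\}\times\Phi_H^{-1}(0))$, stabilizer of $[t,0,z]$ equal to $H_z$, and $(\C^{h+1})^H\subseteq\Phi_H^{-1}(0)$) that the paper leaves implicit. The only cosmetic slip is citing \eqref{eq:33} for the homogeneous $H$-moment map $\Phi_H(z)$, whose quadratic formula appears instead in the proof of Lemma \ref{lemma:walls}; the argument is unaffected.
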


In particular, Definition \ref{defn:regular_exceptional} agrees with
the definition of exceptional orbits of \cite{kt1} in complexity one,
and is the appropriate notion for our purposes.

\subsubsection{Regular and exceptional sheets}\label{sec:exceptional-sheets}
Let $\comp$ be a Hamiltonian $T$-space. For any closed subgroup $H \leq T$, the action of $H$ on $M$ is also
Hamiltonian: Let
$\mathfrak{h}$ be the Lie algebra of $H$ and let $i: \mathfrak{h} \hookrightarrow \g$ denote the
inclusion. Then a moment
map for the $H$-action is given by the composition 
\begin{equation}\label{mm subgroup}
M \stackrel{\Phi}{\longrightarrow} \g^* \stackrel{i^*}{\longrightarrow} \mathfrak{h}^*, 
\end{equation}
where $i^*\colon \g^* \to \mathfrak{h}^*$ is the dual of $i$. We 
denote the set of fixed points of the $H$-action by
$$M^H = \{ p \in M \mid h \cdot p = p \, \text{ for all } h \in
H\}.$$
\noindent
The following result is used throughout and a proof can be found in \cite[Lemma 5.53]{mcduff-salamon}.

\begin{lemma}\label{Lem:fixedSubmainfolds} 
  Let $\comp$ be a Hamiltonian $T$-space and let $H \leq T$ be
  closed. Then any connected component of $M^H$ is an embedded
  symplectic submanifold of $(M,\omega)$.
\end{lemma}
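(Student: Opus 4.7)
The plan is to work locally near a point $p \in M^H$ and use the compactness of $H$ to produce invariant structures that make both the submanifold property and the symplectic property transparent.

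First, I would show that $M^H$ is a closed embedded submanifold with $T_pM^H = (T_pM)^H$, where the right hand side denotes the fixed subspace of the induced linear $H$-action on $T_pM$. Since $H$ is compact, averaging with respect to the Haar measure yields an $H$-invariant Riemannian metric $g$ on $M$. For $p \in M^H$, the exponential map $\exp_p : T_pM \to M$ is defined and a diffeomorphism on a neighborhood $U$ of $0$, and is $H$-equivariant because $g$ is $H$-invariant and $H$ fixes $p$. Hence $\exp_p$ restricts to a diffeomorphism from $U \cap (T_pM)^H$ onto a neighborhood of $p$ in $M^H$, which shows that each connected component of $M^H$ is an embedded submanifold and identifies its tangent space at $p$ with $(T_pM)^H$.

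Next, to promote this to a symplectic submanifold, I would produce an $H$-invariant almost complex structure $J$ on $(M,\omega)$ that is compatible with $\omega$. The space of $\omega$-compatible almost complex structures is nonempty and contractible, and since $H$ is compact one may average in this space (or invoke \cite[Lemma 5.52]{mcduff-salamon}) to obtain such a $J$ that is $H$-invariant. Because $J$ commutes with the $H$-action on $T_pM$, the fixed subspace $(T_pM)^H$ is $J$-invariant, hence a complex subspace of $(T_pM,J)$.

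Finally, I would conclude that $\omega_p$ restricts non-degenerately to $T_pM^H = (T_pM)^H$. Indeed, the compatibility of $J$ with $\omega$ means that $g_p(\cdot,\cdot) := \omega_p(\cdot, J\cdot)$ is a positive definite inner product on $T_pM$; restricting to $(T_pM)^H$, which is $J$-invariant, gives for any nonzero $v \in (T_pM)^H$ that $\omega_p(v, Jv) = g_p(v,v) > 0$, and $Jv$ also lies in $(T_pM)^H$. Therefore $\omega_p|_{T_pM^H}$ is non-degenerate, so each connected component of $M^H$ is a symplectic submanifold of $(M,\omega)$. The only mildly delicate step is the construction of an $H$-invariant compatible $J$; everything else is a routine application of equivariant exponential coordinates and linear algebra, so this is really the load-bearing input.
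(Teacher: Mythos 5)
Your proof is correct and follows essentially the same route as the standard argument the paper relies on (the paper simply cites \cite[Lemma 5.53]{mcduff-salamon} rather than reproving it): one takes an $H$-invariant $\omega$-compatible almost complex structure, uses the associated invariant metric and the equivariant exponential map to identify each component of $M^H$ as an embedded submanifold with tangent space $(T_pM)^H$, and observes that this subspace is $J$-invariant, hence symplectic. No gaps.
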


We refer to the connected components of $M^T$ as the \textbf{fixed submanifolds} of $\comp$.

\begin{rmk}\label{rmk weights of F}
  If $N\subset M^T$ is a fixed submanifold, then for any $p,p'\in N$,
  the isotropy weights of $p$ are equal to those at $p'$. Hence, the 
  \textbf{isotropy weights of the fixed submanifold $\boldsymbol{N}$}
  are well-defined. Moreover, they can be used to determine $\dim N$: By Theorem \ref{local normal form},
  $\dim N$ equals twice the number of isotropy weights of $N$ that are
  equal to 0.
\end{rmk}

Let $H \leq T$ be a closed subgroup that is the stabilizer of a point
$p \in M$ and let $N \subset M^H$ denote the connected component that
contains $p$. We set $\omega_N:= \omega|_N$. By Lemma
\ref{Lem:fixedSubmainfolds}, $(N,\omega_N)$ is an embedded 
symplectic submanifold of $(M,\omega)$. Moreover, since $T$ is
abelian and connected, and since $N$ is connected, $N$ is
$T$-invariant. The $T$-action on $N$ induces a $T':=T/H$-action on $N$
that is effective because the stabilizer of $p$ for the $T$-action
equals $H$. This $T'$-action on $N$ is Hamiltonian. To see this, we
construct a moment map starting from $\Phi$ and the point $p$. To this
end, let $\mathrm{pr} : T \to T'$ be the quotient map. We denote its
derivative at the identity by $\mathrm{pr}_*: \g \to \g'$ and the dual
of $\mathrm{pr}_*$ by $\mathrm{pr}^* : (\g')^* \to \g^*$. By
construction, $\mathrm{pr}^*$ is an isomorphism between $(\g')^*$ and
$\mathrm{Ann}(\mathfrak{h})$. Since $N \subset M^H$ is connected and
since $\Phi$ is a moment map, $\langle \Phi|_N,\xi
\rangle = \langle \Phi(p), \xi \rangle$ for all $\xi \in
\mathfrak{h}$. Consequently, $\Phi(N) \subset \Phi(p) +
\mathrm{Ann}(\mathfrak{h})$. Hence, there exists a unique smooth map
$\Phi_N : N \to (\g')^*$ such that
\begin{equation}
  \label{eq:44}
  \mathrm{pr}^* \circ \Phi_N = \Phi|_N - \Phi(p). 
\end{equation}
\noindent
By \eqref{eq:44}, since $\Phi : M \to \g^*$ is
a moment map for the $T$-action on $M$ and since
$(N,\omega_N)$ is a symplectic submanifold of $(M,\omega)$, it follows shows that $\Phi_N$ is a
moment map for the $T'$-action on $N$. Since the $T'$-action on $N$ is
effective, this shows that
$(N,\omega_N,\Phi_N)$ is a Hamiltonian $T'$-space.

\begin{definition}\label{defn:sheet}
  Let $\comp$ be a Hamiltonian $T$-space and let $H \leq T$ be
  a subgroup that occurs as a stabilizer of some point $p \in M$. The
  Hamiltonian $T'$-space $(N,\omega_N,\Phi_N)$ constructed above is
  called a {\bf sheet stabilized by $\boldsymbol{H}$}. Whenever we wish to emphasize the role of $p$,
  we say that $(N,\omega_N,\Phi_N)$
  is the {\bf sheet through $\boldsymbol{p}$}.
\end{definition}

Any fixed submanifold $N \subset M^T$ gives rise to a sheet that we
denote simply by $(N,\omega_N)$. 

\begin{remark}\label{rmk:convention_moment_map_sheet}
  For our purposes, it is useful to allow some freedom in the choice
  of moment map associated to a sheet. Modulo $\mathrm{pr}^*$, $\Phi_N$ and
  $\Phi|_N$ only differ by a constant. Depending on the context, we
  use either moment map. We trust that this does
  not cause confusion.
\end{remark}

Given a sheet $(N,\omega_N,\Phi_N)$ of $\comp$, we are interested in understanding
the complexity of $(N,\omega_N,\Phi_N)$ in relation to that of
$\comp$. By Definition \ref{defn:complexity}, the complexity of
$(N,\omega_N,\Phi_N)$ is $  \frac{1}{2}\dim N - (d-h)$.

\begin{proposition}\label{prop:regular_exceptional}
  Let $\comp$ be a Hamiltonian $T$-space and let $(N,\omega_N,\Phi_N)$
  be a sheet stabilized by $H \leq T$. The complexity of $(N,\omega_N,\Phi_N)$ is at most
  that of $\comp$. Moreover, the following
  are equivalent:
  \begin{enumerate}[label=(\arabic*),ref=(\arabic*),leftmargin=*]
  \item \label{item:21} The complexity of $(N,\omega_N,\Phi_N)$ is less than
    that of $\comp$.
  \item \label{item:24} Any $p \in N$ with stabilizer $H$ is exceptional.
  \item \label{item:26} The fiberwise $H$-action on the symplectic normal bundle to $N$ has
    positive complexity.
  \end{enumerate}
  % In particular, the complexity of $(N,\omega_N,\Phi_N)$ is at most
  % that of $\comp$.
\end{proposition}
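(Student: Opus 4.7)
My plan is to work locally around a point $p \in N$ whose stabilizer is exactly $H$ (such a point exists by hypothesis), and invoke Theorem \ref{local normal form} together with Proposition \ref{prop:decom_local_model}. Let $k$ denote the complexity of $\comp$, so that the symplectic slice representation at $p$ is an injective homomorphism $\rho : H \hookrightarrow (S^1)^{h+k}$. Setting $s := \dim_{\C}(\C^{h+k})^H$ and applying Lemma \ref{lemma:max_dim_rep}, one may write $\rho = (\rho', 1)$ with $\rho' : H \hookrightarrow (S^1)^{h+k-s}$ injective and $(\C^{h+k-s})^H = \{0\}$. By Proposition \ref{prop:decom_local_model}, a $T$-invariant neighborhood of $p$ in $M$ is then isomorphic to a neighborhood of $[1,0,0]$ in $Y' \times \C^s$, where $Y'$ is the local model determined by $k-s$, $H$ and $\rho'$, and $T$ acts trivially on the $\C^s$ factor.

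The next step is to read off the sheet $N$ and its symplectic normal bundle from this local picture. Since $(\C^{h+k-s})^H = \{0\}$, a direct inspection of the explicit description of $Y'$ given by \eqref{eq:47}--\eqref{eq:48} shows that $(Y')^H$ reduces to the core $T \times_H \mathrm{Ann}(\mathfrak{h}) \times \{0\}$, which is canonically a toric $T/H$-space. Consequently $N$ is locally modeled on $(Y')^H \times \C^s$, a Hamiltonian $T/H$-space of complex dimension $(d-h)+s$, and hence the complexity of $(N,\omega_N,\Phi_N)$ equals $s$. Combined with the inequality $s \leq k$ from Lemma \ref{lemma:max_dim_rep}, this proves the first claim.

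For the equivalences, the observation is that everything is governed by the single integer $s$, which depends only on the sheet $N$ and not on the particular $p$. Condition \ref{item:21} is the strict inequality $s < k$. By Definition \ref{defn:regular_exceptional}, a point $p \in N$ with stabilizer $H$ is exceptional precisely when $s \neq k$, which is the same as $s < k$ since $s \leq k$ always; this gives \ref{item:21} $\iff$ \ref{item:24}. Finally, the symplectic normal bundle to $N$ at $p$ corresponds, under the local identification, to the $\C^{h+k-s}$ factor of $Y'$ with the effective $H$-action $\rho'$, so its fiberwise complexity is $(h+k-s)-h = k-s$, which is positive exactly when $s < k$.

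The main obstacle I anticipate is carefully tracking which factor of $Y' \times \C^s$ corresponds to the sheet and which to the symplectic normal bundle. The key remark is that the $\C^s$ factor, being $H$-fixed, lies entirely in the sheet, while the $\C^{h+k-s}$ factor of $Y'$, being $H$-fixed-point-free, lies entirely in the normal direction; with this in mind the identification becomes essentially a bookkeeping exercise using the formulas for the local model.
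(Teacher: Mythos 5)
Your proposal is correct and follows essentially the same route as the paper: reduce to the local model at a point of $N$ with stabilizer $H$ via Theorem \ref{local normal form}, use Lemma \ref{lemma:max_dim_rep} to bound $s=\dim_{\C}(\C^{h+k})^H$ by $k$, identify the sheet with the $H$-fixed locus (of complexity $s$) and the symplectic normal fiber with the $H$-fixed-point-free factor (of fiberwise complexity $k-s$), so that all three conditions amount to $s<k$. The only cosmetic difference is that you package the computation through the splitting $Y'\times\C^s$ of Proposition \ref{prop:decom_local_model}, whereas the paper computes $Y^H$ and the quotient $\C^{h+k}/(\C^{h+k})^H$ directly inside the local model $Y$.
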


\begin{proof}
  Let $p \in N$ be a point that is stabilized by $H$ and let
  $(Y,\omega_Y,\Phi_Y)$ be the local model of $p$. Since the above
  conditions can be checked locally, by Theorem
  \ref{local normal form}, it suffices to prove the result in the case
  $\comp = (Y,\omega_Y,\Phi_Y)$ and $p = [1,0,0]$. Recall that $Y = T \times_H
  \mathrm{Ann}(\mathfrak{h}) \times \C^{h+k}$, where $k$ is the
  complexity of $(Y,\omega_Y,\Phi_Y)$, and that the $T$-action on $Y$ is given by
  equation \eqref{eq:48}. The submanifold $Y^H$ is given by
  \begin{equation}
    \label{eq:52}
     T \times_H \mathrm{Ann}(\mathfrak{h}) \times (\C^{h+k})^H \simeq
     T/H \times \mathrm{Ann}(\mathfrak{h}) \times (\C^{h+k})^H, 
  \end{equation}
  \noindent
  where $(\C^{h+k})^H$ is the fixed point set for the $H$-action on
  $\C^{h+k}$. Since $(\C^{h+k})^H$ is a subspace of $\C^{h+k}$, it
  follows that $Y^H$ is connected. Since $N \subseteq Y^H$ is a
  connected component of $Y^H$, $N = Y^H$. Moreover, by
  \eqref{eq:52}, the dimension of $N$ equals $2(d-h) +
  2\dim_{\C}(\C^{h+k})^H$. By Lemma \ref{lemma:max_dim_rep},
  $\dim_{\C}(\C^{h+k})^H \leq k$. Hence, the complexity of
  $(N,\omega_N,\Phi_N)$ is at most $k$. Moreover, since
  $(\C^{h+k})^H$ is a complex subspace of $\C^{h+k}$, the $H$-action
  on the symplectic
  normal vector space to $N$ at $p$ can be identified with the linear
  $H$-action on $\C^{h+k}/(\C^{h+k})^H$.

  % To simplify the exposition, we prove the result only in the exceptional case, seeing as the
  % other case can be obtained from the argument below by replacing
  % inequalities with equalities, `positive' with `zero', and `exceptional' with `regular'.
  Suppose that the complexity of $(N,\omega_N,\Phi_N)$ is less than that of $(Y,\omega_Y,\Phi_Y)$. Since $\dim N
  = 2(d-h) +
  2\dim_{\C}(\C^{h+k})^H$, it follows that $\dim_{\C}(\C^{h+k})^H<k$, so that
  $p$ is exceptional. Therefore, \ref{item:21} implies \ref{item:24}. If $p$ is exceptional, so that $\dim_{\C}(\C^{h+k})^H<k$, then $\dim_{\C}
  \C^{h+k}/(\C^{h+k})^H > h$. Hence, the linear $H$-action on
  $\C^{h+k}/(\C^{h+k})^H$ has positive
  complexity. Therefore, \ref{item:24} implies \ref{item:26}. Finally,
  if the $H$-action on the symplectic normal vector space to $N$ at
  $p$ has positive complexity, then $\dim_{\C}
  \C^{h+k}/(\C^{h+k})^H > h$. The latter is equivalent to $\dim_{\C}(\C^{h+k})^H<k$, so that $\dim N
  < 2(d-h) + 2k$. Hence, the complexity of $(N,\omega_N,\Phi_N)$ is less than
  that of $(Y,\omega_Y,\Phi_Y)$. Therefore,
  \ref{item:26} implies \ref{item:21}.
\end{proof}

\begin{remark}\label{rmk:regular_equivalent}
  The proof of Proposition \ref{prop:regular_exceptional} can be
  adapted to prove equivalence of the following conditions:
  \begin{enumerate}[label=(\arabic*'),ref=(\arabic*'),leftmargin=*]
  \item \label{item:27} The complexity of $(N,\omega_N,\Phi_N)$ is
    equal to that of $\comp$.
  \item \label{item:28} Any $p \in N$ with stabilizer $H$ is regular.
  \item \label{item:29} The fiberwise $H$-action on the symplectic normal bundle to $N$ has
    complexity zero.
  \end{enumerate}
  We observe that, by Proposition \ref{prop:regular_exceptional},
  either \ref{item:21} or \ref{item:27} needs to hold for any sheet.
\end{remark}

\begin{remark}\label{rmk:reg_irr_fixed}
  If $(N,\omega_N)$ is a fixed submanifold in $\comp$, properties \ref{item:21} and
  \ref{item:24} of Proposition \ref{prop:regular_exceptional}
  (respectively properties \ref{item:27} and \ref{item:28} of Remark \ref{rmk:regular_equivalent}) simplify as follows: The
  dimension of $N$ is less than (respectively equal to) twice the
  complexity of $\comp$, and {\em all} points in $N$ are exceptional
  (respectively regular). Moreover, $\dim N \leq 2k$,
  where $k$ is the complexity of $\comp$.
\end{remark}

Motivated by Proposition \ref{prop:regular_exceptional} and Remark \ref{rmk:regular_equivalent}, we introduce
the following terminology.

\begin{definition}\label{defn:reg_irreg}
  A sheet $(N,\omega_N,\Phi_N)$ in $\comp$ is {\bf exceptional}
  if it satisfies any of the conditions
  of Proposition \ref{prop:regular_exceptional}, and {\bf regular} otherwise.
\end{definition}

Exceptional sheets enjoy the following stronger characterization.

\begin{lemma}\label{lemma:exceptional}
  A sheet $(N,\omega_N,\Phi_N)$ in $\comp$ is exceptional if and only if
  every point in $N$ is exceptional.
\end{lemma}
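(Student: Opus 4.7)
The ``if'' direction is immediate from condition~\ref{item:24} of Proposition~\ref{prop:regular_exceptional}: if every point of $N$ is exceptional, then in particular every point of $N$ whose stabilizer equals $H$ is exceptional, which is precisely the definition of the sheet $(N,\omega_N,\Phi_N)$ being exceptional. All the substance therefore lies in the converse, for which I would fix an arbitrary $p \in N$ with stabilizer $K$, noting that $N \subseteq M^H$ forces $H \leq K$, and show that $p$ is exceptional by using the local normal form (Theorem~\ref{local normal form}) twice.

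The first application is at $p$ viewed as a point of $M$. Writing the local model as $Y_p = T \times_K \mathrm{Ann}(\mathfrak{k}) \times \C^{k+\dim K}$ with $k$ the complexity of $\comp$, a direct computation analogous to the one in the proof of Proposition~\ref{prop:regular_exceptional} identifies $Y_p^H$ with $T \times_K \mathrm{Ann}(\mathfrak{k}) \times (\C^{k+\dim K})^H$, whence
$$\dim N = 2(d - \dim K) + 2\dim_{\C}(\C^{k+\dim K})^H.$$
Set $A := \dim_{\C}(\C^{k+\dim K})^K$ and $B := \dim_{\C}(\C^{k+\dim K})^H - A$, so that $A$ counts the isotropy weights of $p$ that vanish identically while $B$ counts those vanishing on $\mathfrak{h}$ but not on $\mathfrak{k}$. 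The assumption that $N$ is exceptional -- equivalently, that the complexity of $N$ as a $T/H$-space is strictly less than $k$ -- then translates, via the dimension formula above, into
$$A + B < k + (\dim K - \dim H). \quad (\star)$$

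The second application is at $p$ now viewed as a point of the Hamiltonian $T/H$-space $(N,\omega_N,\Phi_N)$: the local model there has stabilizer $K/H$ and symplectic slice $(\C^{k+\dim K})^H$, on which $K/H$ acts through the representation induced by the original $K$-action. Since the $T/H$-action on $N$ is effective by the construction in Section~\ref{sec:exceptional-sheets}, Corollary~\ref{cor:effective} forces the induced $K/H$-representation on $(\C^{k+\dim K})^H$ to be injective. The $A$ trivial weight components contribute nothing to the image, so this injectivity amounts to effectiveness of the $K/H$-action on the $B$-dimensional complement of $(\C^{k+\dim K})^K$ inside $(\C^{k+\dim K})^H$; Remark~\ref{rmk local normal form} then yields that the $B$ (necessarily nonzero) weights $\Z$-span $\ell^*_{K/H}$, whence $B \geq \mathrm{rk}(\ell^*_{K/H}) = \dim K - \dim H$. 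Combined with $(\star)$ this gives $A < k$, i.e.~$\dim_{\C}(\C^{k+\dim K})^K < k$, so $p$ is exceptional.

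The conceptual heart of the argument -- and the step I expect to require the most care to set up cleanly -- is the second local-model analysis. One has to recognize that treating $(N,\omega_N,\Phi_N)$ as a Hamiltonian $T/H$-space in its own right and invoking effectiveness via Corollary~\ref{cor:effective} is exactly what produces the bound $B \geq \dim K - \dim H$, which then absorbs precisely the extra slack on the right-hand side of $(\star)$ arising from $p$ having stabilizer strictly larger than $H$.
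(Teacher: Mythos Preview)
Your proof is correct but takes a genuinely different route from the paper's. The paper argues the converse by contradiction in three lines: if some $p\in N$ were regular, then by Lemma~\ref{lemma:regular_connected_stab} so would be all nearby points; but by the principal orbit theorem the points of $N$ with stabilizer exactly $H$ are dense, and by Proposition~\ref{prop:regular_exceptional} those points are exceptional---contradiction. No local models, no dimension bookkeeping.

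Your approach is a direct local-model computation: you quantify exactly how much extra room the larger stabilizer $K\geq H$ creates on the right-hand side of the complexity inequality, and then use effectiveness of the $K/H$-action on the symplectic slice of $p$ in $N$ (via Corollary~\ref{cor:effective} and Remark~\ref{rmk local normal form}) to produce the compensating bound $B\geq\dim K-\dim H$. This is heavier but has the virtue of being constructive and of making transparent \emph{why} the exceptionality propagates: the inequality $A<k$ is forced algebraically, not obtained by a density-plus-openness contradiction. The paper's argument, by contrast, buys brevity by leaning on the topological fact that regularity is open and generic stabilizers are dense, avoiding any computation in the slice. Both are valid; the paper's is shorter, yours gives more explicit control.
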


\begin{proof}
  If every point in $N$ is exceptional, then
  $(N,\omega_N,\Phi_N)$ is exceptional by Proposition
  \ref{prop:regular_exceptional}. Conversely, suppose that
  $(N,\omega_N,\Phi_N)$ is exceptional. By contradiction, suppose that $p \in N$ is regular. By Lemma \ref{lemma:regular_connected_stab}, every point in
  $N$ that is sufficiently close to $p$ is also regular. By the
  principal orbit theorem (see \cite[Theorem 2.8.5]{dk}), the set
  of points in $N$ that is stabilized by $H$ is dense. Hence, there exist
  points in $N$ that are stabilized by $H$ that are arbitrarily close
  to $p$. However, by Proposition \ref{prop:regular_exceptional}, any
  such point is exceptional, a contradiction.
\end{proof}

\begin{remark}\label{rmk:not_exceptional_sheet}
  It is not necessarily true that, if $(N,\omega_N,\Phi_N)$ is a
  regular sheet, then all points in $N$ are regular. A counterexample is as follows: Let $\comp$ be a Hamiltonian
  $T$-space of positive complexity that contains an isolated fixed
  point $p$. Since $T$ is compact and
  abelian, and
  since $M$ is connected, the
  principal orbit theorem (see \cite[Theorem 2.8.5]{dk}) implies that
  there exists an open and dense subset of $M$ whose points have
  trivial stabilizer. Hence, taking $H = \{e\}$, it follows that
  $\comp$ is a regular sheet. However, by Lemma \ref{lemma:regular_connected_stab}, $p$ is exceptional.
\end{remark}

\subsection{Invariants of compact Hamiltonian $T$-spaces}\label{sec:global-invariants}
In this section we recall some fundamental results about {\em
  compact}\footnote{In fact, many of the theorems presented in this
  section hold under the weaker assumption that the moment map be
  proper as a map to a convex open subset of $\g^*$. However, this
  degree of generality goes beyond the scope of this paper.}
Hamiltonian $T$-spaces.

\subsubsection{Convexity package and its consequences}\label{sec:conn-fibers-conv}
We start with the following foundational result  (see
\cite{atiyah,gs,Sjamaar}).

\begin{thm}[{\bf Convexity Package}]\label{thm:con_pac}
  Let $\comp$ be a compact Hamiltonian $T$-space.
  \begin{enumerate}[label=(\roman*), ref=(\roman*), leftmargin=*]
  \item\label{item:3} {\bf (Connectedness)} The fibers of the moment map are connected.
  \item\label{item:4} {\bf (Stability)} The moment map is open as a map onto its
    image.
  \item\label{item:10} {\bf (Convexity)} The moment map image is the convex hull of the images of 
    the fixed submanifolds.
  \end{enumerate}
\end{thm}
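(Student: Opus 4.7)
The plan is to prove the three assertions together by induction on $d = \dim T$, with each feeding into the next. The two main tools are this induction and the local normal form of Theorem \ref{local normal form}, which describes $\Phi$ explicitly near every orbit in terms of the isotropy weights.

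For part \ref{item:3} (connectedness), I would use Morse--Bott theory applied to components of $\Phi$. The crucial base case is $d=1$: fix a generator $\xi \in \g$ so that $f := \langle \Phi,\xi\rangle$ has critical set $M^T$ (by \eqref{def moment map}). Applying Theorem \ref{local normal form} at a fixed point and inspecting \eqref{eq:33}, one checks that $f$ is Morse--Bott with critical submanifolds of even index and even coindex, because the Hessian in normal coordinates splits as $\pi\sum_j \langle \alpha_j,\xi\rangle |z_j|^2$ and each term contributes two indices of the same sign. Standard Morse theory over $\Z/2$ then forces all sublevel and level sets of $f$ to be connected. For $d>1$, one chooses a corank-one subtorus $T' < T$ so that, by the inductive hypothesis, the $T'$-moment map \eqref{mm subgroup} has connected fibers, and the residual $T/T' \cong S^1$-action on the (possibly singular) reduced spaces reduces the problem to the circle case.

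For part \ref{item:4} (stability), I would argue locally via Theorem \ref{local normal form}. Near an orbit through $p$, the moment map is modelled on \eqref{eq:43}, so the image under $\Phi_Y$ of a $T$-invariant open neighborhood of $[1,0,0]$ in $Y$ has the form $\Phi(p) + U + C$, where $U \subset \mathrm{Ann}(\mathfrak{h})$ is an open neighborhood of $0$ and $C = \R_{\geq 0}\langle \alpha_1,\ldots,\alpha_{h+k}\rangle$ is the convex cone in $\mathfrak{h}^*$ generated by the isotropy weights at $p$. This set is relatively open in the affine subspace it spans, and combining this local picture with the connectedness from part \ref{item:3} (so that fibers of $\Phi$ in the neighborhood cannot split off) one concludes that $\Phi$ is open onto its image.

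For part \ref{item:10} (convexity), I would combine the first two parts. Given $\alpha, \beta \in \Phi(M)$, let $S = \{t \in [0,1] : t\beta + (1-t)\alpha \in \Phi(M)\}$. Compactness of $M$ makes $\Phi(M)$ compact and hence $S$ closed, while stability makes $S$ open in $[0,1]$; since $0 \in S$ and $[0,1]$ is connected, $S = [0,1]$, proving convexity of $\Phi(M)$. For the identification with the convex hull of $\Phi(M^T)$, at any non-fixed $p$ the local cone $C$ above has positive dimension, so $\Phi(p)$ cannot be an extreme point of $\Phi(M)$; hence every extreme point lies in $\Phi(M^T)$, and a compact convex set equals the convex hull of its extreme points. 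The main obstacle is the evenness of Morse--Bott indices in the base case of \ref{item:3}: this is precisely where the symplectic structure enters decisively via \eqref{eq:33}, and the subsequent passage from even indices to connectedness of sublevel sets, while classical, is the technical heart of the argument.
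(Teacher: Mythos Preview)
The paper does not give its own proof of this theorem: it is stated as a foundational result with references to Atiyah, Guillemin--Sternberg, and Sjamaar (\cite{atiyah,gs,Sjamaar}), and then used as a black box throughout. So there is no ``paper's own proof'' to compare against; your sketch is essentially a compressed version of the classical arguments in those references.

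Your outline is sound in spirit and follows Atiyah's strategy for \ref{item:3} and the local/Sjamaar-style reasoning for \ref{item:4} and \ref{item:10}. One point to be careful about is your inductive step for connectedness: passing to ``possibly singular'' reduced spaces of a corank-one subtorus and then applying the circle case is delicate, since Morse--Bott theory is not directly available on singular quotients. Atiyah's original argument avoids this by working entirely on $M$: one shows that for a \emph{generic} $\xi \in \g$ the function $\langle \Phi,\xi\rangle$ is Morse--Bott with critical set exactly $M^T$ and even indices, deduces connectedness of its level sets directly, and then obtains connectedness of the full fibers of $\Phi$ by a density/limiting argument over rational directions. If you keep the reduction-to-subtorus formulation, you should either restrict to regular values (and handle the singular fibers by continuity) or invoke the stratified Morse theory that underlies Sjamaar's treatment.
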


We remark that, since the action is effective, the moment map
image of a compact Hamiltonian $T$-space is a polytope that has
dimension equal to $\g^*$.

\begin{definition}\label{defn:mom_polytope}
  Let $\comp$ be a compact Hamiltonian $T$-space. The image $\Phi(M)$
  is called the {\bf moment polytope}.
\end{definition}

By Theorem \ref{thm:con_pac}, the moment polytope of a compact
Hamiltonian $T$-space is convex. In fact, more is true and, in order
to prove this, we need to recall a few notions. We say that a polytope
$\Delta \subset \g^*$
is {\bf rational} if any edge $e \subset \Delta$ is of the form
\begin{equation}
  \label{eq:17}
   e = \{v + t \alpha \mid t \in [0,l]\} \quad \text{for some } v \in
   \g^*, \, \alpha \in \ell^* \text{ and } l \in \R_{>0}.
\end{equation}
\noindent
Moreover, a
subset $C \subseteq \g^*$ is a {\bf cone} if, for all $v \in C$
and all $\lambda \in \R_{\geq 0}$, $\lambda v \in C$. A cone in $\g^*$ is {\bf proper} if it does not contain any subspace of $\g^*$ of
positive dimension. The following result provides a local description
of the moment polytope of a compact Hamiltonian $T$-space near the
image of a fixed submanifold.

\begin{corollary}\label{Cor:ConeFace}
Let $\comp$ be a compact Hamiltonian $T$-space of dimension $2n$. Let $N$ be a fixed submanifold and let $\alpha_{1},\dots 
\alpha_{n}$ be the isotropy weights of $N$ (see Remark \ref{rmk weights of F}). Consider the cone 
\begin{equation}\label{cone def}
  \mathcal{C}_N = \R_{\geq 0}\text{-span} \left\lbrace
    \alpha_{1},\dots \alpha_{n}\right\rbrace \subseteq \g^*, 
\end{equation}
\noindent
and let $\mathcal{H}_N \subseteq \g^*$ be the maximal subspace that is contained in $\mathcal{C}_N$.

\begin{enumerate}[label=(\roman*),ref=(\roman*), leftmargin=*]
\item \label{item:1} There exist an open neighborhood $V$ of $\Phi(N)$ in $\Phi(M)$
  and an open neighborhood $W$ of $0$ in $\mathcal{C}_N$ such that $V
  = W + \Phi(N)$. In particular, $\Phi(M)$ is rational.
\item \label{item:2} The intersection
  \begin{align*}
    \left( \mathcal{H}_N + \Phi(N)\right) \cap \Phi(M)
  \end{align*}
  is a face of $\Phi(M)$ and the dimension of this
  face equals the dimension of \(\mathcal{H}_N\).
\end{enumerate}
In particular, \(\Phi(N)\) is a vertex of \(\Phi(M)\) if and only if the cone \(\mathcal{C}_N\) is proper.
\end{corollary}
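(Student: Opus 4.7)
The plan is to derive both parts from the local normal form (Theorem \ref{local normal form}) at a point $p \in N$, combined with the Convexity Package (Theorem \ref{thm:con_pac}). First, $\Phi(N)$ is a single point: since $N$ is a connected component of $M^T$, the defining property \eqref{def moment map} gives $d\langle \Phi, \xi\rangle|_N = -\iota_{\xi^\#}\omega|_N = 0$ for every $\xi \in \g$, so $\Phi|_N$ is constant. For part (i), fix $p \in N$. By Remark \ref{rmk:fixed_point}, the local model at $p$ is $(\C^n, \omega_0, \Phi_Y)$ with $\Phi_Y(z) = \pi \sum_{j=1}^n \alpha_j |z_j|^2$, where $\alpha_1, \ldots, \alpha_n$ are the isotropy weights of $N$. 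Applying Theorem \ref{local normal form} and restricting the local-model side to a small open ball, I obtain a $T$-invariant open neighborhood $U \subset M$ of $p$ whose image is
\[
  \Phi(U) \;=\; \Phi(N) + W_\epsilon, \qquad W_\epsilon := \Bigl\{\pi \sum_{j=1}^n t_j \alpha_j : 0 \leq t_j < \epsilon \Bigr\},
\]
and $W_\epsilon$ is a relatively open neighborhood of $0$ in $\mathcal{C}_N$. By Stability (Theorem \ref{thm:con_pac}\ref{item:4}), $V := \Phi(U)$ is open in $\Phi(M)$, proving the first assertion of (i) with $W := W_\epsilon$. Rationality of $\Phi(M)$ follows by applying the first part of (i) at each vertex $v = \Phi(N')$: every edge of $\Phi(M)$ incident to $v$ lies along an extreme ray of $\mathcal{C}_{N'}$, and since $\mathcal{C}_{N'}$ is generated by lattice vectors (Remark \ref{rmk:fixed_point}), every extreme ray admits a primitive lattice generator, giving the form \eqref{eq:17}.

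For part (ii), I will identify $(\mathcal{H}_N + \Phi(N)) \cap \Phi(M)$ with the unique face $F$ of $\Phi(M)$ whose relative interior contains $\Phi(N)$. Convexity of $\Phi(M)$ (Theorem \ref{thm:con_pac}\ref{item:10}) combined with (i) upgrades the local cone picture to the global inclusion $\Phi(M) \subseteq \Phi(N) + \mathcal{C}_N$: indeed, (i) identifies $\mathcal{C}_N$ with the tangent cone of the convex set $\Phi(M)$ at $\Phi(N)$, and any convex set is contained in the translate of its tangent cone at any of its points. Since $\mathcal{H}_N \subseteq \mathcal{C}_N$, the intersection $(\Phi(N) + \mathcal{H}_N) \cap V = \Phi(N) + (\mathcal{H}_N \cap W)$ is a relatively open neighborhood of $\Phi(N)$ in the affine subspace $\Phi(N) + \mathcal{H}_N$, so $\Phi(N) + \mathcal{H}_N \subseteq \mathrm{aff}(F)$. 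Conversely, a small neighborhood of $\Phi(N)$ in $F$ lies in $\Phi(M) \subseteq \Phi(N) + \mathcal{C}_N$, so $\mathrm{aff}(F) - \Phi(N)$ is a linear subspace of $\g^*$ contained in $\mathcal{C}_N$ and therefore in $\mathcal{H}_N$ by maximality. This yields $\mathrm{aff}(F) = \Phi(N) + \mathcal{H}_N$ and hence $\dim F = \dim \mathcal{H}_N$. The equality $(\mathcal{H}_N + \Phi(N)) \cap \Phi(M) = F$ then follows from the standard polytope fact that the affine hull of a face meets the polytope exactly in that face. The concluding equivalence is immediate: $\Phi(N)$ is a vertex of $\Phi(M)$ if and only if $\dim F = 0$, if and only if $\mathcal{H}_N = \{0\}$, if and only if $\mathcal{C}_N$ contains no nontrivial linear subspace, that is, is proper.

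The main technical step is the bootstrap from the local cone description in (i) to the global inclusion $\Phi(M) \subseteq \Phi(N) + \mathcal{C}_N$ used throughout (ii); this rests essentially on the Convexity part of Theorem \ref{thm:con_pac} (itself requiring compactness), without which the polytope image could a priori stray from $\Phi(N) + \mathcal{C}_N$ away from $\Phi(N)$. Once this globalization is secured, the remainder amounts to standard polytope combinatorics for the rational cone $\mathcal{C}_N$ and its maximal linear subspace $\mathcal{H}_N$.
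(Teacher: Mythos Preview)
Your proof is correct and follows essentially the same route as the paper: local normal form at a fixed point to identify the image near $\Phi(N)$ with a neighborhood of $0$ in $\mathcal{C}_N$, then the Convexity Package to globalize. Two small differences worth noting: for part~(i) you work at a single $p\in N$ (which suffices since $\Phi|_N$ is constant), whereas the paper uses a finite cover of $N$; for part~(ii) you argue via the tangent cone and the face $F$ through $\Phi(N)$, while the paper invokes the splitting $\mathcal{C}_N=\mathcal{H}_N\times\mathcal{C}'_N$ and passes from local to global by convexity in one line---but these are the same argument unpacked to different depths.
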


\begin{figure}[h]
	\begin{center}
		\includegraphics[width=7cm]{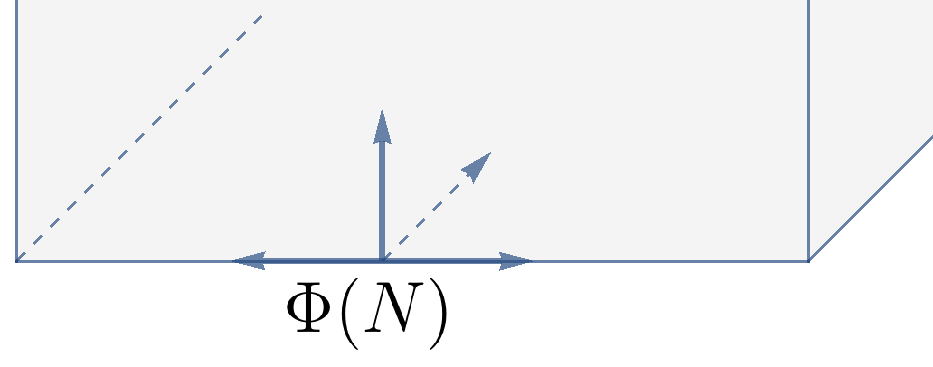}
		\caption{The arrows are the non-zero isotropy weights of the fixed
                  submanifold $N$, the shaded area equals
                  $\mathcal{C}_N$ and the straight line spanned by the
                  collinear isotropy weights is $\mathcal{H}_N$.}
		\label{Edge}
	\end{center}
\end{figure}
\begin{proof}
  For any $p \in N$, by Theorem \ref{local normal
    form}, there exist a $T$-invariant open 
  neighborhood $U_p$ of $p$ and an open neighborhood $W_p$ of $0$ in
  $\mathcal{C}_N$ such that $\Phi(U_p) = W_p + \Phi(N)$. Moreover, by
  Theorem \ref{thm:con_pac}, $\Phi(U_p)$ is an open neighborhood of
  $\Phi(p)$ in $\Phi(M)$. Since $N$ is compact, there exist finitely
  many $p_1,\ldots, p_r \in N$ such that $N$ is contained in
  $\bigcup_{j=1}^r U_{p_j}$. Set $W:= \bigcap_{j=1}^r W_{p_j}$. By
  construction, $W + \Phi(N) = \bigcap_{j=1}^r \Phi(U_{p_j})$ is an open neighborhood of $\Phi(N)$
  in $\Phi(M)$. Since the cone $\mathcal{C}_N$ is convex and rational, and
  since vertices of the moment polytope are the image of fixed submanifolds
  by Theorem \ref{thm:con_pac}, the moment polytope is rational. This proves part \ref{item:1}.

  To prove part \ref{item:2}, observe that any cone is the product of
  the maximal subspace that it contains with a proper cone. Thus we
  can write $\mathcal{C}_N = \mathcal{H}_N \times
  \mathcal{C}'_N$ for some proper cone $\mathcal{C}'_N$. Hence, without loss of generality, we may take an
  open neighborhood $W$ in $\mathcal{C}_N$ 
  as in the statement of part \ref{item:1} to be of the form
  $W_{\mathcal{H}} \times W'$, where $W_{\mathcal{H}}$ (respectively $W'$) is an open
  neighborhood of $0$ in $\mathcal{H}_N$ (respectively
  $\mathcal{C}'_N$). Therefore, the desired result holds `locally' by
  part \ref{item:1}; convexity of
  $\Phi(M)$ (see Theorem \ref{thm:con_pac}) implies that it is true `globally'.
\end{proof}

Until the end of the section, we deduce some consequences of Theorem
\ref{thm:con_pac} that we use throughout the paper. We start with the following sufficient condition
for a sheet to be exceptional (see Definition \ref{defn:reg_irreg}).

\begin{lemma}\label{lemma:exceptional_not_boundary}
  Let $\comp$ be a compact Hamiltonian $T$-space and let
  $(N,\omega_N,\Phi_N)$ be a sheet stabilized by a non-trivial
  subgroup $H$. If $\Phi(N)$ is not contained in
  the boundary of $\Phi(M)$, then $(N,\omega_N,\Phi_N)$ is exceptional.
\end{lemma}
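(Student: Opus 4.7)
The plan is to produce a point $p \in N$ whose $T$-stabilizer is exactly $H$ and whose image $\Phi(p)$ lies in the interior of $\Phi(M)$, and then extract from the local normal form at $p$ the fact that too few of the isotropy weights of $p$ vanish for $p$ to be regular; by Proposition \ref{prop:regular_exceptional}\ref{item:24}, this will force the entire sheet to be exceptional.

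The hypothesis $\Phi(N) \not\subseteq \partial\Phi(M)$ says that the open set $N \cap \Phi^{-1}(\mathrm{int}\,\Phi(M))$ is nonempty. The $T$-action on $N$ descends to an effective action of $T/H$ on the connected manifold $N$, so by the principal orbit theorem the set of points of $N$ whose $T$-stabilizer equals $H$ is open and dense in $N$. Intersecting these two open subsets produces the desired $p$.

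I next invoke Theorem \ref{local normal form} at $p$. Writing the isotropy weights of $p$ for the $H$-action as $\alpha_1,\ldots,\alpha_{h+k} \in \mathfrak{h}^*$, and reading off the explicit formula \eqref{eq:43} for the local moment map, a neighborhood of $\Phi(p)$ in $\Phi(M)$ is contained in $\Phi(p) + \mathrm{Ann}(\mathfrak{h}) + \mathcal{C}_H$, where $\mathcal{C}_H$ is the convex cone spanned by $\alpha_1,\ldots,\alpha_{h+k}$. Because $\Phi(p)$ lies in the interior of $\Phi(M)$, the Stability part of Theorem \ref{thm:con_pac} upgrades this to a neighborhood of $\Phi(p)$ in $\g^*$. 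Under the splitting $\g^* = \mathrm{Ann}(\mathfrak{h}) \oplus \mathfrak{h}^*$ this forces $\mathcal{C}_H$ to contain a neighborhood of the origin of $\mathfrak{h}^*$; since any convex cone with this property must equal the whole ambient space, $\mathcal{C}_H = \mathfrak{h}^*$.

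A positive spanning set of $\mathfrak{h}^*$ must contain at least $h+1$ nonzero elements, where $h = \dim\mathfrak{h}^*$: any such set of size $h$ would be linearly independent, and then $-v_1 = \sum_j c_j v_j$ with $c_j\geq 0$ would produce a nontrivial nonnegative linear dependence, a contradiction. Since $H$ is nontrivial, $h \geq 1$, so at least $h+1$ of the $\alpha_j$ are nonzero and at most $k-1$ vanish. As $(\C^{h+k})^H \subseteq (\C^{h+k})^{H^0}$ is contained in the coordinate subspace cut out by $\{z_j = 0 : \alpha_j \neq 0\}$, we conclude $\dim_\C (\C^{h+k})^H \leq k-1 < k$, so $p$ is exceptional by Definition \ref{defn:regular_exceptional}. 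The main hurdle I expect is the cone-theoretic translation step: combining the interior hypothesis with the local normal form and the splitting of $\g^*$ to force $\mathcal{C}_H = \mathfrak{h}^*$, and then converting this via the counting bound into the strict inequality $\dim_\C (\C^{h+k})^H < k$.
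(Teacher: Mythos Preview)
Your proof is essentially correct and rests on the same core observation as the paper's---that the local normal form at a point $p\in N$ with stabilizer exactly $H$ pins down the shape of $\Phi(M)$ near $\Phi(p)$ via the cone $\mathrm{Ann}(\mathfrak{h})+\mathcal{C}_H$---but you argue directly while the paper argues the contrapositive. The paper assumes the sheet is regular, invokes Remark~\ref{rmk:regular_equivalent} to get that the $H$-action on the symplectic normal bundle is toric, and then reads off from the local normal form that the image of a generic point of $N$ must lie on $\partial\Phi(M)$ (the cone $\mathcal{C}_H$ being proper). Your version instead assumes $\Phi(p)\in\mathrm{int}\,\Phi(M)$, forces $\mathcal{C}_H=\mathfrak{h}^*$, and extracts a numerical bound on the number of nonzero isotropy weights. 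Both directions are the same cone-vs.-interior dichotomy; your route has the mild advantage of making the inequality $\dim_{\C}(\C^{h+k})^H\le k-1$ explicit.

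There is one genuine slip: the implication ``$H$ nontrivial $\Rightarrow h\ge 1$'' fails when $H$ is finite. In that case $\mathfrak{h}^*=0$, all isotropy weights vanish, and your counting argument gives no information. The fix is immediate: if $H$ is finite and nontrivial then by Corollary~\ref{cor:effective} the representation $\rho:H\hookrightarrow (S^1)^k$ is faithful, so $(\C^{k})^H\subsetneq\C^{k}$ and $p$ is exceptional without any use of the interior hypothesis (equivalently, the last bullet of Lemma~\ref{lemma:regular_connected_stab} already rules out regular points with disconnected stabilizer). Once you dispose of the finite case separately, your argument goes through as written for $h\ge 1$.
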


\begin{proof}
  We prove the contrapositive. Let $(N,\omega_N,\Phi_N)$ be a regular sheet. It suffices
  to show that the set of regular values of $\Phi|_N$ that are
  contained in $\Phi(N)$ is contained in the boundary of
  $\Phi(M)$. Let $x \in \Phi(N)$ be a regular value for $\Phi|_N$. By
  Theorem \ref{local normal form},
  there exists $p \in \Phi|_N^{-1}(x)$ that has trivial stabilizer
  for the $T/H$-action, i.e., the stabilizer of $p$ for the
  $T$-action is $H$. By Remark \ref{rmk:regular_equivalent}, $p$ is
  regular and, hence, the $T/H$-action on the symplectic normal bundle
  to $N$ is toric. By the local normal form (Theorem \ref{local normal
    form}), and by openness of the moment map (Theorem
  \ref{thm:con_pac}), it follows that $\Phi(x)$ lies in the boundary
  of $\Phi(M)$.
\end{proof}

For compact Hamiltonian $T$-spaces, the existence of exceptional sheets
is intimately connected to the existence of exceptional fixed points. More precisely, the following holds. 

\begin{lemma}\label{lemma:necessary_sufficient_existence_exceptional}
  A compact Hamiltonian $T$-space $\comp$ contains an exceptional sheet if and
  only if it contains an exceptional fixed point. 
\end{lemma}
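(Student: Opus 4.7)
The plan is to prove the two implications separately using the characterizations already established in Section \ref{sec:exceptional-sheets}. For the forward direction, I would start with an exceptional fixed point $p$, set $H=T$ (its stabilizer), and let $N$ be the connected component of $M^T$ containing $p$; by Definition \ref{defn:sheet}, $(N,\omega_N)$ is the sheet through $p$ stabilized by $T$. Because every point of $N$ has the same multiset of isotropy weights as $p$ (Remark \ref{rmk weights of F}), and the condition $\dim_{\mathbb{C}}(\mathbb{C}^{h+k})^H<k$ defining exceptionality depends only on these weights and on $H$, every point of $N$ is exceptional. Applying the implication \ref{item:24} $\Rightarrow$ \ref{item:21} in Proposition \ref{prop:regular_exceptional} then shows that $(N,\omega_N)$ is an exceptional sheet.

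For the converse, given an exceptional sheet $(N,\omega_N,\Phi_N)$ stabilized by a closed subgroup $H\le T$, the goal is to produce an exceptional $T$-fixed point that lies in $N$. By Lemma \ref{lemma:exceptional}, every point of $N$ is already exceptional in $\comp$, so it is enough to find a point of $N$ that is fixed by the full torus $T$. Now $(N,\omega_N,\Phi_N)$ is itself a compact Hamiltonian $(T/H)$-space. If $H=T$, the torus $T/H$ is trivial and any point of $N$ serves as a $T$-fixed point of $M$. Otherwise, the classical fact that a Hamiltonian action of a non-trivial compact torus on a compact symplectic manifold admits fixed points (for instance by taking a critical point of $\langle\Phi_N,\xi\rangle$ for $\xi$ generating a dense one-parameter subgroup of $T/H$) furnishes $q\in N$ fixed by $T/H$; since the $T$-stabilizer of $q$ surjects onto $T/H$ and contains $H$, it equals $T$, so $q$ is the desired exceptional $T$-fixed point.

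I do not anticipate a serious obstacle: both directions reduce quickly to invoking Lemma \ref{lemma:exceptional} and Proposition \ref{prop:regular_exceptional}, together with the standard existence of fixed points of compact Hamiltonian torus actions. The one point that needs care is, in the forward direction, to promote the exceptionality of the single point $p$ to exceptionality of every point of the fixed submanifold $N$, so that hypothesis \ref{item:24} of Proposition \ref{prop:regular_exceptional} is in fact satisfied; this is precisely what Remark \ref{rmk weights of F} delivers.
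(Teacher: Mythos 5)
Your proof is correct and follows essentially the same route as the paper: for an exceptional sheet, compactness of $N$ yields a $T$-fixed point which is exceptional by Lemma \ref{lemma:exceptional}, and conversely the sheet through an exceptional fixed point is exceptional via the equivalences of Proposition \ref{prop:regular_exceptional}. You merely spell out two steps the paper compresses (the existence of a $T$-fixed point in the compact sheet, and the promotion of exceptionality of $p$ to all of $N$ via Remark \ref{rmk weights of F}), which the paper dispatches with ``by definition.''
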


\begin{proof}
  Suppose first that $(N,\omega_N,\Phi_N)$ is an exceptional sheet of
  $\comp$. Since $N$ is compact, it contains a fixed point $p \in
  M^T$. By Lemma \ref{lemma:exceptional}, $p$ is exceptional. Conversely, suppose that $p \in
  M^T$ is exceptional. By definition, the sheet
  $(N,\omega_N,\Phi_N)$ through $p$ is exceptional. 
\end{proof}

Next we deduce some general results about isotropy weights of isolated
fixed points that are used in one of the key results of our paper,
Proposition \ref{prop:iso_fixed_pts_monotone}.

\begin{lemma}\label{lemma:sheet_weight}
  Let $\comp$ be a compact complexity $k$ $T$-space and let $p \in
  M^T$ be isolated. For any isotropy weight $\alpha$ of $p$,
  there exists a sheet $(N_{\alpha},\omega_{\alpha},\Phi_{\alpha})$ with the following
  properties:
  \begin{itemize}[leftmargin=*]
  \item the point $p$ lies in $N_{\alpha}$,
  \item the sheet $(N_{\alpha},\omega_{\alpha},\Phi_{\alpha})$ is
    stabilized by the codimension 1 subgroup
    \begin{equation}
      \label{eq:36}
      H_{\alpha} :=\exp(\{\xi \in \mathfrak{t} \mid \langle \alpha,\xi \rangle
      \in \mathbb{Z} \}), 
    \end{equation}
  \item the dimension of $N_{\alpha}$ is at most $2(k+1)$, 
  \item the moment map image $\Phi_{\alpha}(N_{\alpha})$ is contained
    in the affine line $\Phi(p) + \R \langle \alpha \rangle$ and intersects the open
    half-ray $\Phi(p) + \R_{>0} \langle \alpha \rangle$, and
  \item there exists $q_{\alpha} \in M^T \cap N_{\alpha}$ such that $\Phi(q_{\alpha}) =
    \Phi_{\alpha}(q_{\alpha})$ is a global extremum of $\Phi_{\alpha}(N_{\alpha})$ with $\Phi(q_{\alpha})
    \in \Phi(p) + \R_{>0} \langle \alpha \rangle$, and $-\alpha$ is
    an isotropy weight of $q_{\alpha}$.
  \end{itemize}
\end{lemma}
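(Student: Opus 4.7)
The plan is to set $N_{\alpha}$ equal to the connected component of $M^{H_{\alpha}}$ that contains $p$, where $H_{\alpha}$ is the codimension-one subgroup defined in~\eqref{eq:36}. Since $p$ is isolated, all of its isotropy weights are nonzero; in particular $\alpha \neq 0$, so $\ker\alpha \subseteq \mathfrak{t}$ has codimension one, making $H_{\alpha}$ a codimension-one closed subgroup. Applying Theorem~\ref{local normal form} at $p$ identifies a neighbourhood of $p$ with the local model of Remark~\ref{rmk:fixed_point}. The point $(0,\ldots,z_j,\ldots,0) \in \C^n$ with $\alpha_j = \alpha$ and $z_j \neq 0$ has stabilizer $\ker\chi_{\alpha_j} = H_{\alpha}$ and lies in the same connected component of $M^{H_{\alpha}}$ as $p$. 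This verifies that $(N_{\alpha},\omega_{\alpha},\Phi_{\alpha})$ is a sheet stabilized by $H_{\alpha}$ in the sense of Definition~\ref{defn:sheet} and immediately gives the first two bullets.

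For the dimension bound, Remark~\ref{rmk local normal form} says that the $\Z$-span of the weights $\alpha_1,\ldots,\alpha_n$ of $p$ equals $\ell^*$, so in particular they span $\mathfrak{t}^*$ over $\R$. Let $r_{\alpha} := \#\{j : \alpha_j \in \R\alpha\}$. Since the weights in $\R\alpha$ contribute only to the line $\R\alpha$, the remaining $n - r_{\alpha}$ weights must span a subspace of dimension at least $d-1$, forcing $r_{\alpha} \leq n - (d-1) = k+1$. Because $\dim_{\C}(T_pM)^{H_{\alpha}}$ equals the number of $j$ such that $\chi_{\alpha_j}$ restricts trivially to $H_{\alpha}$, and this count is at most $r_{\alpha}$, we obtain $\dim N_{\alpha} = 2\dim_{\C}(T_pM)^{H_{\alpha}} \leq 2(k+1)$.

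For the third bullet, the construction of moment maps for sheets, in particular~\eqref{eq:44}, gives $\Phi(N_{\alpha}) \subseteq \Phi(p) + \mathrm{Ann}(\mathfrak{h}_{\alpha}) = \Phi(p) + \R\alpha$. Evaluating the local model moment map~\eqref{eq:33} at the point with $z_j \neq 0$ and all other coordinates zero, where $\alpha_j = \alpha$, gives a value $\Phi(p) + \pi\alpha|z_j|^2 \in \Phi(p) + \R_{>0}\alpha$, so $\Phi_{\alpha}(N_{\alpha})$ strictly enters the open half-ray. For the last bullet, the induced effective action of $T/H_{\alpha}$ on $N_{\alpha}$ is a Hamiltonian circle action whose moment map takes values in $\R\alpha$, and by the Convexity Package (Theorem~\ref{thm:con_pac}) its image $\Phi_{\alpha}(N_{\alpha})$ is a compact interval inside $\R\alpha$ whose endpoints are attained on components of $N_{\alpha} \cap M^T$. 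Let $F_+$ be the component realizing the extremum strictly on the $+\alpha$ side (which exists by the third bullet) and pick $q_{\alpha} \in F_+$.

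The hardest part will be verifying that $-\alpha$ appears among the isotropy weights of $q_{\alpha}$. At any point of $F_+$, the $T$-weights on the tangent space to $N_{\alpha}$ lie in $\Z\alpha$ because $H_{\alpha}$ acts trivially on $N_{\alpha}$, and, since $q_{\alpha}$ is a maximum of the $(T/H_{\alpha})$-moment map, those weights on the symplectic normal bundle of $F_+$ inside $N_{\alpha}$ are of the form $-m_i\alpha$ with $m_i \in \Z_{>0}$. The requirement that $N_{\alpha}$ be a sheet stabilized by $H_{\alpha}$ itself, rather than a strictly larger closed subgroup, forces $\gcd(m_i) = 1$. When $\dim N_{\alpha} = 2$ this immediately gives $m_1 = 1$ and hence $-\alpha$ as the unique normal weight. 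The general case is handled by coupling this with the local cone description at $F_+$ given by Corollary~\ref{Cor:ConeFace}: the edge of $\Phi(M)$ emanating from $\Phi(q_{\alpha})$ that lies in the line $\Phi(p) + \R\alpha$ must be generated by a normal weight at $q_{\alpha}$ which, by primitivity within $\Z\alpha$ enforced by effectivity of the circle action on this edge, equals $-\alpha$.
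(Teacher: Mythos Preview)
Your construction of $N_\alpha$ as the component of $M^{H_\alpha}$ through $p$, and your verification of the first four bullets, are correct and in fact more detailed than the paper's own argument (which simply asserts that the sheet through a point with stabilizer $H_\alpha$ ``satisfies the desired conditions''). The genuine issue is the last bullet. Your observation that the principal stabilizer of $N_\alpha$ being exactly $H_\alpha$ forces $\gcd(m_i) = 1$ is correct, but this does not yield any $m_i = 1$ once there are at least two normal weights (e.g.\ $\gcd(2,3)=1$). Your appeal to Corollary~\ref{Cor:ConeFace} for an ``edge of $\Phi(M)$'' at $\Phi(q_\alpha)$ is also unjustified: $\Phi(q_\alpha)$ need not be a vertex of $\Phi(M)$, and the analogous cone at the endpoint $\Phi_\alpha(q_\alpha)$ of the interval $\Phi_\alpha(N_\alpha)$ only produces the ray $\R_{\le 0}\langle\alpha\rangle$, not a primitive generator.

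In fact the last bullet is false at the stated level of generality. Take $M = \C P^2 \times \C P^1$ with the effective $T^2$-action $(t_1,t_2)\cdot([z_0{:}z_1{:}z_2],[w_0{:}w_1]) = ([z_0:t_1 z_1:t_1^{3} z_2],[w_0:t_2 w_1])$, which is compact of complexity one. The point $p = ([1{:}0{:}0],[1{:}0])$ is isolated with isotropy weights $(1,0),(3,0),(0,1)$. For $\alpha = (1,0)$ one finds $H_\alpha = \{1\}\times S^1$, hence $N_\alpha = \C P^2 \times\{[1{:}0]\}$, and the only possible choice of $q_\alpha$ is $([0{:}0{:}1],[1{:}0])$, whose isotropy weights are $(-3,0),(-2,0),(0,1)$ --- none equal to $-\alpha=(-1,0)$. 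So neither your argument nor the paper's sketch can be completed without further hypotheses; in the paper's actual application (the proof of Proposition~\ref{prop:iso_fixed_pts_monotone}) the extra structure --- tallness, a Delzant moment polytope, and $q_\alpha$ lying on a fixed surface over a vertex --- is what makes the conclusion $-\alpha\in\{\alpha_1,\dots,\alpha_{n-1}\}$ recoverable.
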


\begin{proof}
  Let $\alpha_1,\ldots, \alpha_n
  \in \ell^*$ be the isotropy weights of $p$. Without loss of generality, we may assume that $\alpha=\alpha_n$. By the local normal form
  of Theorem \ref{local normal form}, we may identify a $T$-invariant
  open neighborhood
  of $p$ in $M$ with a $T$-invariant open neighborhood of $0 \in \C^n$
  so that the action becomes that of \eqref{action Cn} and
  the moment map is given by \eqref{eq:33}. Since $p \in M^T$ is
  isolated, $\alpha_j \neq 0$ for all $j$. Hence, since $\Z\langle
  \alpha_1,\ldots, \alpha_n \rangle = \ell^*$, there can be at most
  $k$ isotropy weights that are multiples of
  $\alpha_n$. Therefore the subgroup $H_{\alpha}$ of \eqref{eq:36}
  stabilizes a subspace of $\C^n$ that is of real dimension at most
  $2(k+1)$. Moreover, by \eqref{action Cn}, the subgroup $H_{\alpha}$
  is the stabilizer of some point $p'\in M$. The
  sheet $(N,\omega_N,\Phi_N)$ through $p'$ in the sense of Definition
  \ref{defn:sheet} satisfies the desired conditions.
\end{proof}

For our purposes, it is useful to introduce the following
terminology.

\begin{definition}\label{defn:sheet_along_weight}
  Let $\comp$ be a compact Hamiltonian $T$-space and let $p \in M^T$ be
  isolated. Given an isotropy weight $\alpha$ of $p$, we
  say that the sheet $(N_{\alpha},\omega_{\alpha},\Phi_{\alpha})$
  of
  Lemma \ref{lemma:sheet_weight} is {\bf along $\boldsymbol{\alpha}$}.
\end{definition}

\begin{lemma}\label{lemma:special_weight}
  Let $\comp$ be a compact Hamiltonian $T$-space. Let $\mathcal{F}$ be
  the facet of $\Phi(M)$ supported on $ \{w \in \g^* \mid \langle w, \nu
  \rangle = c\} $. If $p \in M^T$ satisfies $\langle \Phi(p), \nu \rangle
  > c$, then there exists an isotropy weight $\alpha$ of $p$
  with $\langle \alpha, \nu \rangle < 0$.
\end{lemma}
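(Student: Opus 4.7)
The plan is to argue by contradiction, assuming every isotropy weight $\alpha$ of $p$ satisfies $\langle \alpha, \nu \rangle \geq 0$, and derive a contradiction with the convexity of $\Phi(M)$ together with the local description of the moment polytope near $\Phi(p)$ provided by Corollary \ref{Cor:ConeFace}.

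The first step is to invoke Corollary \ref{Cor:ConeFace}\ref{item:1} applied to the fixed submanifold $N \ni p$: there exists an open neighborhood $W$ of $0$ in the cone $\mathcal{C}_N = \R_{\geq 0}\text{-span}\{\alpha_1, \dots, \alpha_n\} \subseteq \g^*$ such that $V := \Phi(p) + W$ is an open neighborhood of $\Phi(p)$ in $\Phi(M)$. Under the assumption $\langle \alpha_j, \nu \rangle \geq 0$ for every isotropy weight $\alpha_j$ of $p$, the entire cone $\mathcal{C}_N$ lies in the half-space $\{v \in \g^* \mid \langle v, \nu \rangle \geq 0\}$, and therefore every point $w \in V$ satisfies $\langle w, \nu \rangle \geq \langle \Phi(p), \nu \rangle$.

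The second step is to manufacture a point in $V$ with strictly smaller $\nu$-value, producing a contradiction. Since $\mathcal{F}$ is a (non-empty) face of $\Phi(M)$, there exists $q \in \mathcal{F}$, i.e., $\langle q, \nu \rangle = c$. By convexity of $\Phi(M)$ (Theorem \ref{thm:con_pac}\ref{item:10}), the segment $\gamma(t) = (1-t)\Phi(p) + tq$ lies in $\Phi(M)$ for $t \in [0,1]$, and
\[
\langle \gamma(t), \nu \rangle = \langle \Phi(p), \nu \rangle - t\bigl(\langle \Phi(p), \nu \rangle - c\bigr) < \langle \Phi(p), \nu \rangle
\]
for every $t \in (0,1]$, using the hypothesis $\langle \Phi(p), \nu \rangle > c$. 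Since $V$ is open in $\Phi(M)$ and $\gamma(t) \to \Phi(p)$ as $t \to 0^+$, there is some $t_0 > 0$ with $\gamma(t_0) \in V$, contradicting the lower bound from the previous step.

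There is no serious technical obstacle here; the only thing to be careful about is that $V$ is an open neighborhood of $\Phi(p)$ \emph{inside} $\Phi(M)$ rather than inside $\g^*$, so one must land in $\Phi(M)$ before invoking openness of $V$, which is precisely what the convex combination $\gamma(t)$ ensures. This is why I use convexity of $\Phi(M)$ rather than just openness of $\Phi$ as a map to its image.
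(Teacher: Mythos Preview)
Your proof is correct and follows essentially the same route as the paper's: argue by contradiction, use Corollary~\ref{Cor:ConeFace}\ref{item:1} to see that a neighborhood of $\Phi(p)$ in $\Phi(M)$ lies in the half-space $\{w \mid \langle w,\nu\rangle \geq \langle \Phi(p),\nu\rangle\}$, and then derive a contradiction from the existence of points in $\Phi(M)$ with smaller $\nu$-value. The paper's version simply asserts the contradiction at the end, whereas you spell it out explicitly via the convex segment to a point of $\mathcal{F}$; this is a helpful clarification but not a genuinely different argument.
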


\begin{proof}
  Let  $\alpha_1,\ldots,\alpha_n$ be the isotropy weights of $p$ and
  suppose that $\langle \alpha_j, \nu \rangle \geq 0$ for all $j=1,\ldots, n$. By part
  \ref{item:1} of Corollary \ref{Cor:ConeFace}, an open neighborhood
  of $\Phi(p)$ in $\Phi(M)$ equals an open neighborhood of $\Phi(p)$
  in $\Phi(p) + \R_{\geq 0} \langle
  \alpha_1,\ldots,\alpha_n\rangle$. Hence, since $\langle \alpha_j,\nu
  \rangle \geq 0$ for all $j$, an open neighborhood $V$
  of $\Phi(p)$ in $\Phi(M)$ is contained in the half-space $\{w \in
  \mathfrak{t}^* \mid \langle w, \nu \rangle \geq \langle \Phi(p), \nu \rangle
  \}$. Since $\langle \Phi(p), \nu \rangle
  > c$ and since $\Phi(M)$ has a facet supported on $ \{w \in \g^* \mid \langle w, \nu
  \rangle = c\} $, this is a contradiction.
\end{proof}

Motivated by Lemma \ref{lemma:special_weight}, we introduce the following
terminology.

\begin{definition}\label{definition:downward_pointing}
  Let $\comp$ be a compact Hamiltonian $T$-space and let $\mathcal{F}$ be
  the facet of $\Phi(M)$ supported on $ \{x \in \g^* \mid \langle x, \nu
  \rangle = c\}$. For any $p \in M^T$ with $\langle \Phi(p), \nu \rangle
  > c$, we say that the isotropy weight $\alpha$ of $p$ of
  Lemma \ref{lemma:special_weight} is {\bf
    ($\boldsymbol{\mathcal{F}}$-)downward pointing}.
\end{definition}

Combining Lemmas \ref{lemma:sheet_weight} and
\ref{lemma:special_weight}, we obtain the following result.

\begin{corollary}\label{cor:downward_pointing_sheet}
  Let $\comp$ be a compact Hamiltonian $T$-space and let $\mathcal{F}$ be
  the facet of $\Phi(M)$ supported on $ \{x \in \g^* \mid \langle x, \nu
  \rangle = c\}$. Let $p \in M^T$ be isolated with $\langle \Phi(p), \nu \rangle
  > c$ and let $\alpha$ be an isotropy weight of $p$ that
  is $\mathcal{F}$-downward pointing. Let
  $(N_{\alpha},\omega_{\alpha},\Phi_{\alpha})$ be the sheet along
  $\alpha$. There exists $q_{\alpha} \in M^T \cap N_{\alpha}$
  such that
  \begin{itemize}[leftmargin=*]
  \item $\Phi(q) = \Phi_{\alpha}(q_{\alpha})$ is a global extremum of
    $\Phi_{\alpha}$,
  \item $-\alpha$ is an isotropy weight of $q_{\alpha}$, and
  \item $\langle \Phi(q_{\alpha}), \nu\rangle <
  \langle \Phi(p),\nu \rangle$.
  \end{itemize}
\end{corollary}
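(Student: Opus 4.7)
The plan is to combine Lemma \ref{lemma:sheet_weight} with the downward pointing hypothesis, so the proof should be little more than bookkeeping. First, I would invoke Lemma \ref{lemma:sheet_weight} applied to the isolated fixed point $p$ and the isotropy weight $\alpha$. This directly produces a point $q_{\alpha} \in M^T \cap N_{\alpha}$ satisfying the first two bulleted properties: that $\Phi(q_{\alpha}) = \Phi_{\alpha}(q_{\alpha})$ is a global extremum of $\Phi_{\alpha}$ (equivalently, of $\Phi_{\alpha}(N_{\alpha})$), and that $-\alpha$ is an isotropy weight of $q_{\alpha}$. Moreover, Lemma \ref{lemma:sheet_weight} guarantees the extra piece of information $\Phi(q_{\alpha}) \in \Phi(p) + \mathbb{R}_{>0}\langle \alpha \rangle$ that is crucial for the third bullet.

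To establish the third property, I would write $\Phi(q_{\alpha}) = \Phi(p) + t \alpha$ for some $t \in \mathbb{R}_{>0}$, pair with $\nu$, and obtain
\[
\langle \Phi(q_{\alpha}), \nu \rangle = \langle \Phi(p), \nu \rangle + t \langle \alpha, \nu \rangle.
\]
By Definition \ref{definition:downward_pointing} (which is justified by Lemma \ref{lemma:special_weight}), the fact that $\alpha$ is $\mathcal{F}$-downward pointing means $\langle \alpha, \nu \rangle < 0$. Combined with $t > 0$, this forces $t \langle \alpha, \nu \rangle < 0$, whence $\langle \Phi(q_{\alpha}), \nu \rangle < \langle \Phi(p), \nu \rangle$, as required.

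There is no real obstacle in this argument: both lemmas do the heavy lifting, and the role of the hypothesis $\langle \Phi(p), \nu \rangle > c$ is already built into the availability of a downward pointing weight via Lemma \ref{lemma:special_weight}. The only mild subtlety is to notice that Lemma \ref{lemma:sheet_weight} gives a \emph{positive} scalar $t$ rather than just an arbitrary non-zero one, which is what ensures the strict inequality in the correct direction.
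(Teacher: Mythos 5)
Your proposal is correct and is essentially identical to the paper's own proof: both take $q_{\alpha}$ from Lemma \ref{lemma:sheet_weight} (which already supplies the first two bullets together with $\Phi(q_{\alpha}) \in \Phi(p) + \R_{>0}\langle \alpha \rangle$) and then deduce the strict inequality by pairing with $\nu$ and using $\langle \alpha, \nu \rangle < 0$ from the downward-pointing hypothesis.
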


\begin{proof}
  Taking $q_\alpha$ as in Lemma \ref{lemma:sheet_weight}, we need to
  prove only the last property. This follows immediately by observing that $\Phi(q) \in \Phi(p) + \R_{>0}
  \langle \alpha \rangle$ and that $\alpha$ is $\mathcal{F}$-downward pointing.
\end{proof}

To conclude this section, we look at the preimage of faces of the
moment polytope. Given a face $\mathcal{F}$ of $\Phi(M)$, we set
\begin{equation}
  \label{eq:23}
\mathfrak{h}_{\mathcal{F}}:= \left\lbrace \xi \in \g \mid
   \left\langle x-y, \xi\right\rangle=0 \text{ for all }  
 x,y\in \mathcal{F} \right\rbrace.
 \end{equation}
By part \ref{item:1} of Corollary \ref{Cor:ConeFace},
$\mathfrak{h}_{\mathcal{F}}$ is a rational subspace of $\g$ of
dimension equal to the codimension of $\mathcal{F}$ in $\Phi(M)$. Hence 
\(\exp(\mathfrak{h}_{\mathcal{F}})\) is subtorus of \(T\). The subset
$M_{\mathcal{F}}:=\Phi^{-1}(\mathcal{F}) \subset M$ is $T$-invariant; we set
\begin{equation}
  \label{eq:15}
  H_{\mathcal{F}}:= \{ t \in T \mid t \cdot p = p
  \text{ for all } p
  \in M_{\mathcal{F}} \}.
\end{equation}

\begin{lemma}[Theorem 3.5.12 in \cite{Nicolaescu}]\label{Lem:FacesSubmanifolds}
  Let $\comp$ be a compact Hamiltonian $T$-space and let \(\mathcal{F}\) be a face of $\Phi(M)$. Then 
  \(M_{\mathcal{F}}=\Phi^{-1}(\mathcal{F})\) is a connected component of
  \(M^{H_{\mathcal{F}}}\) and the Lie algebra of $
  H_{\mathcal{F}}$ equals $\mathfrak{h}_{\mathcal{F}}$.
\end{lemma}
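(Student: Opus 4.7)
The plan is to establish three claims in sequence: first, that $M_{\mathcal{F}}$ is connected; second, that $\mathrm{Lie}(H_{\mathcal{F}}) = \mathfrak{h}_{\mathcal{F}}$; and third, that $M_{\mathcal{F}}$ is a connected component of $M^{H_{\mathcal{F}}}$. Write $V_{\mathcal{F}} := \mathrm{Ann}(\mathfrak{h}_{\mathcal{F}}) \subseteq \g^*$ for the linear direction of the affine span of $\mathcal{F}$. Connectedness of $M_{\mathcal{F}}$ follows from Theorem \ref{thm:con_pac}: since $\Phi$ has connected fibers and is open onto its image, any decomposition of $M_{\mathcal{F}}$ into disjoint closed sets would project to such a decomposition of the convex (hence connected) face $\mathcal{F}$, a contradiction. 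For the easy inclusion $\mathrm{Lie}(H_{\mathcal{F}}) \subseteq \mathfrak{h}_{\mathcal{F}}$: if $\xi \in \mathrm{Lie}(H_{\mathcal{F}})$, then $\xi^{\#}$ vanishes on $M_{\mathcal{F}}$, whence $d\langle \Phi,\xi\rangle = -\iota_{\xi^{\#}}\omega$ vanishes on $M_{\mathcal{F}}$; since $M_{\mathcal{F}}$ is connected and $\Phi(M_{\mathcal{F}}) = \mathcal{F}$, the functional $\langle\cdot,\xi\rangle$ is constant on $\mathcal{F}$, giving $\xi \in \mathfrak{h}_{\mathcal{F}}$.

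For the reverse inclusion, I would fix a vertex $v$ of $\mathcal{F}$. Since any vertex of a face of a polytope is a vertex of the polytope, $\Phi^{-1}(v)$ contains a fixed submanifold $N$ by Theorem \ref{thm:con_pac}. By Corollary \ref{Cor:ConeFace}, the tangent cone $\mathcal{C}_N \subseteq \g^*$ is proper and generated by the isotropy weights $\alpha_1,\ldots,\alpha_n$ of $N$; the face $\mathcal{F} - v$ identifies with a face $\tilde{\mathcal{F}} = \mathcal{C}_N \cap V_{\mathcal{F}}$ of $\mathcal{C}_N$ (using that a supporting hyperplane of $\mathcal{F}$ in $\Phi(M)$, translated to $v$, contains $V_{\mathcal{F}}$ and cuts out $\tilde{\mathcal{F}}$ in $\mathcal{C}_N$). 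Setting $J := \{j : \alpha_j \in V_{\mathcal{F}}\}$, we obtain $\tilde{\mathcal{F}} = \R_{\geq 0}\text{-span}\{\alpha_j\}_{j \in J}$ and $V_{\mathcal{F}} = \R\text{-span}\{\alpha_j\}_{j \in J}$. Picking $p \in N$ and applying Theorem \ref{local normal form}, in the local model $\C^n$ the moment map takes the form $\Phi_Y(z) = v + \pi \sum_j \alpha_j |z_j|^2$, and the face condition $\Phi_Y(z) \in \mathcal{F}$ reads $\sum_j |z_j|^2 \alpha_j \in \tilde{\mathcal{F}}$. Since $\tilde{\mathcal{F}}$ is a face of the pointed cone $\mathcal{C}_N$, the face property (applied inductively to the summands $|z_j|^2\alpha_j \in \mathcal{C}_N$) forces each summand into $\tilde{\mathcal{F}}$, and hence $|z_j| = 0$ for $j \notin J$. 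Thus $M_{\mathcal{F}}$ coincides locally with the coordinate subspace $\{z : z_j = 0 \text{ for } j \notin J\}$, on which $\exp(\xi)$ acts trivially for every $\xi \in \mathfrak{h}_{\mathcal{F}}$ (because $\langle \alpha_j,\xi\rangle = 0$ for $j \in J$). This yields $\mathfrak{h}_{\mathcal{F}} \subseteq \mathrm{Lie}(H_{\mathcal{F}})$ in a neighborhood of $N$, and I would propagate it to all of $M_{\mathcal{F}}$ by connectedness: the subset of $M_{\mathcal{F}}$ fixed by $\exp(\mathfrak{h}_{\mathcal{F}})$ is closed (as an intersection with the fixed set) and, by repeated local-model analysis at each such point, open.

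For the connected component claim, the same local computation identifies the $\exp(\mathfrak{h}_{\mathcal{F}})$-fixed set in the local model $\C^n$ with $\{z : z_j = 0 \text{ whenever } \alpha_j \notin V_{\mathcal{F}}\} = \{z : z_j = 0,\, j \notin J\}$, which agrees with $M_{\mathcal{F}}$ locally. Combined with the closedness of $M_{\mathcal{F}} = \Phi^{-1}(\mathcal{F})$ in $M$, its connectedness, and Lemma \ref{Lem:fixedSubmainfolds}, this forces $M_{\mathcal{F}}$ to be a connected component of $M^{H_{\mathcal{F}}}$. The main obstacle is carrying out the analogous local-model analysis at non-fixed points $p \in M_{\mathcal{F}}$, where the local model is $T \times_H \mathrm{Ann}(\mathfrak{h}) \times \C^{h+k}$ with moment map $\Phi_Y([t,\alpha,z]) = \Phi(p) + \alpha + \Phi_H(z)$. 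Here the face condition decomposes, via the inclusion $\mathrm{Ann}(\mathfrak{h}) \subseteq V_{\mathcal{F}}$ (valid because the dual of $\mathfrak{h}_{\mathcal{F}} \subseteq \mathfrak{h}$, once the stabilizer containment has been propagated to $p$), into the requirement $\Phi_H(z) \in V_{\mathcal{F}} \cap \mathfrak{h}^*$, so the vertex face-of-cone argument goes through verbatim with $\mathcal{C}_N$ replaced by the cone generated by the $\mathfrak{h}^*$-weights of $p$.
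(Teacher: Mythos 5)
Your argument is correct, and it is necessarily a different route from the paper's, since the paper disposes of this lemma by citing Theorem 3.5.12 of \cite{Nicolaescu} rather than proving it; what your write-up buys is a self-contained proof using only ingredients already in the paper (the Convexity Package, Theorem \ref{local normal form}, Corollary \ref{Cor:ConeFace}, and elementary cone combinatorics: a face of a finitely generated cone is spanned by the generators it contains), at the cost of having to run the local-model analysis twice, once at a fixed point over a vertex of $\mathcal{F}$ and once at general points of $M_{\mathcal{F}}$. Two steps deserve to be made explicit. First, in the inclusion $\mathrm{Lie}(H_{\mathcal{F}}) \subseteq \mathfrak{h}_{\mathcal{F}}$, vanishing of $d\langle \Phi,\xi\rangle$ at the points of the (a priori merely closed, connected) set $M_{\mathcal{F}}$ does not by itself give constancy of $\langle \Phi,\xi\rangle$ on $M_{\mathcal{F}}$; the clean fix is to note that $M_{\mathcal{F}}$ lies in a single connected component of the fixed-point submanifold of the closure of $\exp(\R\xi)$ (Lemma \ref{Lem:fixedSubmainfolds}), on which $\langle\Phi,\xi\rangle$ is locally constant, hence constant. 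Second, the phrase ``goes through verbatim'' at a non-fixed point $q$ with stabilizer $H \supseteq \exp(\mathfrak{h}_{\mathcal{F}})$ hides the analogue of Corollary \ref{Cor:ConeFace}, which the paper states only for fixed submanifolds: one must observe that, by Theorem \ref{local normal form} and openness of $\Phi$, the image near $\Phi(q)$ is $\Phi(q) + \mathrm{Ann}(\mathfrak{h}) + \mathcal{C}_q$ with $\mathcal{C}_q \subseteq \mathfrak{h}^*$ the weight cone of $q$, and that the supporting functional of $\mathcal{F}$ is nonnegative on the subspace $\mathrm{Ann}(\mathfrak{h})$ and hence vanishes on it; only then is $\mathcal{C}_q \cap V_{\mathcal{F}}$ an honest face of $\mathcal{C}_q$, which is what forces the coordinates with weights outside $V_{\mathcal{F}}$ to vanish and identifies the local $M_{\mathcal{F}}$ with the local $\exp(\mathfrak{h}_{\mathcal{F}})$-fixed set (and, via $M_{\mathcal{F}} \subseteq M^{H_{\mathcal{F}}} \subseteq M^{\exp(\mathfrak{h}_{\mathcal{F}})}$, with the local $M^{H_{\mathcal{F}}}$). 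With these two points spelled out, your openness-plus-closedness propagation along the connected set $M_{\mathcal{F}}$ is complete and yields both assertions of the lemma.
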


\begin{remark}\label{rmk:preimage_connected}
  Connectedness of $M_{\mathcal{F}}$ can also be proved using the fact
  that
  the Convexity Package implies that the preimage of any convex set is
  connected (see \cite[Section 2]{kt3}).
\end{remark}

\begin{corollary}\label{cor:preimage_face}
  Let $\comp$ be a compact Hamiltonian $T$-space and let
  \(\mathcal{F}\) be a face of $\Phi(M)$. There exists
  a connected, open and dense subset of $M_{\mathcal{F}}$ whose points
  have stabilizer equal to $H_{\mathcal{F}}$.
\end{corollary}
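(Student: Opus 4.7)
The plan is to realize the desired subset as the set of principal orbits of an induced effective torus action on $M_{\mathcal{F}}$ and then to invoke the principal orbit theorem.

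First, by Lemma \ref{Lem:FacesSubmanifolds}, $M_{\mathcal{F}}$ is a connected component of $M^{H_{\mathcal{F}}}$, hence by Lemma \ref{Lem:fixedSubmainfolds} it is a connected embedded symplectic submanifold of $(M,\omega)$. Since $T$ is abelian and connected and $M_{\mathcal{F}}$ is connected, $M_{\mathcal{F}}$ is $T$-invariant and the induced $T$-action descends to an action of the quotient torus $T':=T/H_{\mathcal{F}}$ on $M_{\mathcal{F}}$. By the very definition of $H_{\mathcal{F}}$ in \eqref{eq:15} as the subgroup of $T$ acting trivially on all of $M_{\mathcal{F}}$, the induced $T'$-action on $M_{\mathcal{F}}$ is effective.

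Next, I would apply the principal orbit theorem (see \cite[Theorem 2.8.5]{dk}) to the $T'$-action on the compact connected manifold $M_{\mathcal{F}}$. This yields a subgroup $K \leq T'$ (uniquely determined since $T'$ is abelian) and an open, dense, connected subset $U \subseteq M_{\mathcal{F}}$ whose points all have $T'$-stabilizer equal to $K$. The key step is then to show $K$ is the trivial subgroup of $T'$. This follows from a continuity/density argument: $K$ fixes every point of $U$ by construction, and the fixed-point set of $K$ acting on $M_{\mathcal{F}}$ is closed; since $U$ is dense, $K$ must act trivially on all of $M_{\mathcal{F}}$, which contradicts the effectiveness of the $T'$-action established above unless $K=\{e_{T'}\}$.

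Finally, pulling the triviality of the $T'$-stabilizer back under the quotient homomorphism $T \to T'$ shows that, for every $p \in U$, the $T$-stabilizer of $p$ is exactly $H_{\mathcal{F}}$, so $U$ is the desired subset. I expect no real obstacle here beyond being careful that the principal orbit theorem applies in this setting (effective action of a compact Lie group on a compact connected manifold) and that the effectiveness statement in Step 1 is indeed immediate from the defining property of $H_{\mathcal{F}}$ given in \eqref{eq:15}.
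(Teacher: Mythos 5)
Your proposal is correct and takes essentially the same route as the paper: both rest on the connectedness of $M_{\mathcal{F}}$ from Lemma \ref{Lem:FacesSubmanifolds} and on the principal orbit theorem combined with the fact that $T$ is abelian. The only cosmetic difference is that you pass to the quotient torus $T/H_{\mathcal{F}}$ and show the principal stabilizer is trivial via closedness of fixed-point sets plus effectiveness, whereas the paper applies the principal orbit theorem to the $T$-action directly and gets the two inclusions $H_{\mathcal{F}} \subseteq H \subseteq H_{\mathcal{F}}$ from the minimality of the principal stabilizer; the underlying argument is the same.
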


\begin{proof}
  By the principal orbit theorem (see \cite[Theorem
  2.8.5]{dk}) and since $T$ is abelian, there exists a subgroup $H$
  of $T$ and a connected, open and dense subset of $M_{\mathcal{F}}$
  such that $H$ is the stabilizer of any point in this subset. Hence,
  $H_{\mathcal{F}} \subseteq H$. However, since $H$ is the stabilizer
  of orbits of principal type and since $T$ is abelian, $H$ is
  contained in the stabilizer of any point in
  $M_{\mathcal{F}}$. Therefore, $H \subseteq H_{\mathcal{F}}$. 
\end{proof}

By Corollary \ref{cor:preimage_face}, the preimage of
a face $\mathcal{F}$ of $\Phi(M)$ gives rise to a sheet in the sense
of Definition \ref{defn:sheet} that is stabilized by
$H_{\mathcal{F}}$. We denote it by 
\begin{equation}
  \label{eq:11}
  (M_{\mathcal{F}},\omega_{\mathcal{F}},\Phi_{\mathcal{F}}).
\end{equation}
\noindent
In particular, since the complexity of $
(M_{\mathcal{F}},\omega_{\mathcal{F}},\Phi_{\mathcal{F}})$ is at most that of $\comp$ by Proposition \ref{prop:regular_exceptional}, if
the codimension of $\mathcal{F}$ is $r$, then $\dim M_{\mathcal{F}} \leq 2n - 2r$.

\subsubsection{The Duistermaat-Heckman measure and its density
  function}\label{sec:duist-heckm-meas}
In this section we take a close look at an invariant of compact
Hamiltonian $T$-spaces that is central to this paper. We start by
recalling the following notion.

\begin{definition}\label{defn:DH_measure}
  Let $\comp$ be a compact Hamiltonian $T$-space of dimension $2n$. The {\bf Duistermaat-Heckman measure of
    $\boldsymbol{\comp}$} is the pushforward of the (normalized)
  Liouville measure, i.e., for any Borel set $U\subset\mathfrak{t}^*$,
  \begin{equation}
    \label{eq:13}
    m_{DH}(U)\colon=\frac{1}{(2\pi)^n}\int_{\Phi^{-1}(U)}
    \frac{\omega^n}{n!}.
  \end{equation}
\end{definition}

The Duistermaat-Heckman measure is absolutely continuous with respect to the Lebesgue 
measure on $\mathfrak{t}^*$ (see \cite[equation (3.4)]{dh}). Therefore its Radon-Nikodym derivative with respect 
to the Lebesgue measure is a Lebesgue integrable function $f_\text{DH}\colon \mathfrak{t}^* \rightarrow 
\R$ that is uniquely defined up to a set of measure zero. Without loss
of generality, we henceforth assume that $f_{\text{DH}}$ vanishes identically
on $\g^* \smallsetminus \Phi(M)$.

In \cite{dh}, Duistermaat and Heckman give an explicit representative of the restriction of
$f_{\text{DH}}$ to the intersection of the moment polytope with the
set of regular values of $\Phi$. In order to
state this result, we denote the set of regular
values of $\Phi$ contained in $\Phi(M)$ by
$\Phi(M)_{\mathrm{reg}}$. Moreover, we recall that for any $x\in
\Phi(M)_{\mathrm{reg}}$, the reduced space $M_x$ is an orbifold that
inherits a symplectic form that we denote by $\omega_x$ (see Section
\ref{sec:orbital-moment-map}). 

\begin{thm}[Proposition 3.2 in \cite{dh}]\label{THM:DH}
  Let $\comp$ be a compact complexity $k$ $T$-space. The restriction of the
  Radon-Nikodym derivative of the Duistermaat-Heckman measure of
  $\comp$ to $\Phi(M)_{\mathrm{reg}}$ can be chosen to be
  equal to the function $\Phi(M)_{\mathrm{reg}} \to \R$ 
  \begin{equation}
    \label{eq:1}
      x \mapsto \frac{1}{(2\pi)^k}\int_{M_x} \frac{\omega_x^k}{k!} =:
      \frac{1}{(2\pi)^k} \mathrm{Vol}(M_x), \quad x \in \Phi(M)_{\mathrm{reg}}.
  \end{equation}
\end{thm}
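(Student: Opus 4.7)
The plan is to prove the formula locally around any given regular value $x_0 \in \Phi(M)_{\mathrm{reg}}$, using the local normal form (Theorem \ref{local normal form}) to recognize $\Phi$ as a submersion in a neighborhood of $\Phi^{-1}(x_0)$, then computing the pushforward of the Liouville measure by Fubini using a Duistermaat–Heckman-type local normal form for the symplectic form on a tubular neighborhood of $\Phi^{-1}(x_0)$.

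First I would establish the \emph{submersion step}. Since $x_0$ is a regular value of $\Phi$, every point $p \in \Phi^{-1}(x_0)$ has finite stabilizer $H_p$ (from the characterization of $\mathrm{Im}(d\Phi_p) = \mathrm{Ann}(\mathfrak{h}_p)$ via the local normal form). Applying Theorem \ref{local normal form} at such a $p$, the local model is isomorphic to a neighborhood of $[1,0,0]$ in $Y_p = T \times_{H_p} \g^* \times \C^k$, because $\mathfrak{h}_p = 0$ implies $\mathrm{Ann}(\mathfrak{h}_p)=\g^*$. The moment map is $[t,\alpha,z] \mapsto \alpha + \Phi(p)$, so $\Phi$ is an orbifold submersion onto an open neighborhood of $x_0$ in $\g^*$. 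Consequently there is a connected open $U \subset \Phi(M)_{\mathrm{reg}}$ containing $x_0$ such that $\Phi \colon \Phi^{-1}(U) \to U$ is a proper orbifold submersion whose fibers are all diffeomorphic (via a Moser-type argument) as principal $T$-orbibundles over the common base $M_{x_0}$.

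Next comes the \emph{normal form step}. Fix a connection 1-form $\theta \in \Omega^1(\Phi^{-1}(x_0);\g)$ on the principal $T$-orbibundle $\Phi^{-1}(x_0) \to M_{x_0}$. Using the local models of the previous step and a $T$-equivariant partition of unity, I would assemble an orbifold $T$-equivariant diffeomorphism
\begin{equation*}
\Psi \colon \Phi^{-1}(x_0) \times U \longrightarrow \Phi^{-1}(U), \qquad \Phi \circ \Psi (z,\alpha) = \alpha,
\end{equation*}
under which $\Psi^*\omega = \omega_{x_0} + \mathrm{d}\langle \alpha - x_0,\theta\rangle + \beta$, where $\omega_{x_0}$ denotes the pullback from $M_{x_0}$ of the reduced form and $\beta$ is a 2-form whose pairing against $\partial_{\alpha_i}$ vanishes. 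This is the standard Duistermaat–Heckman local normal form (a consequence of the coisotropic embedding theorem); its derivation goes through verbatim in the orbifold setting since the stabilizers are finite.

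Finally the \emph{computation step}. Expanding $\omega^n/n!$ in $U \times \Phi^{-1}(x_0)$ and keeping only the terms of top vertical degree, one sees
\begin{equation*}
\frac{\omega^n}{n!} = \mathrm{d}\alpha_1 \wedge \cdots \wedge \mathrm{d}\alpha_d \wedge \frac{(\mathrm{d}\theta)^{d}}{d!}\big|_{\mathrm{hor}} \wedge \frac{\omega_{x_0}^k}{k!} + (\text{terms pulled back from fewer than } d \text{ }\mathrm{d}\alpha\text{'s}),
\end{equation*}
up to an overall combinatorial constant, where only the displayed term contributes to integration over the vertical fibers. Integration of $(\mathrm{d}\theta)^d/d!$ along a generic $T$-orbit gives $(2\pi)^d$ after tracking the convention $\exp(ix) = e^{2\pi i x}$ (with all orbifold multiplicities accounted for by the fiber integral defining $\mathrm{Vol}(M_x)$), while integration of $\omega_{x_0}^k/k!$ over $M_{x_0}$ yields $\mathrm{Vol}(M_{x_0})$. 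Dividing by the $(2\pi)^n = (2\pi)^{d+k}$ of the definition of $m_{\mathrm{DH}}$ produces the claimed density $(2\pi)^{-k} \mathrm{Vol}(M_x)$, and continuity in $x$ identifies it as a bona-fide continuous representative of the Radon–Nikodym derivative on $\Phi(M)_{\mathrm{reg}}$.

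The main obstacle I anticipate is the \emph{normal form step}: one must globalize the local isomorphisms over $\Phi^{-1}(x_0)$ into a single symplectic-fibration trivialization, and do so \emph{in the orbifold category} (since at values of $\Phi$ with finite non-trivial stabilizers, $\Phi^{-1}(x)$ is only a principal orbibundle). The key is to build an equivariant tubular neighborhood using the explicit local models and then apply an equivariant Moser argument in the fiber direction; once this is in place, the final integration is bookkeeping.
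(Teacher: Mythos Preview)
The paper does not give its own proof of this statement: Theorem~\ref{THM:DH} is quoted verbatim from Duistermaat--Heckman \cite[Proposition~3.2]{dh} and used as a black box. Your outline is essentially the original Duistermaat--Heckman argument (local normal form near a regular fiber, coisotropic embedding to get the normal form $\Psi^*\omega = \pi^*\omega_{x_0} + d\langle \alpha - x_0,\theta\rangle$ up to a Moser correction, then fiber integration), so there is nothing to compare on the level of strategy.

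There is, however, a genuine slip in your computation step. In the expansion of $\omega^n/n!$, the term carrying $d\alpha_1\wedge\cdots\wedge d\alpha_d$ comes from choosing the $d\alpha_i\wedge\theta_i$ piece of $d\langle\alpha,\theta\rangle$ in $d$ of the factors; what remains paired with $d\alpha_1\wedge\cdots\wedge d\alpha_d$ is $\theta_1\wedge\cdots\wedge\theta_d\wedge \omega_{x_0}^k/k!$, \emph{not} $(d\theta)^d/d!\big|_{\mathrm{hor}}$. The curvature $d\theta$ is horizontal (basic), so its integral along a $T$-orbit vanishes; it is $\theta_1\wedge\cdots\wedge\theta_d$ that restricts to the Haar volume on each orbit and integrates to $(2\pi)^d$ under the paper's normalization $\exp(ix)=e^{2\pi i x}$. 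Once you make this substitution the bookkeeping goes through as you describe, and the orbifold caveats you flag (finite stabilizers at regular values) are indeed handled by working in local uniformizing charts and noting that the order of the stabilizer is absorbed into the definition of $\int_{M_x}\omega_x^k/k!$ as an orbifold integral.
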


\begin{remark}\label{rmk:polynomial}
  By \cite[Corollary 3.3]{dh}, the restriction of the
  function \eqref{eq:1} to each connected component of $\Phi(M)_{\mathrm{reg}}$
  is a polynomial of degree at most $k$. Moreover, if this
  polynomial has positive degree, the coefficients
  of the monomials of top degree are integral. This is because the cohomology classes
  $[\omega_x]$ vary linearly with $x$ on such a connected component
  and the variation is controlled by a cohomology class
  with integral coefficients (see \cite[Theorem 1.1]{dh}).
\end{remark}

\begin{rmk}\label{rmk:thm_enough}
  Theorem \ref{THM:DH} is sufficient to calculate the
  Duistermaat-Heckman measure, as the set of singular values has
  measure zero by Sard's theorem. 
\end{rmk}

By Remark \ref{rmk:polynomial}, the function of
\eqref{eq:1} is continuous. The aim of this section is to prove Theorem \ref{thm:DH_function_cts}
below, which is probably well-known to experts (cf. \cite[Theorem
3.2.11]{GLS} for linear symplectic actions on vector spaces and
\cite[Theorem A and Remark 1.2.10]{paradan}). However,
since we use it extensively throughout the paper, we include a proof for completeness.

\begin{theorem}\label{thm:DH_function_cts}
  Given a compact Hamiltonian $T$-space $\comp$, there exists a unique
  continuous function $DH \comp: \Phi(M) \to \R$ that extends the
  function of \eqref{eq:1}.  
\end{theorem}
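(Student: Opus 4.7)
Uniqueness is straightforward. A point $p \in M$ is a critical point of $\Phi$ if and only if its stabilizer has positive dimension, so the set of critical values is contained in $\bigcup_H \Phi(M^H)$, where $H$ ranges over the finitely many positive-dimensional closed subgroups of $T$ that occur as stabilizers. By Lemma~\ref{Lem:fixedSubmainfolds} and Theorem~\ref{thm:con_pac} applied to the $T/H$-action on each connected component of $M^H$, each $\Phi(M^H)$ is a finite union of convex polytopes of dimension at most $d - \dim H < d$, so this set has empty interior in $\Phi(M)$. Thus $\Phi(M)_{\mathrm{reg}}$ is open and dense in $\Phi(M)$, and any two continuous extensions of~\eqref{eq:1} must coincide.

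For existence, I would localize. Fix $x_0 \in \Phi(M)$. Since $\Phi^{-1}(x_0)$ is compact, one can cover a $T$-invariant open neighborhood of $\Phi^{-1}(x_0)$ by finitely many $T$-invariant open sets $U_1, \ldots, U_N$ on each of which Theorem~\ref{local normal form} gives an isomorphism onto an open subset of a local model $(Y_i, \omega_{Y_i}, \Phi_{Y_i} + c_i)$ with $c_i = \Phi(p_i)$. Shrinking to a smaller open neighborhood $V$ of $x_0$ in $\g^*$, one can choose a $T$-invariant partition of unity $\{\chi_i\}_{i=1}^N$ on $\Phi^{-1}(V)$ subordinate to $\{U_i\}$ (such a partition exists by averaging over $T$). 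Then the restriction of $m_{DH}$ to $V$ equals $\sum_i m_i$, where $m_i$ is the pushforward by $\Phi$ of the smooth compactly supported measure $(2\pi)^{-n}\chi_i\, \omega^n/n!$. It therefore suffices to show that each $m_i$ admits a continuous density on $V$.

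The final step is a direct computation in the local model. Using $Y_i = T \times_{H_i} \mathrm{Ann}(\mathfrak{h}_i) \times \C^{h_i+k}$ together with~\eqref{eq:43}, the moment map is independent of the $T$-factor and is the identity on the $\mathrm{Ann}(\mathfrak{h}_i)$-factor, so by Fubini the continuity of the density of $m_i$ reduces to the continuity of the density of the pushforward by $\Phi_{H_i}\colon\C^{h_i+k}\to\mathfrak{h}_i^*$ of a smooth compactly supported function. Passing to polar coordinates $z_j = r_j e^{2\pi i \theta_j}$, integrating out the angles, and setting $u_j = \pi r_j^2$, this in turn reduces to the pushforward of a smooth compactly supported function on $\R^{h_i+k}_{\geq 0}$ under the linear map $u \mapsto \sum_j u_j \alpha_j^{(i)}$, where the $\alpha_j^{(i)}$ are the isotropy weights. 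By Corollary~\ref{cor:effective} and Remark~\ref{rmk local normal form} these weights $\R$-span $\mathfrak{h}_i^*$, making the linear map a surjection onto a full-dimensional polyhedral cone. The pushforward under such a linear surjection of a smooth compactly supported function on a half-space is classically known to admit a continuous density --- obtained by iterated one-dimensional integrations and piecewise polynomial on each chamber of the weight-hyperplane arrangement. Summing over $i$ yields a continuous density for the restriction of $m_{DH}$ to $V$, and by Remark~\ref{rmk:thm_enough} this density coincides with~\eqref{eq:1} on $\Phi(M)_{\mathrm{reg}} \cap V$. The main obstacle is the careful bookkeeping of the partition-of-unity decomposition combined with the changes of coordinates in each local model; the underlying analytic content reduces to the classical continuity result for linear pushforwards.
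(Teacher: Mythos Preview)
Your approach is essentially correct and genuinely different from the paper's. The paper argues globally: it partitions $\Phi(M)$ into chambers, on each of which \eqref{eq:1} is a polynomial by Remark~\ref{rmk:polynomial}, and then uses the wall-crossing formula (Theorem~\ref{Thm: GLS-wall-crossing-formula}) to show that the polynomials on two adjacent chambers agree on their common facet. The key point there is that an internal facet comes from an \emph{exceptional} sheet (Lemma~\ref{lemma:exceptional_not_boundary}), which by Proposition~\ref{prop:regular_exceptional} has complexity strictly less than $\comp$ and hence codimension at least four, so the jump $f_+-f_-$ in \eqref{eq:2} is divisible by the normal $\xi$ and vanishes on the wall. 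Your local argument via partition of unity and linear pushforwards avoids the wall-crossing machinery entirely and is closer in spirit to \cite[Theorem~3.2.11]{GLS}; it is more self-contained but requires the bookkeeping you acknowledge.

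A few statements in your sketch need tightening. First, each $m_i$ does \emph{not} in general admit a continuous density on all of $V$: already in the toric case the density jumps at $\partial\Phi(M)$. What is true, and what you actually need, is that each $m_i$ has a continuous density on the local image cone $\Phi(p_i)+\mathrm{Ann}(\mathfrak{h}_i)+C_i$, and that by stability (Theorem~\ref{thm:con_pac}\ref{item:4}) this cone agrees, as a germ at $x_0$, with $\Phi(M)$ for \emph{every} $p_i\in\Phi^{-1}(x_0)$; hence the sum is continuous on $V\cap\Phi(M)$. Second, the domain of the linear pushforward is an orthant $\R^{h_i+k}_{\ge 0}$, not a half-space. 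Third, the pushforward of a general smooth compactly supported $\tilde g$ is \emph{not} piecewise polynomial---that description applies only to the pushforward of the constant function (the model DH density). The continuity you want follows instead from dominated convergence: writing the density as $\int_{\ker L}\tilde g(s(\beta)+w)\,\mathbb{1}_{\R^{h+k}_{\ge 0}}(s(\beta)+w)\,dw$, the indicator is discontinuous only on a measure-zero set of $w$ for each fixed $\beta$ in the image cone, since the coordinates that are constant along fibers are automatically nonnegative there. With these corrections your argument goes through.
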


\begin{remark}\label{rmk:DH_no_internal_walls}
  By Remark \ref{rmk:polynomial}, if the interior of the moment polytope consists solely of regular
  values, then Theorem \ref{thm:DH_function_cts}
  is trivial. In this case, $DH\comp$ is the restriction
  of a polynomial of degree at most the complexity of
  $\comp$. For instance, the desired function in the
  case of compact symplectic toric
  manifolds is the indicator function of the moment polytope, since
  reduced spaces are connected by Theorem \ref{thm:con_pac}, and since $k = \dim M_x
  = 0$ for all $x \in \Phi(M)_{\mathrm{reg}}$. 
\end{remark}

Theorem \ref{thm:DH_function_cts} allows us to introduce the following notion.

\begin{definition}\label{Def: DHfucntion}
Let $\comp$ be a compact Hamiltonian $T$-space. We call the continuous
map $DH \comp : \Phi (M) \to \R$ given by Theorem
\ref{thm:DH_function_cts} {\bf the Duistermaat-Heckman
  function} of $\comp$.
\end{definition}

\begin{example}\label{exm:induced_DH}
  Let $\comp$ be a compact Hamiltonian $T$-space and let $H \subset T$
  be a subtorus. Choose a complementary subtorus $K$ of $T$ so that
  $T = H \times K$. This induces an identification $\g^* \simeq \mathfrak{h}^* \oplus
  \mathfrak{k}^*$. We write the Lebesgue measure on $\g^*$ as $dxdy$,
  where $dx$ (respectively $dy$) is the Lebesgue measure on
  $\mathfrak{h}^*$ (respectively $\mathfrak{k}^*$). Let $\pi : \g^*
  \to \mathfrak{h}^*$ be the projection induced by the inclusion $H
  \subset T$. The $H$-action is Hamiltonian with moment map $\Phi':= \pi \circ \Phi$. Since $DH\comp $ is continuous, by Fubini's theorem, the Duistermaat-Heckman function of
  $(M,\omega, \Phi')$ is given by
  \begin{equation}
    \label{eq:19}
      DH (M,\omega, \Phi') (x) = \int\limits_{\Delta_x}DH\comp(x,y) dy
      \quad \text{for all } x \in \Phi'(M),
  \end{equation}
  \noindent
  where $\Delta_x = \pi^{-1}(x) \cap \Phi(M)$.

  Suppose further that $\comp$ is a compact symplectic toric manifold and that
  $H$ has codimension 1. For any
  $x \in \Phi'(M)$, $\Delta_x$ is an interval in $\{x\} \times
  \mathfrak{k}^* \simeq \{x\} \times \R$ that we write as $\{(x,y) \in \mathfrak{h}^*\oplus \R \mid y \in
  [p_{min}(x), p_{max}(x)] \}$. By Remark \ref{rmk:DH_no_internal_walls}, 
  \begin{equation}
    \label{eq:20}
    DH (M,\omega, \Phi') (x) = p_{\max}(x) - p_{\min}(x) \quad
    \text{for all } x \in \Phi'(M). 
  \end{equation}
  We observe that, since $\Phi(M)$ is a convex polytope, the
  difference $p_{max} - p_{min}$ is concave (cf. Proposition
  \ref{prop:concave} below).

  %     We fix notation as in Example \ref{exm:induced_DH}. Suppose that
  % $\comp$ is a compact complexity zero $T$-space and let $H \subset T$
  % be a subtorus of codimension 1, so that $\dim K =1$ and
  % $(M,\omega,\Phi')$ is a complexity one $H$-space. Then, for all
  % $x \in \Phi'(M)$, $\Delta(x)$ is an interval in $\{x\} \times
  % \mathfrak{k}^* \cong \{x\} \times \R$ that we write as $\{(x,y) \in \mathfrak{h}^*\oplus \R \mid y \in
  % [p_{min}(x), p_{max}(x)] \}$. In particular, we see that 
  % $$
  % \Phi(M)= \{ (x,y)\in \Phi'(M) \times \R \mid y \in
  % [p_{min}(x), p_{max}(x)] \}.   
  % $$
  % \noindent
  % Since $\Phi(M)$ is a convex polytope, the functions $p_{min}, p_{max}\colon \Phi'(M) \to  \R$ are
  % continuous and piecewise linear; moreover, $p_{min}$ (respectively
  % $p_{max}$) is convex (respectively concave). In particular, the
  % difference $p_{max} - p_{min}$ is concave. By equation
  % \eqref{eq:20}, we have that the Duistermaat-Heckman function of
  % $(M,\omega,\Phi')$ is precisely the concave function $p_{max} - p_{min}$.
\end{example}

Our proof of Theorem \ref{thm:DH_function_cts} uses extensively ideas from \cite[Section
3.5]{GLS}. Before proceeding with the proof, we need to recall some
facts about singular values of the moment map.

\subsubsection*{Intermezzo 1: chambers of the moment map}
We begin by relating singular points of the moment map of a
Hamiltonian $T$-space (that is not necessarily compact), to sheets
arising from one dimensional stabilizers (see Definition
\ref{defn:sheet}). If $K \leq T$ is a closed one-dimensional
subgroup that occurs as the stabilizer of some point in $M$, then
every $p \in M^K$ is a singular point of $\Phi$. In fact, the converse
also holds.

\begin{lemma}\label{lemma:walls}
  Let $\comp$ be a Hamiltonian $T$-space. If $K \leq T$ is a
  one-dimensional closed
  subgroup that occurs as the stabilizer of some point
  in $M$ then $\Phi(M^K)$ is contained in the set of singular values
  of $\Phi$. Conversely, if $x \in \g^*$ is a
  singular value of $\Phi$, then there exists a one-dimensional closed
  subgroup $K \leq T$ that occurs as the stabilizer of some point
  in $M$ such that $x \in \Phi(M^K)$.
\end{lemma}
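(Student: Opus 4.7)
I would split the argument into the two implications, noting that the first is essentially infinitesimal while the converse relies on the local normal form in an essential way.

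For the forward direction, let $p \in M^K$. Then $K$ is contained in $\mathrm{Stab}(p)$, so the Lie algebra $\mathfrak{h}_p$ of $\mathrm{Stab}(p)$ has dimension at least $1$. The moment map identity $d\langle \Phi, \xi\rangle_p = -\iota_{\xi^\#_p}\omega_p$ from \eqref{def moment map}, combined with the fact that $\xi^\#_p = 0$ precisely when $\xi \in \mathfrak{h}_p$ and the non-degeneracy of $\omega_p$, yields $\mathrm{Im}(d\Phi_p) = \mathrm{Ann}(\mathfrak{h}_p)$. Since $\dim \mathfrak{h}_p \geq 1$, the image is a proper subspace of $\g^*$, so $p$ is a critical point of $\Phi$ and $\Phi(p)$ is a singular value. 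I note in passing that the hypothesis that $K$ occurs as a stabilizer is actually not needed for this direction.

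For the converse, let $x \in \g^*$ be a singular value and pick a critical point $p \in \Phi^{-1}(x)$. The computation above shows that $H := \mathrm{Stab}(p)$ has positive dimension $h \geq 1$. If $h = 1$, I would simply take $K := H$: this is a one-dimensional closed subgroup of $T$, it is by definition the stabilizer of $p$, and $p \in M^K$ trivially. If $h \geq 2$, I would invoke Theorem \ref{local normal form} to work in the local model $Y = T \times_H \mathrm{Ann}(\mathfrak{h}) \times \C^{h+k}$ centered at $p$, with isotropy weights $\alpha_1, \ldots, \alpha_{h+k}$. By Remark \ref{rmk local normal form}, these $\R$-span $\mathfrak{h}^*$, so I can extract a subset $S \subseteq \{1, \ldots, h+k\}$ of cardinality $h-1$ whose corresponding weights span a hyperplane in $\mathfrak{h}^*$. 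The point $q := [1, 0, z] \in Y$ with $z_j \neq 0$ for $j \in S$ and $z_j = 0$ otherwise has stabilizer $K := \{g \in H \mid \rho_j(g) = 1 \text{ for all } j \in S\}$, a closed subgroup of $H$ whose Lie algebra $\{\xi \in \mathfrak{h} \mid \langle \alpha_j, \xi\rangle = 0 \text{ for all } j \in S\}$ is one-dimensional by our choice of $S$. Thus $K$ is a one-dimensional closed subgroup of $T$ that occurs as a stabilizer (the image of $q$ under the local isomorphism of Theorem \ref{local normal form}), and $K \subseteq H$ implies $p \in M^K$, giving $x = \Phi(p) \in \Phi(M^K)$.

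The main obstacle is the converse in the case $h \geq 2$: one must not merely produce a one-dimensional stabilizer somewhere in $M$, but one which \emph{also} fixes the originally chosen critical point $p$. The isotropy weight argument resolves this by exhibiting a one-dimensional subgroup of $H$ itself as a stabilizer of a nearby orbit in the local model.
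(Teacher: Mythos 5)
Your proposal is correct and follows essentially the same route as the paper: the forward direction via $\mathrm{Im}(d\Phi_p)=\mathrm{Ann}(\mathfrak{h}_p)$, and the converse by reducing to the local model at a critical point $p$, using injectivity of the slice representation so the isotropy weights span $\mathfrak{h}^*$, and then making nonzero exactly those coordinates whose weights span a hyperplane to produce a one-dimensional stabilizer $K\leq H$, which fixes $p$ and hence gives $x\in\Phi(M^K)$. The only (harmless) cosmetic difference is that you compute the stabilizer of an explicit point $[1,0,z]$ rather than invoking the $H$-moment map as the paper does.
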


\begin{proof}
  Since the action is Hamiltonian, the first statement
  holds. Conversely, let $p \in M$ be a singular point with $\Phi(p)
  =x$. Let $H$ be the stabilizer of $p$, let $h = \dim H$ and let $k$
  be the complexity of $\comp$. Since the $T$-action is
  Hamiltonian and since $p$ is a singular point of $\Phi$, $\dim H \geq 1$. If $\dim H = 1$, there is nothing to prove. Hence, suppose that
  $\dim H \geq 2$. By Theorem \ref{local normal form}, it suffices to prove the result for the Hamiltonian $T$-action
  on the local model of $p$. Hence, we may assume that $\comp$ is the
  local model $(Y,\omega_Y,\Phi_Y)$ at $p$ and that $p =
  [1,0,0]$. Moreover, by Corollary \ref{cor:effective}, the symplectic
  slice representation $\rho :  H \to
  (S^1)^{h+k}$ of $p$ is injective. The
  stabilizer of any point in $Y = T
  \times_H \mathrm{Ann}(\mathfrak{h}) \times \C^{h+k}$ is a subgroup of
  $H$. In fact, if $\tilde{H} \leq H$ is the stabilizer of some
  point in $Y$, we have that
  $$ Y^{\tilde{H}} = T \times_H \mathrm{Ann}(\mathfrak{h}) \times
  (\C^{h+k})^{\tilde{H}}, $$
  \noindent
  where $H$ acts on $\C^{h+k}$ via
  the symplectic slice representation of $p$. Since the $H$-action on $\C^{h+k}$ is linear,
  $(\C^r)^{\tilde{H}} $ is a subspace for any subgroup $\tilde{H}
  \leq H$. Hence, to prove the result, it suffices to show that there exists a
  one-dimensional closed subgroup $K \leq H$ that occurs as a
  stabilizer of a point in $\C^{h+k}$. 

  Let $\eta_1,\ldots, \eta_{h+k} \in \mathfrak{h}^*$ be the isotropy
  weights of $p$. Since the symplectic slice representation of $p$ is
  injective, the $H$-action on $\C^{h+k}$ is effective. This implies that $\eta_1,\ldots, \eta_{h+k}$
  span $\mathfrak{h}^*$. Hence we may assume that there exists $s \geq
  1$ such that the span of $\eta_{s+1},\ldots, \eta_{h+k}$ has codimension
  1. Since the $H$-action on $\C^{h+k}$ is Hamiltonian with moment map $\Phi_H(z) = \frac{1}{2}\sum\limits_{j=1}^{h+k}
  |z_j|^2 \eta_j$, it can be checked directly that all
  points in
  \begin{equation*}
     \{z = (z_1,\ldots,z_r) \in \C^{h+k} \mid z_j = 0 \text{ if }
     j=1,\ldots,s, \, z_j \neq 0 \text{ if } j \geq s+1 \}, 
  \end{equation*}
 \noindent
 have stabilizers of dimension one, as desired.
\end{proof}

In other words, the set of singular values of the moment map equals
the union of the moment map image of all sheets that are stabilized by
some one dimensional closed subgroup
of $T$ (see Definition \ref{defn:sheet}). Each such image is contained
in some affine hyperplane (see Remark
\ref{rmk:convention_moment_map_sheet} and the discussion preceding
it). If, in addition, $M$ is compact there are two important
consequences. First, there are only finitely many subgroups of $T$ that occur
as the stabilizer of some point in $M$ (see \cite[Theorem
10.9.1]{gs-supersymmetry}). Second, since the action is
Hamiltonian, $\Phi$ has some singular
point; hence, by Lemma \ref{lemma:walls}, there exists a
one dimensional closed subgroup of $T$ that that occurs
as the stabilizer of some point in $M$. Let $K_1,\ldots, K_r \leq T$ be the
collection of such one dimensional closed subgroups. Since $M$ is
compact, by Lemma \ref{Lem:fixedSubmainfolds}, $M^{K_i}$ is a compact submanifold of $M$
for each $i=1,\ldots,
r$ and, therefore, has finitely
many connected components $N_{i1},\ldots, N_{is_i}$. For each $i,j$,
we denote the corresponding sheet by $(N_{ij},\omega_{ij},\Phi_{ij})$
and we set $\Delta_{ij}:= \Phi_{ij}(N_{ij})$. 
We observe that the union of the $\Delta_{ij}$'s includes the union of all
facets. More precisely, the following holds.

\begin{lemma}\label{lemma:facets_are_included}
  Let $\comp$ be a compact Hamiltonian $T$-space. For any facet
  $\mathcal{F}$ of $\Phi(M)$, there exist indices $i,j$ as above so
  that $\mathcal{F} = \Delta_{ij}$. 
\end{lemma}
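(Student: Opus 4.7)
The plan is to identify, for a given facet $\mathcal{F}$ of $\Phi(M)$, the subgroup $H_{\mathcal{F}}$ from \eqref{eq:15} with one of the $K_i$'s, and then use the fact that $M_{\mathcal{F}} = \Phi^{-1}(\mathcal{F})$ is a connected component of $M^{H_{\mathcal{F}}}$ to conclude that it must equal one of the $N_{ij}$'s, whose image is $\Delta_{ij}$.

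First I would invoke Lemma \ref{Lem:FacesSubmanifolds} to note that the Lie algebra of $H_{\mathcal{F}}$ equals $\mathfrak{h}_{\mathcal{F}}$, which by \eqref{eq:23} and the discussion after it has dimension equal to the codimension of $\mathcal{F}$ in $\Phi(M)$. Since $\mathcal{F}$ is a facet, this codimension is $1$, so $H_{\mathcal{F}}$ is a closed one-dimensional subgroup of $T$. Next, by Corollary \ref{cor:preimage_face}, there exists a dense subset of $M_{\mathcal{F}}$ whose points have stabilizer exactly $H_{\mathcal{F}}$, so $H_{\mathcal{F}}$ occurs as the stabilizer of some point in $M$. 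Therefore $H_{\mathcal{F}} = K_i$ for some $i \in \{1,\ldots,r\}$.

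Then, applying Lemma \ref{Lem:FacesSubmanifolds} again, $M_{\mathcal{F}}$ is a connected component of $M^{H_{\mathcal{F}}} = M^{K_i}$, so $M_{\mathcal{F}} = N_{ij}$ for some $j \in \{1,\ldots,s_i\}$. Finally, since $M_{\mathcal{F}} = \Phi^{-1}(\mathcal{F})$, we have $\Phi(M_{\mathcal{F}}) \subseteq \mathcal{F}$; conversely, every point of $\mathcal{F} \subseteq \Phi(M)$ has a preimage in $\Phi^{-1}(\mathcal{F}) = M_{\mathcal{F}}$, so $\Phi(M_{\mathcal{F}}) = \mathcal{F}$. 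Using the convention that $\Phi_{ij}$ may be taken to agree with $\Phi|_{N_{ij}}$ (see Remark \ref{rmk:convention_moment_map_sheet}), we conclude $\mathcal{F} = \Phi(N_{ij}) = \Delta_{ij}$, as desired.

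There is no substantive obstacle: the proof is a direct combination of Lemma \ref{Lem:FacesSubmanifolds} (which provides both the dimension of $H_{\mathcal{F}}$ and the identification of $M_{\mathcal{F}}$ with a connected component of a fixed submanifold) and Corollary \ref{cor:preimage_face} (which guarantees that $H_{\mathcal{F}}$ is realized as a stabilizer). The only mildly delicate point is the bookkeeping between $\Phi_{ij}$ and $\Phi|_{N_{ij}}$, which is handled by the convention of Remark \ref{rmk:convention_moment_map_sheet}.
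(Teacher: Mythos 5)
Your proof is correct and follows essentially the same route as the paper: identify $H_{\mathcal{F}}$ as a one-dimensional closed subgroup via Lemma \ref{Lem:FacesSubmanifolds}, use Corollary \ref{cor:preimage_face} to see it is realized as a stabilizer (hence equals some $K_i$), and conclude that $M_{\mathcal{F}}$, being a connected component of $M^{K_i}$, is one of the $N_{ij}$'s with image $\mathcal{F}$. Your version merely spells out the details (including the surjectivity $\Phi(M_{\mathcal{F}})=\mathcal{F}$ and the moment-map convention) that the paper leaves implicit.
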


\begin{proof}
  Let $(M_{\mathcal{F}},\omega_{\mathcal{F}},\Phi_{\mathcal{F}})$ be the
  sheet corresponding to $\mathcal{F}$ (see Corollary \ref{cor:preimage_face} and
  \eqref{eq:11}), and let $H_{\mathcal{F}}$ its stabilizer. By Lemma
  \ref{Lem:FacesSubmanifolds}, since the codimension of $\mathcal{F}$
  in $\g^*$ is one, the dimension of $H_{\mathcal{F}}$ is also one. By
  Corollary \ref{cor:preimage_face}, it follows that
  $(M_{\mathcal{F}},\omega_{\mathcal{F}},\Phi_{\mathcal{F}})$ is one
  of the sheets constructed above.
\end{proof}

The complement of the union of the $\Delta_{ij}$'s in the moment polytope is precisely $\Phi(M)_{\mathrm{reg}}$. We call the
closure of a connected component of this complement a {\bf chamber of
  $\boldsymbol{\Phi(M)}$}. These chambers partition the moment polytope into
subpolytopes, i.e., the following properties hold (see Figure \ref{Figure ChambersFlagVariety}):

\begin{enumerate}[label=(\arabic*),ref=(\arabic*),leftmargin=*]
\item \label{item:16} Each chamber is a polytope in $\g^*$ of full dimension and any two chambers
intersect in a common face. 
\item \label{item:11} Let $F$ be a facet of a chamber. The set of
  points $x\in F$ that are regular values for all the moment maps
  $\Phi_{ij}$ of the sheets
  $(N_{ij},\omega_{ij},\Phi_{ij})$ constructed above is dense in
  $F$. Moreover this set is contained in the interior of
  $F$. Conversely, if $x$ is a singular value that is a regular value
  for all the moment maps
  $\Phi_{ij}$ of the sheets
  $(N_{ij},\omega_{ij},\Phi_{ij})$ constructed above, then $x$ lies in
  the interior of a facet of a chamber.
\item \label{item:15} If two chambers $\mathfrak{c}$ and
  $\mathfrak{c}'$ intersect in a face $F$, then there exists a
  sequence of chambers $\mathfrak{c}_0 = \mathfrak{c},
  \mathfrak{c}_1,\ldots, \mathfrak{c}_s = \mathfrak{c}'$ with the
  property that the intersection of $\mathfrak{c}_l$ and
  $\mathfrak{c}_{l+1}$ is a facet that contains $F$ for all
  $l=0,\ldots, s-1$.
\end{enumerate}

\begin{figure}[htbp]
	\begin{center}
		\includegraphics[width=4.5cm]{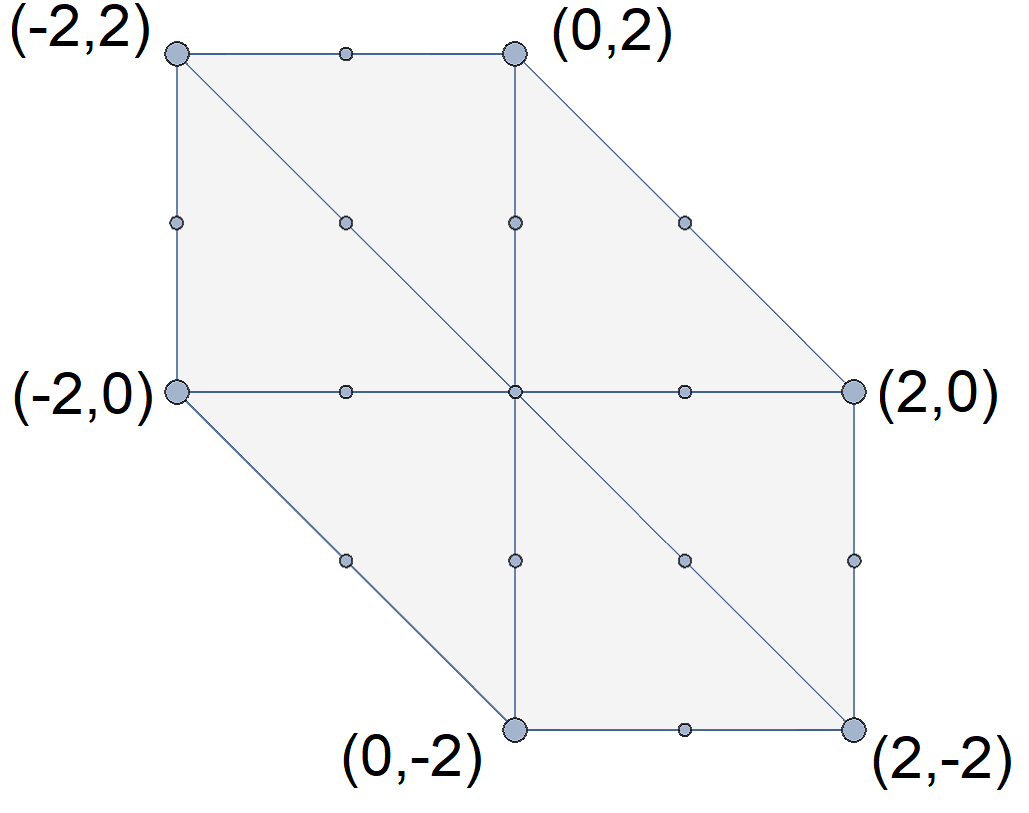}
		\caption{The six triangles above are the chambers for
                  the moment polytope for the standard
                  $(S^1)^2$-action on the complete flag variety in
                  $\C^3$ with first Chern
                  class equal to the class of the symplectic form.}
		\label{Figure ChambersFlagVariety}
	\end{center}
\end{figure}

Properties \ref{item:16} -- \ref{item:15} follow from the above
definition of the $N_{ij}$'s and from Theorems \ref{local normal form}
and \ref{thm:con_pac} (see also \cite[Section 14]{gs-inve}). \\

By Remark \ref{rmk:DH_no_internal_walls}, Theorem
\ref{thm:DH_function_cts} follows at once if the interior of $\Phi(M)$
consists entirely of regular values. The following result describes
precisely when this happens in terms of exceptional sheets.

\begin{lemma}\label{lemma:one_chamber}
  Let $\comp$ be a compact Hamiltonian $T$-space. There is
  precisely one chamber of $\Phi(M)$ if and only if there are no
  exceptional sheets.
\end{lemma}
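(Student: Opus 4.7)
The key reformulation is that having precisely one chamber is equivalent to every $\Delta_{ij}$ being contained in $\partial\Phi(M)$: indeed $\Phi(M)_{\mathrm{reg}}=\Phi(M)\setminus\bigcup_{i,j}\Delta_{ij}$, and the interior of $\Phi(M)$ is connected, so the interior forms a single connected component of regular values precisely when no $\Delta_{ij}$ cuts into it. Since the $\Delta_{ij}$ are exactly the moment images of the sheets $(N_{ij},\omega_{ij},\Phi_{ij})$ stabilized by the one-dimensional closed subgroups $K_i$, I plan to recast the equivalence in terms of these sheets and invoke Lemmas \ref{lemma:exceptional_not_boundary} and \ref{lemma:necessary_sufficient_existence_exceptional}.

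For the direction ``no exceptional sheets $\Rightarrow$ one chamber'', if every sheet is regular then in particular every sheet $(N_{ij},\omega_{ij},\Phi_{ij})$ is regular, and the contrapositive of Lemma \ref{lemma:exceptional_not_boundary} forces $\Delta_{ij}\subset\partial\Phi(M)$ for each $i,j$. Consequently $\mathrm{int}(\Phi(M))\subset\Phi(M)_{\mathrm{reg}}$, and since the interior is connected there is exactly one chamber.

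For the converse I plan to argue by contradiction. Assume the one-chamber condition, so $\bigcup_{i,j}\Delta_{ij}\subset\partial\Phi(M)$, and suppose an exceptional sheet exists. By Lemma \ref{lemma:necessary_sufficient_existence_exceptional} there is an exceptional fixed point $p\in M^T$; via the local model at $p$ (Theorem \ref{local normal form}), the fixed submanifold through $p$ has dimension strictly less than $2k$, so the number of non-zero isotropy weights among $\alpha_1,\ldots,\alpha_n$ strictly exceeds $\dim T$. The heart of the proof is to exploit this over-determination of the weight cone $\mathcal{C}_p=\R_{\geq 0}\langle\alpha_1,\ldots,\alpha_n\rangle$ by a Farkas/separation argument in $\g$ to produce a primitive $\eta\in\ell$ such that (i) $\langle\alpha_{j_0},\eta\rangle=0$ for some $j_0$, so that $K=\exp(\R\eta)$ actually occurs as the stabilizer of nearby points in the local model, and (ii) $\eta^{\perp}$ is not a supporting hyperplane of $\mathcal{C}_p$, i.e., there exist weights $\alpha_i,\alpha_{i'}$ with $\langle\alpha_i,\eta\rangle>0>\langle\alpha_{i'},\eta\rangle$. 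Property (ii) guarantees that $\eta^{\perp}$ meets the interior of $\mathcal{C}_p$, so the moment image of the component of $M^K$ through the associated local-model point enters the interior of $\Phi(p)+\mathcal{C}_p$ and hence, by Corollary \ref{Cor:ConeFace}, the interior of $\Phi(M)$; this produces a $\Delta_{ij}$ not contained in $\partial\Phi(M)$, contradicting the hypothesis. The main obstacle is the construction of $\eta$: the existence of such an element in $\g$ is a standard convex-separation statement given the non-simplicial structure of $\mathcal{C}_p$ forced by having more non-zero weights than $\dim T$, but the passage to a primitive integral representative within the rational hyperplane $\alpha_{j_0}^{\perp}$ must be carried out while preserving the open mixed-sign condition.
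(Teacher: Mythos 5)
Your first direction is exactly the paper's: if no sheet is exceptional, then every sheet stabilized by a nontrivial subgroup has image in $\partial\Phi(M)$ by (the contrapositive of) Lemma \ref{lemma:exceptional_not_boundary}, so all the $\Delta_{ij}$ lie in the boundary and the complement is the connected interior, giving one chamber. For the converse, however, the paper does something much lighter than what you attempt: it argues that if there are at least two chambers, then some $\Delta_{ij}$ is not contained in $\partial\Phi(M)$, and Lemma \ref{lemma:exceptional_not_boundary} immediately makes the corresponding sheet $(N_{ij},\omega_{ij},\Phi_{ij})$ exceptional. At no point does the paper try to go the way you do, from an exceptional sheet back to a wall in the interior of the polytope.

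That step is where your plan has a genuine gap. The ``standard convex-separation statement'' you invoke is not available: an exceptional fixed point with more than $\dim T$ nonzero weights need not admit a primitive $\eta\in\ell$ annihilating some weight while pairing with the remaining weights with both signs. Concretely, let $T=S^1$ act on $\C P^2$ by $t\cdot[z_0:z_1:z_2]=[z_0:tz_1:tz_2]$. The fixed point $p=[1:0:0]$ is isolated, hence exceptional by Lemma \ref{lemma:regular_connected_stab}, and its weights are $1,1$: no nonzero $\eta$ satisfies your condition (i), and mixed signs are impossible. Worse, this example shows the implication your contradiction scheme needs (``an exceptional sheet forces a $\Delta_{ij}$ to meet the interior, hence at least two chambers'') is simply false with the paper's definitions: the only one-dimensional stabilizer is $S^1$ itself, $M^{S^1}$ consists of $\{p\}$ and the line $\{z_0=0\}$, whose images are the two endpoints of the moment interval, so there is precisely one chamber, while the sheet $\{p\}$ (stabilized by all of $T$, as allowed in Definition \ref{defn:sheet} and used in Lemma \ref{lemma:necessary_sufficient_existence_exceptional}) is exceptional; products with toric factors give analogous higher-dimensional examples with weights such as $(1,0),(1,0),(0,1)$, where again no admissible $\eta$ exists. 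So no refinement of the Farkas step can complete your argument. There is also a secondary imprecision: even when $\eta$ exists, the local image of the component of $M^K$ through $p$ is the cone spanned only by the weights annihilated by $\eta$, and your condition (ii) ensures that $\eta^{\perp}$ meets the interior of the full weight cone, not that this subcone does. If you want the converse in the form the paper actually uses and proves, show that two or more chambers force some $\Delta_{ij}$ off the boundary and apply Lemma \ref{lemma:exceptional_not_boundary}; the stronger ``exceptional sheet $\Rightarrow$ at least two chambers'' is not obtainable, and the paper's written proof never claims it.
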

\begin{proof}
  If $\Phi(M)$ has no exceptional sheets then all sheets are contained
  in the boundary of $\Phi(M)$ by Lemma
  \ref{lemma:exceptional_not_boundary}. This implies that the union of
  the $\Delta_{ij}$'s constructed above is contained in the boundary of
  $\Phi(M)$, which equals the union of the facets of $\Phi(M)$ since
  $\Phi(M)$ is a convex polytope. By Lemma
  \ref{lemma:facets_are_included}, the union of all the facets of
  $\Phi(M)$ is contained in the union of the $\Delta_{ij}$'s. Hence,
  the union of the $\Delta_{ij}$'s equals the boundary of
  $\Phi(M)$. Since $\Phi(M)$ is a convex polytope of full dimension, the complement of the boundary of
  $\Phi(M)$ in $\Phi(M)$ equals the interior of $\Phi(M)$, which is
  connected. Hence, there is precisely one chamber of $\Phi(M)$.

  Conversely, if $\Phi(M)$ has at least two chambers, then
  at least one of the $\Delta_{ij}$'s constructed above cannot be
  contained in the boundary of $\Phi(M)$. By Lemma
  \ref{lemma:exceptional_not_boundary}, the corresponding sheet is exceptional.
\end{proof}

Seeing as the case of only one chamber has already been proved, in
what follows (namely, in Intermezzo 2 and in the proof of Theorem
\ref{thm:DH_function_cts} below), we assume that there are at least
two chambers.

\subsubsection*{Intermezzo 2: the wall-crossing formula} The main tool that we use
in the proof of Theorem \ref{thm:DH_function_cts} is the so-called \textbf{wall-crossing
  formula} for compact Hamiltonian $T$-spaces (see \cite{bp}, \cite[Section
3.5]{GLS} and \cite{paradan}). We recall it here for completeness and
we draw on the above Intermezzo for notation. Moreover, in this
subsection we consider the closure of the complement of the moment map
as a chamber.

Let
$\mathfrak{c}_{\pm}$ be two chambers in $\Phi(M)$ that intersect in a
facet $F$. Let $\xi \in \ell$ be the primitive
element that is normal to the hyperplane supporting $F$ and that points out
of $\mathfrak{c}_-$ (see Figure \ref{Wall}). We fix a point $x \in F$ that
has the property that is a regular value for all for all the moment maps
  $\Phi_{ij}$ of the sheets
  $(N_{ij},\omega_{ij},\Phi_{ij})$ constructed in the above Intermezzo;
such a point exists by property \ref{item:11}.

\begin{figure}[htbp]
\begin{center}
\includegraphics[width=6cm]{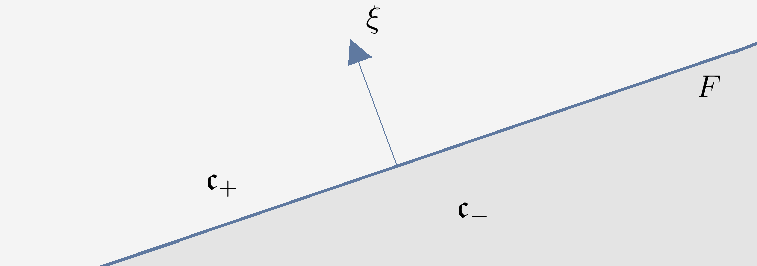}
\caption{Two chambers, $\mathfrak{c}_-$ and $\mathfrak{c}_+$, separated
by a hyperplane with normal $\xi$.}
\label{Wall}
\end{center}
\end{figure}

We use the fixed inner
product on $\g$ to choose a complementary subspace $\mathfrak{k}$ to
the span of $\xi$, i.e., $\g = \langle \xi \rangle \oplus
\mathfrak{k}$. Hence, viewing $\g$ as the space of homogeneous polynomials of
degree one on $\g^*$, we can view polynomials on $\g^*$ as being
generated by $\xi$ and polynomials on $\mathfrak{k}^*$. Moreover,
since $\xi \in \ell$, we have that $\exp (\langle \xi \rangle)$ is a
circle in $T$ that we denote by $S^1$. The subspace $\mathfrak{k}$ is
isomorphic to the Lie algebra of the quotient $T/S^1$. In what follows, we use this identification tacitly since
it is compatible with the identification of Remark \ref{rmk:convention_moment_map_sheet}.

Let $f_{\pm}: \g^* \to \R$ be the polynomial that,
when restricted to the interior of $\mathfrak{c}_{\pm}$, equals
\eqref{eq:1}. Since $x$ is a regular value of
$\Phi_{ij}$, it lies in the interior of a
chamber $\mathfrak{c}_{ij}$ of $\Phi_{ij}(N_{ij})$ for all $i,j$. Hence, there is a corresponding polynomial
$f_{ij}: \mathfrak{k}^* \to \R$ that, when restricted to the interior of the
$\mathfrak{c}_{ij}$, equals \eqref{eq:1}. (If $x \notin
\Phi_{ij}(N_{ij})$, this polynomial is identically zero.) 

Finally, for each $i,j$, we let $\kappa_{ij}$ be half the codimension
of $N_{ij}$ in $M$ and, if $i,j$ are such that $x \in \Phi_{ij}(N_{ij})$, we let $\alpha_{ij1},\ldots,
\alpha_{ij\kappa_{ij}} \in \Z$ be the isotropy weights for the $S^1$-action on the normal bundle to $N_{ij}$.

\begin{theorem}[Wall-crossing formula]\label{Thm: GLS-wall-crossing-formula}
  Set the notation as above. For all $y \in \g^*$ we have that
  \begin{equation}
    \label{eq:2}
    f_+(y)- f_-(y) = \sum\limits_{\{i,j \mid x \in \Phi_{ij}(N_{ij})\}}\xi^{\kappa_{ij}-1}(y-x)\left(\prod\limits_{s=1}^{\kappa_{ij}}
      \alpha_{ijs}\right)^{-1}\left[\frac{f_{ij}(y-x)}{(\kappa_{ij}-1)!}
      + P_{ij}(y-x)\right],
  \end{equation}
  \noindent
  where $P_{ij}$ is a polynomial depending on $i,j$ that is divisible
  by $\xi$. 
  % Let $(M,\omega,\Phi)$ be a compact Hamiltonian $T$-space, let
  % $t_0 \in \Phi(M)$ be a singular value and let $F_1,\ldots, F_l \subset M^{S^1}$ be the connected components that are
  % mapped to $t_0$ under $\Phi$. For each $i = 1,\ldots, l$, let
  % $\kappa_i$ denote half of the codimension of $F_i$ in $M$ and set $\kappa:=
  % \min(\kappa_1,\ldots,\kappa_l)$. Then, for all $t \in \R$,
  % \begin{equation}
  %   \label{eq:6}
  %   f_+(t) - f_-(t) = \sum\limits_{i=1}^l
  %   \frac{\mathrm{Vol}(F_i)}{(2\pi)^{1/2
  %       \dim(F_i)}}\left(\prod\limits_{j=1}^{\kappa_i}
  %     \alpha_{ij}\right)^{-1}\frac{(t-t_0)^{\kappa_i-1}}{(\kappa_i-1)!}
  %   + O((t-t_0)^{\kappa}),
  % \end{equation}
  % \noindent
  % where $\mathrm{Vol}(F_i)$ denotes the symplectic volume of
  % $(F_i,\omega|_{F_i})$ -- see \eqref{eq:1}, and $\alpha_{i1},\ldots,\alpha_{i\kappa_i} \in
  % \Z$ are the isotropy weights for the $S^1$-action on the normal bundle to $F_i$ for all $i=1,\ldots,
  % \kappa_i$.
  
\end{theorem}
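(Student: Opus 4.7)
The plan is to reduce the statement to the classical Duistermaat--Heckman wall-crossing formula for circle actions, via reduction in stages, and then to apply equivariant localization on the normal bundle to each relevant sheet. Write $T = S^1 \times K$, where $S^1 = \exp(\langle \xi \rangle)$ and $K$ is the subtorus with Lie algebra $\mathfrak{k}$; correspondingly, split $\Phi = (\Phi_\xi, \Phi_K)$ and $y - x = (t, z)$ with $t \in \R$ and $z \in \mathfrak{k}^*$. Note that $F$ lies in the hyperplane $\{t = 0\} + x$, that $\mathfrak{c}_-$ is on the side $t < 0$, and that the sheets $N_{ij}$ appearing in the sum are exactly the connected components of $M^{S^1}$ whose $\Phi_\xi$-image is $\langle \xi, x \rangle$ (equivalently, whose stabilizer $K_i$ contains $S^1$ and whose moment image passes through $x$).

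First I would carry out the reduction step. By reduction in stages (applied on the complement of the critical set of $\Phi_\xi$), for each regular $t$ the fiber $\Phi_\xi^{-1}(\langle\xi,x\rangle + t)/S^1$ is a symplectic orbifold with a Hamiltonian $K$-action, and Fubini together with Theorem \ref{THM:DH} (applied to the residual $K$-action) give that $f_\pm(y)$ agrees, on each chamber, with the Duistermaat--Heckman density of that $K$-action at $z$. This recasts the wall-crossing question on the $T$-space as a one-parameter wall-crossing in $t$ for a family of $K$-reduced spaces, with the jump localized at $t = 0$.

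Next I would apply the local analysis at each relevant sheet. Using the local normal form (Theorem \ref{local normal form}) and Proposition \ref{prop:decom_local_model}, a $T$-invariant neighborhood of $N_{ij}$ is modelled on a Hermitian vector bundle over $N_{ij}$ on which $S^1$ acts fiberwise with non-zero weights $\alpha_{ij1}, \ldots, \alpha_{ij\kappa_{ij}}$; the $K$-action on the zero section is the one producing the polynomial $f_{ij}$ by Theorem \ref{THM:DH}. On this local model the Liouville volume of reduced slices can be computed explicitly: either by direct partial-fraction integration of the fiberwise moment map, or more conceptually by the Atiyah--Bott--Berline--Vergne localization formula applied to the $S^1$-equivariant pushforward of $e^{\omega - \Phi_\xi}$. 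Either route produces the residue-type contribution
\[
\frac{\xi^{\kappa_{ij}-1}(y-x)}{(\kappa_{ij}-1)!\,\prod_{s=1}^{\kappa_{ij}}\alpha_{ijs}}\, f_{ij}(y-x)
\]
plus corrections in which $f_{ij}$ is multiplied by extra positive powers of $\xi$ coming from expanding $(1 + \xi\cdot\text{normal curvature})^{-1}$ in the equivariant characteristic class. Summing over all $N_{ij}$ with $x \in \Phi_{ij}(N_{ij})$, and noting that the bulk contribution across the wall cancels (the reduced-space volume varies continuously in the $S^1$-free locus), yields the left-hand side $f_+(y) - f_-(y)$.

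The main obstacle is the bookkeeping in the last step: organizing the local contribution at each $N_{ij}$ so that the leading piece is exactly $f_{ij}(y-x)/(\kappa_{ij}-1)!$ and the correction $P_{ij}$ is manifestly divisible by $\xi$. The cleanest way is to isolate the $\xi$-direction: writing the localized residue as a formal power series in $\xi$ and identifying the $\xi^0$-coefficient on the $K$-slice through $x$ with the honest Duistermaat--Heckman polynomial $f_{ij}$ of the sheet (by Theorem \ref{THM:DH} applied to the $K$-action on $N_{ij}$), while all higher-order corrections automatically carry an extra factor of $\xi$ and so assemble into $P_{ij}$. The required non-vanishing of the $\alpha_{ijs}$ is guaranteed because $S^1$ acts non-trivially on the normal bundle to $N_{ij}$ (equivalently, $N_{ij} \subseteq M^{S^1}$ is a connected component), and the finiteness of the sum follows from compactness of $M$ and the classification of occurring stabilizers recalled in Intermezzo~1.
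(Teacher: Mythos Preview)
The paper does not supply its own proof of this theorem: it is stated in Intermezzo~2 as a known result, with references to \cite{bp}, \cite[Section~3.5]{GLS}, and \cite{paradan}, and the subsequent Remark~\ref{rmk:gls_formula} points to \cite[Theorem~1.2.9]{paradan} for the explicit computation of the polynomials $P_{ij}$. So there is no in-paper argument to compare against.

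That said, your sketch is a faithful outline of the standard proof as it appears in those references. The reduction-in-stages step (splitting off the $S^1$ generated by $\xi$ and treating the wall-crossing as a one-parameter jump for the residual $K$-reduced spaces) is exactly the framework of \cite[Section~3.5]{GLS}, and the localization/residue computation on the normal bundle is the mechanism by which the leading term $f_{ij}/(\kappa_{ij}-1)!$ and the $\xi$-divisible corrections $P_{ij}$ emerge. One small point of care: in the paper's setup the $N_{ij}$ are components of $M^{K_i}$ for \emph{various} one-dimensional stabilizers $K_i$, not a priori components of $M^{S^1}$; your identification of the contributing sheets with components of $M^{S^1}$ is correct, but it relies on the fact that $x$ lies in the relative interior of the facet $F$ and is regular for every $\Phi_{ij}$, which forces each contributing $\Phi_{ij}(N_{ij})$ to lie in the hyperplane normal to $\xi$ through $x$ and hence forces $S^1 \subseteq K_i$ (up to components). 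It would be worth making that deduction explicit before invoking the $S^1$-weights $\alpha_{ijs}$ on the normal bundle.
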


\begin{remark}\label{rmk:gls_formula}
  \mbox{}
  \begin{itemize}[leftmargin=*]
  \item We stress that Theorem \ref{Thm: GLS-wall-crossing-formula}
    also holds if, say, $\mathfrak{c}_-$ is the closure of the complement of
    the moment map image. 
  \item The polynomials $P_{ij}$ have been computed explicitly (see
    \cite[Theorem 1.2.9]{paradan}). They depend on the symplectic
    reduction of $N_{ij}$ at $x$ by the $T/S^1$-action.
  \end{itemize}
\end{remark}

\subsubsection*{Back to the Duistermaat-Heckman function of $\comp$}
With the above Intermezzos, we have all the ingredients to proceed with the proof of Theorem \ref{thm:DH_function_cts}. 

\begin{proof}[Proof of Theorem \ref{thm:DH_function_cts}]
  First, we prove existence of a continuous extension. Let $\mathfrak{c}_1,\ldots, \mathfrak{c}_l$ be the
  chambers of $\Phi(M)$ and, for each $i = 1,\ldots, l$, let $f_i:\g^*
  \to \R$ be the polynomial that equals
  \eqref{eq:1} when restricted to the interior of
  $\mathfrak{c}_i$. The result is proved if
  we show that the map given by
  \begin{equation}
    \label{eq:14}
    x \mapsto f_i(x) \qquad \text{if } x \in \mathfrak{c}_i
  \end{equation}
  \noindent
  is well-defined, i.e., given two chambers $\mathfrak{c}_i$ and
  $\mathfrak{c}_j$ such that $\mathfrak{c}_i\cap \mathfrak{c}_j \neq
  \emptyset$, the following holds:
   \begin{equation}
    \label{eq:7}
    f_i(x) = f_j(x) \text{ for all } x \in \mathfrak{c}_i \cap \mathfrak{c}_j.
  \end{equation}
  \noindent
  Clearly equation \eqref{eq:7} holds if $i=j$, so suppose that $i
  \neq j$. Set $F := \mathfrak{c}_i \cap \mathfrak{c}_j$; this is
  a face of both $\mathfrak{c}_i$ and $\mathfrak{c}_j$ (see property \ref{item:16}).

  We consider
  first the special case that $F$ is a facet. Since the set of points in $F$ that
  are regular values for all the sheets
  $(N,\omega_N,\Phi_N)$ constructed in Intermezzo 1 is
  dense in $F$ (see property \ref{item:11}), and since both $f_i$ and
  $f_j$ are continuous, it suffices to check that equation \eqref{eq:7} holds
  for those points. Let $x \in F$ be such a point. If we show
  that the codimension of each $N$ such
  that $x \in \Phi(N)$ is at least four, then we are done by
  Theorem \ref{Thm: GLS-wall-crossing-formula} (cf. \cite[Remark 1.2.10]{paradan}). Since $F$ is a
  facet of two distinct chambers, it is not contained in the boundary
  of $\Phi(M)$. In particular, since $x$ lies in the interior of $F$,
  it does not lie in the boundary of $\Phi(M)$. Therefore, if $N$
  is such that $x \in \Phi(N)$, then $\Phi(N)$ is
  not contained in the boundary of $\Phi(M)$. By Lemma
  \ref{lemma:exceptional_not_boundary}, $(N,\omega,\Phi)$
  is an exceptional sheet. Thus the complexity of
  $(N,\omega,\Phi)$ is strictly less than that of
  $\comp$ by Proposition \ref{prop:regular_exceptional}. Since the dimension of the
  torus that acts effectively on $N$ is one less than that of
  $T$, it follows that the codimension of $N$ in $M$ is at least
  four, as desired.

  If $F$ is not a facet, then by property \ref{item:15} above we can find chambers $\mathfrak{c}_0 =
  \mathfrak{c}_i, \mathfrak{c}_1,\ldots, \mathfrak{c}_s =
  \mathfrak{c}_j$ such that $\mathfrak{c}_l$ and $\mathfrak{c}_{l+1}$
  intersect in a facet that contains $F$ for all $l=0,\ldots, s-1$ (see property \ref{item:15}). The result then follows
  immediately by the above special case.

  Uniqueness of the continuous extension follows immediately by
  observing that $\Phi(M)_{\mathrm{reg}}$ is a dense subset
  of $\Phi(M)$ and that the function of \eqref{eq:1} is continuous. 
\end{proof}

The Duistermaat-Heckman function is an invariant of the isomorphism
class of a compact Hamiltonian $T$-space that plays an important role in this
paper. As an illustration, the following result describes the restriction of the
Duistermaat-Heckman function of $\comp$ to any facet of the moment
polytope. 

\begin{proposition}\label{prop:DH_boundary}
  Let $\comp$ be a compact Hamiltonian $T$-space. If
  $\mathcal{F}$ is a facet of $\Phi(M)$, then
  \begin{equation}
    \label{eq:12}
    DH \comp|_{\mathcal{F}} =
    \begin{cases}
      DH(M_{\mathcal{F}},\omega_{\mathcal{F}},\Phi_{\mathcal{F}}) &
      \text{if } \dim M_{\mathcal{F}} = \dim M - 2, \\
      0 & \text{otherwise,}
    \end{cases}
  \end{equation}
  \noindent
  where $(M_{\mathcal{F}},\omega_{\mathcal{F}},\Phi_{\mathcal{F}})$
  is as in \eqref{eq:11}.
\end{proposition}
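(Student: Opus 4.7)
The plan is to apply the wall-crossing formula (Theorem \ref{Thm: GLS-wall-crossing-formula}) across $\mathcal{F}$, using the closure of the complement of $\Phi(M)$, on which $DH\comp$ vanishes by convention, as the chamber $\mathfrak{c}_-$, and the unique chamber of $\Phi(M)$ admitting $\mathcal{F}$ as a boundary facet as $\mathfrak{c}_+$. By continuity of both sides of \eqref{eq:12}, it suffices to verify the identity on a dense subset of $\mathcal{F}$, so I would work at a point $y$ in the relative interior of $\mathcal{F}$ that is a regular value of every sheet moment map $\Phi_{ij}$ from Intermezzo 1; such points are dense in $\mathcal{F}$ by property \ref{item:11}.

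The key preliminary step is to identify the sheets that contribute to the wall-crossing sum at $y$. I claim the only contributor is $(M_\mathcal{F},\omega_\mathcal{F},\Phi_\mathcal{F})$ itself: any sheet $(N,\omega_N,\Phi_N)$ arising from a one-dimensional stabilizer has moment map image contained in an affine hyperplane, and if $y \in \Phi_N(N)$ with $y$ interior to $\mathcal{F}$, then convexity of $\Phi(M)$ together with $\mathcal{F}$ being a facet forces that hyperplane to coincide with the supporting hyperplane of $\mathcal{F}$, hence $\Phi_N(N) \subseteq \mathcal{F}$; connectedness of $M_\mathcal{F}$ (Lemma \ref{Lem:FacesSubmanifolds}) then gives $N = M_\mathcal{F}$. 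Writing $\xi \in \ell$ for the primitive inward normal to $\mathcal{F}$ and $\kappa = \tfrac{1}{2}(\dim M - \dim M_\mathcal{F})$, Theorem \ref{Thm: GLS-wall-crossing-formula} reduces to
\begin{equation*}
  DH\comp(z) = \langle \xi, z-y\rangle^{\kappa-1}\Bigl(\prod_{s=1}^{\kappa} \alpha_s\Bigr)^{-1}\left[\frac{f_0(z-y)}{(\kappa-1)!} + P(z-y)\right],
\end{equation*}
where the $\alpha_s$ are the isotropy weights of $H_\mathcal{F}=\exp\langle\xi\rangle$ on the symplectic normal bundle to $M_\mathcal{F}$, $P$ is a polynomial divisible by $\xi$, and $f_0$ is the polynomial that agrees with $DH(M_\mathcal{F},\omega_\mathcal{F},\Phi_\mathcal{F})$ on the chamber of $M_\mathcal{F}$ containing $y$ (in coordinates translated so that $y$ corresponds to the origin of $\mathrm{Ann}(\mathfrak{h}_\mathcal{F})$).

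For $z \in \mathcal{F}$ one has $\langle \xi, z-y\rangle = 0$ and $P(z-y) = 0$, so the right-hand side vanishes whenever $\kappa \geq 2$, which is precisely the case $\dim M_\mathcal{F} < \dim M - 2$; this settles the second branch of \eqref{eq:12}. When $\kappa = 1$, i.e.\ $\dim M_\mathcal{F} = \dim M - 2$, the formula collapses to $DH\comp(z) = \alpha_1^{-1} f_0(z-y)$, and effectiveness of $H_\mathcal{F} \cong S^1$ on the two-dimensional symplectic normal bundle (Corollary \ref{cor:effective}) gives $|\alpha_1| = 1$. The step I expect to require the most care is the sign bookkeeping $\alpha_1 = +1$: it amounts to checking that the moment map of the induced $H_\mathcal{F}$-action on a normal fibre increases in the direction of the inward normal $\xi$, which one reads off from the local normal form of Theorem \ref{local normal form} and the sign convention \eqref{def moment map}. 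Once this is in place, the identification $f_0(z-y) = DH(M_\mathcal{F},\omega_\mathcal{F},\Phi_\mathcal{F})(z)$ (aligned via Remark \ref{rmk:convention_moment_map_sheet}) yields the first branch of \eqref{eq:12}.
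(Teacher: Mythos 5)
Your overall route is the same as the paper's: cross the wall at a generic point of $\mathcal{F}$ with $\mathfrak{c}_-$ the closure of the complement of $\Phi(M)$, so that $f_-\equiv 0$, read off the dichotomy $\kappa\geq 2$ versus $\kappa=1$ from Theorem \ref{Thm: GLS-wall-crossing-formula}, fix the sign of the single weight by effectiveness, and finish by density and continuity. The genuine problem is your justification of the crucial step that $(M_{\mathcal{F}},\omega_{\mathcal{F}},\Phi_{\mathcal{F}})$ is the only sheet contributing to the wall-crossing sum at $y$. It is simply not true that a sheet whose image meets the relative interior of $\mathcal{F}$ must have image contained in the hyperplane supporting $\mathcal{F}$: walls can hit the relative interior of a facet transversally, and this happens in the paper's own examples. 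For a normalized monotone tall complexity one space, an exceptional sheet has image $\Phi(M)\cap\{w\in\g^*\mid \langle w,\nu_{\min}\rangle =0\}$ (Proposition \ref{lemma:exceptional_sheet_dimension_1}), which meets facets other than $\mathcal{F}_{\min}$ in their relative interiors; correspondingly, isolated fixed points map to relative-interior points of edges coming out of $\mathcal{F}_{\min}$ (Proposition \ref{prop:iso_fixed_pts_monotone}). For the same reason ``the unique chamber of $\Phi(M)$ admitting $\mathcal{F}$ as a boundary facet'' need not exist: several chambers may abut different portions of $\mathcal{F}$, and one has to work with the chamber whose boundary facet contains the chosen $y$ in its interior, as the paper does. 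Finally, even granting $\Phi_N(N)\subseteq\mathcal{F}$, the inference ``connectedness of $M_{\mathcal{F}}$ gives $N=M_{\mathcal{F}}$'' is not an argument: $N\subseteq M_{\mathcal{F}}$ does not force equality.

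The conclusion you want is nevertheless available at exactly the points you chose, but for a different reason, and this is how the paper argues. Since $M_{\mathcal{F}}=\Phi^{-1}(\mathcal{F})$ (Lemma \ref{Lem:FacesSubmanifolds}), any point of a sheet lying over $y\in\mathcal{F}$ belongs to $M_{\mathcal{F}}$ and is therefore stabilized by $H_{\mathcal{F}}$; if the sheet's image is not contained in the hyperplane supporting $\mathcal{F}$, its stabilizing circle has Lie algebra different from $\mathfrak{h}_{\mathcal{F}}$, so such a point has stabilizer of dimension at least two, i.e.\ it is a singular point of $\Phi_{\mathcal{F}}$ and $y$ is a singular value of $\Phi_{\mathcal{F}}$ --- contradicting your choice of $y$ as a regular value of all the $\Phi_{ij}$, among which $\Phi_{\mathcal{F}}$ appears by Lemma \ref{lemma:facets_are_included}. (The paper phrases this by choosing $x\in\mathcal{F}$ a regular value of $\Phi_{\mathcal{F}}$ and observing that it is then automatically regular for all sheets and lies in only one sheet image.) With your convexity argument replaced by this regularity argument, the rest of your proposal --- vanishing for $\kappa\geq 2$ because $\xi^{\kappa-1}$ and the $\xi$-divisible polynomial $P$ both vanish on the wall, $\alpha_1=+1$ for $\kappa=1$ from effectiveness and the choice of $\xi$ pointing out of $\mathfrak{c}_-$, then density of regular values and continuity --- coincides with the paper's proof.
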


\begin{proof}
  In this proof we denote the closure
  of the complement of the moment map image by
  $\mathfrak{c}_-$ (cf. the paragraph
  preceding Theorem \ref{Thm: GLS-wall-crossing-formula} and Remark
  \ref{rmk:gls_formula}). By Lemma \ref{lemma:facets_are_included}, 
  $(M_{\mathcal{F}},\omega_{\mathcal{F}},\Phi_{\mathcal{F}})$ is one
  of the sheets constructed in Intermezzo 1; in particular, the
  quotient $T/H_{\mathcal{F}}$ is isomorphic to $S^1$, where
  $H_{\mathcal{F}}$ is the stabilizer of $(M_{\mathcal{F}},\omega_{\mathcal{F}},\Phi_{\mathcal{F}})$. Let $x \in \mathcal{F}$
  be a regular value of $\Phi_{\mathcal{F}}$. Since $M_{\mathcal{F}} =
  \Phi^{-1}(\mathcal{F})$ (see Lemma \ref{Lem:FacesSubmanifolds}), it
  follows that
  $(M_{\mathcal{F}},\omega_{\mathcal{F}},\Phi_{\mathcal{F}})$ is the
  only sheet constructed in Intermezzo 1 that contains $x$ in its
  moment map image. In particular, $x$ is a regular value for all the
  moment maps of all the sheets constructed in Intermezzo 1. Hence,
  there exists a chamber $\mathfrak{c}_+$ and a facet $F$ of
  $\mathfrak{c}_+$ such that $x$ lies in the interior of $F$.

  Let $\xi \in \ell$ be the primitive normal to the hyperplane
  supporting $F$ that points out of $\mathfrak{c}_-$. Let $f_+ : \g^* \to \R$ denotes the polynomial that, when restricted
  to $\mathfrak{c}_+$, equals \eqref{eq:1}. Since $f_- \equiv 0$, by Theorem \ref{Thm:
    GLS-wall-crossing-formula},
  \begin{equation}
    \label{eq:8}
     f_+(y)= \xi^{\kappa_{\mathcal{F}}-1}(y-x) \left(\prod\limits_{s=1}^{\kappa_{\mathcal{F}}}
      \alpha_{s}\right)^{-1}\left[\frac{f_{\mathcal{F}}(y-x)}{(\kappa_{\mathcal{F}}-1)!}
      + P_{\mathcal{F}}(y-x)\right] \quad \text{for all } y \in \g^*,
  \end{equation}
  \noindent
  where $\kappa_{\mathcal{F}}$ is half of the codimension of
  $M_{\mathcal{F}}$ in $M$, $\alpha_1,\ldots,
  \alpha_{\kappa_{\mathcal{F}}} \in \Z$ are the isotropy weights of
  the $S^1$-action on the normal bundle to $M_{\mathcal{F}}$,
  $f_{\mathcal{F}}$ is the polynomial that equals \eqref{eq:1} when restricted to the
  chamber of $\Phi_{\mathcal{F}}$ containing $x$, and $P_{\mathcal{F}}$ is the polynomial
  associated to $(M_{\mathcal{F}},\omega_{\mathcal{F}},\Phi_{\mathcal{F}})$ in
  \eqref{eq:2}. By \eqref{eq:8}, if $\kappa_{\mathcal{F}} \geq 2$, then $f_+(x) = 0$. On the
  other hand, if $\kappa_{\mathcal{F}} = 1$, since $P_{\mathcal{F}}$
  is divisible by $\xi$ (see Theorem \ref{Thm:
    GLS-wall-crossing-formula}, then the right hand side
  of \eqref{eq:8} evaluated at $x$ equals $ f_{\mathcal{F}}(0)/\alpha_1$.
  Since the $S^1$-action is effective and $\xi$ is chosen to point out
  of $\mathfrak{c}_-$, it follows that $\alpha_1 = 1$ and, hence,
  $f_+(x) = f_{\mathcal{F}}(0)$. Since $f_+$ and $f_{\mathcal{F}}$ are
  restrictions of $DH\comp$ and $DH
  (M_{\mathcal{F}},\omega_{\mathcal{F}},\Phi_{\mathcal{F}}) $
  respectively, we have shown that, if $x$ is a regular value of
  $\Phi_{\mathcal{F}}$, then
  \begin{equation}
    \label{eq:54}
    DH\comp(x) =
    \begin{cases}
      DH (M_{\mathcal{F}},\omega_{\mathcal{F}},\Phi_{\mathcal{F}})(x) &
      \text{if } \dim M_{\mathcal{F}} = \dim M - 2 \\
      0 & \text{otherwise.}
    \end{cases}
  \end{equation}
  \noindent
  Since $\Phi_{\mathcal{F}}(M_{\mathcal{F}})_{\mathrm{reg}}$ is
  dense in $\Phi_{\mathcal{F}}(M_{\mathcal{F}})$, and since $DH\comp$
  and $DH (M_{\mathcal{F}},\omega_{\mathcal{F}},\Phi_{\mathcal{F}})$
  are continuous and defined on $\mathcal{F}$, equation \eqref{eq:54}
  implies the desired result.
\end{proof}

\subsubsection*{The Duistermaat-Heckman function of a complexity one $T$-space}
In this section we prove some properties of the Duistermaat Heckman
function of a compact complexity one $T$-space $\comp$. We start with the
following special property that fails to hold in higher complexity
(see \cite{kar_not_log}).

\begin{proposition}\label{prop:concave}
  The Duistermaat-Heckman function $DH\comp : \Phi(M) \to \R$ of a compact complexity one
  $T$-space $\comp$ is concave. 
\end{proposition}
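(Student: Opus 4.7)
The plan is to combine piecewise-affineness with the wall-crossing formula of Theorem \ref{Thm: GLS-wall-crossing-formula} and reduce the problem to a sign computation at each internal wall.

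First, by Remark \ref{rmk:polynomial}, on each chamber $\mathfrak{c}$ of $\Phi(M)$ the function $DH\comp$ is the restriction of a polynomial of degree at most $k=1$, hence affine. Combined with continuity of $DH\comp$ on the convex polytope $\Phi(M)$, this says that $DH\comp$ is a continuous, piecewise-affine function on a convex polytope, and such a function is concave iff its directional derivative does not increase as one crosses any internal codimension-one wall. I would therefore fix such a wall: a facet $F$ shared by adjacent chambers $\mathfrak{c}_-$ and $\mathfrak{c}_+$ with $F \not\subset \partial\Phi(M)$, let $\xi \in \ell$ be the primitive normal to the affine span of $F$ pointing from $\mathfrak{c}_-$ into $\mathfrak{c}_+$, and let $f_\pm$ be the affine functions agreeing with $DH\comp$ on $\mathfrak{c}_\pm$. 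The goal becomes $\partial_\xi (f_+-f_-)(x) \leq 0$ for every $x$ in the relative interior of $F$, and by continuity together with the density property \ref{item:11} it suffices to verify this at points $x$ that are simultaneously regular values for every sheet $(N_{ij},\omega_{ij},\Phi_{ij})$ constructed in Intermezzo~1.

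Next, at such a point I would invoke Theorem \ref{Thm: GLS-wall-crossing-formula}. Since $x \notin \partial \Phi(M)$, Lemma \ref{lemma:exceptional_not_boundary} forces every sheet $(N_{ij},\omega_{ij},\Phi_{ij})$ whose moment image contains $x$ to be exceptional. In complexity one, Proposition \ref{prop:regular_exceptional} then forces each such $N_{ij}$ to have complexity zero, so that $N_{ij}$ is a compact symplectic toric $(T/K_i)$-manifold of dimension $2d-2$; hence the codimension of $N_{ij}$ in $M$ equals four and $\kappa_{ij}=2$. By Remark \ref{rmk:DH_no_internal_walls}, the polynomial $f_{ij}$ agreeing with $DH(N_{ij},\omega_{ij},\Phi_{ij})$ near $x$ is the constant $1$. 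Plugging all of this into \eqref{eq:2}, and using that each $P_{ij}(y-x)$ is divisible by the linear form $v \mapsto \langle v, \xi\rangle$, the wall-crossing formula reduces to
\[
f_+(y) - f_-(y) = \langle y-x,\xi\rangle \sum_{(i,j)\,:\, x \in \Phi_{ij}(N_{ij})} \frac{1 + P_{ij}(y-x)}{\alpha_{ij1}\,\alpha_{ij2}},
\]
where $\alpha_{ij1}, \alpha_{ij2} \in \Z$ are the weights of the $K_i$-action on the rank-two symplectic normal bundle to $N_{ij}$. Differentiating at $y=x$ yields
\[
\partial_\xi (f_+-f_-)(x) = |\xi|^2 \sum_{(i,j)\,:\, x \in \Phi_{ij}(N_{ij})} \frac{1}{\alpha_{ij1}\,\alpha_{ij2}}.
\]

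The final and hardest step is to show that the sum on the right is non-positive. Using the local model of Theorem \ref{local normal form}, near a point $p \in N_{ij}$ above $x$ the $\xi$-component of the moment map is $\pi(\alpha_{ij1}|z_1|^2 + \alpha_{ij2}|z_2|^2)$ on the normal fibre $\C^2$, so the pair $(\alpha_{ij1},\alpha_{ij2})$ records on which side(s) of the hyperplane normal to $\xi$ through $x$ the image of a neighbourhood of $p$ lies. I would exploit this local description, together with the fact that $x$ lies in the relative interior of the internal wall $F$ (so the image of $\Phi$ extends on both sides of this hyperplane), to pair the contributing sheets, or the weights within a single sheet, so that the net contribution to $\sum 1/(\alpha_{ij1}\alpha_{ij2})$ is $\leq 0$. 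The delicacy --- and the main obstacle --- is that several connected components of $M^{K_i}$, and several distinct circles $K_i$, may meet $x$ simultaneously, so this sign analysis requires a careful bookkeeping of all sheets whose moment image passes through $x$; once this inequality is secured, continuity of $DH\comp$ propagates concavity from the interior of $\Phi(M)$ to all of $\Phi(M)$.
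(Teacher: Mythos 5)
Your route is genuinely different from the paper's: the paper simply notes that $DH\comp$ is continuous (Theorem \ref{thm:DH_function_cts}) and piecewise linear (Remark \ref{rmk:polynomial}), so that concavity is equivalent to log-concavity, and then quotes the log-concavity theorem of Cho--Kim \cite{ck}; you instead attempt a direct wall-crossing argument. Up to the jump formula your computation is sound: for an internal wall every contributing sheet is exceptional by Lemma \ref{lemma:exceptional_not_boundary}, hence has complexity zero and codimension four by Proposition \ref{prop:regular_exceptional}, so $\kappa_{ij}=2$, $P_{ij}(0)=0$, $f_{ij}(0)>0$, and Theorem \ref{Thm: GLS-wall-crossing-formula} gives
\[
\partial_{\xi}\bigl(f_+-f_-\bigr)(x)\;=\;|\xi|^{2}\sum_{\{i,j\,\mid\,x\in\Phi_{ij}(N_{ij})\}}\frac{f_{ij}(0)}{\alpha_{ij1}\,\alpha_{ij2}},
\]
where $\partial_\xi$ is the derivative in the direction of the metric dual of $\xi$.

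The gap is that the decisive inequality --- non-positivity of this sum --- is exactly what you do not prove: you call it the main obstacle and propose to pair sheets or weights so that contributions cancel, which is not how the sign arises and is not an argument. In fact no pairing or bookkeeping is needed, because every summand is separately negative: $\alpha_{ij1}\alpha_{ij2}<0$ for each sheet whose moment image passes through the interior point $x$. Indeed, if both weights of the circle $\exp(\R\langle\xi\rangle)$ on the rank-two normal bundle to $N_{ij}$ had the same sign, the local normal form (Theorem \ref{local normal form}) would place the image of a $T$-invariant neighbourhood of a point of $\Phi^{-1}(x)\cap N_{ij}$ inside a closed half-space bounded by the hyperplane through $x$ supporting the wall; since $\Phi$ is open onto its image (Theorem \ref{thm:con_pac}) and $x$ lies in the interior of $\Phi(M)$, that image must contain a full neighbourhood of $x$ in $\g^*$, a contradiction. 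This is precisely the observation the paper itself makes in the proof of Lemma \ref{lemma:constant}, where $\alpha_{ij1}\alpha_{ij2}<0$ and $f_{ij}(x)>0$ are used at interior walls. With this one-line sign argument inserted (and a brief word on why local concavity across codimension-one walls, together with continuity and, say, property \ref{item:15} or a density argument along line segments, yields concavity on all of $\Phi(M)$), your proof closes and is self-contained, whereas the paper's shorter proof outsources the analytic content to \cite{ck}.
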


\begin{proof}
  By Theorem \ref{thm:DH_function_cts}, $DH\comp$ is
  continuous. Hence, it suffices to check that the restriction of $DH
  \comp$ to the interior of $\Phi(M)$ is concave. Since the complexity
  of $\comp$ is one, by Remark \ref{rmk:polynomial}, $DH \comp$ is piecewise
  linear. Thus the restriction of $DH
  \comp$ to the interior of $\Phi(M)$ is concave if and only if it is
  log-concave. The result then follows immediately from \cite[Theorem 1.1]{ck}.
\end{proof}

As observed in \cite[Corollary 3.8]{ss}, continuity and concavity of
$DH \comp$ (see Theorem \ref{thm:DH_function_cts}
and Proposition \ref{prop:concave}), together with convexity of $\Phi(M)$ (see Theorem \ref{thm:con_pac}), immediately
imply the following result.

\begin{corollary}\label{cor:min_DH_function}
  The minimum of the Duistermaat-Heckman function of a compact
  complexity one $T$-space is attained at a vertex.
\end{corollary}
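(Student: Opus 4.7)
The plan is to deduce this from three ingredients already established: that $\Phi(M)$ is a compact convex polytope (part \ref{item:10} of Theorem \ref{thm:con_pac}), that $DH\comp$ is continuous (Theorem \ref{thm:DH_function_cts}), and that $DH\comp$ is concave (Proposition \ref{prop:concave}). None of these inputs is difficult to combine; the statement is really the standard principle that a concave function on a polytope attains its minimum at a vertex. There is no genuine obstacle, so the argument will be a short convexity calculation.

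Concretely, I would fix a minimizer $x_0 \in \Phi(M)$, which exists by compactness of $\Phi(M)$ and continuity of $DH\comp$. Let $v_1,\ldots,v_N$ denote the vertices of $\Phi(M)$. Since $\Phi(M)$ is the convex hull of its vertices, I can write
\begin{equation*}
  x_0 = \sum_{i=1}^N \lambda_i v_i, \qquad \lambda_i \geq 0, \quad \sum_{i=1}^N \lambda_i = 1.
\end{equation*}
By concavity of $DH\comp$ (Proposition \ref{prop:concave}),
\begin{equation*}
  DH\comp(x_0) \;\geq\; \sum_{i=1}^N \lambda_i\, DH\comp(v_i) \;\geq\; \min_{1 \leq i \leq N} DH\comp(v_i).
\end{equation*}
On the other hand, $x_0$ is a global minimizer, so $DH\comp(x_0) \leq DH\comp(v_i)$ for every $i$. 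Combining these two inequalities forces equality throughout, which in particular yields $DH\comp(x_0) = DH\comp(v_{i_0})$ for some index $i_0$. Hence the minimum is attained at the vertex $v_{i_0}$, as claimed.

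The only point that merits a brief remark is that concavity is needed in the above form on the closed polytope $\Phi(M)$, not merely on its interior: this is ensured by continuity of $DH\comp$, which extends the concavity inequality to the boundary by a standard limiting argument. Thus the three results from the paper combine directly, with no further input, to give the corollary.
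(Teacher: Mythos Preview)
Your argument is correct and is exactly the approach the paper takes: the text simply remarks that continuity (Theorem~\ref{thm:DH_function_cts}) and concavity (Proposition~\ref{prop:concave}) of $DH\comp$, together with convexity of $\Phi(M)$ (Theorem~\ref{thm:con_pac}), immediately yield the result. Your write-up just spells out the standard convexity step in detail.
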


Next we prove two results relating $DH \comp$ with exceptional orbits
and singular values of $\Phi$.

\begin{lemma}\label{lemma:no_isolated_implies_affine_DH}
  Let $\comp$ be a compact complexity one $T$-space. If
  there are no isolated fixed points, then $M_{\mathrm{exc}} =
  \emptyset$ and the Duistermaat-Heckman
  function $DH \comp : \Phi(M) \to \R$ is the restriction of an affine
  function. 
\end{lemma}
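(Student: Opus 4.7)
The plan is to deduce both conclusions as short consequences of the structural results recalled earlier in Section \ref{sec:comp-hamilt-t}, with essentially no computation.

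First I would establish that $M_{\mathrm{exc}}=\emptyset$ by a three-step chain. The hypothesis is that $\comp$ has no isolated fixed points, and Lemma \ref{lemma:fixed_point_exceptional} says that in complexity one a fixed point is isolated if and only if it is exceptional; hence $\comp$ has no exceptional fixed points. Lemma \ref{lemma:necessary_sufficient_existence_exceptional} then upgrades this to the statement that $\comp$ has no exceptional sheets. Finally, to rule out exceptional orbits I would argue by contradiction: if $p\in M$ were exceptional with stabilizer $H$, the connected component of $M^H$ through $p$ is a sheet $(N,\omega_N,\Phi_N)$ (Definition \ref{defn:sheet}) containing a point with stabilizer $H$ that is exceptional, so condition (2) of Proposition \ref{prop:regular_exceptional} is met and the sheet is exceptional, contradicting the previous step.

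Second, for the Duistermaat-Heckman function, the absence of exceptional sheets triggers Lemma \ref{lemma:one_chamber}, which guarantees that $\Phi(M)$ consists of a single chamber. The interior of $\Phi(M)$ is then contained in $\Phi(M)_{\mathrm{reg}}$, so by Remark \ref{rmk:polynomial} the restriction of the function in \eqref{eq:1} to this interior is a polynomial of degree at most the complexity $k=1$, that is, an affine function. Since Theorem \ref{thm:DH_function_cts} characterizes $DH\comp$ as the unique continuous extension of \eqref{eq:1} from $\Phi(M)_{\mathrm{reg}}$ to $\Phi(M)$, the extension must coincide with the restriction of that affine function to $\Phi(M)$.

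There is no serious obstacle: the argument is a chaining of the lemmas of Sections \ref{sec:exceptional-orbits}--\ref{sec:duist-heckm-meas}. The only point worth mentioning is that the three equivalent conditions of Proposition \ref{prop:regular_exceptional} are phrased for sheets, so one must remember that the local-normal-form proof of that proposition shows condition (2) holds as soon as a single point of $N$ with stabilizer $H$ is exceptional; this is what allows the final step of the first paragraph to go through.
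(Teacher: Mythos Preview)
Your argument is correct and follows essentially the same route as the paper: Lemma~\ref{lemma:fixed_point_exceptional} then Lemma~\ref{lemma:necessary_sufficient_existence_exceptional} to kill exceptional sheets, Lemma~\ref{lemma:one_chamber} to get a single chamber, and then the degree-one polynomial observation (the paper packages the last step as Remark~\ref{rmk:DH_no_internal_walls}). The only cosmetic difference is in deducing $M_{\mathrm{exc}}=\emptyset$ from the absence of exceptional sheets: the paper simply invokes Lemma~\ref{lemma:exceptional}, whereas you re-derive the needed direction via Proposition~\ref{prop:regular_exceptional}; citing Lemma~\ref{lemma:exceptional} directly would spare you the parenthetical justification at the end.
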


\begin{proof}
  Since there are no isolated fixed points, Lemma
  \ref{lemma:fixed_point_exceptional} implies that there are no
  exceptional fixed
  points. Hence, by Lemma
  \ref{lemma:necessary_sufficient_existence_exceptional}, there are no
  exceptional sheets. By Lemma \ref{lemma:exceptional},
  $M_{\mathrm{exc}} = \emptyset$. Moreover, by Lemma \ref{lemma:one_chamber}, there
  is only one chamber of $\Phi(M)$. The result then follows by Remark \ref{rmk:DH_no_internal_walls}.
\end{proof}

\begin{lemma}\label{lemma:constant}
  Let $\comp$ be a compact complexity one $T$-space. If the
  Duistermaat-Heckman function $DH\comp$ is constant, then there are
  no singular values in the interior of $\Phi(M)$.
\end{lemma}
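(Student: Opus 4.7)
My plan is to prove this by induction on $d = \dim T$, using symplectic reduction by a circle in the inductive step.

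For the base case $d = 1$ (so $\dim M = 4$), assume for contradiction that $DH\comp$ is constant and that $x_0$ is a singular value interior to $\Phi(M)$. By Lemma \ref{lemma:walls}, $x_0 = \Phi(p)$ for some $p \in M^T$. A $2$-dimensional fixed component has moment cone equal to a ray, so by Corollary \ref{Cor:ConeFace} its image is a vertex; hence $p$ must be an isolated fixed point, and because $\Phi(p)$ is interior the two isotropy weights at $p$ must have opposite signs in order for the moment cone to fill $\R$. Applying the wall-crossing formula of Theorem \ref{Thm: GLS-wall-crossing-formula} across the wall through $x_0$, constancy of $DH\comp$ forces $f_+ - f_- \equiv 0$, and extracting the linear coefficient in $y$ yields
\begin{equation*}
\sum_{p'}\frac{1}{\alpha_1^{p'}\alpha_2^{p'}} = 0,
\end{equation*}
where the sum runs over isolated fixed points $p'$ with $\Phi(p') = x_0$. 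Since each summand is strictly negative by the opposite-sign weights, this gives a contradiction.

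For the inductive step ($d \geq 2$), I would pick a generic $\xi \in \g$ avoiding the finitely many Lie algebras of proper stabilizers of points of $M$, and set $K = \exp\langle\xi\rangle \cong S^1$. Writing $\pi \colon \g^* \to \mathfrak{k}^*$ for the projection, the singular values of $\pi \circ \Phi$ then lie in the finite set $\pi(\Phi(M^T))$; so for any $\alpha$ regular for $\pi \circ \Phi$ the symplectic reduction $M_K^\alpha := (\pi\circ\Phi)^{-1}(\alpha)/K$ is a compact complexity one $(T/K)$-space, and iterated reduction gives $DH(M_K^\alpha)(y) = DH\comp(y + \alpha)$, which is again the constant $c$. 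Supposing $x_0$ is an interior singular value of $\Phi$, the wall through $x_0$ meets the slice $\pi^{-1}(\alpha)$ transversally for generic $\xi$ and $\alpha$ close to $\pi(x_0)$, producing singular values $x$ of $\Phi$ in the slice close to $x_0$ and outside $\Phi(M^T)$. Any singular point $q \in \Phi^{-1}(x)$ then has stabilizer $H \neq T$ with $\xi \notin \mathrm{Lie}(H)$, so $[q] \in M_K^\alpha$ retains a non-trivial $(T/K)$-stabilizer, making $\bar{x} := x - \alpha$ an interior singular value of $M_K^\alpha$ and contradicting the inductive hypothesis.

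The main obstacle is ensuring that the singular values of $\Phi$ lying in the slice $\pi^{-1}(\alpha)$ really descend to interior singular values of $M_K^\alpha$; this is arranged by the genericity of $\xi$ and by taking $\alpha$ outside the finite set $\pi(\Phi(M^T))$, so that the residual stabilizer of any singular point $q$ survives the quotient by $K$.
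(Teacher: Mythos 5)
Your base case is essentially the paper's argument specialized to $d=1$, and it is fine. The problem is the inductive step: reducing by a circle $K=\exp\langle\xi\rangle$ takes you out of the category in which the lemma (and all the machinery it rests on) is stated. Genericity of $\xi$ only lets you avoid the \emph{Lie algebras} of the proper stabilizers; it cannot prevent $K$ from meeting finite stabilizers, or one-dimensional stabilizers, in nontrivial finite subgroups (already in $T=(S^1)^2$, any circle meets the diagonal and antidiagonal subtori, say, in a nontrivial finite group, so no choice of $\xi$ makes the action on the level set free in general). Hence $M_K^\alpha=(\pi\circ\Phi)^{-1}(\alpha)/K$ is in general only a symplectic \emph{orbifold}, not a compact complexity one $T$-space in the sense of Definition \ref{def hamiltonian space}, and the inductive hypothesis does not apply to it. Nor can you quote the supporting results you need for it: Theorem \ref{thm:DH_function_cts} (existence/continuity of the Duistermaat--Heckman function), Lemma \ref{lemma:walls}, the chamber structure and the wall-crossing formula of Theorem \ref{Thm: GLS-wall-crossing-formula} are all established in the paper for manifolds only. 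Even granting an orbifold extension, your step also leaves unverified that the reduced $(T/K)$-action is effective, and the assertion that ``the wall through $x_0$ meets the slice $\pi^{-1}(\alpha)$ transversally'' needs an argument: a priori the only sheet through $x_0$ could map to the single point $x_0$ (e.g.\ when all singular points over $x_0$ are isolated fixed points), and one has to produce a positive-dimensional wall through $x_0$, e.g.\ via Lemma \ref{lemma:sheet_weight}, before any transversality/genericity discussion makes sense.

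The induction is also unnecessary. The paper proves the statement directly for every $d$ by exactly the computation you did in your base case: if an interior singular value exists, the adjacent chambers meet along an interior facet $F$; any sheet whose image meets the interior is exceptional by Lemma \ref{lemma:exceptional_not_boundary}, hence of complexity zero and codimension four by Proposition \ref{prop:regular_exceptional}, so $\kappa_{ij}=2$ in \eqref{eq:2}; the lowest-order term in $\xi$ of the jump is $\xi\bigl(\sum (\alpha_{ij1}\alpha_{ij2})^{-1}f_{ij}\bigr)$ with $\alpha_{ij1}\alpha_{ij2}<0$ and $f_{ij}(x)>0$, so $f_+\neq f_-$ and $DH\comp$ cannot be constant. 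In other words, the ingredient that lets one run your $d=1$ argument in arbitrary dimension is the exceptional-sheet codimension bound, not a reduction to smaller torus rank; I would replace the inductive step by that observation.
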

\begin{proof}
  We prove the contrapositive. Suppose that there is a singular value
  in the interior of $\Phi(M)$. Hence there exist two chambers
  $\mathfrak{c}_-$ and $\mathfrak{c}_+$ of $\Phi(M)$ that intersect in
  a facet $F$ that is not contained in the boundary of $\Phi(M)$. As
  in Intermezzo 2, we let $\xi \in \ell$ be a primitive element that
  is normal to the hyperplane supporting $F$ and points out of $\mathfrak{c}_-$. Moreover, we fix $x \in F$ that is a regular value for
  all the sheets $(N_{ij},\omega_{ij},\Phi_{ij})$ constructed in
  Intermezzo 1; we observe that $x$ lies in the interior of $F$ (see
  property \ref{item:11}).

  Since $F$ is not contained in the boundary
  of $\Phi(M)$, neither is $x$. In particular, if $x \in
  \Phi_{ij}(N_{ij})$, then $\Phi_{ij}(N_{ij})$ is not contained in the
  boundary of $\Phi(M)$. Hence, $(N_{ij},\omega_{ij},\Phi_{ij})$ is
  exceptional by Lemma \ref{lemma:exceptional_not_boundary}. By
  Proposition \ref{prop:regular_exceptional}, the complexity of
  $(N_{ij},\omega_{ij},\Phi_{ij})$ is strictly less than that of
  $\comp$. Since the complexity of $\comp$ is one, the complexity of
  $(N_{ij},\omega_{ij},\Phi_{ij})$ is zero. Hence, the codimension of
  $N_{ij}$ in $M$ equals 4. Therefore, the
  lowest order term in $\xi$ in the right hand side of equation
  \eqref{eq:2} is 
  \begin{equation}
    \label{eq:5}
    \xi \left(\sum\limits_{\{i,j \mid x \in \Phi_{ij}(N_{ij})\}} (\alpha_{ij1}\alpha_{ij2})^{-1}f_{ij} \right),
  \end{equation}
  \noindent
  where, for each $i,j$, the polynomial$f_{ij}$ and the integers
  $\alpha_{ij1},\alpha_{ij2}$ are as in the
  discussion leading up to Theorem \ref{Thm:
    GLS-wall-crossing-formula}. Since $x$ lies in the interior of
  $\Phi(M)$, it follows that $\alpha_{ij1}\alpha_{ij2} < 0$ and
  $f_{ij}(x) > 0$ for each $i,j$. In particular, the polynomial in
  equation \eqref{eq:5} is not identically zero and, therefore,
  neither is the right hand side of equation \eqref{eq:2}.

  Let $f_{\pm} : \g^* \to \R$ be the polynomial that, when restricted
  to $\mathfrak{c}_{\pm}$, equals \eqref{eq:1}. By \eqref{eq:5} and Theorem \ref{Thm:
    GLS-wall-crossing-formula}, $f_{+}$ and $f_-$ are not
  equal. Hence, since $f_{\pm}$ is the restriction of the
  Duistermaat-Heckman function $DH\comp$ to $\mathfrak{c}_{\pm}$,
  $DH\comp$ is not constant, as desired.
\end{proof}

The next result describes $DH\comp$ near a vertex of $\Phi(M)$ that
corresponds to a fixed surface. To this end, let $v \in
\Phi(M)$ be a vertex and let $\Sigma = \Phi^{-1}(v)$ be a fixed
surface. Let $\alpha_1,\ldots,\alpha_n$ be the isotropy weights of
$\Sigma$ (see Remark \ref{rmk weights of F}), labeled so that
$\alpha_n = 0$. Since $v$ is a vertex, by part \ref{item:1} of
Corollary \ref{Cor:ConeFace}, a sufficiently small neighborhood of $v$
in $\Phi(M)$ is of the form
$$ \Big\{ v + \sum\limits_{i=1}^{n-1}t_i \alpha_i \mid t_i \geq 0
\text{ sufficiently small} \Big\}. $$
\noindent
Moreover, by part \ref{item:2} of
Corollary \ref{Cor:ConeFace}, for each $i=1,\ldots, n-1$, the edge $e_i$ of
$\Phi(M)$ that is incident to $v$ is contained in the half-line
$\{v+t_i\alpha_i \mid t_i \geq 0\}$. Let $(M_i,\omega_i,\Phi_i)$ be
the sheet corresponding to the edge $e_i$ as in \eqref{eq:11} with
stabilizer $H_i$. By
Proposition \ref{prop:tall_vertex}, $\dim M_i = 4$ and $(M_i,\omega_i,\Phi_i)$ is a
compact complexity one Hamiltonian $T/H_i$-space. In what follows, we
identify $T/H_i \simeq S^1$. Let $N$ be the normal bundle of $\Sigma$ in $M$. Then $N$ splits
$T$-equivariantly as a direct sum
\begin{equation}
  \label{eq:25}
  N = L_1 \oplus \ldots \oplus L_{n-1}, 
\end{equation}
\noindent
where $L_i$ denotes the normal bundle of $\Sigma$ in $M_i$. 

\begin{lemma}\label{Lemma:DHnearfixedSurface}
  Let \(\comp\) be a compact complexity one $T$-space of dimension
  \(2n\) and let $v \in \Phi(M)$ be a vertex such that \(\Sigma =
  \Phi^{-1}(v)\) is a fixed surface. Let \(\alpha_{1}, \ldots,
  \alpha_{n-1}\) be the non-zero isotropy 
  weights of $\Sigma$. For all
  $t_1,\ldots, t_{n-1} \geq 0$ sufficiently small,
  \begin{equation}
    DH \comp \left(v+ \sum\limits_{i=1}^{n-1}t_i\alpha_{i}\right)=  \int\limits_\Sigma \omega-\sum\limits_{i=1}^{n-1}t_i c_1(L_i) \left[ 
      \Sigma\right],
  \end{equation}
  \noindent
  where $L_1,\ldots, L_{n-1}$ are as in \eqref{eq:25} and $c_1(L_i)$
  is the first Chern class of $L_i$ for $i=1,\ldots, n-1$. In
  particular, if $DH \comp$ attains its minimum at $v$, then
  \begin{equation}\label{eq: value c1 on Li}
    c_1(L_i)[\Sigma] \leq 0 \qquad \text{for all } i=1,\ldots, n-1.
  \end{equation}
\end{lemma}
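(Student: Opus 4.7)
The plan is to identify a $T$-invariant neighborhood of $\Sigma$ in $M$ with a neighborhood of the zero section of its normal bundle via the equivariant symplectic tubular neighborhood theorem, use the local model to compute the reduced spaces near $v$, and then apply the Duistermaat-Heckman variation formula for the reduced cohomology class.

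Since $\Sigma$ is a symplectic fixed submanifold of $\comp$, the equivariant symplectic tubular neighborhood theorem (see \cite{gs-local,marle}) yields a $T$-equivariant symplectomorphism from a $T$-invariant open neighborhood of $\Sigma$ onto a $T$-invariant open neighborhood of the zero section in the normal bundle $N = L_1 \oplus \cdots \oplus L_{n-1}$, equipped with a symplectic form built from $\omega|_{\Sigma}$ using chosen $T$-invariant Hermitian metrics and compatible unitary connections on each $L_i$. Since the $\alpha_i$ are the nonzero isotropy weights of the fixed surface $\Sigma$, effectiveness (Corollary \ref{cor:effective} and Remark \ref{rmk local normal form}) forces $\alpha_1, \ldots, \alpha_{n-1}$ to be a $\Z$-basis of $\ell^*$, and the local normal form of equation \eqref{eq:33} gives the moment map in this model as $\Phi(\sigma,z)=v+\sum_i \pi\|z_i\|^2 \alpha_i$. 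For $t_1, \ldots, t_{n-1}>0$ sufficiently small, the point $x_t := v + \sum_i t_i \alpha_i$ is therefore a regular value of $\Phi$ lying in the open cone $v + \R_{>0}\langle \alpha_1, \ldots, \alpha_{n-1}\rangle$; its preimage is the fiber product over $\Sigma$ of the circle subbundles $\{\|z_i\|^2 = t_i/\pi\}\subset L_i$, and the reduced space $M_{x_t}$ is canonically diffeomorphic to $\Sigma$ via the projection.

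Next, the Duistermaat-Heckman variation formula (\cite[Theorem 1.1]{dh}) identifies the class $[\omega_{x_t}] \in H^2(M_{x_t}; \R) \cong H^2(\Sigma; \R)$ as an affine function of $x_t$ whose linear part is controlled by the Chern class of the principal $T$-bundle $\Phi^{-1}(x_t) \to \Sigma$. Decomposing this bundle according to the weight basis $\alpha_1, \ldots, \alpha_{n-1}$ into the fiber product of the circle bundles $S(L_i)$, and fixing signs by the fact that $[\omega_{x_t}] \to [\omega|_\Sigma]$ as $t \to 0$, one obtains
\[
[\omega_{x_t}] \;=\; [\omega|_{\Sigma}] \;-\; 2\pi\sum_{i=1}^{n-1} t_i\, c_1(L_i) \;\in\; H^2(\Sigma;\R).
\]
Substituting into formula \eqref{eq:1} of Theorem \ref{THM:DH} for $k=1$ and integrating over $\Sigma$ gives the claimed expression for $DH\comp(x_t)$ on the interior of the small cone at $v$; continuity of $DH\comp$ (Theorem \ref{thm:DH_function_cts}) extends it to all $t_i \geq 0$ sufficiently small.

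Finally, if $DH\comp$ attains its minimum at $v$, then along each ray $t \mapsto v + t\alpha_i$ (with the other $t_j$ set to $0$) we have $DH\comp(v+t\alpha_i) \geq DH\comp(v) = \int_\Sigma \omega$ for all small $t > 0$, and the affine formula immediately forces $c_1(L_i)[\Sigma] \leq 0$ for each $i=1,\ldots,n-1$. The main point requiring care is the precise identification of the coefficient of $t_i$ in $[\omega_{x_t}]$ via the variation formula; this is a routine but delicate bookkeeping of the $2\pi$-conventions relating the moment map, the connection curvatures, and the reduced symplectic form in the local model.
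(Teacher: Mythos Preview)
Your approach is correct in structure but genuinely different from the paper's. The paper does \emph{not} work in a tubular neighborhood of $\Sigma$ at all. Instead it argues in two steps: first, since the complexity is one, $DH\comp$ is piecewise affine (Remark~\ref{rmk:polynomial}), and near the vertex $v$ there is only one chamber, so $DH\comp$ is a single affine function there; hence it suffices to determine $DH\comp$ along each edge direction $v+t_i\alpha_i$. Second, by Corollary~\ref{cor:DH_boundary_comp_pres} the restriction of $DH\comp$ to the edge $e_i$ equals the Duistermaat--Heckman function of the four-dimensional sheet $(M_i,\omega_i,\Phi_i)$, and for a compact Hamiltonian $S^1$-space of dimension four this is computed by \cite[Lemma~2.19]{karshon}. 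So the paper reduces the multi-parameter variation to the classical one-parameter case along each edge and then quotes Karshon.

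Your route---equivariant tubular neighborhood plus the full Duistermaat--Heckman variation formula \cite[Theorem~1.1]{dh} for the $T$-action---is more direct and self-contained, and does not need the intermediate facts about restriction to faces. The paper's route is more modular: it isolates exactly the one-dimensional computation and outsources it to a standard reference, at the cost of invoking Corollary~\ref{cor:DH_boundary_comp_pres} and Proposition~\ref{prop:tall_vertex}.

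One point to tighten: with the paper's normalization \eqref{eq:1}, $DH\comp(x)=\frac{1}{2\pi}\int_{M_x}\omega_x$, so your displayed formula $[\omega_{x_t}]=[\omega|_\Sigma]-2\pi\sum_i t_i\,c_1(L_i)$ does \emph{not} literally give the claimed expression after substitution---you get $\frac{1}{2\pi}\int_\Sigma\omega$ for the constant term. Either the coefficient in your variation formula should be $1$ rather than $2\pi$ (this is what the paper's moment map convention $\Phi_Y(z)=\pi\sum|z_j|^2\alpha_j$ actually produces), or the statement of the lemma is using a different normalization of $DH$ than \eqref{eq:1}. You flagged this as bookkeeping, and it is, but as written the constants do not match; you should resolve it explicitly rather than assert it.
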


\begin{proof}
  Since $v$ is a vertex and the complexity of $\comp$ is one, the restriction of $DH\comp$ to a
  sufficiently small neighborhood of $v$ is an affine function. Thus
  there exist real numbers $\beta_0,\beta_1,\ldots, \beta_{n-1}$ such
  that, for all
  $t_1,\ldots, t_{n-1} \geq 0$ sufficiently small,
  \begin{equation}
    \label{eq:26}
    DH \comp \left(v+ \sum\limits_{i=1}^{n-1}t_i\alpha_{i}\right) =
    \beta_0 + \sum\limits_{i=1}^{n-1} t_i\beta_i.
  \end{equation}
  \noindent
  In order to determine the constants $\beta_0,\beta_1,\ldots,
  \beta_{n-1}$, it suffices to understand the restriction of $DH
  \comp$ to elements of the form $v + t_j\alpha_j$ for $j=1,\ldots,
  n-1$. Fix $i=1,\ldots, n-1$. By Corollary
  \ref{cor:DH_boundary_comp_pres}, 
  \begin{equation}
    \label{eq:28}
    DH\comp (v+t_i\alpha_i) = DH(M_i,\omega_i,\Phi_i)(v +
    t_i\alpha_i).
  \end{equation}
  \noindent
  By \cite[Lemma 2.19]{karshon}, for
  all $t_i \geq 0$ sufficiently small,
  \begin{equation}
    \label{eq:29}
     DH(M_i,\omega_i,\Phi_i)(v +
     t_i\alpha_i) = \int\limits_{\Sigma} \omega_i - t_i
     c_1(L_i)[\Sigma].
  \end{equation}
  \noindent
  The result follows immediately by comparing equations \eqref{eq:26},
  \eqref{eq:28} and \eqref{eq:29}.
\end{proof}

The following result is an immediate consequence of piecewise
linearity of the Duistermaat-Heckman function of a compact complexity
one $T$- space and of Proposition \ref{prop:concave}.

\begin{corollary}\label{cor:DH_polytope}
  Let $\comp$ be a compact complexity one $T$-space. The subset
  \begin{equation}
    \label{eq:21}
    \{(x,t) \in \mathfrak{t}^* \times \R \mid x \in \Phi(M) \, , \, t
    \in [0,DH\comp(x)]\}
  \end{equation}
  \noindent
  is a convex polytope in $\g^* \times \R$.
\end{corollary}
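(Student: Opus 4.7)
The plan is to denote the set in \eqref{eq:21} by $P$ and to prove that it is a bounded convex subset of $\g^* \times \R$ with finitely many extreme points; this suffices because any such set equals the convex hull of its extreme points and is therefore a polytope.

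First, I would establish convexity. Take $(x_1,t_1),(x_2,t_2) \in P$ and $\lambda \in [0,1]$. Since $\Phi(M)$ is a convex polytope, $\lambda x_1 + (1-\lambda)x_2 \in \Phi(M)$; the first coordinate of the convex combination is non-negative trivially; and by Proposition \ref{prop:concave},
\begin{equation*}
DH\comp(\lambda x_1 + (1-\lambda)x_2) \geq \lambda DH\comp(x_1) + (1-\lambda)DH\comp(x_2) \geq \lambda t_1 + (1-\lambda)t_2,
\end{equation*}
so the convex combination lies in $P$. Boundedness of $P$ is immediate, since $\Phi(M)$ is compact and $DH\comp$ is continuous on $\Phi(M)$ by Theorem \ref{thm:DH_function_cts}, hence bounded.

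Next I would exhibit $P$ as a finite union of polytopes. Using the notation of Intermezzo 1, let $\mathfrak{c}_1, \ldots, \mathfrak{c}_l$ be the chambers of $\Phi(M)$ and let $f_i : \g^* \to \R$ be the polynomial that agrees with $DH\comp$ on $\mathfrak{c}_i$. By Remark \ref{rmk:polynomial}, since the complexity is one, each $f_i$ is affine. Hence
\begin{equation*}
P_i := \{(x,t) \in \g^* \times \R \mid x \in \mathfrak{c}_i, \, 0 \leq t \leq f_i(x)\}
\end{equation*}
is the bounded intersection of finitely many closed half-spaces, i.e., a polytope. Clearly $P = \bigcup_{i=1}^l P_i$.

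Finally, I would argue that the extreme points of $P$ form a finite set. If $p$ is an extreme point of $P$, then $p \in P_i$ for some $i$; if $p = \tfrac12(q_1+q_2)$ with $q_1,q_2 \in P_i$, then $q_1, q_2 \in P$ and extremality in $P$ forces $q_1 = q_2 = p$, so $p$ is an extreme point of $P_i$. Since each polytope $P_i$ has finitely many extreme points (its vertices), $P$ has finitely many extreme points. Combining this with convexity and boundedness of $P$ and invoking the Minkowski--Krein--Milman theorem for convex polytopes (or directly \cite[Theorem 2.15]{ziegler}), $P$ equals the convex hull of its extreme points and is therefore a polytope. The main subtlety is this last step, showing that a bounded convex finite union of polytopes is itself a polytope; the extreme-point argument above addresses it cleanly without having to write down a system of defining inequalities.
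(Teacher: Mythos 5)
Your proof is correct and follows essentially the same route as the paper, which deduces the corollary directly from concavity (Proposition \ref{prop:concave}) together with piecewise linearity of $DH\comp$ on the chambers (Remark \ref{rmk:polynomial}); you simply spell out the details the paper treats as immediate, via the decomposition $P=\bigcup_i P_i$ and the extreme-point/Krein--Milman argument. One small precision: mere boundedness is not enough for that last step --- you need $P$ closed, but this does follow from your own decomposition of $P$ as a finite union of compact polytopes, so the argument stands.
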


The convex polytope of Corollary \ref{cor:DH_polytope} and the
Duistermaat-Heckman function of a compact complexity one $T$-space
are equivalent in the sense that knowing one allows to reconstruct the
other. To emphasize the combinatorial nature of the problem we study,
we introduce the following notion.

\begin{definition}\label{Def: DH-polytope}
  The convex polytope \eqref{eq:21} of a compact complexity one
  $T$-space $\comp$ is called the {\bf Duistermaat-Heckman polytope} of $\comp$.
\end{definition}

\begin{remark}\label{rmk:DH_polytope_induced}
  Suppose that $\comp$ is a compact complexity one $T$-space obtained
  by restricting a complexity zero $T \times S^1$-action on
  $(M,\omega)$ to the subtorus $T \times \{1\}$. The moment
  polytope of the complexity zero action need not agree with the
  Duistermaat-Heckman polytope of $\comp$. However, the two are
  related as follows. If $\Delta$ denotes the moment polytope of the
  complexity zero $T \times S^1$-action, we have that
  $$ \Delta = \{ (x,y)\in \Phi'(M) \times \R \mid y \in
  [p_{min}(x), p_{max}(x)] \}, $$
  \noindent
  whereas, combining Example \ref{exm:induced_DH} and
  \eqref{eq:21}, the Duistermaat-Heckman polytope of $\comp$ equals
  $$     \{(x,t) \in \mathfrak{t}^* \times \R \mid x \in \Phi(M) \, , \, t
  \in [0,p_{max}(x) - p_{min}(x)]\}. $$
  \noindent
  Finally, we observe that the latter can be obtained from the former
  by applying a piecewise integral affine transformation of $\g^*
  \oplus \R$, where the lattice is $\ell^* \oplus \Z$ (see Figure \ref{HexagonHouse}).
\end{remark}

\begin{figure}[htbp]
	\begin{center}
		\includegraphics[width=8cm]{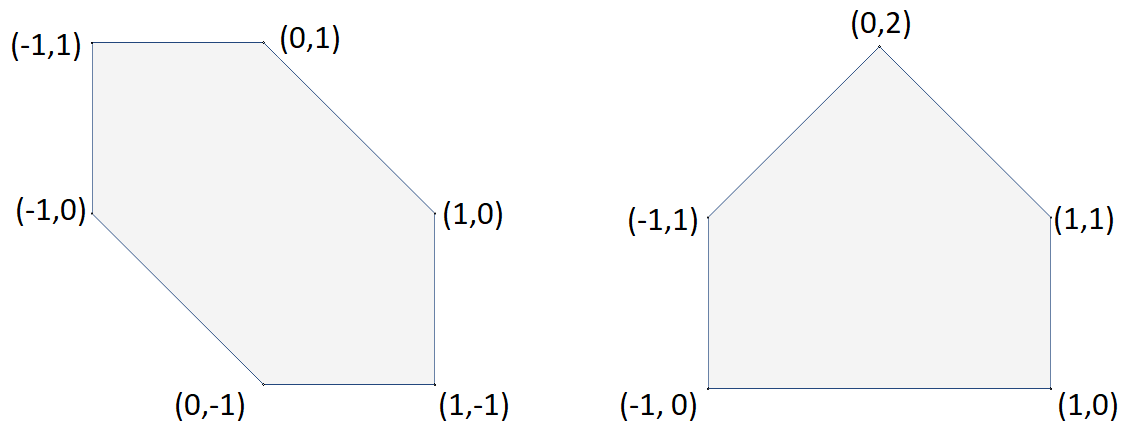}
		\caption{An example in which the moment polytope of
                  the compact symplectic toric manifold does not agree
                  with the Duistermaat-Heckman
                  polytope of the complexity one space constructed as
                  in Remark \ref{rmk:DH_polytope_induced}.}
		\label{HexagonHouse}
	\end{center}
\end{figure}

\subsection{Compact complexity preserving Hamiltonian $T$-spaces}\label{sec:compl-pres-comp}
In this section, we introduce a class of Hamiltonian $T$-spaces that
enjoy special properties, which are also enjoyed by compact
symplectic toric manifolds (see Corollary \ref{cor:comp_preserving},
Proposition \ref{prop:tall_delzant} and Corollary
\ref{cor:DH_boundary_comp_pres}). We begin
with the following result, which extends \cite[Corollary 3.1, 
Lemma 3.3 and Remark 3.4]{ss}.

\begin{proposition}\label{prop:tall_vertex}
  Let $\comp$ be a compact complexity $k$ $T$-space. If $N \subset
  M^T$ is a fixed submanifold with $\dim N = 2k$, then $\Phi(N)$ is a
  vertex of $\Phi(M)$. Moreover, for every face $\mathcal{F}$ of
  $\Phi(M)$ that contains $\Phi(N)$, the sheet
  $(M_{\mathcal{F}},\omega_{\mathcal{F}},\Phi_{\mathcal{F}})$ is
  stabilized by a connected subgroup and has complexity equal to $k$.
\end{proposition}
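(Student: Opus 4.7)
The plan is to reduce everything to the local normal form at a point of $N$. Fix $p \in N$; by Theorem \ref{local normal form} and Remark \ref{rmk:fixed_point}, a $T$-invariant neighborhood of $p$ is modeled on $\C^n$ acted on as in \eqref{action Cn} by the isotropy weights $\alpha_1, \ldots, \alpha_n \in \ell^*$, with moment map \eqref{eq:33}. By Remark \ref{rmk weights of F}, the number of vanishing weights equals $\tfrac{1}{2}\dim N = k$, so I may order so that $\alpha_{d+1} = \cdots = \alpha_n = 0$ while $\alpha_1, \ldots, \alpha_d \in \ell^* \setminus \{0\}$. By Remark \ref{rmk local normal form}, these $d$ nonzero weights $\Z$-span $\ell^*$; since $\mathrm{rank}(\ell^*) = d$, they must form a $\Z$-basis of $\ell^*$ and are in particular $\R$-linearly independent.

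This linear independence makes the cone $\mathcal{C}_N = \R_{\geq 0}\text{-span}\{\alpha_1,\ldots,\alpha_d\}$ proper, so the final assertion of Corollary \ref{Cor:ConeFace} immediately yields that $\Phi(N)$ is a vertex of $\Phi(M)$, proving the first statement.

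For the second statement, set $v := \Phi(N)$ and let $\mathcal{F}$ be a face of $\Phi(M)$ containing $v$. By part \ref{item:1} of Corollary \ref{Cor:ConeFace}, $\Phi(M)$ agrees with $v + W$ near $v$ for an open neighborhood $W$ of $0$ in $\mathcal{C}_N$. Since the $\alpha_i$'s are linearly independent, the faces of $\mathcal{C}_N$ through $0$ are exactly the subcones $\R_{\geq 0}\text{-span}\{\alpha_i : i \in I\}$ for $I \subseteq \{1,\ldots,d\}$, so there is a unique such $I$ for which $\mathcal{F}$ agrees locally with $v + \R_{\geq 0}\text{-span}\{\alpha_i : i \in I\}$. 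Using \eqref{eq:33}, the local preimage $\Phi^{-1}(\mathcal{F})$ near $p$ is the complex subspace $\{z \in \C^n : z_j = 0 \text{ for } j \in \{1,\ldots,d\}\setminus I\}$ of complex dimension $|I|+k$; since $M_{\mathcal{F}}$ is connected by Lemma \ref{Lem:FacesSubmanifolds}, this forces $\dim M_{\mathcal{F}} = 2(|I|+k)$.

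To compute $H_{\mathcal{F}}$, I read off from \eqref{action Cn} that the stabilizer of a point of this local preimage with $z_j \neq 0$ for all $j \in I$ equals $\{\exp(\xi) \in T : \langle \alpha_i, \xi\rangle \in \Z \text{ for } i \in I\}$. Using the dual $\Z$-basis $\xi_1,\ldots,\xi_d \in \ell$ of $\alpha_1,\ldots,\alpha_d$, a short calculation modulo $\ell$ identifies this subgroup with $\exp(\R\text{-span}\{\xi_j : j \notin I\})$, the connected subtorus whose Lie algebra is $\mathfrak{h}_{\mathcal{F}}$ (see \eqref{eq:23}). By Corollary \ref{cor:preimage_face} this subgroup contains $H_{\mathcal{F}}$, by Lemma \ref{Lem:FacesSubmanifolds} the two share the Lie algebra $\mathfrak{h}_{\mathcal{F}}$, and connectedness of the former forces equality. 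Hence $H_{\mathcal{F}}$ is connected of dimension $d-|I|$, and the complexity of $(M_{\mathcal{F}}, \omega_{\mathcal{F}}, \Phi_{\mathcal{F}})$ as a $T/H_{\mathcal{F}}$-space is $\tfrac{1}{2}\dim M_{\mathcal{F}} - \dim(T/H_{\mathcal{F}}) = (|I|+k)-|I|=k$. The step I expect to require the most care is this final identification of the global $H_{\mathcal{F}}$ with the subgroup computed in the local model, which rests on combining the connectedness of the locally computed subtorus with the Lie-algebra-level information provided by Lemma \ref{Lem:FacesSubmanifolds} and Corollary \ref{cor:preimage_face}.
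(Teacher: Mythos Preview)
Your proof is correct and follows essentially the same route as the paper: both use the local normal form at a point of $N$ and Corollary~\ref{Cor:ConeFace} to get the vertex, and both identify $H_{\mathcal{F}}$ via stabilizers in that local model. The only notable difference is in the complexity computation: you compute $\dim M_{\mathcal{F}}$ and $\dim H_{\mathcal{F}}$ explicitly from the subset $I$, while the paper instead observes that $N$ is a fixed submanifold of $M_{\mathcal{F}}$ and sandwiches the complexity using Proposition~\ref{prop:regular_exceptional} twice (once for $N\subset M_{\mathcal{F}}$ giving $\geq k$, once for $M_{\mathcal{F}}\subset M$ giving $\leq k$). Your direct count is a bit more concrete; the paper's version leverages the sheet machinery it has already built. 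One minor simplification available to you: since Corollary~\ref{cor:preimage_face} already identifies $H_{\mathcal{F}}$ as the stabilizer of a generic point of $M_{\mathcal{F}}$, you could conclude $H_{\mathcal{F}}$ equals your locally computed connected subtorus in one step, rather than via the containment-plus-equal-Lie-algebra argument.
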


\begin{proof}
  Since $\dim N= 2k$, by Remark \ref{rmk:reg_irr_fixed},
  $(N,\omega_N)$ is regular and every point in $N$ is regular. Hence,
  by Remark \ref{rmk:regular_equivalent}, the
  $T$-action on the normal bundle to $N$ is toric. By Corollary
  \ref{Cor:ConeFace}, $\Phi(N)$ is a vertex of $\Phi(M)$.

  Let $\mathcal{F}$ be a face of $\Phi(M)$ that contains $\Phi(N)$ and
  let $p \in N \cap M_{\mathcal{F}}$. By Corollary
  \ref{cor:preimage_face}, there exist points arbitrarily close to $p$
  that have stabilizer equal to $H_{\mathcal{F}}$, the stabilizer of
  $(M_{\mathcal{F}},\omega_{\mathcal{F}},\Phi_{\mathcal{F}})$. Since
  $p$ is regular and all stabilizers in a regular local model
  are connected, $H_{\mathcal{F}}$ is connected by Theorem \ref{local
    normal form}. Finally, we observe that, since $(N,\omega_N)$
  is a fixed submanifold and $N \subseteq
  M_{\mathcal{F}}$, $(N,\omega_N)$ is a
  sheet in $(M_{\mathcal{F}},\omega_{\mathcal{F}},\Phi_{\mathcal{F}})$. Since
  $\dim N = 2k$ and $N \subset M^{T/H_{\mathcal{F}}}_{\mathcal{F}}$,
  by Proposition \ref{prop:regular_exceptional}, the complexity of
  $(M_{\mathcal{F}},\omega_{\mathcal{F}},\Phi_{\mathcal{F}})$ is at
  least $k$. On the other hand,
  $(M_{\mathcal{F}},\omega_{\mathcal{F}},\Phi_{\mathcal{F}})$ is a
  sheet in $\comp$ and the complexity of the latter is $k$. Hence, by
  Proposition \ref{prop:regular_exceptional}, the complexity of
  $(M_{\mathcal{F}},\omega_{\mathcal{F}},\Phi_{\mathcal{F}})$ is at
  most $k$. 
\end{proof}

\begin{corollary}\label{cor:comp_preserving}
  Let $\comp$ be a compact complexity $k$ $T$-space. The following are equivalent:
  \begin{enumerate}[label=(\roman*),ref=(\roman*),leftmargin=*]
  \item \label{item:14} for each face $\mathcal{F}$ of $\Phi(M)$, the
    complexity of the sheet $(M_{\mathcal{F}},\omega_{\mathcal{F}},\Phi_{\mathcal{F}})$ equals $k$;
  \item \label{item:12} for each face $\mathcal{F}$ of $\Phi(M)$ of codimension $r$, $M_{\mathcal{F}}$ has maximal dimension,
    i.e., $\dim M_{\mathcal{F}} = 2n-2r$;
  \item \label{item:13} for each vertex $v$ of $\Phi(M)$, $\Phi^{-1}(v)$ has maximal
    dimension, i.e., $\dim \Phi^{-1}(v) = 2k$.
  \end{enumerate}
\end{corollary}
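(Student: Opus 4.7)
The plan is to establish the cycle (i) $\Leftrightarrow$ (ii) $\Rightarrow$ (iii) $\Rightarrow$ (i), with the bulk of the work done by invoking the results already proved in this section, in particular Lemma \ref{Lem:FacesSubmanifolds}, Proposition \ref{prop:regular_exceptional}, and Proposition \ref{prop:tall_vertex}.

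First I would set up the dimension count that links (i) and (ii). For a face $\mathcal{F}$ of $\Phi(M)$ of codimension $r$, Lemma \ref{Lem:FacesSubmanifolds} (together with the definition \eqref{eq:23} of $\mathfrak{h}_{\mathcal{F}}$) gives $\dim H_{\mathcal{F}} = r$. Hence the effective torus acting on the sheet $(M_{\mathcal{F}},\omega_{\mathcal{F}},\Phi_{\mathcal{F}})$ is $T/H_{\mathcal{F}}$ of dimension $d - r$, so by Definition \ref{defn:complexity} the complexity of the sheet equals
\[
  \tfrac{1}{2}\dim M_{\mathcal{F}} - (d-r).
\]
Proposition \ref{prop:regular_exceptional} asserts that this is at most $k$, which rearranges to $\dim M_{\mathcal{F}} \leq 2n - 2r$. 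Equality of complexities (condition \ref{item:14}) is therefore equivalent to equality of dimensions (condition \ref{item:12}), proving \ref{item:14} $\Leftrightarrow$ \ref{item:12}.

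Next I would deduce \ref{item:12} $\Rightarrow$ \ref{item:13} by specializing to a vertex: if $v \in \Phi(M)$ is a vertex then $\{v\}$ is a face of codimension $d$, so $\mathfrak{h}_{\{v\}} = \g$ and $H_{\{v\}} = T$ by connectedness of $T$. By Lemma \ref{Lem:FacesSubmanifolds}, $\Phi^{-1}(v) = M_{\{v\}}$ is a connected component of $M^T$, and \ref{item:12} yields $\dim \Phi^{-1}(v) = 2n - 2d = 2k$.

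Finally I would prove \ref{item:13} $\Rightarrow$ \ref{item:14}. Given any face $\mathcal{F}$ of $\Phi(M)$, pick a vertex $v$ of $\Phi(M)$ lying in $\mathcal{F}$ (such a vertex exists as $\Phi(M)$ is a polytope and faces of polytopes contain vertices). By the argument above, $\Phi^{-1}(v)$ is a fixed submanifold of $\comp$, and by \ref{item:13} its dimension equals $2k$. Proposition \ref{prop:tall_vertex} then applies: for every face of $\Phi(M)$ containing $v$, in particular for $\mathcal{F}$ itself, the associated sheet has complexity $k$, giving \ref{item:14}.

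There is no real obstacle here; the statement is essentially a repackaging of Proposition \ref{prop:tall_vertex} together with the upper bound from Proposition \ref{prop:regular_exceptional}. The only mild subtlety is to notice that the codimension of $\mathcal{F}$ in $\Phi(M)$ equals its codimension in $\g^*$ (since $\Phi(M)$ has full dimension $d$), so that the formula $\dim \mathfrak{h}_{\mathcal{F}} = r$ provided by Lemma \ref{Lem:FacesSubmanifolds} matches the codimension $r$ appearing in \ref{item:12}.
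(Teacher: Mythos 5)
Your proof is correct and follows essentially the same route as the paper: the equivalence of (i) and (ii) via the dimension count $\operatorname{codim}\mathcal{F}=\dim H_{\mathcal{F}}$, the implication (ii) $\Rightarrow$ (iii) by specializing to vertices, and (iii) $\Rightarrow$ (i) by applying Proposition \ref{prop:tall_vertex} to a vertex contained in the given face. The paper's proof is merely a terser version of the same argument.
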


\begin{proof}
  For any face $\mathcal{F}$ of
  $\Phi(M)$ of codimension $r$, the complexity of
  $(M_{\mathcal{F}},\omega_{\mathcal{F}},\Phi_{\mathcal{F}})$ equals
  that of $\comp$ if and only if $\dim M_{\mathcal{F}} = \dim M -
  2r$. This shows that \ref{item:14} and \ref{item:12} are equivalent. Since vertices are faces of maximal codimension, \ref{item:12} implies
  \ref{item:13}. The converse follows from Proposition \ref{prop:tall_vertex}.
\end{proof}

% \begin{remark}\label{rmk:only_vertex}
%   In fact, the proof of Lemma \ref{cor:comp_preserving} shows that,
%   if $v$ is a vertex of $\Phi(M)$ such that $\Phi^{-1}(v)$ has maximal
%   dimension (see \ref{item:13}), then, for all
%   faces $\mathcal{F}$ of $\Phi(M)$ containing $v$, $M_{\mathcal{F}}$ also has maximal
%   dimension (see \ref{item:12}). The latter implies that the
%   complexity of
%   $(M_{\mathcal{F}},\omega_{\mathcal{F}},\Phi_{\mathcal{F}})$ equals
%   the complexity of $\comp$ (see \ref{item:14}). 
% \end{remark}

Motivated by Corollary \ref{cor:comp_preserving}, we introduce the
following terminology.

\begin{definition}\label{defn:complexity_preserving}
  A compact Hamiltonian $T$-space is said to be {\bf complexity
    preserving} if it satisfies any (and hence all) of the conditions
  \ref{item:14} -- \ref{item:13} in Corollary \ref{cor:comp_preserving}.
\end{definition}

compact complexity preserving Hamiltonian $T$-spaces generalize compact
symplectic toric manifolds.

\begin{remark}\label{rmk:complexity_preserving_restriction}
  By Corollary \ref{cor:comp_preserving}, if $\comp$ is a compact complexity preserving Hamiltonian $T$-space,
  then, for every face $\mathcal{F}$ of $\Phi(M)$, so is $(M_{\mathcal{F}},\omega_{\mathcal{F}},\Phi_{\mathcal{F}})$.
\end{remark}

The following result describes a property of the Duistermaat-Heckman
function of compact complexity preserving Hamiltonian $T$-spaces and is an immediate consequence of Propositions \ref{prop:DH_boundary},
\ref{prop:tall_vertex} and Remark
\ref{rmk:complexity_preserving_restriction}.

\begin{corollary}\label{cor:DH_boundary_comp_pres}
  Let $\comp$ be a compact complexity $k$ $T$-space and suppose that $N
  \subset M^T$ is a fixed submanifold with $\dim N = 2k$. For
  any face $\mathcal{F}$ of $\Phi(M)$ containing $\Phi(N)$, 
  \begin{equation}
    \label{eq:27}
    DH \comp|_{\mathcal{F}} =
     DH(M_{\mathcal{F}},\omega_{\mathcal{F}},\Phi_{\mathcal{F}}), 
  \end{equation}
  \noindent
  where  $(M_{\mathcal{F}},\omega_{\mathcal{F}},\Phi_{\mathcal{F}})$
  is the sheet given in \eqref{eq:11}. In particular, if $\comp$ is complexity preserving, then
  \eqref{eq:27} holds for all faces of $\Phi(M)$.
\end{corollary}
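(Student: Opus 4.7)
The plan is to prove the first assertion by induction on the codimension $r$ of the face $\mathcal{F}$ in $\Phi(M)$. The case $r=0$ is trivial since $(M_\mathcal{F},\omega_\mathcal{F},\Phi_\mathcal{F})$ coincides with $\comp$. For $r=1$, i.e.\ $\mathcal{F}$ a facet, I apply Proposition \ref{prop:DH_boundary}: by Proposition \ref{prop:tall_vertex} the sheet $(M_\mathcal{F},\omega_\mathcal{F},\Phi_\mathcal{F})$ has complexity $k$, and by Lemma \ref{Lem:FacesSubmanifolds} its stabilizer $H_\mathcal{F}$ has dimension equal to the codimension of $\mathcal{F}$ in $\g^*$, namely $1$; hence $\dim M_\mathcal{F} = 2k + 2(d-1) = \dim M - 2$, which places us in the first (non-trivial) case of Proposition \ref{prop:DH_boundary} and yields the desired identity on $\mathcal{F}$.

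For the inductive step $r \geq 2$, I choose a facet $\mathcal{F}'$ of $\Phi(M)$ containing $\mathcal{F}$ (such a facet exists and necessarily contains $\Phi(N)$). Using Remark \ref{rmk:convention_moment_map_sheet} to take $\Phi_{\mathcal{F}'} = \Phi|_{M_{\mathcal{F}'}}$, Lemma \ref{Lem:FacesSubmanifolds} gives $\Phi_{\mathcal{F}'}(M_{\mathcal{F}'}) = \mathcal{F}'$, while Proposition \ref{prop:tall_vertex} ensures that $(M_{\mathcal{F}'},\omega_{\mathcal{F}'},\Phi_{\mathcal{F}'})$ has complexity $k$ and still contains $N$ as a fixed submanifold of dimension $2k$. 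The face $\mathcal{F}$ of $\Phi(M)$ is then a face of $\mathcal{F}'$ of strictly smaller codimension $r-1$ that still contains $\Phi(N)$, and the sheet associated to $\mathcal{F}$ inside $(M_{\mathcal{F}'},\omega_{\mathcal{F}'},\Phi_{\mathcal{F}'})$ is naturally $(M_\mathcal{F},\omega_\mathcal{F},\Phi_\mathcal{F})$ itself. By the inductive hypothesis applied to this space,
\[
 DH(M_{\mathcal{F}'},\omega_{\mathcal{F}'},\Phi_{\mathcal{F}'})|_\mathcal{F} \;=\; DH(M_\mathcal{F},\omega_\mathcal{F},\Phi_\mathcal{F}),
\]
while the $r=1$ step applied to the facet $\mathcal{F}'$ of $\Phi(M)$ gives $DH\comp|_{\mathcal{F}'} = DH(M_{\mathcal{F}'},\omega_{\mathcal{F}'},\Phi_{\mathcal{F}'})$. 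Composing these two identities yields the claim.

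For the ``in particular'' statement, if $\comp$ is complexity preserving then by Corollary \ref{cor:comp_preserving} every vertex $v$ of $\Phi(M)$ satisfies $\dim \Phi^{-1}(v) = 2k$; since every face contains at least one vertex, the first part applies to each face $\mathcal{F}$. Remark \ref{rmk:complexity_preserving_restriction} is not strictly needed for the argument but is consistent with it, confirming that every sheet that appears in the induction is itself complexity preserving. The main point that I expect to require a little care is the identification, in the inductive step, of $\mathcal{F}$ as a face of the moment polytope of the sheet $(M_{\mathcal{F}'},\omega_{\mathcal{F}'},\Phi_{\mathcal{F}'})$ of strictly smaller codimension, and the agreement of the corresponding ``iterated'' sheet with $(M_\mathcal{F},\omega_\mathcal{F},\Phi_\mathcal{F})$; once this bookkeeping is in place, the proof reduces to a telescoping of Proposition \ref{prop:DH_boundary}.
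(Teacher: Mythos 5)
Your proof is correct and follows the route the paper intends: the paper states this corollary as an immediate consequence of Proposition \ref{prop:DH_boundary}, Proposition \ref{prop:tall_vertex} and Remark \ref{rmk:complexity_preserving_restriction}, and your induction on the codimension of $\mathcal{F}$ (facet case via Proposition \ref{prop:DH_boundary} with the dimension count $\dim M_{\mathcal{F}}=\dim M-2$ supplied by Proposition \ref{prop:tall_vertex}, then telescoping through a facet $\mathcal{F}'\supseteq\mathcal{F}$) is exactly the spelled-out version of that argument. The bookkeeping you flag — that $N$ remains a $2k$-dimensional fixed submanifold of the sheet $(M_{\mathcal{F}'},\omega_{\mathcal{F}'},\Phi_{\mathcal{F}'})$ and that the iterated sheet over $\mathcal{F}$ agrees with $(M_{\mathcal{F}},\omega_{\mathcal{F}},\Phi_{\mathcal{F}})$ up to the moment-map convention of Remark \ref{rmk:convention_moment_map_sheet} — is handled correctly.
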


To conclude this section, we prove the following result, which we need
in Section \ref{sec:duist-heckm-funct}.

\begin{lemma}\label{lemma:comp_pres_no_iso}
  Let $\comp$ be a compact complexity preserving Hamiltonian $T$-space
  of positive complexity. If there are no singular values of $\Phi$ contained in
  the interior of $\Phi(M)$,
  then the action has no isolated fixed points. 
\end{lemma}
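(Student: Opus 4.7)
My plan is to prove the contrapositive: assuming $\comp$ admits an isolated fixed point, I will produce a singular value of $\Phi$ in the interior of $\Phi(M)$. The argument chains together several facts from Section \ref{sec:global-invariants}, and the complexity preserving hypothesis will not actually be needed beyond keeping us in the paper's setting.

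Let $p \in M^T$ be isolated. Since $\comp$ has positive complexity $k \geq 1$, the second bullet of Lemma \ref{lemma:regular_connected_stab} (equivalently Example \ref{exm:fixed_point_reg}) forces $p$ to be exceptional, because a regular fixed point would lie in a fixed submanifold of dimension $2k > 0$ and thus could not be isolated. Consequently $\comp$ admits an exceptional fixed point, hence an exceptional sheet by Lemma \ref{lemma:necessary_sufficient_existence_exceptional}, and therefore at least two chambers in $\Phi(M)$ by Lemma \ref{lemma:one_chamber}.

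It remains to promote the existence of two chambers to the existence of a singular value in $\mathrm{int}(\Phi(M))$. Following the argument in the proof of Lemma \ref{lemma:one_chamber}, if every $\Delta_{ij}$ from Intermezzo 1 were contained in $\partial \Phi(M)$, then the union of the $\Delta_{ij}$'s would equal $\partial \Phi(M)$, and $\mathrm{int}(\Phi(M))$ would form the unique chamber, a contradiction. Hence some $\Delta_{ij}$ must meet $\mathrm{int}(\Phi(M))$; since each $\Delta_{ij}$ consists of singular values of $\Phi$ by Lemma \ref{lemma:walls}, this produces a singular value of $\Phi$ in $\mathrm{int}(\Phi(M))$, contradicting the hypothesis. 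I do not expect a serious technical obstacle: the proof amounts to an assembly of previously established facts, and the only mildly delicate step is this last translation from ``multiple chambers'' to ``interior singular value,'' which is already implicit in the proof of Lemma \ref{lemma:one_chamber}.
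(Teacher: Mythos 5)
Your reduction to a chamber count has a genuine gap, and the warning sign is your own remark that the complexity-preserving hypothesis is not needed: without it the statement is false. Take $\C P^2$ with the $S^1$-action $\lambda\cdot[z_0:z_1:z_2]=[z_0:\lambda z_1:\lambda z_2]$. This is a compact complexity one (hence positive complexity) Hamiltonian $S^1$-space; its moment image is an interval whose interior consists entirely of regular values (the only singular values are the two endpoints, the images of the fixed point set $\{[1:0:0]\}\cup\{z_0=0\}$), and yet $[1:0:0]$ is an isolated fixed point. So no argument that never uses complexity preserving can prove the lemma.

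The step of yours that fails is ``exceptional sheet $\Rightarrow$ at least two chambers.'' You invoke this as one direction of Lemma \ref{lemma:one_chamber}, but the proof given there only establishes the implication ``no exceptional sheets $\Rightarrow$ precisely one chamber'' (its two paragraphs are contrapositives of one another), and the direction you need is false with the paper's definitions: an exceptional sheet may have moment image entirely contained in $\partial\Phi(M)$, in which case it contributes no interior wall. That is exactly what happens in the example above, where the exceptional sheet is the zero-dimensional sheet $\{[1:0:0]\}$ sitting over a vertex of the moment polytope and there is still only one chamber; your final translation from ``at least two chambers'' to an interior singular value is fine, but nothing forces two chambers. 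The complexity-preserving hypothesis is precisely what rules this out, and it is where the paper's proof genuinely differs from yours: by condition \ref{item:13} of Corollary \ref{cor:comp_preserving} an isolated fixed point cannot map to a vertex (the preimage of a vertex is a connected fixed submanifold of dimension $2k\geq 2$), so $\Phi(p)$ lies in a face of positive dimension, and the paper then runs a local-normal-form argument at $p$, choosing $d-1$ linearly independent isotropy weights adapted to that face so that points with one-dimensional stabilizer have images in $\mathrm{Int}(\Phi(M))$. Your first steps (isolated $\Rightarrow$ exceptional via Lemma \ref{lemma:regular_connected_stab}, exceptional fixed point $\Rightarrow$ exceptional sheet via Lemma \ref{lemma:necessary_sufficient_existence_exceptional}) are correct; what is missing is an argument that an exceptional fixed point forces a wall off the boundary, and that is where complexity preserving must enter.
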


\begin{proof}
  Suppose that $p \in M^T$ is isolated. By condition \ref{item:13} in Corollary \ref{cor:comp_preserving},
  $\Phi(p)$ is not a vertex. Hence, if
  $\mathcal{F}$ is the face of smallest dimension in which $\Phi(p)$
  lies, then $\dim \mathcal{F} \geq 1$. By part \ref{item:1}
  of Corollary \ref{Cor:ConeFace}, an open neighborhood of
  $\Phi(p)$ in $\Phi(M)$ can be identified with an open neighborhood of $(0,0) \in
  \R^{\dim \mathcal{F}} \times \R^{d-\dim \mathcal{F}}$ in $\R^{\dim
    \mathcal{F}} \times \mathcal{C}'_p$, where $\mathcal{C}'_p$ is the
  proper cone in the proof of part \ref{item:2} of Corollary
  \ref{Cor:ConeFace}. We observe that, since $\dim \mathcal{F} \geq
  1$, the subset $\{0\} \times \mathcal{C}'_p$ intersects the interior
  of $\R^{\dim \mathcal{F}} \times \mathcal{C}'_p$. Choose $d - \dim
  \mathcal{F}$ linearly independent isotropy weights of $p$ that span
  $\mathcal{C}'_p$ and, if needed, complete this set with $\dim \mathcal{F}
  - 1$ linearly
  independent isotropy weights of $p$ whose span is contained in $\R^{\dim
    \mathcal{F}}$. The span of these isotropy weights
  $\alpha_1,\ldots, \alpha_{d-1}$ satisfies
  \begin{equation}
    \label{eq:31}
    \left(\Phi(p) + \R_{\geq 0} \langle \alpha_1,\ldots, \alpha_{d-1} \rangle \right) \cap
    \mathrm{Int}(\Phi(M)) \neq \emptyset,
  \end{equation}
  \noindent
  where $\mathrm{Int}(\Phi(M))$ denotes the interior of
  $\Phi(M)$.

  By Theorem \ref{local normal form}, we may identify a $T$-invariant
  neighborhood of $p$ with a $T$-invariant neighborhood of $0 \in
  \C^n$ so that $\Phi$ becomes the map
  \begin{equation}
    \label{eq:9}
     (z_1,\ldots, z_n) \mapsto \pi\sum\limits_{i=1}^n \alpha_i |z_i|^2
     +\Phi(p). 
  \end{equation}
  \noindent
  Moreover, by part \ref{item:1} of Corollary \ref{Cor:ConeFace}, an
  open neighborhood of $\Phi(p)$ in $\Phi(M)$ can be identified with
  an open neighborhood of $\Phi(p)$ in the image of the map of
  equation \eqref{eq:9}.

  By \eqref{eq:31}, the affine hyperplane $ \Phi(p) + \R \langle \alpha_1,\ldots, \alpha_{d-1} \rangle $ intersects $\mathrm{Int}(\Phi(M))$. All values in this intersection
  have a one-dimensional stabilizer: this is because 
  $$ \dim\left(\mathrm{Ann}\left(\R \langle \alpha_1 \rangle \right) \cap
    \ldots \cap \mathrm{Ann}\left(\R \langle \alpha_{d-1}
      \rangle\right)\right) = 1, $$
  \noindent
  since $\alpha_1,\ldots, \alpha_{d-1}$ are linearly
  independent. Hence, there is a singular value of $\Phi$ in
  $\mathrm{Int}(\Phi(M))$, a contradiction.
\end{proof}

\subsubsection{Moment polytopes for compact complexity preserving
  Hamiltonian $T$-spaces}\label{sec:moment-polyt-monot}
In this subsection, we characterize the moment map image of complexity
preserving compact Hamiltonian $T$-spaces (see Proposition
\ref{prop:tall_delzant} below). To this end, given a polytope $\Delta
\subset \g^*$ and a vertex $v \in \Delta$, we say that $\Delta$ is
{\bf smooth} at $v$ if

\begin{itemize}[leftmargin=*]
\item there are exactly $d$ edges $e_1,\ldots, e_d$ that are incident
  to $v$, and
\item there exists a basis $\alpha_1,\ldots, \alpha_d$ of $\ell^*$
  such that $\alpha_i$ is a tangent vector to the edge $e_i$ for all
  $i=1,\ldots, d$.
\end{itemize}
A polytope $\Delta \subset \g^*$ is smooth at $v$ if and only if the collection
of inward (or outward) normals to the facets of $\Delta$ that
contain $v$ can be chosen to be a basis of $\ell$. We say that a polytope $\Delta$ is
\textbf{Delzant} if it is smooth at every vertex. \\

The moment map image of a compact symplectic toric manifold is
 a Delzant polytope and, conversely, every Delzant polytope
arises as such an image (see \cite{delzant}). In general, this fails
to be true in higher complexity. However, under the additional
hypothesis of complexity preserving, the following result holds.

\begin{proposition}\label{prop:tall_delzant}
  The moment map image of a compact complexity preserving Hamiltonian
  $T$-space is a Delzant polytope in $\g^*$.

  Conversely, for every
  Delzant polytope $\Delta$ in $\g^*$ and for every
  integer $k \geq 0$, there exists a compact complexity preserving Hamiltonian
  $T$-space $\comp$ of complexity $k$ such that $\Phi(M) = \Delta$.
\end{proposition}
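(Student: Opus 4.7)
The plan for the forward direction is to apply the local normal form (Theorem \ref{local normal form}) at a fixed submanifold of maximal possible dimension. Let $v$ be a vertex of $\Phi(M)$. Since $\{v\}$ is a face of maximal codimension, the subspace $\mathfrak{h}_v \subseteq \g$ defined in \eqref{eq:23} equals $\g$, so by Lemma \ref{Lem:FacesSubmanifolds} the set $N := \Phi^{-1}(v)$ is a connected component of $M^T$, i.e., a fixed submanifold. By condition \ref{item:13} of Corollary \ref{cor:comp_preserving}, $\dim N = 2k$.

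Pick $p \in N$ and let $\alpha_1, \ldots, \alpha_n \in \ell^*$ be the isotropy weights of $p$. By Remark \ref{rmk weights of F}, exactly $k$ of these vanish, so after relabeling we may assume $\alpha_1, \ldots, \alpha_d$ are the non-zero ones. Since $p$ is a fixed point, the symplectic slice representation at $p$ is the full $T$-representation on $T_pM \cong \C^{d+k}$, and $(\C^{d+k})^T$ has complex dimension $k$ (it is $T_pN$). We are therefore in the maximal case $s=k$ of Lemma \ref{lemma:max_dim_rep}; its ``moreover'' clause asserts that the induced homomorphism $T \to (S^1)^d$ is an isomorphism of Lie groups, which is equivalent to $\alpha_1, \ldots, \alpha_d$ forming a $\Z$-basis of $\ell^*$. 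Then Corollary \ref{Cor:ConeFace} identifies an open neighborhood of $v$ in $\Phi(M)$ with $v + W$, where $W$ is an open neighborhood of $0$ in the cone $\mathcal{C}_N = \R_{\geq 0}\text{-span}\{\alpha_1, \ldots, \alpha_d\}$; since $\{\alpha_i\}$ are linearly independent, this cone has exactly $d$ extreme rays, giving $d$ edges of $\Phi(M)$ incident to $v$ with tangent vectors $\alpha_1, \ldots, \alpha_d$. Hence $\Phi(M)$ is smooth at $v$.

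For the converse, the plan is to reduce to Delzant's construction. Given a Delzant polytope $\Delta \subset \g^*$, Delzant's theorem \cite{delzant} produces a compact symplectic toric $T$-manifold $(M_0, \omega_0, \Phi_0)$ with $\Phi_0(M_0) = \Delta$; this space is trivially complexity preserving for $k=0$ since the preimage of every vertex is a single fixed point, hence of dimension $0 = 2k$. For $k \geq 1$, fix any compact connected symplectic manifold $(F, \omega_F)$ of real dimension $2k$ (for instance $F = (\C P^1)^k$ with a product of Fubini--Study forms), and set
\[
M := M_0 \times F, \qquad \omega := \mathrm{pr}_1^*\omega_0 + \mathrm{pr}_2^*\omega_F, \qquad \Phi := \Phi_0 \circ \mathrm{pr}_1,
\]
with $T$ acting on the first factor via its toric action on $M_0$ and trivially on $F$. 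This is plainly a compact, connected, effective Hamiltonian $T$-space of complexity $k$ with moment polytope $\Delta$. For each vertex $v$ of $\Delta$, $\Phi^{-1}(v) = \Phi_0^{-1}(v) \times F$ is a single copy of $F$ and hence has dimension $2k$, so condition \ref{item:13} of Corollary \ref{cor:comp_preserving} holds.

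The main subtle point is the use of Lemma \ref{lemma:max_dim_rep} in the forward direction: it is what upgrades the non-zero isotropy weights from an $\R$-basis of $\g^*$ (which would give only rational simplicity of the vertex) to a $\Z$-basis of $\ell^*$ (yielding smoothness), and this is where the full strength of the complexity preserving hypothesis enters. The converse is routine once one observes that the product construction visibly preserves condition \ref{item:13} of Corollary \ref{cor:comp_preserving}.
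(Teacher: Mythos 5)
Your proposal is correct and follows essentially the same route as the paper: in the forward direction you use condition \ref{item:13} of Corollary \ref{cor:comp_preserving} to see that the preimage of a vertex is a $2k$-dimensional fixed submanifold, count the zero weights, and combine the local cone description of Corollary \ref{Cor:ConeFace} with the effectiveness of the symplectic slice representation to get a lattice basis, while the converse is the same Delzant-plus-trivial-factor product construction. The only (inessential) difference is that you extract the $\Z$-basis statement from the $s=k$ clause of Lemma \ref{lemma:max_dim_rep}, whereas the paper cites Remark \ref{rmk local normal form} (the $\Z$-span of the weights equals $\ell^*$) for the same underlying fact.
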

\begin{proof}
  Let $\comp$ be a compact complexity preserving Hamiltonian
  $T$-space of complexity $k$. Let $v \in \Phi(M)$ be a vertex. By Corollary
  \ref{cor:comp_preserving}, $\dim \Phi^{-1}(v) = 2k$. Let $\alpha_1,\ldots,\alpha_n$
  be the isotropy weights of $\Phi^{-1}(v)$ (see Remark \ref{rmk weights of F}). Since
  $\dim \Phi^{-1}(v) =2k$, precisely $k$ weights are zero. Without loss of generality, we may assume
  that $\alpha_{n-k+1},\ldots, \alpha_n = 0$. By part \ref{item:1} of Corollary \ref{Cor:ConeFace} 
  an open neighborhood of $v$ in $\Phi(M)$ looks like
  an open neighborhood of $0$ in 
  \begin{equation}\label{cone F}
    \R_{\geq 0}\text{-span} \left\lbrace \alpha_{1},\dots,\alpha_n\right\rbrace=
    \R_{\geq 0}\text{-span} \left\lbrace \alpha_{1},\dots \alpha_{n-k}\right\rbrace.
  \end{equation}
  \noindent
  Since the complexity of $\comp$ is $k$, $d = \dim T = 
  n-k$. Hence, by equation \eqref{cone F}, there are exactly
  $d$ edges that are incident to $v$. Moreover, by Remark \ref{rmk local normal form},
  the $\Z$-span of $\alpha_1,\ldots,\alpha_d$ equals $\ell^*$. Hence,
  $\Phi(M)$ is smooth at $v$ and the first statement follows.

  Conversely, fix an integer $k \geq 0$ and suppose that $\Delta$ is a
  Delzant polytope in $\g^*$. By the classification of compact symplectic toric manifolds
  (see \cite{delzant}), there exists a compact
  complexity zero $T$-space $(M',\omega',\Phi')$ such that $\Phi'(M') =
  \Delta$. Let $(M'',\omega'')$ be a closed symplectic
  manifold of dimension $2k$. Consider the $T$-action on $M:= M'\times M''$ given by
  taking the product of the above $T$-action on $M'$ with the
  trivial $T$-action on $M''$. This action is Hamiltonian for the
  symplectic form $\omega$ obtained by summing the pullbacks to $M$ of $\omega'$ and $\omega''$
  along the projections. A moment map for this $T$-action is given by the pullback to $M$ of
  $\Phi'$ along the projection $M \to M'$; we denote this moment map
  by $\Phi$. Then $(M,\omega, \Phi)$ is a compact complexity preserving
  complexity $k$ $T$-space with moment map image given by
  $\Delta$, as desired.
\end{proof}

\subsection{Compact tall complexity one $T$-spaces} \label{sec:tall-comp-compl}
In this section we introduce an important class of compact complexity one $T$-spaces.

\begin{defin}\label{def tall}
  A compact complexity one $T$-space $\comp$ is called \textbf{tall} if no reduced space is a point.
\end{defin}

To shed light on Definition \ref{def tall} we observe that, if $\comp$ is a compact 
complexity one $T$-space, then
the reduced space $M_x$ is homeomorphic to a closed, connected orientable
surface for any $x \in \Phi(M)_{\mathrm{reg}}$ (see Section \ref{sec:orbital-moment-map}). If $\comp$ is tall, then this holds {\em for all} $x \in
\Phi(M)$ (see \cite[Proposition 6.1]{kt1}). Moreover, the following
result holds.

\begin{proposition}\label{prop:tall=complexity_preserving}
  A compact complexity one $T$-space is tall if and only if it is
  complexity preserving.
\end{proposition}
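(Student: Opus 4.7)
My plan is to use the equivalent characterization of complexity preserving given by condition \ref{item:13} of Corollary \ref{cor:comp_preserving}, namely that $\dim \Phi^{-1}(v) = 2k = 2$ for every vertex $v$ of $\Phi(M)$.

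For the forward implication, suppose $\comp$ is tall and let $v$ be a vertex of $\Phi(M)$. By Lemma \ref{Lem:FacesSubmanifolds}, $\Phi^{-1}(v)$ is a connected component of $M^T$, hence a fixed submanifold of dimension at most $2k = 2$ by Remark \ref{rmk:reg_irr_fixed}. If this dimension were zero, then $\Phi^{-1}(v) = \{p\}$ would consist of a single isolated fixed point, and the reduced space at $v$ would equal $\{p\}/T$, a single point --- contradicting tallness. Hence $\dim \Phi^{-1}(v) = 2$, which verifies condition \ref{item:13}.

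For the converse, I argue by contradiction. Suppose $\comp$ is complexity preserving but not tall, so there exists $x \in \Phi(M)$ with $\Phi^{-1}(x) = T \cdot p$ a single orbit; let $H = \mathrm{Stab}(p)$ and $h = \dim H$. By Theorem \ref{local normal form} I pass to the local model $(Y,\omega_Y,\Phi_Y)$ at $p$, where $Y = T \times_H \mathrm{Ann}(\mathfrak{h}) \times \C^{h+1}$ and $\Phi_Y([t,\alpha,z]) = \alpha + \Phi_H(z)$ with $\Phi_H(z) = \pi \sum_{j=1}^{h+1} |z_j|^2 \alpha_j$ for the isotropy weights $\alpha_1,\ldots,\alpha_{h+1}$ of $p$. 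Since $\alpha \in \mathrm{Ann}(\mathfrak{h})$ and $\Phi_H(z) \in \mathfrak{h}^*$ lie in complementary subspaces, the fiber satisfies $\Phi_Y^{-1}(0) = T \times_H \{0\} \times \Phi_H^{-1}(0)$, and it coincides with the single orbit $T \cdot [1,0,0]$ precisely when $\Phi_H^{-1}(0) = \{0\}$. This forces the weights $\alpha_j$ to lie strictly on one side of a hyperplane through the origin in $\mathfrak{h}^*$; in particular no $\alpha_j$ vanishes and the cone $\mathcal{C}_H := \R_{\geq 0}\{\alpha_1,\ldots,\alpha_{h+1}\}$ is proper.

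Two consequences follow. First, Theorem \ref{local normal form} identifies a neighborhood of $x$ in $\Phi(M)$ with $x + \mathrm{Ann}(\mathfrak{h}) + \mathcal{C}_H$; since $\mathcal{C}_H$ contains no positive-dimensional subspace, the maximal linear subspace of this tangent cone is $\mathrm{Ann}(\mathfrak{h})$. Hence the face $\mathcal{F}$ of $\Phi(M)$ containing $x$ in its relative interior has dimension $d - h$ and codimension $h$, with $\mathfrak{h}_{\mathcal{F}} = \mathfrak{h}$. Second, since every $\alpha_j$ is nonzero, the $H$-action on $\C^{h+1}$ has no nonzero fixed vectors, so $Y^H = T \times_H \mathrm{Ann}(\mathfrak{h}) \times \{0\}$ has dimension $2(d-h)$; by Lemma \ref{Lem:FacesSubmanifolds}, $M_{\mathcal{F}}$ is the connected component of $M^{H_{\mathcal{F}}}$ containing $p$, so $\dim M_{\mathcal{F}} = 2(d-h)$. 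However, condition \ref{item:12} of Corollary \ref{cor:comp_preserving} requires $\dim M_{\mathcal{F}} = 2n - 2h = 2(d-h) + 2$, giving the desired contradiction. The main obstacle lies in this converse direction: the careful local model computation is what reveals the off-by-two dimension discrepancy that rules out the existence of a pointlike reduced space in a complexity preserving space.
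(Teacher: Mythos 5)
Your proof is correct. The forward direction is essentially the paper's own argument: identify $\Phi^{-1}(v)$ with the reduced space at a vertex, bound its dimension by $2k=2$ via Remark \ref{rmk:reg_irr_fixed}, and rule out dimension zero by tallness, giving condition \ref{item:13} of Corollary \ref{cor:comp_preserving}. Where you genuinely diverge is the converse: the paper deduces tallness from condition \ref{item:14} by citing Corollary 2.6 of Karshon--Tolman \cite{kt3}, whereas you give a self-contained local normal form argument, showing that a point reduced space at $x$ forces $\Phi_H^{-1}(0)=\{0\}$ in the local model, hence a proper cone of nonzero $H$-weights, hence $\dim M_{\mathcal F}\le 2(d-h)$ for the face $\mathcal F$ whose relative interior contains $x$, contradicting the $2(d-h)+2$ demanded by condition \ref{item:12}. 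This buys independence from the external reference at the cost of redoing (in complexity one) what \cite{kt3} proves in general; the paper's route is shorter but opaque without the citation. Two small points worth tightening if you write this up: the identification of a neighborhood of $x$ in $\Phi(M)$ with a neighborhood of the apex of $x+\mathrm{Ann}(\mathfrak h)+\mathcal C_H$ uses not only Theorem \ref{local normal form} but also openness of $\Phi$ onto its image (Theorem \ref{thm:con_pac}), exactly as in the proof of Corollary \ref{Cor:ConeFace}; and in the last step you compute $\dim Y^{H}$ while Lemma \ref{Lem:FacesSubmanifolds} concerns $M^{H_{\mathcal F}}$ --- this is harmless because $H_{\mathcal F}$ and $H$ share the identity component $\exp(\mathfrak h)$ and all weights are nonzero, so the relevant fixed sets coincide (indeed the inequality $\dim M_{\mathcal F}\le 2(d-h)$ already suffices for the contradiction), but the conflation should be acknowledged.
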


\begin{proof}
  Let $\comp$ be a compact tall complexity one $T$-space $\comp$ and let $v
  \in \Phi(M)$ be a vertex. By Remark
  \ref{rmk:reg_irr_fixed}, $\Phi^{-1}(v)$ is either a fixed point or a 
  fixed surface. Since the reduced space at $v$ can be identified with
  $\Phi^{-1}(v)$ and since $\comp$ is tall,  $\Phi^{-1}(v)$ has
  dimension two. Hence, since $\comp$ has complexity one, it
  satisfies property \ref{item:13} in Corollary
  \ref{cor:comp_preserving}; therefore, it is complexity preserving. Conversely, if $\comp$ is complexity preserving, then it satisfies
  property \ref{item:14} in Corollary
  \ref{cor:comp_preserving}. Hence, by \cite[Corollary 2.6]{kt3}, no
  reduced space is a point and $\comp$ is tall. 
\end{proof}

In \cite{kt1, kt2, kt3} the authors classify tall
complexity one $T$-spaces\footnote{In {\em loc. cit.} the authors consider a more general class of tall complexity one spaces, namely those for which $M$ is 
connected but not necessarily compact
and such that there exists an open convex set $\mathcal{T}\subseteq
\g^*$ containing the image of the moment map with the property that $\Phi\colon M 
\to \mathcal{T}$ is proper. However, we state all results in {\em
  loc. cit.} only in the compact case.}. Below we recall this
classification. Henceforth, we fix a
compact complexity one $T$-space $\comp$. As a consequence of \cite[Corollary
9.7]{kt1} or \cite{li}, any two reduced spaces of
$\comp$ are homeomorphic. This motivates introducing the following notion.

\begin{definition}\label{defn:genus}
  The \textbf{genus} of a compact tall complexity one $T$-space 
  $\comp$ is the genus of the reduced space $M_x$ for any $x \in \Phi(M)$.
\end{definition}

The following result is a stepping stone for the classification of
compact tall complexity one $T$-spaces (see Proposition 2.2 in \cite{kt2}, Proposition 1.2 and
Remark 1.9 in
\cite{kt3})

\begin{proposition}\label{prop 2.2}
  If $\comp$ is a compact tall complexity one $T$-space, then there exist a closed oriented surface $\Sigma$ and a map
  $f\colon M/T \to \Sigma$ such that 
  \begin{equation}\label{map prop 2.2}
    (\overline{\Phi},f)\colon M/T \longrightarrow \Phi(M)\times \Sigma
  \end{equation}
  is a homeomorphism and the restriction $f : \Phi^{-1}(x)/T \to
  \Sigma$ is orientation-preserving for any $x \in \Phi(M)$. Given two such maps $f$ and $f'$, there exists an orientation-preserving homeomorphism 
  $\xi\colon \Sigma' \to \Sigma$ such that $f$ is homotopic to $\xi
  \circ f'$ through maps that induce homeomorphisms 
  $M/T\to \Phi(M)\times \Sigma$. 
\end{proposition}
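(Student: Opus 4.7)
The plan is to reduce the statement to a trivialization theorem for a topological fibration over a contractible base. First, by tallness and \cite[Proposition 6.1]{kt1}, every reduced space $\Phi^{-1}(x)/T$ is a closed orientable surface, and by \cite[Corollary 9.7]{kt1} any two reduced spaces are homeomorphic, so they all share the same genus $g$; this $g$ is the invariant of Definition \ref{defn:genus}. We take $\Sigma$ to be an abstract closed oriented surface of genus $g$. The orbital moment map $\bar{\Phi} : M/T \to \Phi(M)$ is continuous, proper (by compactness of $M$), and surjective with fibers homeomorphic to $\Sigma$.

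The next step is to establish that $\bar{\Phi}$ is a locally trivial topological fibration over $\Phi(M)$. For each orbit $\mathcal{O}$, the local normal form (Theorem \ref{local normal form}) identifies a $T$-invariant neighborhood of $\mathcal{O}$ in $M$ with a neighborhood in the local model $(Y,\omega_Y,\Phi_Y)$. Passing to $T$-quotients gives an explicit description of a neighborhood of $[\mathcal{O}]$ in $M/T$ that is compatible with $\bar{\Phi}$. Case analysis on whether $\mathcal{O}$ is a free orbit, an exceptional orbit in the interior of a chamber, an orbit lying over a facet, or a fixed surface (using tallness, via Corollary \ref{cor:comp_preserving} and Proposition \ref{prop:tall=complexity_preserving} to rule out isolated fixed points in the fiber) shows that locally the map $\bar{\Phi}$ is homeomorphic to a projection $U \times \Sigma \to U$ for a sufficiently small neighborhood $U$ of $\bar{\Phi}([\mathcal{O}])$ in $\Phi(M)$. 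The orientations on the fibers can be chosen continuously by fixing an orientation at one base point and transporting it via the local trivializations.

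Since $\Phi(M)$ is a convex polytope, hence contractible, any locally trivial fibration over it with connected fiber $\Sigma$ is globally trivializable; we choose the trivialization to be orientation-preserving on each fiber, obtaining a homeomorphism $M/T \cong \Phi(M) \times \Sigma$ commuting with the projection to $\Phi(M)$. Composing with the projection to $\Sigma$ defines $f : M/T \to \Sigma$ with the desired properties. For uniqueness, given two choices $f$ and $f'$ with target surfaces $\Sigma$ and $\Sigma'$, the composition $(\bar{\Phi},f) \circ (\bar{\Phi},f')^{-1}$ is a self-homeomorphism of $\Phi(M) \times \Sigma'$ fibered over $\Phi(M)$ that is orientation-preserving on every fiber. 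This amounts to a continuous map from $\Phi(M)$ into the space $\mathrm{Homeo}^+(\Sigma',\Sigma)$ of orientation-preserving homeomorphisms. Contractibility of $\Phi(M)$ yields a homotopy (through maps of the required form) from this family to a constant family given by a single orientation-preserving homeomorphism $\xi : \Sigma' \to \Sigma$.

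The main obstacle is the local triviality step, and in particular the analysis near orbits whose image lies on an interior wall of $\Phi(M)$: there, the reduced spaces acquire orbifold singularities, and even though they remain topological surfaces of genus $g$, producing a continuous topological trivialization across the wall requires carefully identifying the link of the singular orbit in the local model and verifying that the induced identification of nearby fibers with $\Sigma$ is compatible with the orientation. This is the technical heart of \cite[Section 6]{kt1} and \cite[Proposition 2.2]{kt2}, and once it is granted the remaining arguments are formal consequences of contractibility of $\Phi(M)$.
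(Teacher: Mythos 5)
The first thing to say is that the paper itself offers no proof of this statement to compare against: Proposition \ref{prop 2.2} is quoted from Karshon--Tolman (Proposition 2.2 in \cite{kt2}, together with Proposition 1.2 and Remark 1.9 in \cite{kt3}), and the paper simply cites those references. Judged on its own terms, your outline follows what is essentially the Karshon--Tolman strategy: all fibers of $\overline{\Phi}$ are closed orientable surfaces of one genus (via \cite[Proposition 6.1, Corollary 9.7]{kt1}), local product structure of $\overline{\Phi}\colon M/T \to \Phi(M)$, global trivialization over the contractible polytope, and the homotopy to a constant fiberwise homeomorphism $\xi$ obtained by composing the fiberwise comparison map $\Phi(M)\to \mathrm{Homeo}^+(\Sigma',\Sigma)$ with a contraction of $\Phi(M)$. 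The last two steps are indeed routine once local triviality is in hand (a fiber bundle over a compact convex, hence contractible and paracompact, base is trivial, and the contraction argument for uniqueness works as you describe).

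The genuine gap is the step you yourself identify and then wave through: establishing that $\overline{\Phi}$ is a locally trivial fibration, with orientation-compatible trivializations, near orbits whose image is a singular value of $\Phi$ --- exceptional orbits over interior walls, orbits over facets, and fixed surfaces over vertices. This is not a routine unwinding of the local normal form: the reduced spaces acquire orbifold points across the walls, and one must exhibit an identification of the quotient of the local model with $U\times\Sigma$ over a neighborhood $U$ of the singular value, check that these identifications match the symplectic orientations of nearby regular reduced spaces, and verify compatibility on overlaps (so that the structure group reduces to $\mathrm{Homeo}^+(\Sigma)$). That analysis is precisely the content of the proposition. Your proposal grants it by citing \cite[Section 6]{kt1} and \cite[Proposition 2.2]{kt2}; but the latter \emph{is} the statement to be proved, so the appeal is circular, while the former only yields that the reduced spaces are homeomorphic surfaces, not the local product structure of $M/T$ over a neighborhood of a singular value. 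As it stands, your text is an accurate outline of the known proof with its technical heart omitted, rather than a proof.
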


By Proposition \ref{prop 2.2}, the genus of
$\comp$ is the genus of $\Sigma$. \\

The next invariant of tall complexity one $T$-spaces is related to the
exceptional orbits (see Remark \ref{rmk:exc_orbits}), and is introduced below. To this end, we observe
that given a closed surface $\Sigma$ and a map $f : M/T \to 
\Sigma$ as in Proposition \ref{prop 2.2}, its restriction to
$M_{\mathrm{exc}}$ makes $(\overline{\Phi},f) : M_{\mathrm{exc}} \to
\Phi(M) \times \Sigma$ injective.

\begin{definition}\label{defn:painting}
  Let $(M,\omega, \Phi), (M',\omega',\Phi')$ be compact tall complexity one
  $T$-spaces and let $\Sigma, \Sigma'$ be closed oriented surfaces.
  \begin{itemize}[leftmargin=*]
  \item A {\bf painting of $\boldsymbol{(M,\omega,\Phi)}$} is a map $f :
    M_{\mathrm{exc}} \to \Sigma$ such that
    $(\overline{\Phi},f) : M_{\mathrm{exc}} \to \Phi(M) \times \Sigma$
    is injective.
  \item An {\bf isomorphism of exceptional orbits} is a homeomorphism
    $i : M_{\mathrm{exc}} \to M'_{\mathrm{exc}}$ satisfying $\overline{\Phi} = \overline{\Phi}' \circ i$ that sends each orbit
    to an orbit with the same symplectic slice representation. 
  \item A painting $f : M_{\mathrm{exc}} \to \Sigma$ of $(M,\omega,
    \Phi)$ is {\bf equivalent} to a painting $f' : M'_{\mathrm{exc}} \to \Sigma'$ of $(M',\omega',
    \Phi')$ if there exists an isomorphism of exceptional orbits $i :
    M_{\mathrm{exc}} \to M_{\mathrm{exc}'}$ and an
    orientation-preserving homeomorphism $\xi : \Sigma \to \Sigma'$
    such that $f' \circ i$ and $\xi \circ f$ are homotopic through paintings. 
  \end{itemize}
\end{definition}

By Proposition \ref{prop 2.2}, we can associate an equivalence class
of paintings to a compact tall complexity one $T$-space (see \cite[Section 2]{kt2}). For our purposes, it is useful to introduce the following terminology.

\begin{definition}\label{defn:trivial_painting}
  Let $\comp$ be a tall, compact complexity one $T$-space. The equivalence class of paintings $[f]$ associated to $\comp$ is
  {\bf trivial} if there exists a painting $f: M_{\mathrm{exc}}
  \to \Sigma$ representing $[f]$ that is constant on each connected
  component of $M_{\mathrm{exc}}$.
\end{definition}

The classification of compact tall complexity one $T$-spaces is as
follows.

\begin{theorem}\label{sue and yael}
{\em (Karshon--Tolman, Theorem 1 in \cite{kt2}, and Theorem 1.8 and
  Remark 1.9 \cite{kt3})}
Two compact tall complexity one $T$-spaces are isomorphic if and only
if they have equal genera, equal 
Duistermaat-Heckman measures, and equivalent paintings. 
\end{theorem}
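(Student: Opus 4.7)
The plan is to dispatch the easy direction first and then focus all effort on reconstructing an isomorphism from equal invariants. The forward direction is immediate: an isomorphism $\Psi : (M_1,\omega_1,\Phi_1) \to (M_2,\omega_2,\Phi_2)$ induces (by Remark \ref{rmk:orbital_moment_map}) a homeomorphism $\bar\Psi$ of orbit spaces intertwining orbital moment maps, which by Remark \ref{rmk:symp_slice_orbit} preserves the symplectic slice representation of every orbit; hence $\Psi$ descends to an isomorphism of exceptional orbits, is compatible with any presentation as in Proposition \ref{prop 2.2} (so it preserves the genus and painting equivalence class), and is volume-preserving on fibers (so it preserves the Duistermaat--Heckman measure).

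For the converse I would proceed in four stages. First, I would fix presentations $(\bar\Phi_i, f_i): M_i/T \to \Phi(M_i)\times\Sigma_i$ as in Proposition \ref{prop 2.2}; equal genera allow me to pick a single surface $\Sigma$ and equivalent paintings produce an orientation-preserving homeomorphism $\xi : \Sigma_1 \to \Sigma_2$ which, together with $\bar\Phi_1=\bar\Phi_2$, yields a homeomorphism $\bar\Psi : M_1/T \to M_2/T$ covering the identity on $\Phi(M)=\Phi(M_1)=\Phi(M_2)$ and realizing an isomorphism of exceptional orbits in the sense of Definition \ref{defn:painting}. Second, for each exceptional orbit $\mathcal O \subset M_1$ with stabilizer $H$ and symplectic slice representation $\rho$, I would invoke the local normal form (Theorem \ref{local normal form}) on both sides to obtain a $T$-equivariant symplectomorphism $\Psi_{\mathcal O}$ between suitable invariant neighborhoods of $\mathcal O$ in $M_1$ and $\bar\Psi(\mathcal O)$ in $M_2$, intertwining moment maps; these local isomorphisms can be arranged to lift $\bar\Psi$ over the neighborhood of the (closed) set $M_{1,\mathrm{exc}}\cup (M_1)^T$.

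Third, I would tackle the regular part. Over $(\Phi(M)\times\Sigma)\setminus \mathrm{image\ of\ exceptional\ orbits}$, each moment fiber is a smooth genus-$g$ surface, and the DH function of Theorem \ref{thm:DH_function_cts} records, chamber by chamber, the cohomology class $[\omega_x]\in H^2(M_x;\R)$ by equation \eqref{eq:1} and Remark \ref{rmk:polynomial}. Since the genus and the total area of each reduced surface coincide on the two sides, I would apply Moser's trick fiberwise (over each chamber, with compatible behavior across the walls provided by the matching near exceptional orbits from stage two) to choose an identification of the reduced spaces that pulls back $(\omega_2)_{\mathrm{red}}$ to $(\omega_1)_{\mathrm{red}}$. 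This fiberwise diffeomorphism, compatible with $\xi$, covers $\bar\Psi$ on the regular part. Fourth, I would lift the identification of reduced spaces to a $T$-equivariant symplectomorphism of the total spaces. This is the standard reduction-to-connection problem: the regular part is a principal $T$-bundle over the reduced total space, and by the minimal coupling construction the symplectic form is determined (up to equivariant symplectomorphism covering the identity) by the reduced form together with a connection whose curvature represents the Chern class of the bundle; matching local models near exceptional orbits prescribes these Chern classes globally.

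The main obstacle will be this fourth step: glueing the fiberwise identifications and local isomorphisms across the internal walls of $\Phi(M)$ into a single global symplectomorphism, which requires showing that the obstructions (differences of connection 1-forms on overlaps, discrepancies in the principal $T$-bundle monodromy across walls) vanish precisely when the painting data and the DH function agree. Controlling these obstructions is exactly where the deep content of \cite{kt1,kt2,kt3} lies, and I would expect to need Karshon--Tolman's combinatorial book-keeping of isotropy data together with a Moser-type homotopy argument to absorb the remaining ambiguity into a $T$-equivariant isotopy.
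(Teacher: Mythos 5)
The paper does not prove this theorem at all: it is imported verbatim from Karshon--Tolman (Theorem 1 in \cite{kt2}, Theorem 1.8 and Remark 1.9 in \cite{kt3}), and the present authors only use it as a black box (e.g.\ in the proof of Theorem \ref{thm:DH_classifies}). So the relevant question is whether your sketch amounts to an independent proof, and it does not. Your forward direction is fine, and your first two stages (fixing presentations as in Proposition \ref{prop 2.2}, using equivalence of paintings to build a homeomorphism of orbit spaces, and matching local normal forms near exceptional orbits) are a reasonable reconstruction of the opening moves. But stages three and four are exactly the content of \cite{kt1,kt2,kt3}, and you concede as much in your final paragraph: ``controlling these obstructions is exactly where the deep content of \cite{kt1,kt2,kt3} lies.'' Since the statement to be proved \emph{is} the main theorem of those papers, deferring the gluing step to their ``combinatorial book-keeping'' is circular; as written, the hard half of the theorem is assumed rather than proved.

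There are also concrete reasons the fourth step cannot go through as you describe it. The regular part of a tall complexity one space is not a principal $T$-bundle over anything: regular points may have nontrivial finite stabilizers (only points with \emph{trivial} stabilizer form a principal stratum), and in complexity one the codimension-one isotropy strata ($Z_{ij}$-type spheres over the internal walls) sit inside the regular part, so a connection/minimal-coupling argument on a genuine principal bundle is not available globally. Likewise, ``Moser's trick fiberwise'' over each chamber does not by itself produce identifications that agree across walls or near the exceptional set; equal genus and equal fiberwise area give a fiberwise symplectomorphism only up to a large gauge ambiguity, and the actual Karshon--Tolman argument resolves this ambiguity by a separate rigidity mechanism (passing through their classification of ``$\Phi$-$T$-diffeomorphism'' classes and a sheaf-theoretic/local-to-global uniqueness argument), not by matching Chern classes of a bundle. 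So the proposal is an honest road map of where the difficulty lives, but it contains a genuine gap precisely at the global gluing step, and no amount of the machinery developed in this paper (which concerns the monotone case) fills it.
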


The invariants of a compact tall complexity one $T$-space
determine the moment map image, as it is the
support of the Duistermaat-Heckman measure (cf. \cite[Theorem 1.8]{kt3}). 

\section{Compact monotone Hamiltonian $T$-spaces} \label{sec:monot-comp-hamilt}
In this section we use ideas and techniques from equivariant
cohomology, referring the reader to \cite{gs-supersymmetry} for
details and background. 
\subsection{The weight sum formula}\label{sec:weight-sum-formula}
In this paper we are mostly concerned with compact Hamiltonian
$T$-spaces satisfying the following condition. 
\begin{defin}\label{def monotone}
A symplectic manifold $(M,\omega)$ is \textbf{monotone} if there exists $\lambda \in \R$ such that 
$c_1=\lambda[\omega]$, where $c_1$ is the first Chern class of $(M,\omega)$. 
It is \textbf{positive monotone} if $\lambda>0$. 
\end{defin}

\begin{remark}\label{rmk:monotone_invariant_symplecto}
  \mbox{}
  \begin{enumerate}[label=(\arabic*),ref=(\arabic*),leftmargin=*]
  \item \label{item:7} If
    $(M,\omega)$ is compact and monotone, since $[\omega] \neq 0$,
    then $\lambda$ in Definition \ref{def monotone} is unique.
  \item \label{item:17} Let $(M,\omega)$ be a monotone
    symplectic manifold and let $\Psi : (M',\omega') \to (M,\omega)$
    be a symplectomorphism. Since $\Psi$ pulls back almost complex
    structures that are compatible with $\omega$ to almost complex
    structures that are compatible with $\omega'$, $(M',\omega')$
    is monotone. Moreover, if $(M,\omega)$ is compact and if $\lambda,
    \lambda' \in \R$ are such that $c_1 =
    \lambda[\omega]$ and $c_1' = \lambda'[\omega']$, then $\lambda = \lambda'$.
  \end{enumerate}
\end{remark}

If $(M,\omega)$ is such that $H^2(M;\R)=\R$, then it is monotone (e.g.\ $\C 
P^n$). In general, (positive) monotonicity is very restrictive. In the
presence of a Hamiltonian torus action, 
the following result holds.

\begin{prop}\label{assumptions moment map c1}
  If $(M,\omega)$ is compact and monotone, and admits
  an effective Hamiltonian $T$-action, then $(M,\omega)$ is positive monotone.
\end{prop}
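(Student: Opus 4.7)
The plan is to lift the identity $c_{1}=\lambda[\omega]$ to equivariant cohomology, restrict it to a pair of cleverly chosen fixed points, and use the sign of isotropy weights at the Morse-Bott extrema of a generic moment component to pin down the sign of $\lambda$. We assume throughout $\dim T\geq 1$, since otherwise the claim is vacuous. Under the paper's convention $d\langle\Phi,\xi\rangle=-\iota_{\xi^{\#}}\omega$, a direct computation in the Cartan model shows that $\widetilde{\omega}:=\omega-\Phi$ is closed for the equivariant differential $d_{T}=d-\iota_{\xi^{\#}}$, and hence defines a class $[\widetilde{\omega}]\in H^{2}_{T}(M)$ mapping to $[\omega]$ under the forgetful map $H^{2}_{T}(M)\to H^{2}(M)$. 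Let $c_{1}^{T}\in H^{2}_{T}(M)$ be the equivariant first Chern class, an equivariant lift of $c_{1}$.

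By Kirwan's equivariant formality theorem, for any compact Hamiltonian $T$-space, $H^{*}_{T}(M)\cong H^{*}(M)\otimes H^{*}(BT)$ as $H^{*}(BT)$-modules, so the kernel of the forgetful map in degree $2$ is exactly $H^{2}(BT)=\g^{*}$. Since $c_{1}-\lambda[\omega]=0$, there exists $\beta\in \g^{*}$ with
\begin{equation*}
c_{1}^{T}=\lambda\,[\widetilde{\omega}]+\beta\quad\text{in } H^{2}_{T}(M).
\end{equation*}
Restricting to any fixed point $p\in M^{T}$ produces an identity in $H^{2}_{T}(\{p\})=\g^{*}$: using $c_{1}^{T}|_{p}=\sum_{j=1}^{n}\alpha_{j}^{(p)}$ (sum of isotropy weights at $p$) and $[\widetilde{\omega}]|_{p}=-\Phi(p)$, pairing with $\xi\in\g$ yields the key equation
\begin{equation*}
\sum_{j=1}^{n}\langle \alpha_{j}^{(p)},\xi\rangle=-\lambda\langle\Phi(p),\xi\rangle+\langle\beta,\xi\rangle.
\end{equation*}

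Next choose $\xi\in\g$ in the complement of the finitely many rational hyperplanes $\{\alpha=0\}$ cut out by the non-zero isotropy weights at the (finitely many components of) fixed points; such $\xi$ satisfies $M^{\xi}=M^{T}$, and $\phi:=\langle\Phi,\xi\rangle$ is a Morse-Bott function on $M$ with critical set $M^{T}$. Pick $p_{\min},p_{\max}\in M^{T}$ where $\phi$ attains its global minimum and maximum. By the local normal form of Theorem \ref{local normal form} and equation \eqref{eq:33}, near a fixed point $p$ we have $\phi(z)-\phi(p)=\pi\sum_{j}\langle\alpha_{j}^{(p)},\xi\rangle|z_{j}|^{2}$, so the minimum (resp.\ maximum) condition forces $\langle\alpha_{j}^{(p_{\min})},\xi\rangle\geq 0$ (resp.\ $\leq 0$) for all $j$, with strict inequalities for all non-zero weights by the genericity of $\xi$. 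By Remark \ref{rmk local normal form}, effectiveness implies that the non-zero isotropy weights at any fixed point span $\ell^{*}$, so there are at least $\dim T\geq 1$ of them; therefore
\begin{equation*}
\sum_{j}\langle\alpha_{j}^{(p_{\min})},\xi\rangle>0>\sum_{j}\langle\alpha_{j}^{(p_{\max})},\xi\rangle.
\end{equation*}

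Subtracting the two instances of the key equation from the second paragraph gives
\begin{equation*}
0<\sum_{j}\langle\alpha_{j}^{(p_{\min})},\xi\rangle-\sum_{j}\langle\alpha_{j}^{(p_{\max})},\xi\rangle=\lambda\bigl(\phi(p_{\max})-\phi(p_{\min})\bigr),
\end{equation*}
and since the $T$-action is non-trivial $\phi$ is non-constant, so $\phi(p_{\max})>\phi(p_{\min})$, forcing $\lambda>0$. The main technical hurdle is the correct invocation of equivariant formality to obtain a single global constant $\beta\in\g^{*}$ valid at every fixed point (as opposed to a potentially $p$-dependent correction that would spoil the subtraction argument), together with keeping sign conventions straight for the equivariant extension of $\omega$ under the paper's sign convention. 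Once these are in place the remainder is a short sign chase on a generic direction $\xi$.
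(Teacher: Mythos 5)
Your proof is correct and follows essentially the same strategy as the paper's: lift $c_1=\lambda[\omega]$ to an equivariant identity (up to a constant in $\g^*$), evaluate at fixed points where a moment-map component attains its extrema, and use the signs of the isotropy-weight sums there, which is exactly the argument of \cite[Lemma 5.2]{gvhs} that the paper invokes. The only cosmetic difference is that you work with the full torus and a generic $\xi\in\g$ (justifying the constant $\beta$ via equivariant formality), whereas the paper restricts to a one-dimensional subtorus $H\simeq S^1$ and compares $c_1^{S^1}$ with $\lambda[\omega-\phi]$ at the minimum and maximum components.
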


\begin{proof}
The proof follows \textit{mutatis mutandis} that of \cite[Lemma 5.2]{gvhs}, in which it is assumed that $M^{S^1}$ is 
discrete (see \cite[Definition 3.1]{gvhs}). Let $H \leq T$ be a one
dimensional subtorus and let $\phi : M \to \mathfrak{h}^*$ be the
induced moment map. We identify $H \simeq S^1$ and consider
$(M,\omega, \phi)$ as a Hamiltonian $S^1$-space. We observe that
$\phi\colon M \to (\text{Lie}(S^1))^*$ is a Morse-Bott function;
moreover, by 
\eqref{def moment map}, the isotropy weights in the positive normal bundle to a fixed point are 
positive (cf. \cite[Proof of Lemma 5.2]{gvhs}). Therefore, if $F_{\min}$ (respectively $F_{\max}$) denotes a fixed component on which $\phi$ attains its minimum 
(respectively maximum), all the isotropy weights in the normal bundle to $F_{\min}$ (respectively $F_{\max}$) are 
positive (respectively negative). Moreover, even if some of the isotropy weights of $p_{\min}\in F_{\min}$ (respectively at 
$p_{\max}\in F_{\max}$) are zero, by the effectiveness of the action
some of them must be different from zero. 
Hence, the sum of the isotropy weights of $p_{\min}$ (resp.\
$p_{\max}$) is strictly positive (respectively strictly 
negative). 
To complete the proof, it is enough to consider the equivariant extensions of $[\omega]$ and 
$c_1$ in the equivariant cohomology ring of $M$, which are respectively 
$[\omega-\phi]$ and $c_1^{S^1}$,
and to compare them
at $p_{\min}$ and $p_{\max}$ to deduce that $\lambda$ must be positive
(see equation (5.1) in \cite[Proof of Lemma 5.2]{gvhs}).
\end{proof}

Throughout this paper, a Hamiltonian $T$-space $\comp$ is {\bf
  monotone} if $(M,\omega)$ is. The following result is an immediate
consequence of Remark \ref{rmk:monotone_invariant_symplecto} and Proposition
\ref{assumptions moment map c1}.
\begin{corollary}\label{cor:rescaling}
  If $(M,\omega, \Phi)$ is a compact monotone Hamiltonian $T$-space, then there exists a unique $\lambda >0$ such that
  $c_1 = [\lambda \omega]$.
\end{corollary}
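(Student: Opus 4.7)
The plan is to assemble the corollary by combining the three ingredients that have just been proved or recalled. By Definition \ref{def monotone}, since $(M,\omega)$ is monotone there is at least one $\lambda \in \R$ with $c_1 = \lambda[\omega]$, so existence of a real scalar is immediate. The task is then to upgrade this existence to a unique, strictly positive scalar.

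First, I would invoke Proposition \ref{assumptions moment map c1}: because $\comp$ is a Hamiltonian $T$-space, the $T$-action is by assumption effective, so the proposition forces any $\lambda$ with $c_1 = \lambda[\omega]$ to be positive. Second, I would appeal to Remark \ref{rmk:monotone_invariant_symplecto}\ref{item:7}: compactness of $M$ implies $[\omega] \neq 0 \in H^2(M;\R)$, since $[\omega]^n$ pairs nontrivially with the fundamental class, and therefore the relation $c_1 = \lambda[\omega]$ determines $\lambda$ uniquely. Finally, I would observe that the statement $c_1 = \lambda[\omega]$ is literally the same as $c_1 = [\lambda\omega]$, which is the formulation used in the corollary.

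Since all three ingredients are already in hand, there is no genuine obstacle; the proof is essentially one line assembling the quoted results. The only very minor point worth making explicit for the reader is that $[\omega] \neq 0$, which follows from $\int_M \omega^n / n! > 0$ for a compact symplectic manifold of dimension $2n$, so that the linear map $\R \to H^2(M;\R)$, $\mu \mapsto \mu[\omega]$, is injective and the scalar $\lambda$ realizing $c_1$ is unique.
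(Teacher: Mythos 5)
Your proposal is correct and follows the paper's argument: the corollary is stated there as an immediate consequence of Remark \ref{rmk:monotone_invariant_symplecto} (uniqueness, via $[\omega]\neq 0$ on a compact symplectic manifold) and Proposition \ref{assumptions moment map c1} (positivity), exactly the two ingredients you combine. Your explicit remark that $\int_M \omega^n/n! > 0$ forces $[\omega]\neq 0$ is a harmless elaboration of the same point.
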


The next proposition extends
\cite[Remark 3.11]{gvhs}.
\begin{prop}\label{prop:weight_sum}
  If $\comp$ is a compact Hamiltonian $T$-space with $c_1 =
  [\omega]$, then there exists a unique $w \in \g^*$ such that the moment map $\widetilde{\Phi}:=\Phi + w$ 
  satisfies the \textbf{weight sum formula}, i.e., 
 \begin{equation}\label{weight sum formula}
   \widetilde{\Phi}(p) =-\sum_{j=1}^n \alpha_j,\quad \text{for all }p\in M^T\,,
 \end{equation}
 where $\alpha_1,\ldots,\alpha_n \in \ell^*$ 
 are the isotropy weights of $p$. 
\end{prop}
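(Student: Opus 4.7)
The plan is to pass to equivariant cohomology with real coefficients and compare equivariant extensions of $[\omega]$ and of $c_1$. Since $\comp$ is a compact Hamiltonian $T$-space, it is equivariantly formal, so the forgetful map $H^2_T(M;\R)\to H^2(M;\R)$ is surjective and its kernel is naturally identified with $H^2(BT;\R)\simeq\g^*$. This is the key structural input on which the whole argument rests.

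In Cartan's model, the equivariant form $\omega-\Phi$ defined by $(\omega-\Phi)(\xi):=\omega-\langle\Phi,\xi\rangle$ is equivariantly closed precisely because $\Phi$ is a moment map (a direct check using \eqref{def moment map}), and hence yields a class $[\omega-\Phi]\in H^2_T(M;\R)$ extending $[\omega]$. Choose any equivariant extension $c_1^T\in H^2_T(M;\R)$ of $c_1$; such an extension exists by equivariant formality. Because $c_1=[\omega]$ in $H^2(M;\R)$, the difference $c_1^T-[\omega-\Phi]$ lies in the kernel of the forgetful map and hence equals a unique element $w_0\in\g^*$. Evaluating at any fixed point $p\in M^T$ — which exists because the moment polytope has vertices that are images of fixed submanifolds by Theorem \ref{thm:con_pac} — and using the standard facts that $c_1^T|_p=\sum_{j=1}^{n}\alpha_j$ (the equivariant Chern class of a sum of weight-$\alpha_j$ line bundles at a fixed point is $\sum_j\alpha_j$) and $[\omega-\Phi]|_p=-\Phi(p)$ in $H^2_T(\{p\};\R)\simeq\g^*$, we obtain
\begin{equation*}
\sum_{j=1}^n\alpha_j+\Phi(p)=w_0\quad\text{for every }p\in M^T.
\end{equation*}
Setting $w:=-w_0\in\g^*$ and $\widetilde{\Phi}:=\Phi+w$ then produces the desired weight sum formula \eqref{weight sum formula}.

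Uniqueness of $w$ is immediate: if $\Phi+w$ and $\Phi+w'$ both satisfy \eqref{weight sum formula}, then evaluating at any fixed point forces $w=w'$, since the right hand side of \eqref{weight sum formula} does not depend on the choice of translate. The main obstacle is the identification of the kernel of the forgetful map with $\g^*$; this rests on equivariant formality of compact Hamiltonian $T$-spaces (a classical consequence of Kirwan's work), together with tracking sign conventions carefully so that the shift ends up on the correct side. Once these are granted, the rest is a direct application of Cartan's model and of the standard computation of the restriction of an equivariant extension of $c_1$ at a fixed point.
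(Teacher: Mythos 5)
Your argument is correct and is essentially the paper's own proof: compare the equivariant extension $[\omega-\Phi]$ with the equivariant first Chern class, observe that their difference lies in $\g^*\simeq H^2(BT;\R)\subset H^2_T(M;\R)$ because $c_1=[\omega]$, and evaluate at a fixed point. One small imprecision: you should take $c_1^T$ to be the canonical equivariant first Chern class of $TM$ (with its natural $T$-equivariant structure) rather than ``any'' equivariant extension of $c_1$, since the fixed-point identity $c_1^T|_p=\sum_{j}\alpha_j$ holds only for that choice --- although an arbitrary extension would merely shift your constant $w_0$ by an element of $\g^*$ independent of $p$, so the conclusion survives.
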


\begin{proof}
  Since $c_1=[\omega]$ and since the action is Hamiltonian, there
  exists a unique $w \in \g^*$ such that 
  $$
  c_1^T+w=[\omega-\Phi].
  $$
  Thus the moment map $\widetilde{\Phi} = \Phi+w$ satisfies $c_1^T=[\omega-\widetilde{\Phi}]$. 
  Since $M$ is compact and the action is Hamiltonian, there exists $p
  \in M^T$. The equality in \eqref{weight sum formula} is obtained by
  comparing these two equivariant cohomology classes at $p \in M^T$
  and observing that $c_1^T(p)=\sum_{j=1}^n\alpha_j$.
\end{proof}

\begin{remark}\label{rmk:v_invariant_isomorphism}
  Let $(M,\omega,\Phi)$ be a compact Hamiltonian $T$-space with $c_1 =
  [\omega]$. If $(M',\omega', \Phi')$ is isomorphic to
  $(M,\omega,\Phi)$, then $c_1' = [\omega']$ by Remark
  \ref{rmk:monotone_invariant_symplecto}. Let $\Psi : (M,\omega,\Phi) \to
  (M',\omega',\Phi')$ be an isomorphism. By Remark \ref{rmk:equivariant},
  $\Psi$ is equivariant. Hence, by \eqref{weight sum formula}, if $w, w' \in \mathfrak{t}^*$ are as in Proposition \ref{prop:weight_sum} for $\Phi$ and $\Phi'$
  respectively, then $w = w'$. 
\end{remark}

\begin{defin}\label{monotone ham space}
  A compact monotone Hamiltonian $T$-space $\comp$ is \textbf{normalized} if 
  \begin{enumerate}[label=(\roman*), ref=(\roman*), leftmargin=*]
  \item\label{item:5} $c_1=[\omega]$, and 
  \item\label{item:6} the moment map $\Phi$ satisfies the weight sum
    formula \eqref{weight sum formula}.
  \end{enumerate}
  In this case we call $\comp$ a \textbf{normalized monotone} Hamiltonian $T$-space. 
\end{defin}

Since the isotropy weights of a fixed point lie in $\ell^*$, Proposition \ref{prop:weight_sum} has the following
immediate consequence.

\begin{corollary}\label{cor:monotone_image_isolated}
  If $\comp$ is a normalized monotone Hamiltonian $T$-space, then
  $[\omega] \in H^2(M;\Z)$ and, for any $p
  \in M^T$, $\Phi(p) \in \ell^*$.
\end{corollary}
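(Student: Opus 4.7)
The result follows almost immediately from the two parts of Definition \ref{monotone ham space} together with basic facts already recalled in the paper. The plan is to handle each of the two conclusions separately, as they come from the two conditions \ref{item:5} and \ref{item:6}.

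For the first claim, that $[\omega] \in H^2(M;\Z)$, I would invoke condition \ref{item:5} of Definition \ref{monotone ham space}, which gives $c_1 = [\omega]$. The first Chern class of any almost complex manifold is an integral cohomology class, and the paper fixes a $T$-invariant almost complex structure on $(M,\omega)$ compatible with $\omega$ (see the discussion in Section \ref{sec:local-model-local}). Hence $c_1 \in H^2(M;\Z)$, and the equality $c_1 = [\omega]$ transfers integrality to $[\omega]$.

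For the second claim, I would invoke condition \ref{item:6} of Definition \ref{monotone ham space}. Let $p \in M^T$ and let $\alpha_1, \ldots, \alpha_n$ be the isotropy weights of $p$. Since $p$ is a fixed point, by Remark \ref{rmk:fixed_point} each $\alpha_j$ lies in $\ell^*$. The weight sum formula \eqref{weight sum formula} states that
\[
\Phi(p) = -\sum_{j=1}^n \alpha_j.
\]
Since $\ell^* \subset \mathfrak{t}^*$ is a lattice, in particular a subgroup under addition, it is closed under negation and finite sums. Therefore $\Phi(p) \in \ell^*$, as required.

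There is no real obstacle here: both parts are direct consequences of unwinding the definitions and citing Proposition \ref{prop:weight_sum} and Remark \ref{rmk:fixed_point}. The only thing to be slightly careful about is to justify why the isotropy weights of a fixed point lie in the full lattice $\ell^*$ (rather than in some quotient), but this is exactly the content of Remark \ref{rmk:fixed_point}, where $H = T$ so $\ell_{\mathfrak{h}} = \ell$.
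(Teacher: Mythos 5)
Your proposal is correct and matches the paper's own (implicit, one-line) argument: the second claim follows from the weight sum formula \eqref{weight sum formula} together with the fact that the isotropy weights of a fixed point lie in $\ell^*$ (Remark \ref{rmk:fixed_point}), and the first from $c_1 = [\omega]$ with $c_1$ integral. Nothing is missing; your extra care about why the weights lie in the full lattice $\ell^*$ is exactly the right point to note.
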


The following result is an immediate
consequence of Corollary
\ref{cor:rescaling} and Proposition \ref{prop:weight_sum}.

\begin{corollary}\label{cor:mono_normalized}
  If $\comp$ is a compact monotone Hamiltonian $T$-space, then there exist unique
  $\lambda >0$ and $w \in \mathfrak{t}^*$ such that $(M,\lambda
  \omega, \lambda \Phi + w)$ is normalized monotone. 
\end{corollary}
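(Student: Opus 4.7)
The plan is to combine the two previous results, Corollary \ref{cor:rescaling} and Proposition \ref{prop:weight_sum}, in the obvious way, taking a little care about what gets rescaled by $\lambda$ and how uniqueness propagates.

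First I would apply Corollary \ref{cor:rescaling} to the compact monotone Hamiltonian $T$-space $(M,\omega,\Phi)$ to obtain the unique $\lambda>0$ with $c_1=[\lambda\omega]$. I would then check that $(M,\lambda\omega,\lambda\Phi)$ is itself a Hamiltonian $T$-space: since the $T$-action is unchanged, the defining relation $d\langle \lambda\Phi,\xi\rangle = -\iota_{\xi^\#}(\lambda\omega)$ follows immediately from $d\langle\Phi,\xi\rangle=-\iota_{\xi^\#}\omega$ by linearity. By construction this new triple satisfies condition \ref{item:5} of Definition \ref{monotone ham space}.

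Next I would invoke Proposition \ref{prop:weight_sum} applied to $(M,\lambda\omega,\lambda\Phi)$, which has $c_1=[\lambda\omega]$, to produce a unique $w\in\mathfrak{t}^*$ such that $\widetilde{\Phi}:=\lambda\Phi+w$ satisfies the weight sum formula \eqref{weight sum formula}. Then $(M,\lambda\omega,\lambda\Phi+w)$ satisfies both conditions \ref{item:5} and \ref{item:6} of Definition \ref{monotone ham space}, hence is normalized monotone, giving existence.

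For uniqueness, suppose $(M,\lambda'\omega,\lambda'\Phi+w')$ is normalized monotone for some $\lambda'>0$ and $w'\in\mathfrak{t}^*$. Condition \ref{item:5} yields $c_1=[\lambda'\omega]$, so by the uniqueness clause of Corollary \ref{cor:rescaling} (equivalently Remark \ref{rmk:monotone_invariant_symplecto}\ref{item:7}) we have $\lambda'=\lambda$. With $\lambda$ fixed, condition \ref{item:6} together with the uniqueness of $w$ in Proposition \ref{prop:weight_sum} applied to $(M,\lambda\omega,\lambda\Phi)$ forces $w'=w$. There is no real obstacle here; the only point to be slightly careful about is that $\lambda$ must be extracted first from the cohomological condition before $w$ can be pinned down by the weight sum formula, since Proposition \ref{prop:weight_sum} is formulated under the hypothesis $c_1=[\omega]$ and therefore needs to be applied to the already rescaled space $(M,\lambda\omega,\lambda\Phi)$ rather than to $(M,\omega,\Phi)$ itself.
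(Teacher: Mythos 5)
Your proof is correct and is exactly the argument the paper has in mind: the statement is presented there as an immediate consequence of Corollary \ref{cor:rescaling} and Proposition \ref{prop:weight_sum}, obtained by first extracting the unique $\lambda$ from the cohomological condition and then the unique $w$ from the weight sum formula applied to the rescaled space. Your extra checks (that $(M,\lambda\omega,\lambda\Phi)$ is again a Hamiltonian $T$-space, and the order in which uniqueness of $\lambda$ and $w$ is pinned down) are exactly the right points to make explicit.
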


Classifying compact monotone Hamiltonian $T$-space is almost equivalent to classifying 
normalized monotone Hamiltonian $T$-spaces. More precisely, the
following holds.

\begin{lemma}\label{lemma:monotone_normalized}
  Let $(M,\omega,\Phi)$, $(M',\omega',\Phi')$ be compact monotone Hamiltonian
  $T$-spaces. Let $\lambda, \lambda' > 0$ and $v, v' \in \mathfrak{t}^*$
  be as in Corollary \ref{cor:mono_normalized}. Then $(M,\omega,\Phi)$
  and $(M',\omega',\Phi')$ are isomorphic if and only if $\lambda =
  \lambda'$, $v = v'$ and $(M,\lambda
  \omega, \lambda\Phi + v)$ is isomorphic to $(M',\lambda'
  \omega', \lambda'\Phi' + v')$. 
\end{lemma}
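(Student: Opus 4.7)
The plan is to argue both implications by rescaling and translating the maps involved, exploiting the uniqueness statements in Corollary \ref{cor:rescaling} (which fixes $\lambda$) and Proposition \ref{prop:weight_sum} (which fixes the translation $w$), together with the isomorphism invariance of both constants.

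For the forward implication, suppose $\Psi : (M,\omega,\Phi) \to (M',\omega',\Phi')$ is an isomorphism. Since $\Psi$ is in particular a symplectomorphism, Remark \ref{rmk:monotone_invariant_symplecto}\ref{item:17} gives that $(M,\omega)$ and $(M',\omega')$ are both compact monotone with the same constant, so $\lambda = \lambda'$. Now consider the normalized spaces $(M,\lambda\omega,\lambda\Phi+v)$ and $(M',\lambda'\omega',\lambda'\Phi'+v')$, which exist by Corollary \ref{cor:mono_normalized}. The map $\Psi$ automatically satisfies $\Psi^*(\lambda'\omega')=\lambda\omega$ (since $\lambda=\lambda'$) and $(\lambda'\Phi')\circ\Psi = \lambda\Phi$, so $\Psi$ is an isomorphism $(M,\lambda\omega,\lambda\Phi+v') \to (M',\lambda'\omega',\lambda'\Phi'+v')$. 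The left-hand side therefore satisfies the weight sum formula for the $T$-action (being isomorphic to a normalized space), so by the uniqueness clause of Proposition \ref{prop:weight_sum} applied to $(M,\lambda\omega,\lambda\Phi)$ — which has $c_1 = [\lambda\omega]$ — we obtain $v=v'$; this is also directly the content of Remark \ref{rmk:v_invariant_isomorphism}. Hence $\Psi$ is an isomorphism between the two normalized spaces.

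For the reverse implication, assume $\lambda=\lambda'$, $v=v'$, and let $\Psi : (M,\lambda\omega,\lambda\Phi+v) \to (M',\lambda'\omega',\lambda'\Phi'+v')$ be an isomorphism. Then $\Psi^*(\lambda'\omega')=\lambda\omega$ gives $\Psi^*\omega'=\omega$ after dividing by $\lambda=\lambda'>0$; similarly, $(\lambda'\Phi'+v')\circ\Psi = \lambda\Phi+v$ combined with $v=v'$ and $\lambda=\lambda'$ yields $\Phi'\circ\Psi=\Phi$. Thus $\Psi$ is an isomorphism $(M,\omega,\Phi)\to(M',\omega',\Phi')$.

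No step is genuinely an obstacle: the proof is essentially a bookkeeping exercise invoking the uniqueness of $\lambda$ and $v$ from Corollaries \ref{cor:rescaling} and \ref{cor:mono_normalized} and Remark \ref{rmk:v_invariant_isomorphism}. The only subtlety worth flagging is making sure the equivariant invariance of $v$ (that is, $v=v'$) is deduced from the isomorphism of the \emph{original} spaces, rather than assumed; this is handled by applying Remark \ref{rmk:v_invariant_isomorphism} to the rescaled spaces $(M,\lambda\omega,\lambda\Phi)$ and $(M',\lambda'\omega',\lambda'\Phi')$, which satisfy $c_1 = [\lambda\omega]$ and are intertwined by $\Psi$.
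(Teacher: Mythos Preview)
Your proof is correct and follows essentially the same approach as the paper: use Remark~\ref{rmk:monotone_invariant_symplecto}\ref{item:17} to get $\lambda=\lambda'$, then Remark~\ref{rmk:v_invariant_isomorphism} (applied to the rescaled spaces) to get $v=v'$, and observe that the same $\Psi$ intertwines the rescaled and translated data. The paper's write-up is more terse and leaves the converse to the reader, but the content is identical.
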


\begin{proof}
  Suppose that $(M,\omega,\Phi)$
  and $(M',\omega',\Phi')$ are isomorphic. Let $\Psi : (M,\omega) \to
  (M',\omega')$ be a symplectomorphism such that $\Phi' \circ \Psi =
  \Phi$. By part \ref{item:17} of Remark
  \ref{rmk:monotone_invariant_symplecto} and by Remark
  \ref{rmk:v_invariant_isomorphism}, $\lambda = \lambda'$ and $v =
  v'$. Hence, $\Psi : (M,\lambda \omega) \to (M',\lambda'\omega')$ is
  a symplectomorphism and $(\lambda'\Phi' + v') \circ \Psi = \lambda \Phi + v$, i.e.,
  $\Psi$ is an isomorphism between $(M,\lambda
  \omega, \lambda \Phi + v)$ and $(M',\lambda'
  \omega', \lambda'\Phi' + v')$. The converse is entirely analogous and is
  left to the reader.
\end{proof}

\subsection{Moment polytopes of monotone complexity preserving Hamiltonian $T$-spaces}\label{sec:moment-polyt-tall}
We recall that a polytope $\Delta$ in $\g^*$ can be described by its minimal representation (see Section \ref{sec:conventions}): 
$$\Delta=\bigcap_{i=1}^l \, \{w\in \g^* \mid \langle w,\nu_i \rangle \geq
c_i\}$$
for some inward normals $\nu_1,\ldots, \nu_l \in \g$ and
 $c_1,\ldots, c_l \in \R$. Such a polytope $\Delta$ is {\bf integral} if its
vertices belong to $\ell^*$.

\begin{remark}\label{rmk:integral}
  If $\Delta$ is integral, then it is possible to choose the inward
  normal $\nu_i$ so that it is a primitive element of $\ell$, for every $i=1,\ldots,l$. 
  The corresponding constants $c_i$'s are therefore uniquely determined
  by this choice of $\nu_i$'s. 
\end{remark}

\begin{defin}\label{defn:reflexive}
A polytope $\Delta \subset \g^*$ is 
\textbf{reflexive} if it is integral and $\nu_i \in \ell$
in its minimal representation is primitive with corresponding $c_i=-1$, for all $i=1,\ldots, l$.
\end{defin}

The following result is an immediate consequence of Definition \ref{defn:reflexive} and is stated below without proof (see \cite[Proposition 7.3]{hnp}).

\begin{lemma}\label{lemma:reflexive_interior}
  For any reflexive polytope the origin is the only interior lattice point.
\end{lemma}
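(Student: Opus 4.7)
The plan is to verify first that the origin lies in the interior of $\Delta$, and then to rule out the existence of any other interior lattice point by exploiting the integrality of the pairings $\langle p,\nu_i\rangle$ for $p\in\ell^*$ and $\nu_i\in\ell$.

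That the origin is an interior point is immediate from the minimal representation: since each $\nu_i$ comes paired with $c_i=-1$, one has $\langle 0,\nu_i\rangle = 0 > -1$ for every $i=1,\ldots,l$, so $0$ lies strictly inside each defining half-space and hence in the interior of $\Delta$.

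Now suppose, for contradiction, that $p\in\Delta\cap\ell^*$ is an interior lattice point with $p\neq 0$. Because $p$ lies in the interior, a whole ball around $p$ is contained in $\Delta$; in particular, points of the form $(1+\varepsilon)p$ remain in $\Delta$ for all sufficiently small $\varepsilon>0$. Since $\Delta$ is bounded, I would then define
\[
t_0 := \sup\{\,t\geq 1 \mid tp \in \Delta\,\},
\]
which satisfies $t_0 > 1$ and $t_0 p \in \partial \Delta$ by compactness of $\Delta$. Hence there exists an index $i\in\{1,\ldots,l\}$ for which $\langle t_0 p,\nu_i\rangle = -1$, so that
\[
\langle p,\nu_i\rangle \;=\; -\frac{1}{t_0} \;\in\; (-1,0).
\]

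The final step is to obtain a contradiction from integrality: since $p\in\ell^*$ and $\nu_i\in\ell$, the pairing $\langle p,\nu_i\rangle$ is an integer, which cannot lie strictly between $-1$ and $0$. Hence no such $p$ exists, and $0$ is the unique interior lattice point of $\Delta$. The main (and only) subtlety in this argument is the compactness step producing $t_0$ together with the supporting inequality $\langle t_0 p,\nu_i\rangle = -1$; once the minimal representation of Definition \ref{defn:reflexive} is invoked, the integrality contradiction closes the proof immediately.
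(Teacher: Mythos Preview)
Your proof is correct. The paper itself does not supply a proof of this lemma, stating only that it is an immediate consequence of Definition~\ref{defn:reflexive} and citing \cite[Proposition 7.3]{hnp}; you have filled in that gap with the standard argument via scaling and integrality of the pairing $\langle p,\nu_i\rangle$, which is exactly how one typically proves this fact.
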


Lemma \ref{lemma:reflexive_interior} and a result of Lagarias and 
Ziegler \cite{lz}
imply the following result.

\begin{corollary}\label{cor:finite}
  Up to the action of $\mathrm{GL}(\ell^*)$, there are only finitely many reflexive
  polytopes in $\g^*$.
\end{corollary}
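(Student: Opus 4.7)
The plan is to combine Lemma \ref{lemma:reflexive_interior} with the finiteness theorem of Lagarias and Ziegler \cite{lz}, which states that, for each fixed dimension $d$ and each fixed positive integer $k$, there are finitely many lattice polytopes in $\g^*$ containing exactly $k$ interior lattice points, up to the action of the affine unimodular group $\mathrm{GL}(\ell^*) \ltimes \ell^*$.

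First, by Definition \ref{defn:reflexive}, any reflexive polytope is integral, so it is a lattice polytope in $\g^*$. By Lemma \ref{lemma:reflexive_interior}, each reflexive polytope has exactly one interior lattice point, namely the origin. Hence the Lagarias--Ziegler theorem applies with $k=1$, and shows that the set of reflexive polytopes in $\g^*$ falls into finitely many orbits under $\mathrm{GL}(\ell^*) \ltimes \ell^*$.

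The remaining step is to upgrade this to finitely many orbits under $\mathrm{GL}(\ell^*)$ alone. For this, I would argue that, whenever two reflexive polytopes $\Delta_1, \Delta_2 \subset \g^*$ are related by an affine unimodular transformation, say $\Delta_2 = A(\Delta_1) + t$ with $A \in \mathrm{GL}(\ell^*)$ and $t \in \ell^*$, the translation part is automatically trivial. Indeed, $A$ maps the unique interior lattice point of $\Delta_1$, i.e.\ the origin, to the interior lattice point $A(0) = 0$ of $A(\Delta_1)$; hence $t$ is the unique interior lattice point of $\Delta_2$, which by Lemma \ref{lemma:reflexive_interior} forces $t = 0$. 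Therefore $\Delta_2 = A(\Delta_1)$, so the $\mathrm{GL}(\ell^*) \ltimes \ell^*$-orbit and the $\mathrm{GL}(\ell^*)$-orbit of a reflexive polytope coincide, and the desired finiteness follows from the previous paragraph.

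There is essentially no obstacle here beyond invoking \cite{lz}; the only small subtlety is the reduction from the affine group to $\mathrm{GL}(\ell^*)$, which is handled cleanly by the fact that the origin is the unique interior lattice point of every reflexive polytope.
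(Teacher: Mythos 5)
Your proof is correct and follows essentially the same route as the paper, which simply cites Lemma \ref{lemma:reflexive_interior} together with the Lagarias--Ziegler finiteness theorem \cite{lz} without spelling out the details. Your additional observation that the translation part of any affine unimodular equivalence between reflexive polytopes must vanish (since it would have to carry the origin to the unique interior lattice point) is exactly the implicit step needed to pass from $\mathrm{GL}(\ell^*) \ltimes \ell^*$ to $\mathrm{GL}(\ell^*)$, and it is handled correctly.
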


For instance, there are sixteen two-dimensional reflexive polytopes (see \cite[Figure 2]{prv}), five of which are also 
Delzant (see Figure \ref{smooth}).
\begin{figure}[htbp]
\begin{center}
\includegraphics[width=12cm]{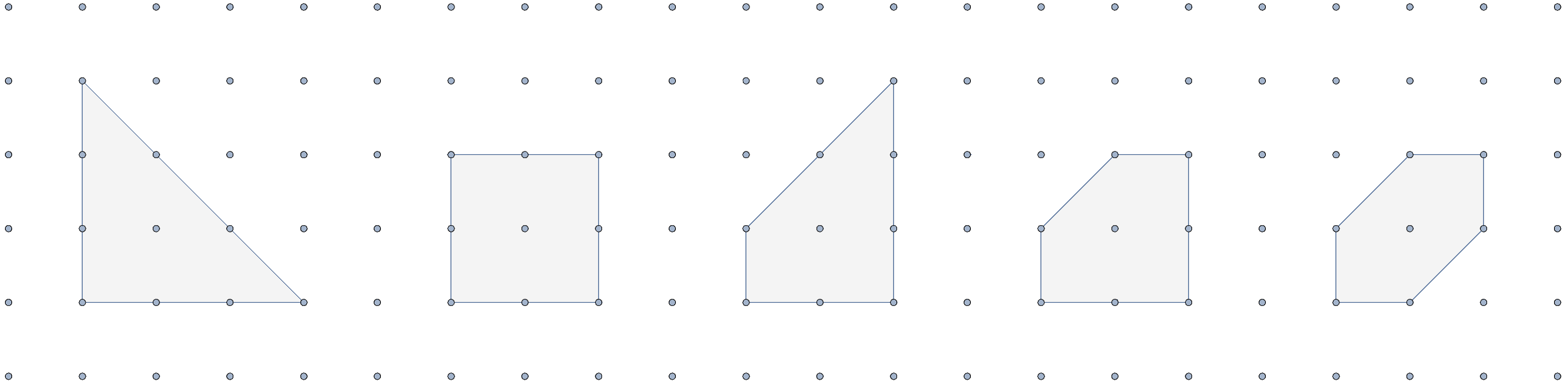}
\caption{The five reflexive Delzant polygons.}
\label{smooth}
\end{center}
\end{figure}

For a rational polytope $\Delta \subset \g^*$, given a vertex $v$ of $\Delta$  
one can choose the vectors $\alpha_i$'s in \eqref{eq:17}, which support the edges coming out of $v$, to be
primitive elements of $\ell^*$; these vectors are uniquely determined and referred to as the \textbf{weights of the vertex} $v$.

Reflexive Delzant polytopes are in particular rational and they can be
characterized in terms of the weights of their vertices. More
precisely, the following result, proved in various contexts by various
authors, holds (see, in particular, \cite[Prop 1.8]{ep}, \cite[Sect.\ 3]{mcduff 
displacing} and 
\cite[Prop.\ 2.12]{gvhs}).

\begin{proposition}\label{prop reflexive eq}
Let $\Delta\subset \g^*$ be a $d$-dimensional Delzant polytope. The following conditions are equivalent:
\begin{enumerate}[label=(\roman*),ref=(\roman*),leftmargin=*]
\item \label{item:8} $\Delta$ is a reflexive polytope.
\item \label{item:9} $\Delta$ satisfies the \textbf{weight sum formula}, i.e., for
  each vertex $v \in \Delta$,
  \begin{equation}\label{wsf polytope}
    v = - \sum_{j=1}^d \alpha_j\,,
  \end{equation}
  where $\alpha_1,\ldots,\alpha_d$ are the weights of $v$.
\end{enumerate}
\end{proposition}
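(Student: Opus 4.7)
The plan is to reduce everything at a fixed vertex $v$ to a clean linear-algebra statement about two dual bases, then deduce both implications.

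Fix a vertex $v$ of $\Delta$ and let $\alpha_1, \dots, \alpha_d \in \ell^*$ be the weights at $v$, i.e.\ the primitive edge directions. Since $\Delta$ is $d$-dimensional and Delzant, exactly $d$ facets meet at $v$; let $\nu_1, \dots, \nu_d \in \ell$ be their primitive inward normals, supporting $\{\langle \cdot, \nu_i\rangle = c_i\}$, numbered so that $\alpha_i$ is tangent to every facet meeting at $v$ except the one with normal $\nu_i$. My first step would be to establish that $(\alpha_1, \dots, \alpha_d)$ and $(\nu_1, \dots, \nu_d)$ are dual $\Z$-bases, i.e.\ $\langle \alpha_j, \nu_i\rangle = \delta_{ij}$. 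The off-diagonal vanishing is geometric: $\alpha_j$ lies along an edge contained in the facet with normal $\nu_i$ for $i \neq j$. For the diagonal, the Delzant condition at $v$ gives two $\Z$-bases (the $\nu_i$'s of $\ell$ and the $\alpha_j$'s of $\ell^*$); together with $\langle \alpha_j, \nu_j\rangle > 0$ (the edge points into $\Delta$) and primitivity, this forces $\langle \alpha_j, \nu_j\rangle = 1$.

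For (i) $\Rightarrow$ (ii): assume $\Delta$ is reflexive, so $c_i = -1$ for all $i$. Since $\alpha_1, \dots, \alpha_d$ is a basis of $\g^*$, write $v = \sum_j a_j \alpha_j$. Pairing with $\nu_i$ and using the duality established above gives
\begin{equation*}
  -1 = \langle v, \nu_i \rangle = \sum_j a_j \langle \alpha_j, \nu_i\rangle = a_i,
\end{equation*}
so $v = -\sum_j \alpha_j$, which is \eqref{wsf polytope}.

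For (ii) $\Rightarrow$ (i): assume the weight sum formula at every vertex. Because each $\alpha_j \in \ell^*$, every vertex lies in $\ell^*$, so $\Delta$ is integral. Now let $\mathcal{F}$ be any facet of $\Delta$ with primitive inward normal $\nu \in \ell$, supported on $\{\langle \cdot, \nu\rangle = c\}$. Pick any vertex $v$ of $\mathcal{F}$; then $\nu$ is one of the primitive normals at $v$, say $\nu = \nu_i$. Using \eqref{wsf polytope} and the duality,
\begin{equation*}
  c = \langle v, \nu_i\rangle = -\sum_j \langle \alpha_j, \nu_i\rangle = -1.
\end{equation*}
Since $\mathcal{F}$ was arbitrary, $c_i = -1$ for all facets, so $\Delta$ is reflexive.

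The main (minor) obstacle is the duality claim $\langle \alpha_j, \nu_i\rangle = \delta_{ij}$; once that is nailed down, both implications are one-line calculations. I would take care with the bookkeeping: the Delzant hypothesis is used precisely to guarantee (a) exactly $d$ facets meet at $v$, so the $\nu_i$ are well-defined and there are $d$ edges whose primitive directions form a basis, and (b) primitivity plus the dual-basis property force the diagonal pairings to equal $1$ rather than some larger positive integer. No external results beyond the standard Delzant setup recalled in Section \ref{sec:moment-polyt-monot} are needed.
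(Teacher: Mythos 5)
Your proof is correct, and it is worth noting that the paper does not actually prove this proposition: it cites it to Entov--Polterovich, McDuff and Godinho--von Heymann--Sabatini, and only supplies a remark repairing the hypothesis in \cite[Prop.\ 2.12]{gvhs} that the origin be an interior point of $\Delta$ (the remark shows this follows from the weight sum formula). Your argument is a self-contained alternative: the key step is the dual-basis relation $\langle \alpha_j, \nu_i\rangle = \delta_{ij}$ at each vertex, which you justify correctly (off-diagonal vanishing because the edge in direction $\alpha_j$ lies in the facet with normal $\nu_i$ for $i \neq j$; the diagonal entries equal $1$ because the pairing matrix between the $\Z$-basis $(\alpha_j)$ of $\ell^*$ and the primitive normals is forced to be the identity by unimodularity, positivity of $\langle \alpha_j,\nu_j\rangle$, and primitivity). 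Given that relation, both implications are the one-line computations you give, and in particular your (ii) $\Rightarrow$ (i) direction yields $c=-1$ for every facet directly, so the interior-point issue that the paper's remark addresses never arises in your route (it instead comes out a posteriori via Lemma \ref{lemma:reflexive_interior}). One small bookkeeping point you implicitly use and could state: since the $d$ edge directions at $v$ are linearly independent, the tangent cone at $v$ is simplicial, so exactly $d$ facets meet at $v$ and each edge at $v$ lies on all of them except one, which is what makes your numbering of the $\nu_i$ possible.
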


\begin{rmk}
In \cite[Prop.\ 2.12]{gvhs} it is assumed that the origin is an
interior point of $\Delta$ to prove that \ref{item:9} implies \ref{item:8}. 
However this follows by \eqref{wsf polytope}. Indeed, consider the multiset $\mathcal{W}$ of all the primitive 
vectors appearing as weights of vertices of $\Delta$.
Note that, if $\alpha \in \mathcal{W}$ has multiplicity $r$, then so
does $-\alpha$. 

Hence the sum of all the weights in $\mathcal{W}$ is $
0 \in \g^*$. 
Therefore, if $\Delta$ satisfies \eqref{wsf polytope}, then
\begin{equation}\label{sum vertices zero}
\sum_{v\in \mathcal{V}} v = 0\,,
\end{equation}
where $\mathcal{V}$ is the set of vertices of $\Delta$. Since $\Delta$ is the
convex hull of its vertices, the interior points of $\Delta$ are
precisely those that can be written as follows:
\begin{equation}
  \label{eq:4}
  \sum_{v\in \mathcal{V}}\lambda_v v \quad \text{with } \lambda_v>0\;\;\text{for all  }v\in \mathcal{V}\;\;\text{and}\;\; \sum_{v\in 
    \mathcal{V}}\lambda_v=1 \,.
\end{equation}
Let $\lambda_v=\frac{1}{|\mathcal{V}|}$ for all
$v\in \mathcal{V}$. Then by \eqref{sum vertices zero}, we have  $0 =\sum_{v\in \mathcal{V}} \lambda_v
v$. Hence, \eqref{eq:4} yields that $0$ belongs to the interior of $\Delta$.
\end{rmk}

The following technical lemma regarding reflexive Delzant polytopes is
used extensively in Sections \ref{sec:stuff-that-we} and
\ref{sec:an-expl-real} below.

\begin{lemma}\label{lemma:weight_edge_out_facet}
  Let $\Delta$ be a reflexive Delzant polytope in $\g^*$, let
  $\mathcal{F}$ be a facet of $\Delta$ supported on the affine
  hyperplane $\{w \in \g^* \mid \langle w, \nu \rangle = -1\}$, let $v$ be a vertex of
  $\Delta$ in $\mathcal{F}$, and let $\alpha_1,\ldots, \alpha_d \in \ell^*$ be the weights of $v$ ordered so that
  \begin{equation}
    \label{eq:70}
    \mathcal{F} \subset v + \R_{\geq 0} \langle \alpha_1,\ldots,\alpha_{d-1}\rangle.
  \end{equation}
  Then $\langle \alpha_d, \nu \rangle = 1$. Moreover, if $e$ is the
  edge that is incident to $v$ and comes out of $\mathcal{F}$, then
  there exists $t_{\max} \in \Z_{> 0}$ such that $e = \{v + t \alpha_d
  \mid 0 \leq t \leq t_{\max}\}$.
\end{lemma}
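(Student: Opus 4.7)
The plan is to exploit two facts: first, the Delzant condition at $v$ gives a $\Z$-basis of $\ell^*$ via the weights $\alpha_1,\ldots,\alpha_d$; second, reflexivity of $\Delta$ forces the inward normal $\nu$ to be primitive in $\ell$. Combined, this constrains $\nu$ to be exactly $\pm$ one of the elements of the dual basis to $\{\alpha_i\}$.

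First I would set up the dual basis. Since $\Delta$ is smooth at $v$, the weights $\alpha_1,\ldots,\alpha_d$ form a $\Z$-basis of $\ell^*$, so let $\xi_1,\ldots,\xi_d \in \ell$ be the dual $\Z$-basis, i.e., $\langle \alpha_i,\xi_j\rangle = \delta_{ij}$. Moreover, an open neighborhood of $v$ in $\Delta$ agrees with $v + \R_{\geq 0}\langle \alpha_1,\ldots,\alpha_d\rangle$ (by Delzant-ness), and the facets of $\Delta$ incident to $v$ correspond bijectively to the weights, each such facet being locally spanned by the $d-1$ weights other than the excluded one. The hypothesis \eqref{eq:70} therefore means that $\mathcal{F}$ is the facet locally spanned by $\alpha_1,\ldots,\alpha_{d-1}$, and the unique edge $e$ at $v$ that is not contained in $\mathcal{F}$ is the one tangent to $\alpha_d$.

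Next I would compute $\langle \alpha_d,\nu\rangle$. Since $v \in \mathcal{F}$, one has $\langle v,\nu\rangle = -1$; and since $v + t\alpha_i \in \mathcal{F}$ for $i < d$ and all sufficiently small $t > 0$, one obtains $\langle \alpha_i,\nu\rangle = 0$ for $i = 1,\ldots,d-1$. Expanding $\nu = \sum_j c_j \xi_j$ in the dual basis yields $c_i = \langle \alpha_i,\nu\rangle = 0$ for $i < d$, hence $\nu = c_d \xi_d$. By reflexivity of $\Delta$, $\nu$ is a primitive element of $\ell$; since $\xi_d$ is also primitive (it belongs to a $\Z$-basis), $c_d \in \{\pm 1\}$. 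Finally, the edge $e$ goes into $\Delta$ from $\mathcal{F}$ in the direction $\alpha_d$, so $\langle \alpha_d,\nu\rangle > 0$, forcing $c_d = 1$ and hence $\langle \alpha_d,\nu\rangle = 1$.

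For the second assertion, the other endpoint of the edge $e$ is another vertex $v'$ of $\Delta$, with $v' = v + t_{\max}\alpha_d$ for some $t_{\max} > 0$. Reflexivity implies in particular that $\Delta$ is integral, so $v,v' \in \ell^*$; then $t_{\max}\alpha_d = v' - v \in \ell^*$, and since $\alpha_d$ is a primitive element of $\ell^*$, we conclude $t_{\max} \in \Z_{>0}$, as desired. The argument is essentially book-keeping around the Delzant and reflexive conditions; the only step requiring care is the sign/primitivity argument pinning down $\langle \alpha_d,\nu\rangle = 1$ rather than merely $\pm 1$, which is where the orientation of $\nu$ as the \emph{inward} normal is used.
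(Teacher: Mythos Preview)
Your proof is correct, and the second assertion is handled identically to the paper. For the first assertion, though, you take a different route. The paper invokes Proposition~\ref{prop reflexive eq}: a reflexive Delzant polytope satisfies the weight sum formula $v = -\sum_{j=1}^d \alpha_j$; pairing with $\nu$ and using $\langle \alpha_i,\nu\rangle = 0$ for $i<d$ together with $\langle v,\nu\rangle = -1$ gives $\langle \alpha_d,\nu\rangle = 1$ in one line. Your argument instead expands $\nu$ in the dual $\Z$-basis, uses primitivity of $\nu$ (from the definition of reflexive) to force the remaining coefficient to be $\pm 1$, and then fixes the sign via the inward-normal orientation. Both are short; the paper's approach packages the computation into a single identity, while yours is more hands-on with the lattice structure and makes explicit where the sign convention on $\nu$ enters.
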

\begin{proof}
  By \eqref{eq:70}, $\langle \alpha_i, \nu \rangle = 0$ for all
  $i=1,\ldots, d-1$. By Proposition \ref{prop reflexive eq}, $\Delta$
  satisfies the weight sum formula at $v$. Since $v \in \mathcal{F}$,
  $\langle \alpha_d, \nu \rangle = 1$.

  If $e$ is an edge of $\Delta$ as in the statement, then there exists
  $t_{\max} > 0$ such that $e = \{v + t \alpha_d
  \mid 0 \leq t \leq t_{\max}\}$. It remains to show that $t_{\max}$ is a
  positive integer. To this end, we observe that $v':= v + t_{\max}
  \alpha_d$ is a vertex of $\Delta$. Since $\Delta$ is reflexive, $v'
  \in \ell^*$. By definition of weight, $\alpha_d$ is
  primitive in $\ell^*$. Hence, $t_{\max} \in \mathbb{Z}_{>0}$. 
\end{proof}

To conclude this section, we look at the relation between normalized
monotone complexity preserving $T$-spaces and reflexive Delzant
polytopes in $\g^*$. We start with the following strengthening of Proposition \ref{prop:tall_delzant} under the
additional assumption of monotonicity. 

\begin{proposition}\label{prop:mom_map_image}
The moment map image of a normalized monotone complexity
preserving $T$-space is a reflexive Delzant polytope in $\g^*$.
Conversely, for every reflexive Delzant polytope $\Delta$ in $\g^*$ and for
every integer $k \geq 0$, there exists a normalized monotone complexity
preserving Hamiltonian $T$-space $\comp$ of complexity $k$ such that
$\Phi(M) = \Delta$.
\end{proposition}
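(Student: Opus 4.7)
The plan is to handle the two directions separately, leveraging Proposition \ref{prop:tall_delzant} for the Delzant part and Proposition \ref{prop reflexive eq} for the reflexive part.

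For the forward direction, let $\comp$ be a normalized monotone complexity preserving $T$-space of complexity $k$. By Proposition \ref{prop:tall_delzant}, $\Phi(M)$ is a Delzant polytope; by Proposition \ref{prop reflexive eq}, it suffices to verify that the weight sum formula \eqref{wsf polytope} holds at every vertex. Fix a vertex $v$ of $\Phi(M)$ and a fixed point $p \in \Phi^{-1}(v)$. By condition \ref{item:13} of Corollary \ref{cor:comp_preserving}, $\Phi^{-1}(v)$ has dimension $2k$, so by Remark \ref{rmk weights of F} precisely $k$ of the isotropy weights $\alpha_1,\ldots,\alpha_n \in \ell^*$ at $p$ vanish; say $\alpha_{d+1}=\cdots=\alpha_n=0$, where $d = n-k = \dim T$. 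As in the proof of Proposition \ref{prop:tall_delzant}, the nonzero weights $\alpha_1,\ldots,\alpha_d$ are precisely the weights of the vertex $v$ in the sense of equation \eqref{eq:17}. Since $\Phi$ satisfies the weight sum formula \eqref{weight sum formula},
\begin{equation*}
  v = \Phi(p) = -\sum_{j=1}^n \alpha_j = -\sum_{j=1}^d \alpha_j,
\end{equation*}
so Proposition \ref{prop reflexive eq} concludes that $\Phi(M)$ is reflexive.

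For the converse, fix a reflexive Delzant polytope $\Delta \subset \g^*$ and an integer $k \geq 0$. By Delzant's classification, there exists a compact symplectic toric manifold $(M_\Delta,\omega_\Delta,\Phi_\Delta)$ with $\Phi_\Delta(M_\Delta) = \Delta$. In Delzant's construction the cohomology class $[\omega_\Delta]$ is determined by the support constants $c_i$ in the minimal representation \eqref{eq:3} of $\Delta$, while $c_1$ is expressed as the sum of the classes Poincar\'e dual to the preimages of the facets. Reflexivity (i.e., $c_i = -1$ for all $i$) thus yields $c_1 = [\omega_\Delta]$. Moreover, at every vertex $v$ of $\Delta$ the isotropy weights of $\Phi_\Delta^{-1}(v)$ coincide with the weights of $v$, and equation \eqref{wsf polytope} for $\Delta$ implies that $\Phi_\Delta$ satisfies \eqref{weight sum formula}. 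Hence $(M_\Delta,\omega_\Delta,\Phi_\Delta)$ is normalized monotone, settling the case $k=0$.

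For $k \geq 1$, pick any closed monotone symplectic manifold $(M'',\omega'')$ of dimension $2k$ whose symplectic class is rescaled so that $c_1(M'') = [\omega'']$; for example, $M''=(\mathbb{CP}^1)^{k}$ with an appropriate multiple of the product Fubini--Study form, or $\mathbb{CP}^k$ with $(k+1)$ times the standard Fubini--Study form. Endow $M := M_\Delta \times M''$ with the symplectic form $\omega := \mathrm{pr}_1^*\omega_\Delta + \mathrm{pr}_2^*\omega''$, the $T$-action induced by the toric action on $M_\Delta$ and the trivial action on $M''$, and the moment map $\Phi := \mathrm{pr}_1^*\Phi_\Delta$. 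As in the proof of Proposition \ref{prop:tall_delzant}, $\comp$ is a compact complexity preserving Hamiltonian $T$-space of complexity $k$ with $\Phi(M) = \Delta$. By naturality of the first Chern class under products, $c_1(M) = \mathrm{pr}_1^* c_1(M_\Delta) + \mathrm{pr}_2^* c_1(M'') = [\omega]$, establishing condition \ref{item:5} of Definition \ref{monotone ham space}. Finally, for any $(p,q) \in M^T = M_\Delta^T \times M''$, the isotropy weights at $(p,q)$ are those of $p$ in $M_\Delta$ together with $k$ zero weights, and $\Phi(p,q)=\Phi_\Delta(p)$; since \eqref{weight sum formula} holds for $\Phi_\Delta$, it also holds for $\Phi$, giving condition \ref{item:6}. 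The main subtlety is the cohomological computation $c_1 = [\omega_\Delta]$ on the Delzant side, which is where reflexivity enters decisively; everything else is a direct bookkeeping of the product construction.
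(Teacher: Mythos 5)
Your proposal is correct and follows essentially the same route as the paper: the forward direction identifies the polytope weights at a vertex with the non-zero isotropy weights of its preimage and invokes the weight sum formula together with Proposition \ref{prop reflexive eq}, and the converse takes the Delzant toric manifold over $\Delta$ and multiplies by a monotone $2k$-dimensional factor exactly as in the second half of the proof of Proposition \ref{prop:tall_delzant}. The only difference is that where the paper simply cites Proposition \ref{prop:monotone_toric} to see that the toric factor is normalized monotone, you sketch the underlying cohomological facts ($[\omega_\Delta]=-\sum_i c_i\,\mathrm{PD}[\Phi_\Delta^{-1}(\mathcal{F}_i)]$ and $c_1=\sum_i \mathrm{PD}[\Phi_\Delta^{-1}(\mathcal{F}_i)]$) together with the weight-sum check at vertices, which is a correct, if terse, re-derivation of that cited result.
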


Before proving Proposition \ref{prop:mom_map_image}, we recall the following result,
stated below without proof (see \cite[Proposition 3.10]{gvhs}).

\begin{proposition}\label{prop:monotone_toric}
  Let $\comp$ be a compact symplectic toric manifold. If $\Phi(M)$ is
  a reflexive Delzant polytope, then $\comp$ is normalized monotone.
\end{proposition}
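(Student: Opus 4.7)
The plan is to prove directly both defining conditions of a normalized monotone Hamiltonian $T$-space (Definition \ref{monotone ham space}), using the equivariant cohomology of the compact symplectic toric manifold $\comp$ and the combinatorial characterization of reflexivity provided by Proposition \ref{prop reflexive eq}.

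First I would verify the weight sum formula \eqref{weight sum formula} for the moment map. Since $\comp$ is a compact symplectic toric manifold, every $p \in M^T$ is isolated and $v := \Phi(p)$ is a vertex of $\Phi(M)$; moreover, by Theorem \ref{local normal form} combined with part \ref{item:1} of Corollary \ref{Cor:ConeFace}, the isotropy weights $\alpha_1,\ldots,\alpha_d$ of $p$ coincide with the weights of the vertex $v$ of the polytope $\Phi(M)$. Since $\Phi(M)$ is a reflexive Delzant polytope, Proposition \ref{prop reflexive eq} applied at $v$ yields
\begin{equation*}
\Phi(p) \;=\; v \;=\; -\sum_{j=1}^d \alpha_j,
\end{equation*}
which is exactly \eqref{weight sum formula}. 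This settles condition \ref{item:6} of Definition \ref{monotone ham space}.

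Next I would prove that $c_1 = [\omega]$ by comparing the equivariant extensions of both sides at the fixed points. The class $[\omega - \Phi] \in H^2_T(M;\R)$ is the canonical equivariant extension of $[\omega]$, and $c_1^T \in H^2_T(M;\R)$ is the equivariant first Chern class of $(TM,J)$ for any $T$-invariant compatible almost complex structure $J$. At any $p \in M^T$ with isotropy weights $\alpha_1,\ldots,\alpha_d$, the standard formulas give $c_1^T(p) = \sum_{j=1}^d \alpha_j$ and $[\omega - \Phi](p) = -\Phi(p)$, which agree by the weight sum formula established in the previous paragraph. Since $\comp$ is a compact Hamiltonian $T$-space, it is equivariantly formal, and the Kirwan/Atiyah–Bott–Borel–Hsiang injectivity theorem guarantees that the restriction
\begin{equation*}
H^*_T(M;\R) \;\longrightarrow\; H^*_T(M^T;\R)
\end{equation*}
is injective. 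Therefore $c_1^T = [\omega - \Phi]$ in $H^2_T(M;\R)$. Applying the forgetful homomorphism $H^*_T(M;\R) \to H^*(M;\R)$, which sends $c_1^T \mapsto c_1$ and $[\omega - \Phi] \mapsto [\omega]$, we obtain $c_1 = [\omega]$, i.e., condition \ref{item:5} of Definition \ref{monotone ham space}.

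The only real subtlety is invoking injectivity of the fixed point restriction in Step 2; for a compact symplectic toric manifold this is a classical consequence of equivariant formality (which holds because a generic component of the moment map is a perfect Morse–Bott function), so I do not expect this to present a genuine obstacle. The argument is completely non-circular with respect to the surrounding paper since Proposition \ref{prop reflexive eq} is a purely combinatorial statement about polytopes.
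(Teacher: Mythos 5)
Your argument is correct. Both steps check out: for a compact symplectic toric manifold every fixed point is isolated, its image is a vertex, and by Theorem \ref{local normal form} together with part \ref{item:1} of Corollary \ref{Cor:ConeFace} its isotropy weights are exactly the (primitive) edge vectors at that vertex, since they form a $\Z$-basis of $\ell^*$; Proposition \ref{prop reflexive eq} then gives the weight sum formula \eqref{weight sum formula}. For the cohomological step, the restrictions $c_1^T(p)=\sum_j\alpha_j$ and $[\omega-\Phi](p)=-\Phi(p)$ are the same identities the paper itself uses in the proof of Proposition \ref{prop:weight_sum}, and Kirwan injectivity of $H^*_T(M;\R)\to H^*_T(M^T;\R)$ legitimately upgrades fixed-point agreement to $c_1^T=[\omega-\Phi]$, whence $c_1=[\omega]$ under the forgetful map. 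The one point worth flagging is that the paper never states or proves Kirwan injectivity/equivariant formality, so your proof imports a classical external theorem; that is harmless (and standard for toric manifolds), and your argument is non-circular within the paper. For comparison: the paper offers no proof at all, quoting the result from \cite[Proposition 3.10]{gvhs}; your argument is essentially the converse of the paper's own Proposition \ref{prop:weight_sum} — there, $c_1=[\omega]$ is assumed and the weight sum formula is extracted at fixed points, whereas you start from the combinatorial weight sum formula supplied by reflexivity and use fixed-point injectivity to recover the global equality of equivariant classes — which is in the same spirit as the cited reference and gives a self-contained proof the paper chose to omit.
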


\begin{proof}[Proof of Proposition \ref{prop:mom_map_image}]
Let $\comp$ be a normalized monotone complexity
preserving $T$-space. By Proposition \ref{prop:tall_delzant},
$\Phi(M)$ is a Delzant polytope in $\g^*$. It remains to show
that $\Phi(M)$ is reflexive. Since $\comp$ is complexity preserving, by part
\ref{item:1} of Corollary \ref{Cor:ConeFace}, the weights of $\Phi(M)$
at a vertex $v$ are equal to the non-zero weights of any $p \in
\Phi^{-1}(v)$. Since $\comp$ is normalized monotone, $\Phi$ satisfies the weight sum 
formula \eqref{weight sum formula}. Hence, $\Phi(M)$ also satisfies the
weight sum formula \eqref{wsf polytope} and so, by Proposition \ref{prop
  reflexive eq}, it is reflexive.

Conversely, let $\Delta$ be a reflexive Delzant polytope in $\g^*$ and
let $k \geq 0$ be an integer. We adapt the second half of the proof of Proposition \ref{prop:tall_delzant} (and fix the notation therein), to show that we can make appropriate choices so that the
resulting complexity preserving $T$-space of complexity $k$ is normalized monotone. Since
$\Delta$ is reflexive, by Proposition \ref{prop:monotone_toric}, $(M', 
\omega')$ is normalized monotone. Choose $M'' = \C
P^k$ and $\omega''$ to be a monotone symplectic form on $\C
P^k$ such that $c_1(\C P^k) = [\omega'']$. The complexity
preserving $T$-space $(M,\omega,\Phi)$ constructed in the second
half of the proof of Proposition \ref{prop:tall_delzant} has
complexity $k$ and is normalized monotone, as desired.
\end{proof}

We finish with the following simple, useful result.

\begin{lemma}\label{lemma:normalized_complexity_preserving}
  Let $\comp$ be a compact monotone complexity
  preserving $T$-space. If $\Phi(M)$ is reflexive Delzant,
  then $\comp$ is normalized monotone. 
\end{lemma}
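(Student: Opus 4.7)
The plan is to pin down the canonical normalization guaranteed by Corollary \ref{cor:mono_normalized}. By that corollary, there exist unique $\lambda > 0$ and $w \in \g^*$ such that $(M, \lambda \omega, \lambda \Phi + w)$ is normalized monotone, so it suffices to show $\lambda = 1$ and $w = 0$, for then $\comp$ itself is normalized monotone.

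First, I would verify that being complexity preserving is invariant under the rescaling $\omega \mapsto \lambda \omega$ and translation $\Phi \mapsto \lambda \Phi + w$: such a modification does not affect the underlying $T$-action on $M$, and the affine bijection $\varphi(x) := \lambda x + w$ from $\Phi(M)$ to $\lambda \Phi(M) + w$ identifies faces with matching preimages, so condition \ref{item:12} of Corollary \ref{cor:comp_preserving} is preserved. By Proposition \ref{prop:mom_map_image}, the moment polytope $P' := \lambda P + w$ of the normalized version (where $P := \Phi(M)$) is then reflexive Delzant. Combined with the hypothesis, we thus have two reflexive Delzant polytopes, $P$ and $P'$, in $\g^*$ related by $\varphi$.

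Next, I would compare the weight sum formulas for $P$ and $P'$. Let $v$ be a vertex of $P$ with weights $\alpha_1, \ldots, \alpha_d \in \ell^*$. The affine map $\varphi$ sends $v$ to the vertex $\lambda v + w$ of $P'$ and rescales each edge at $v$ by the positive factor $\lambda$, which preserves primitive tangent directions in $\ell^*$; hence the weights of $\lambda v + w$ in $P'$ are again $\alpha_1, \ldots, \alpha_d$. Applying Proposition \ref{prop reflexive eq} to both polytopes yields
\begin{equation*}
v = -\sum_{j=1}^d \alpha_j \quad \text{and} \quad \lambda v + w = -\sum_{j=1}^d \alpha_j,
\end{equation*}
so $(\lambda - 1) v + w = 0$ for every vertex $v$ of $P$.

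Finally, since $P$ is a full-dimensional polytope in $\g^*$ with $\dim T \geq 1$, it has at least two distinct vertices $v_1, v_2$. Subtracting the corresponding identities gives $(\lambda - 1)(v_1 - v_2) = 0$, which forces $\lambda = 1$ and then $w = 0$. No serious obstacle is expected: the argument reduces to comparing the weight sum formulas of two reflexive Delzant polytopes related by an affine map, and the rigidity of the formula at multiple vertices pins this map down to be the identity.
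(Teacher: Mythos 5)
Your proof is correct, and it takes a somewhat different route from the paper. The paper works directly with equivariant cohomology: from monotonicity it produces the unique $w$ with $c_1^T+w=\lambda[\omega-\Phi]$, evaluates this identity at a fixed point $p\in\Phi^{-1}(v)$ over a vertex, uses the complexity preserving hypothesis to identify the non-zero isotropy weights of $p$ with the edge weights of $\Phi(M)$ at $v$, and then invokes the weight sum formula (Proposition \ref{prop reflexive eq}) for the hypothesized reflexive polytope to get $-v+w=-\lambda v$ at every vertex, whence $\lambda=1$, $w=0$. You instead stay entirely at the level of polytopes: you normalize via Corollary \ref{cor:mono_normalized}, check that the complexity preserving condition is unaffected by rescaling and translating the moment map (a small but necessary verification, which you handle correctly via condition \ref{item:12} of Corollary \ref{cor:comp_preserving}), apply Proposition \ref{prop:mom_map_image} to conclude that $\lambda\Phi(M)+w$ is also reflexive Delzant, and then compare the weight sum formulas of the two reflexive Delzant polytopes $\Phi(M)$ and $\lambda\Phi(M)+w$, noting that a positive affine rescaling preserves the primitive edge directions. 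In effect you reduce the lemma to the statement that a Delzant polytope admits at most one reflexive representative under positive rescaling and translation (the same rigidity the paper uses later in the proof of Theorem \ref{thm:DH_classifies}). What your approach buys is modularity -- the equivariant cohomology computation is confined to the already proved Corollary \ref{cor:mono_normalized} and Proposition \ref{prop:mom_map_image}, and complexity preserving enters only through the latter -- at the cost of the extra check that the hypothesis transfers to the normalized space; the paper's argument is more self-contained and uses complexity preserving directly to match isotropy weights with polytope weights. Both arguments, like the paper's, implicitly use that $\Phi(M)$ is full-dimensional with at least two vertices (i.e., $\dim T\geq 1$) to conclude $\lambda=1$ from $(\lambda-1)v+w=0$; this is harmless in the paper's setting.
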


\begin{proof}
  If $\dim M =0$, there is nothing to prove, so we may assume $\dim M
  > 0$. Let $k$ be the complexity of $\comp$. Since $(M,\omega)$ is
  monotone, by the proof of Proposition \ref{prop:weight_sum}, there exists a
  unique constant $w \in \g^*$
  such that
  $$c^T_1 + w= \lambda [\omega - \Phi].$$
  We fix a
  vertex $v \in \Phi(M)$ and a fixed point $p \in \Phi^{-1}(v)$. Evaluating both sides of the above
  displayed equality at $p$,
  \begin{equation}
    \label{eq:81}
    \sum\limits_{i=1}^{n-k} \alpha_i + c= - \lambda v,
  \end{equation}
  \noindent
  where $\alpha_1,\ldots, \alpha_{n-k}$ are the non-zero
  weights of $p$. Since $\comp$ is complexity preserving, the non-zero
  isotropy weights of $p$ in $M$ are precisely the
  weights of $v$ in $\Phi(M)$. Hence, by Proposition \ref{prop reflexive eq},
  \eqref{eq:81} gives that $-v +w = - \lambda v $. Since this equality
  holds for any vertex $v \in \Phi(M)$, we have $w = 0$ and
  $\lambda = 1$, as desired.
\end{proof}

\section{Complete invariants of compact monotone tall
  complexity one $T$-spaces}\label{sec:stuff-that-we}

\subsection{The genus and a minimal facet}\label{sec:duist-heckm-funct}
In this section we explore the first consequences of the combination
of tallness and monotonicity of compact complexity one $T$-spaces,
recovering and extending some of the results in \cite{ss}. To this
end, let $\comp$ be a monotone tall complexity one $T$-space of
dimension $2n$ and let $v \in \Phi(M)$ be a vertex. Let $N = L_1\oplus \ldots \oplus
L_{n-1}$ be the normal bundle to $\Sigma:=\Phi^{-1}(v)$ in $M$ together with its
$T$-equivariant splitting into $T$-invariant complex line bundles as in
\eqref{eq:25}.

\begin{lemma}\label{lemma:chern}
  Let $\comp$ be a compact monotone tall complexity one $T$-space of
  dimension $2n$ and
  let $v \in \Phi(M)$ be a vertex that attains the minimum of
  $DH\comp$. If $c_1(\Sigma), c_1(L_i)$ denote the first Chern
  class of $\Sigma$ and of the complex line bundle $L_i$ for any
  $i=1,\ldots, n-1$ respectively, then
  \begin{equation}
    \label{eq:65}
    c_1(\Sigma)[\Sigma] > - \sum\limits_{i=1}^{n-1}c_1(L_i)[\Sigma].
  \end{equation}
  Moreover, the genus of $\comp$ in the sense of Definition
  \ref{defn:genus} equals zero.
\end{lemma}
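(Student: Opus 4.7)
The plan is to show that $\Sigma := \Phi^{-1}(v)$ is a fixed symplectic surface, then to leverage monotonicity to control the sum $c_1(\Sigma)[\Sigma] + \sum_{i}c_1(L_i)[\Sigma]$, and finally to combine this with Lemma \ref{Lemma:DHnearfixedSurface} to pin down the genus.

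First I would argue that $\Sigma$ is a fixed symplectic surface. Since $\comp$ is tall, Proposition \ref{prop:tall=complexity_preserving} ensures that it is complexity preserving, so condition \ref{item:13} of Corollary \ref{cor:comp_preserving} yields $\dim \Phi^{-1}(v) = 2$. By Remark \ref{rmk:reg_irr_fixed}, all points of $\Sigma$ are fixed by $T$, and by Lemma \ref{Lem:fixedSubmainfolds}, $(\Sigma,\omega|_\Sigma)$ is a symplectic submanifold of $(M,\omega)$; in particular $\int_\Sigma \omega > 0$.

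Next, the $T$-equivariant splitting $TM|_\Sigma = T\Sigma \oplus L_1 \oplus \cdots \oplus L_{n-1}$ coming from \eqref{eq:25} and the multiplicativity of total Chern classes yield
\begin{equation*}
c_1(M)[\Sigma] = c_1(\Sigma)[\Sigma] + \sum_{i=1}^{n-1} c_1(L_i)[\Sigma].
\end{equation*}
By Proposition \ref{assumptions moment map c1}, $(M,\omega)$ is positive monotone, so there exists $\lambda > 0$ with $c_1(M) = \lambda[\omega]$. Pairing with $[\Sigma]$ gives $c_1(M)[\Sigma] = \lambda \int_\Sigma \omega > 0$, which is precisely the inequality \eqref{eq:65}.

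For the genus statement, since $v$ attains the minimum of $DH\comp$, Lemma \ref{Lemma:DHnearfixedSurface} (specifically \eqref{eq: value c1 on Li}) gives $c_1(L_i)[\Sigma] \leq 0$ for every $i$, so $-\sum_{i=1}^{n-1}c_1(L_i)[\Sigma] \geq 0$. Combined with \eqref{eq:65}, this forces $c_1(\Sigma)[\Sigma] > 0$. Since $\Sigma$ is a closed oriented surface, $c_1(\Sigma)[\Sigma] = 2 - 2g(\Sigma)$, so $g(\Sigma) = 0$. Because $T$ acts trivially on the fixed surface $\Sigma$, the reduced space $M_v = \Phi^{-1}(v)/T$ is homeomorphic to $\Sigma$, and by \cite[Corollary 9.7]{kt1} every reduced space of $\comp$ is homeomorphic to this one. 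Hence the genus of $\comp$ in the sense of Definition \ref{defn:genus} equals $g(\Sigma) = 0$. I do not anticipate any real obstacle here: the proof is essentially a combination of the local Duistermaat-Heckman formula near a fixed surface together with the positive monotonicity of $(M,\omega)$.
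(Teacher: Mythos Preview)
Your proof is correct and follows essentially the same approach as the paper: positive monotonicity gives $c_1(M)[\Sigma]>0$, the splitting $TM|_\Sigma = T\Sigma \oplus N$ yields \eqref{eq:65}, and then the sign constraint from Lemma~\ref{Lemma:DHnearfixedSurface} forces $c_1(\Sigma)[\Sigma]>0$, hence genus zero. You supply a bit more justification than the paper does (that $\Sigma$ is a fixed symplectic surface, and that $g(\Sigma)=0$ implies the genus of $\comp$ is zero), but the argument is the same.
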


\begin{proof}
  Since $(M,\omega)$ is monotone, by Proposition \ref{assumptions
    moment map c1}, it is positive monotone. Hence, since $\Sigma$ is
  a symplectic submanifold of $(M,\omega)$, 
  \begin{equation}
    \label{eq:30}
    0 < c_1[\Sigma] = c_1(\Sigma)[\Sigma] + c_1(N)[\Sigma] =
    c_1(\Sigma)[\Sigma] + \sum\limits_{i=1}^{n-1}c_1(L_i)[\Sigma],
  \end{equation}
  \noindent
  whence \eqref{eq:65} holds. By Lemma \ref{Lemma:DHnearfixedSurface}, the right hand side of \eqref{eq:65} is
  non-negative, which implies that $\Sigma$ is diffeomorphic to a
  sphere, as desired.
\end{proof}

Lemma \ref{lemma:chern} is a stepping stone towards the following
important result.

\begin{proposition}\label{prop:minimum_facet}
  Let $\comp$ be a compact monotone tall complexity one
  $T$-space of dimension $2n$. There exists a facet $\mathcal{F}$ of $\Phi(M)$ such
  that $DH(M_{\mathcal{F}},\omega_{\mathcal{F}},\Phi_{\mathcal{F}})$ is constant and equal to the minimum
  of $DH\comp$, where
  $(M_{\mathcal{F}},\omega_{\mathcal{F}},\Phi_{\mathcal{F}})$ is
  defined as in \eqref{eq:11}. Moreover, for any vertex $v \in \mathcal{F}$, there exist $n-2$
  non-zero isotropy weights $\alpha_1,\ldots,
  \alpha_{n-2}$ of $\Phi^{-1}(v)$ such that
  \begin{itemize}[leftmargin=*]
  \item $\mathcal{F} \subset v + \R_{\geq 0} \langle \alpha_1, \ldots,
    \alpha_{n-2} \rangle$, and
  \item the self-intersection of $\Phi^{-1}(v)$ in $\Phi^{-1}(e_j)$
    equals zero for all $j=1,\ldots, n-2$, where $e_j \subset
    \mathcal{F}$ is the edge incident to $v$ with tangent vector given by $\alpha_j$.
  \end{itemize}
\end{proposition}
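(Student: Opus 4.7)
The plan is to find a vertex $v_0$ where $DH\comp$ attains its minimum, use monotonicity to force a very restricted local picture at $v_0$, identify a candidate facet $\mathcal{F}$ from this picture, and then promote local constancy to global constancy on $\mathcal{F}$ using concavity. Finally, I repeat the local analysis at every vertex of $\mathcal{F}$ to extract the second bullet.

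For the first step, Corollary \ref{cor:min_DH_function} furnishes a vertex $v_0 \in \Phi(M)$ at which the minimum $m$ of $DH\comp$ is attained. Since $\comp$ is tall, Proposition \ref{prop:tall=complexity_preserving} together with condition \ref{item:13} of Corollary \ref{cor:comp_preserving} shows that $\Sigma_0 := \Phi^{-1}(v_0)$ is a $2$-dimensional fixed submanifold; by Lemma \ref{lemma:chern} it is a sphere. Writing the normal bundle of $\Sigma_0$ as $L_1 \oplus \cdots \oplus L_{n-1}$ along the $n-1$ non-zero isotropy weights $\alpha_1,\ldots,\alpha_{n-1}$ of $\Sigma_0$, I combine Lemma \ref{Lemma:DHnearfixedSurface} (which forces each $c_1(L_i)[\Sigma_0] \leq 0$) with Lemma \ref{lemma:chern} to obtain
\[
0 \leq \sum_{i=1}^{n-1}\bigl(-c_1(L_i)[\Sigma_0]\bigr) < c_1(\Sigma_0)[\Sigma_0] = 2.
\]
Since each summand is a non-negative integer, at most one of them is non-zero, so after reindexing $c_1(L_i)[\Sigma_0] = 0$ for $i = 1,\ldots, n-2$. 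Because $\Phi(M)$ is Delzant (Proposition \ref{prop:tall_delzant}), there is a unique facet $\mathcal{F}$ of $\Phi(M)$ incident to $v_0$ whose tangent directions at $v_0$ are $\alpha_1,\ldots,\alpha_{n-2}$ (Corollary \ref{Cor:ConeFace}).

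The main obstacle is showing $DH\comp \equiv m$ on all of $\mathcal{F}$. Lemma \ref{Lemma:DHnearfixedSurface} gives local constancy near $v_0$, and Corollary \ref{cor:DH_boundary_comp_pres} (applicable since $\comp$ is complexity preserving) identifies $DH\comp|_{\mathcal{F}}$ with the Duistermaat-Heckman function of the complexity one sheet $(M_\mathcal{F},\omega_\mathcal{F},\Phi_\mathcal{F})$, which is concave by Proposition \ref{prop:concave}. If some $w \in \mathcal{F}$ satisfied $DH\comp(w) > m$, I would choose $t \in (0,1)$ small enough that $(1-t)v_0 + tw$ lies in the locally constant neighborhood of $v_0$, and then concavity along the segment $[v_0,w]$ would yield the contradiction $m \geq (1-t)\,m + t\,DH\comp(w) > m$.

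For the second part, any vertex $v \in \mathcal{F}$ now satisfies $DH\comp(v) = m$, so the local analysis carried out at $v_0$ applies verbatim at $v$: $\Phi^{-1}(v)$ is a fixed sphere with $n-1$ non-zero isotropy weights, the $n-2$ of them that are tangent to $\mathcal{F}$ at $v$ give the desired $\alpha_1,\ldots,\alpha_{n-2}$ (and witness $\mathcal{F} \subset v + \R_{\geq 0}\langle \alpha_1,\ldots,\alpha_{n-2}\rangle$ by Corollary \ref{Cor:ConeFace}), and the constancy of $DH\comp$ along each edge $e_j \subset \mathcal{F}$ incident to $v$ forces $c_1(L_j)[\Phi^{-1}(v)] = 0$ via Lemma \ref{Lemma:DHnearfixedSurface}. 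This Chern number coincides with the self-intersection of $\Phi^{-1}(v)$ inside the $4$-dimensional complexity one sheet $\Phi^{-1}(e_j)$, completing the argument. The genuinely non-routine ingredient is the concavity-based propagation; all other steps are direct applications of the results already established.
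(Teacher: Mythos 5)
Your proof is correct and follows essentially the same route as the paper's: locate a minimizing vertex via Corollary \ref{cor:min_DH_function}, combine Lemma \ref{lemma:chern} with Lemma \ref{Lemma:DHnearfixedSurface} to force $n-2$ of the normal Chern numbers to vanish, take the facet spanned by those directions, propagate constancy over the whole facet by concavity (Proposition \ref{prop:concave}), and rerun the local analysis at every vertex of $\mathcal{F}$. Your explicit segment argument for the concavity step and your observation that constancy along $\mathcal{F}$ pins down exactly which Chern numbers vanish at the other vertices just make explicit what the paper leaves terse.
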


\begin{proof}
  By Proposition
  \ref{prop:tall=complexity_preserving}, $\comp$ is complexity
    preserving. Hence, by Corollary \ref{cor:DH_boundary_comp_pres},
    for any face $\tilde{\mathcal{F}}$ of $\Phi(M)$
    $$
    DH(M_{\tilde{\mathcal{F}}},\omega_{\tilde{\mathcal{F}}},\Phi_{\tilde{\mathcal{F}}})
    = DH \comp|_{\tilde{\mathcal{F}}}.$$
    \noindent
  Therefore, to prove the first statement, it suffices to prove that there exists a facet
  $\mathcal{F}$ of $\Phi(M)$ such that $DH \comp|_{\mathcal{F}}$ is
  constant and equal to the minimum of $DH\comp$ -- see Figure \ref{DHGraphNearVertex}.

  By Corollary \ref{cor:min_DH_function}, the minimum of $DH \comp$ is
  attained at a vertex of $\Phi(M)$, say $v_0$. Let $\alpha_1,\ldots,
  \alpha_{n-1}$ be the non-zero isotropy weights of the fixed surface $\Sigma:=\Phi^{-1}(v_0)$. By
  Lemma \ref{Lemma:DHnearfixedSurface},
  $$ DH\comp \left( v_0 + \sum\limits_{i=1}^{n-1}t_i \alpha_i\right) =
  \int\limits_{\Sigma} \omega - \sum\limits_{i=1}^{n-1}t_i
  c_1(L_i)[\Sigma] $$
  \noindent
  for all $t_1,\ldots, t_{n-1}
  \geq 0$ sufficiently small. By Lemma
  \ref{lemma:chern}, $ 2 > - \sum\limits_{i=1}^{n-1}c_1(L_i)[\Sigma]$;
  moreover, by Lemma \ref{Lemma:DHnearfixedSurface}, $c_1(L_i)[\Sigma]\leq
  0$ for all $i=1,\ldots, n-1$. Thus
  at least $n-2$ of $c_1(L_1)[\Sigma], \ldots,
  c_1(L_{n-1})[\Sigma]$ vanish; there is no loss of generality in
  assuming that $c_1(L_i)[\Sigma]=0$ for all $i=1,\ldots, n-2$. Hence,
  $$ DH\comp \left( v_0 + \sum\limits_{i=1}^{n-1}t_i \alpha_i\right) =
  \int\limits_{\Sigma} \omega - t_{n-1}
  c_1(L_{n-1})[\Sigma] $$
  \noindent
  for all $t_1,\ldots, t_{n-1}
  \geq 0$ sufficiently small. In particular, the restriction of $DH \comp$ to a sufficiently small
  neighborhood of $v_0$ in
  $$ (v_0 + \R_{\geq 0} \langle \alpha_1, \ldots, \alpha_{n-2} \rangle)
  \cap \Phi(M) $$
  \noindent
  is constant and equal to $DH\comp(v_0)$, which, by assumption, is the
  minimum of $DH \comp$. Let $\mathcal{F}$ be the facet of $\Phi(M)$
  that is contained in $v_0 + \R_{\geq 0} \langle \alpha_1, \ldots,
  \alpha_{n-2} \rangle$. Since $DH \comp$ is concave (see Proposition
  \ref{prop:concave}), since $DH \comp$ attains its minimum at $v_0$,
  and since $v_0 \in \mathcal{F}$, $DH\comp|_{\mathcal{F}}$
  is constant and equal to the minimum of $DH \comp$. Moreover, since
  $c_1(L_i)[\Sigma] = 0$, the
  self-intersection of $\Sigma$ in $\Phi^{-1}(e_i)$ is zero for each
  $i= 1,\ldots, n-2$.

  It remains to show that the bullet points in the statement hold for
  all vertices of $\mathcal{F}$. However, since $DH\comp|_{\mathcal{F}}$
  is constant and equal to the minimum of $DH \comp$, it follows that
  any vertex of $\mathcal{F}$ is a minimum of $DH \comp$. Hence, the
  above argument gives the desired result.
\end{proof}

\begin{figure}[h]
	\begin{center}
		\includegraphics[width=4cm]{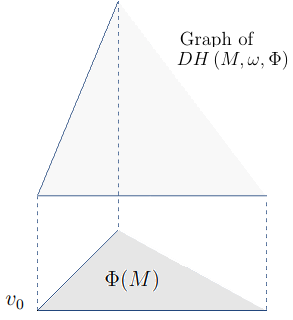}
		\caption{}
		\label{DHGraphNearVertex}
	\end{center}
\end{figure}

A facet as in Proposition
\ref{prop:minimum_facet} plays an important role throughout Section \ref{sec:stuff-that-we}. % In order to simplify the exposition, we introduce
% the following terminology and notation:

\begin{definition}\label{defn:minimal_facet}
  Let $\comp$ be a compact monotone tall complexity one
  $T$-space. A facet of $\Phi(M)$ satisfying the conclusions of
  Proposition \ref{prop:minimum_facet} is called a {\bf minimal
    facet} of $\Phi(M)$ and denoted by $\mathcal{F}_{\mathrm{min}}$. Given a
  minimal facet $\mathcal{F}_{\min}$, the sheet 
  corresponding to $\mathcal{F}_{\min}$ is denoted by
  $(M_{\min},\omega_{\min},\Phi_{\min})$.   
\end{definition}

We observe that, in spite of the notation,
$(M_{\min},\omega_{\min},\Phi_{\min})$ clearly depends on
$\mathcal{F}_{\min}$. However, we trust that the notation does not
cause confusion.

% compact tall complexity one Hamiltonian $T$-space
%   constructed as in \eqref{eq:11} 

\begin{corollary}\label{cor:min_facet_no_iso}
  Let $\comp$ be a compact monotone tall complexity one
  $T$-space. If $\mathcal{F}_{\min}$ is a minimal facet of
  $\Phi(M)$, then $M_{\min}$ contains no isolated fixed point of
  $\comp$. 
\end{corollary}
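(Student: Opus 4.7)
Proof plan: The plan is to argue by contradiction and restrict attention to the sheet $(M_{\min},\omega_{\min},\Phi_{\min})$ corresponding to the minimal facet, where the hypotheses force the Duistermaat-Heckman function to be constant. Standard results from Sections \ref{sec:duist-heckm-meas} and \ref{sec:compl-pres-comp} will then rule out isolated fixed points.

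Suppose, for contradiction, that $p \in M_{\min}$ is an isolated fixed point of $\comp$. First I would verify that $(M_{\min},\omega_{\min},\Phi_{\min})$ satisfies the hypotheses needed to apply the no-isolated-fixed-point criterion. By Proposition \ref{prop:tall=complexity_preserving}, $\comp$ is complexity preserving, so by Remark \ref{rmk:complexity_preserving_restriction}, so is $(M_{\min},\omega_{\min},\Phi_{\min})$. A short dimension count using Corollary \ref{cor:comp_preserving} gives $\dim M_{\min}=2n-2$ and $\dim (T/H_{\mathcal{F}_{\min}})=d-1=n-2$, so the sheet is a compact complexity one (hence positive complexity) complexity preserving Hamiltonian $T/H_{\mathcal{F}_{\min}}$-space.

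Next I would transfer the constancy of $DH\comp|_{\mathcal{F}_{\min}}$ to the sheet: by Corollary \ref{cor:DH_boundary_comp_pres} we have $DH(M_{\min},\omega_{\min},\Phi_{\min})=DH\comp|_{\mathcal{F}_{\min}}$, and the right-hand side is constant by the definition of a minimal facet (Proposition \ref{prop:minimum_facet} and Definition \ref{defn:minimal_facet}). Lemma \ref{lemma:constant} then implies that $\Phi_{\min}$ has no singular values in the interior of $\mathcal{F}_{\min}$, so Lemma \ref{lemma:comp_pres_no_iso} applies and gives that the $T/H_{\mathcal{F}_{\min}}$-action on $M_{\min}$ has no isolated fixed points.

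The final step is to observe that this contradicts our assumption: since $H_{\mathcal{F}_{\min}}$ acts trivially on $M_{\min}$, the set of $T$-fixed points in $M_{\min}$ coincides with the set of $T/H_{\mathcal{F}_{\min}}$-fixed points in $M_{\min}$, and a point that is isolated in $M^T$ remains isolated in this (smaller) set. No step in this plan looks like a serious obstacle: the proof is essentially a clean assembly of Corollary \ref{cor:DH_boundary_comp_pres}, Lemma \ref{lemma:constant}, and Lemma \ref{lemma:comp_pres_no_iso} applied to the right sheet, and the only point that needs a moment's care is checking that the complexity of $(M_{\min},\omega_{\min},\Phi_{\min})$ is positive, which is immediate from the complexity-preserving property.
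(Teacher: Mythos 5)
Your proposal is correct and follows essentially the same route as the paper's proof: restrict to the sheet $(M_{\min},\omega_{\min},\Phi_{\min})$, which is complexity preserving (tall) by Proposition \ref{prop:tall=complexity_preserving} and Remark \ref{rmk:complexity_preserving_restriction}, note its Duistermaat-Heckman function is constant via Corollary \ref{cor:DH_boundary_comp_pres} and Proposition \ref{prop:minimum_facet}, then apply Lemma \ref{lemma:constant} and Lemma \ref{lemma:comp_pres_no_iso} and transfer the conclusion from the $T/H_{\mathcal{F}_{\min}}$-action back to the $T$-action. The only difference is cosmetic: you phrase it as a contradiction, while the paper argues directly.
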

\begin{proof}
  By Remark \ref{rmk:complexity_preserving_restriction} and
  Proposition \ref{prop:tall=complexity_preserving},
  $(M_{\mathrm{min}},\omega_{\mathrm{min}},\Phi_{\mathrm{min}})$ is a
  compact tall complexity one
  $T/H_{\mathcal{F}_{\min}}$-space. By Corollary
  \ref{cor:DH_boundary_comp_pres} and Proposition
  \ref{prop:minimum_facet},
  $DH(M_{\mathrm{min}},\omega_{\mathrm{min}},\Phi_{\mathrm{min}})$ is
  constant. Hence, by Lemma \ref{lemma:constant}, there are no
  singular values in the (relative) interior of $\Phi_{\mathrm{min}}(M_{\min})$. Thus, by
  Lemma \ref{lemma:comp_pres_no_iso},
  $(M_{\mathrm{min}},\omega_{\mathrm{min}},\Phi_{\mathrm{min}})$ has
  no isolated fixed points for the
  $T/H_{\mathcal{F}_{\min}}$-action and, hence, for the $T$-action.
\end{proof}

To conclude this section, we prove that certain self-intersections of
the pre-image of vertices in a minimal facet are independent of the
vertices. To this end, we say that an edge $e$ of a polytope 
$\Delta$ {\bf comes out of a facet} $\mathcal{F}$ if it is not
contained in $\mathcal{F}$ but it is incident to
a vertex of $\Delta$ contained in $\mathcal{F}$.

\begin{lemma}\label{lemma:s_invariant}
  Let $\comp$ be a compact monotone tall complexity one
  $T$-space and let $\mathcal{F}_{\min}$ be a minimal facet of
  $\Phi(M)$. There exists $s \in \Z$ such that, for any vertex $v \in
  \mathcal{F}_{\min}$, the self-intersection of $\Phi^{-1}(v)$ in
  $\Phi^{-1}(e)$ equals $s$, where $e$ is the edge of $\Phi(M)$ that
  comes out of $\mathcal{F}_{\min}$ and is incident to $v$.
\end{lemma}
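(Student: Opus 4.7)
The plan is to reduce to the normalized monotone case and then compute $s_v$ explicitly via a Chern class calculation on $\Sigma_v := \Phi^{-1}(v)$, obtaining a closed-form expression that depends only on the constant value of $DH\comp$ on $\mathcal{F}_{\min}$ and on the (fixed) topology of $\Sigma_v$. The guiding idea is that monotonicity turns a symplectic area into a topological invariant, while Proposition \ref{prop:minimum_facet} forces all but one of the normal Chern numbers of $\Sigma_v$ to vanish; what remains is $s_v$ itself.

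First, by Corollary \ref{cor:mono_normalized}, I would rescale $\omega$ and translate $\Phi$ to reduce to the case where $\comp$ is normalized monotone; since the $s_v$ are topological intersection numbers, they are unaffected, so I may assume $c_1(M) = [\omega]$. Fix a vertex $v$ of $\mathcal{F}_{\min}$. By the very definition of a minimal facet and by Lemma \ref{lemma:chern}, $\Sigma_v$ is diffeomorphic to $S^2$. By Proposition \ref{prop:minimum_facet}, I would order the non-zero isotropy weights $\alpha_1^v,\ldots,\alpha_{n-1}^v$ of $\Sigma_v$ so that $\alpha_1^v,\ldots,\alpha_{n-2}^v$ span the cone of $\mathcal{F}_{\min}$ at $v$ and satisfy $c_1(L_j^v)[\Sigma_v]=0$ for $j=1,\ldots,n-2$, where $L_j^v$ denotes the $T$-invariant line subbundle of the normal bundle of $\Sigma_v$ in $M$ with weight $\alpha_j^v$; the remaining weight $\alpha_{n-1}^v$ spans the edge $e$ out of $\mathcal{F}_{\min}$ at $v$, and the self-intersection in question is exactly $s_v = c_1(L_{n-1}^v)[\Sigma_v]$.

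The key step is to pair the identity $c_1(M) = [\omega]$ with $[\Sigma_v]$ using the splitting $TM|_{\Sigma_v} = T\Sigma_v \oplus L_1^v \oplus \cdots \oplus L_{n-1}^v$. Together with $\chi(\Sigma_v) = 2$, this yields
\begin{equation*}
\int_{\Sigma_v}\omega \;=\; c_1(M)[\Sigma_v] \;=\; c_1(T\Sigma_v)[\Sigma_v] + \sum_{i=1}^{n-1}c_1(L_i^v)[\Sigma_v] \;=\; 2 + s_v.
\end{equation*}
Applying Lemma \ref{Lemma:DHnearfixedSurface} with all $t_i = 0$ gives $\int_{\Sigma_v}\omega = DH\comp(v)$. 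Since $DH\comp$ takes the constant minimum value, say $m$, on $\mathcal{F}_{\min}$, the displayed equation yields $s_v = m - 2$ for \emph{every} vertex $v$ of $\mathcal{F}_{\min}$, so the integer $s := m - 2$ does the job.

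I do not expect any serious obstacle. The main point to check carefully is that the vanishings $c_1(L_j^v)[\Sigma_v]=0$ hold uniformly at every vertex of $\mathcal{F}_{\min}$ (and not merely at a single minimizer of $DH\comp$), which is precisely the content of the last sentence of Proposition \ref{prop:minimum_facet}; beyond this, the argument consists of a direct splitting of $TM|_{\Sigma_v}$ and an invocation of the already-established constancy of $DH\comp$ on the minimal facet.
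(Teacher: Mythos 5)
Your argument is correct and follows essentially the same route as the paper: constancy of $DH$ on $\mathcal{F}_{\min}$, Lemma \ref{Lemma:DHnearfixedSurface} evaluated at the vertex, the vanishing (at \emph{every} vertex of $\mathcal{F}_{\min}$, as guaranteed by Proposition \ref{prop:minimum_facet}) of the normal Chern numbers along the edges inside the facet, and monotonicity to convert the symplectic area of the fixed sphere into a first Chern number, leaving only the self-intersection along the edge coming out of $\mathcal{F}_{\min}$. The only cosmetic difference is that you normalize via Corollary \ref{cor:mono_normalized} and extract the explicit value $s = m-2$, whereas the paper's proof skips normalization and simply compares two vertices of $\mathcal{F}_{\min}$; both versions rest on exactly the same ingredients.
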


\begin{proof}
  Let $v_1,v_2 \in \mathcal{F}_{\min}$ be vertices and let $e_1,e_2$
  be the edges that come out of $\mathcal{F}_{\min}$ that are incident
  to $v_1$ and to $v_2$ respectively. By Proposition
  \ref{prop:minimum_facet}, $DH\comp(v_1) = DH \comp(v_2)$. Let
  $\Sigma_i:= \Phi^{-1}(v_i)$ for $i=1,2$. Thus, by Lemma
  \ref{Lemma:DHnearfixedSurface}, $[\omega](\Sigma_1) =
  [\omega](\Sigma_2)$. Since $(M,\omega)$ is monotone, it is positive
  monotone by Proposition \ref{assumptions moment map c1}. Hence, $c_1(\Sigma_1)
  = c_1(\Sigma_2)$. Moreover, by Proposition \ref{prop:minimum_facet},
  the only self-intersection of $\Sigma_i$ that can possibly be
  different from zero is that in $\Phi^{-1}(e_i)$, for $i=1,2$. Hence,
  since $\Sigma_1 \simeq \Sigma_2$, the result follows.
\end{proof}

\subsection{A characterization of isolated fixed points}\label{sec:the-image-of}

Henceforth, we assume that $\comp$ is a normalized monotone tall complexity one $T$-space unless otherwise stated. Moreover, we fix a
minimal facet $\mathcal{F}_{\min}$ of $\Phi(M)$ (which exists by
Proposition \ref{prop:minimum_facet}). By Proposition
\ref{prop:mom_map_image}, $\Phi(M)$ is a reflexive Delzant
polytope. Hence, by Definition \ref{defn:reflexive}, there exists a unique
primitive $\nu_{\min} \in \ell$ such that
\begin{equation}
  \label{eq:10}
  \begin{split}
    \mathcal{F}_{\min} & \subset \{w \in \mathfrak{t}^* \mid \langle w,\nu_{\min}
    \rangle = -1\}, \\
    \Phi(M) & \subset \{w \in \mathfrak{t}^* \mid \langle w,\nu_{\min}
    \rangle \geq -1 \}.
  \end{split}
\end{equation}
Finally, if $v \in \mathcal{F}_{\min}$ is any vertex and $\alpha_1,\ldots,\alpha_{n-1}$
are the non-zero isotropy weights of $\Phi^{-1}(v)$, then, by
Proposition \ref{prop:minimum_facet}, we may assume that the weights are
ordered so that
\begin{equation}
  \label{eq:59}
  \mathcal{F}_{\min} \subset v + \R_{\geq 0} \langle
  \alpha_1,\ldots, \alpha_{n-2} \rangle. 
\end{equation}
In particular, by Lemma
\ref{lemma:weight_edge_out_facet},
\begin{equation}
  \label{eq:16}
  \langle \alpha_{n-1}, \nu_{\min} \rangle = 1.
\end{equation}

\begin{proposition}\label{prop:iso_fixed_pts_monotone}
  Let $\comp$ be a normalized monotone tall complexity one $T$-space of dimension $2n$. If $p \in M^T$ is
  isolated, then there exists an edge $e$ of $\Phi(M)$
  that comes out of $\mathcal{F}_{\min}$ such that 
  $\Phi(p)$ is (the only element) in the
  intersection of $e$ with the linear hyperplane $ \{w \in \mathfrak{t}^* \mid \langle w,\nu_{\min}
  \rangle = 0 \}$.

  Moreover, if $v \in \mathcal{F}_{\min}$ is the vertex
  that is incident to $e$ and if $\alpha_1,\ldots,\alpha_{n-1}$
  are the non-zero isotropy weights of $\Phi^{-1}(v)$
  ordered so that \eqref{eq:59} holds, 
  then the isotropy weights of $p$ are
  \begin{equation}
    \label{eq:32}
    \alpha_1,\ldots,\alpha_{n-2}, \alpha_{n-1},-\alpha_{n-1}.
  \end{equation}
\end{proposition}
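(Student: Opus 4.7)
The plan is to combine a descent argument along an $\mathcal{F}_{\min}$-downward pointing isotropy weight of $p$ with the reflexive Delzant structure of $\Phi(M)$ and the weight sum formula.

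First, I establish easy bounds. Since $p$ is isolated it is exceptional by Lemma \ref{lemma:fixed_point_exceptional}; Corollary \ref{cor:monotone_image_isolated} places $\Phi(p)$ in $\ell^*$, and Corollary \ref{cor:min_facet_no_iso} yields $\Phi(p) \notin \mathcal{F}_{\min}$, so integrality forces $\langle \Phi(p), \nu_{\min}\rangle \geq 0$. Moreover, since $\comp$ is complexity preserving (Proposition \ref{prop:tall=complexity_preserving}), $\Phi(p)$ cannot be a vertex of $\Phi(M)$.

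Next, I apply Lemma \ref{lemma:special_weight} to obtain an isotropy weight $\beta$ of $p$ with $\langle \beta, \nu_{\min}\rangle \leq -1$, and invoke Corollary \ref{cor:downward_pointing_sheet} to produce a $T$-fixed point $q_\beta$ in the sheet $N_\beta$ along $\beta$ (supplied by Lemma \ref{lemma:sheet_weight}), with $\Phi(q_\beta) \in \Phi(p) + \R_{>0}\langle \beta\rangle$, $\langle \Phi(q_\beta),\nu_{\min}\rangle < \langle \Phi(p),\nu_{\min}\rangle$, and $-\beta$ an isotropy weight of $q_\beta$. The sheet has dimension at most $2(k+1)=4$ and its moment image is a segment on the line $\Phi(p) + \R\beta$. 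The structural core of the argument is to show that $\Phi(q_\beta) \in \mathcal{F}_{\min}$ (so the descent terminates in one step), via a case analysis on $\dim N_\beta$: when $\dim N_\beta = 2$ the sheet is a $T/H_\beta \simeq S^1$-invariant two-sphere with $p$ and $q_\beta$ as its poles, while when $\dim N_\beta = 4$, $(N_\beta, \omega_\beta, \Phi_\beta)$ is a four-dimensional complexity-one Hamiltonian $S^1$-space whose structure is constrained by the inherited monotonicity and by the constant-minimum behavior of $DH\comp$ on $\mathcal{F}_{\min}$ (Proposition \ref{prop:minimum_facet} and Corollary \ref{cor:DH_boundary_comp_pres}). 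In both cases $\Phi(q_\beta)$ lands on $\mathcal{F}_{\min}$; Corollary \ref{cor:min_facet_no_iso} then places $q_\beta$ on a fixed surface $\Phi^{-1}(v)$ for a vertex $v \in \mathcal{F}_{\min}$. The segment $\Phi(N_\beta)$ thus sits on the edge $e$ of $\Phi(M)$ through $v$ in the outward direction $\alpha_{n-1}$, characterized by Lemma \ref{lemma:weight_edge_out_facet}, and integrality of $\Phi(p)$ with $\langle \Phi(p),\nu_{\min}\rangle \geq 0$ pins it to $\Phi(p) = v + \alpha_{n-1}$, giving the first part of the statement.

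Finally, for the isotropy weights of $p$, the sheet $N_\beta$ must be four-dimensional (otherwise $p$ would correspond to a pole of a two-sphere sheet and the integrality constraints together with the descent could not match up with the edge $e$), and hence $p$ has exactly two weights in $\R\beta$, namely $\beta = -\alpha_{n-1}$ and $-\beta = \alpha_{n-1}$. The remaining $n-2$ weights are determined by combining the weight sum formula $\Phi(p) = -\sum_i \alpha_i^p$ of Proposition \ref{prop:weight_sum} at $p$ with the analogous formula $v = -\sum_{i=1}^{n-1}\alpha_i$ at $v$ and $\Phi(p) = v+\alpha_{n-1}$, which yield $\sum_{i=1}^{n-2}\alpha_i^p = \sum_{i=1}^{n-2}\alpha_i$; matching this against the local moment polytope at $\Phi(p)$ (which translates that at $v$ by $\alpha_{n-1}$) identifies the weights after relabeling. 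The main obstacle is the structural claim that the descent lands on $\mathcal{F}_{\min}$ in a single step, which requires carefully exploiting the low-dimensional geometry of $N_\beta$ together with the global monotonicity of $\comp$.
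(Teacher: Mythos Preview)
Your outline follows the paper's basic descent idea, but there is a genuine gap at the step you yourself flag as ``the main obstacle'': the claim that the descent along $\beta$ lands on $\mathcal{F}_{\min}$ in a single step. You assert that ``in both cases $\Phi(q_\beta)$ lands on $\mathcal{F}_{\min}$'' by appealing to ``inherited monotonicity'' of the sheet $N_\beta$ and the constant-minimum behavior of $DH\comp$, but neither ingredient delivers this. A sheet is a symplectic submanifold, and $c_1(TN_\beta)=c_1(TM)|_{N_\beta}-c_1(\nu_{N_\beta})$, so $(N_\beta,\omega_\beta)$ is not monotone in general; and nothing in Proposition~\ref{prop:minimum_facet} prevents $q_\beta$ from being another isolated fixed point at some intermediate height $0\le\langle\Phi(q_\beta),\nu_{\min}\rangle<\langle\Phi(p),\nu_{\min}\rangle$. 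Without ruling that out, you cannot conclude $\Phi(q_\beta)$ is a vertex, let alone a vertex of $\mathcal{F}_{\min}$.

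The paper closes this gap by a minimality argument rather than a direct one-step descent. It first treats the case where $\langle\Phi(p),\nu_{\min}\rangle$ is minimal among \emph{isolated} fixed points: then $q_\beta$ is forced onto a fixed surface, hence $\Phi(q_\beta)$ is a vertex, the Delzant structure there gives $\dim N_\beta=4$ and primitivity of $\beta$, and the weight sum formula at $p$ yields the decisive inequality
\[
0\le\langle\Phi(p),\nu_{\min}\rangle=-\sum_{j=1}^{n-2}\langle\beta_j,\nu_{\min}\rangle+(\lambda-1)\langle\beta_n,\nu_{\min}\rangle\le 0,
\]
forcing $\langle\Phi(p),\nu_{\min}\rangle=0$, $\lambda=1$, and $\langle\beta_j,\nu_{\min}\rangle=0$ for $j\le n-2$. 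Only \emph{then} does one get $\langle\Phi(q_\beta),\nu_{\min}\rangle=-1$, i.e.\ $\Phi(q_\beta)\in\mathcal{F}_{\min}$. A second pass (Case~2) shows that no isolated fixed point can have $\langle\Phi(p),\nu_{\min}\rangle>0$. Your sketch also leaves the weight identification incomplete: the equality $\sum\alpha_i^p=\sum\alpha_i$ does not determine the individual $\alpha_i^p$, and the ``local moment polytope at $\Phi(p)$'' does not simply translate from $v$. The paper instead matches normal-bundle weights along $N_\beta$ (which agree modulo $\Z\langle\alpha_{n-1}\rangle$) and then uses $\langle\beta_j,\nu_{\min}\rangle=0=\langle\alpha_j,\nu_{\min}\rangle$ to kill the ambiguity.
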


\begin{figure}[htbp]
	\begin{center}
		\includegraphics[width=5.5cm]{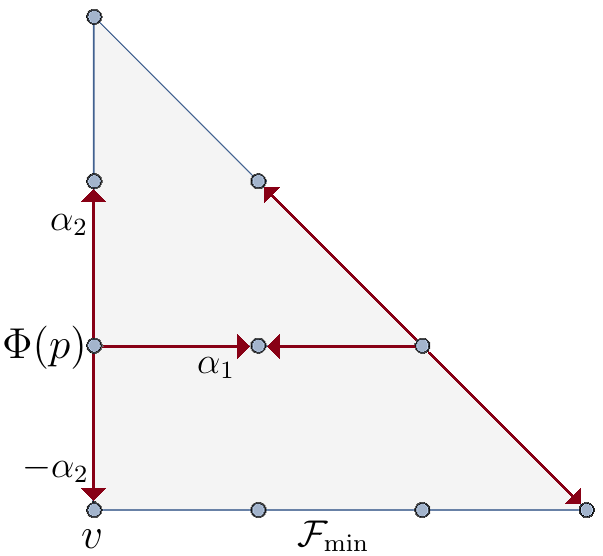}
		\caption{A schematic illustration of Proposition
                  \ref{prop:iso_fixed_pts_monotone}. The non-zero isotropy weights of
                  $\Phi^{-1}(v)$ are precisely $\alpha_1$ and
                  $\alpha_2$.}
		\label{isolatedfixedpoints}
	\end{center}
\end{figure}

\begin{proof}
  Since $\comp$ is normalized monotone, by Corollary
  \ref{cor:monotone_image_isolated}, $\Phi(p) \in
  \ell^*$. Hence, since $\nu_{\min} \in \ell$, $\langle \Phi(p), \nu_{\min}
  \rangle \in \Z$. Moreover, by Corollary \ref{cor:min_facet_no_iso},
  $\Phi(p) \notin \mathcal{F}_{\min}$; hence, by \eqref{eq:10}, $\langle \Phi(p), \nu_{\min}
  \rangle$ is a non-negative integer. Let $\beta_1,\ldots, \beta_n \in \ell^*$ be the isotropy weights of
  $p$. By Lemma
  \ref{lemma:special_weight}, there exists an isotropy weight $\beta$
  of $p$ such that $\langle  \beta, \nu_{\min} \rangle < 0$. Without
  loss of generality, we may assume that $\beta_{n} = \beta$. Let
  $(N,\omega_N,\Phi_N)$ be the sheet along $\beta_n$ given by Lemma
  \ref{lemma:sheet_weight}, let 
  $$H = \exp\left(\{ \xi \in \g \mid \langle \beta_n,\xi \rangle \in
    \Z\}\right)$$
  be its stabilizer, and let $q \in M^T \cap N$ be a fixed
  point satisfying the conclusions of Corollary
  \ref{cor:downward_pointing_sheet}, i.e.,
  \begin{itemize}[leftmargin=*]
  \item $\Phi(q) = \Phi_N(q)$ is a global extremum of
    $\Phi_N$,
  \item $-\beta_n$ is an isotropy weight of $q$, and $\alpha_1$ $\alpha_2$  $-\alpha_2$  $\mathcal{F}_{\min}$
  \end{itemize}
  \begin{equation}
    \label{eq:60}
    \langle \Phi(q), \nu_{\min}\rangle <
    \langle \Phi(p),\nu_{\min} \rangle.
  \end{equation}
  We split the proof in two cases: first, we assume that $\langle \Phi(p), \nu_{\min}
  \rangle$ is minimal among isolated fixed points and, second, we
  deduce the general case from this special one. \\

  {\bf Case 1:} We suppose that $\langle \Phi(p), \nu_{\min}
  \rangle$ is minimal among isolated fixed points. By
  \eqref{eq:60}, the fixed point $q$ is not isolated. Hence, it lies
  on a fixed surface. By Proposition \ref{prop:tall_vertex}, $v$ is a vertex of $\Phi(M)$. Let
  $\alpha_1,\ldots, \alpha_{n-1}$ be the non-zero isotropy weights of
  $\Phi^{-1}(\Phi(q))$ ordered so that $\alpha_{n-1} = - \beta_n$. By
  Proposition \ref{prop:tall_delzant}, $\alpha_1,\ldots,
  \alpha_{n-1}$ are a basis of $\ell^*$. Hence, $\beta_n$ is a
  primitive element in $\ell^*$. Moreover, since $H$ must be one of the stabilizers of dimension $n-2$ for points sufficiently
  close to $q$ in $M$, it follows that $\dim N = 4$ and that $\Phi(p)
  + \R \langle \beta_n\rangle$ contains an edge of
  $\Phi(M)$. Hence, since $\comp$ is tall, $\Phi(p)$
  lies in the (relative) interior of this edge. 

  Hence, by Corollary \ref{Cor:ConeFace}, there exists precisely one
  $i=1,\ldots, n-1$ such that $\beta_i$ is a multiple of $\beta_n$;
  without loss of generality, we assume that $i=n-1$. By  Remark \ref{rmk local normal form}, the
  $\Z$-span of $\beta_1,\ldots, \beta_{n-2},\beta_n$ equals
  $\ell^*$. Hence, by a dimension count, $\beta_1,\ldots,
  \beta_{n-2},\beta_n$ are linearly independent. We
  claim that $\langle \beta_j, \nu_{\min} \rangle \geq 0$
  for all $j =
  1,\ldots, n-2$. Suppose, on the contrary, that there exists an index
  $j=1,\ldots, n-2$ such that $\langle \beta_j, \nu_{\min}
  \rangle <
  0$. By Corollary \ref{cor:downward_pointing_sheet} applied to $\beta_j$ and
  by the above argument, $\Phi(p)$
  must lie in the (relative) interior of an edge of $\Phi(M)$ that is
  contained in $\Phi(p)
  + \R \langle \beta_j\rangle$. (Observe that this uses the fact that $\langle \Phi(p), \nu_{\min}
  \rangle$ is minimal among all isolated fixed points.) Since any point in a convex polytope
  is contained in the (relative) interior of at most one edge, it
  follows that $\R \langle \beta_j\rangle = \R \langle
  \beta_n\rangle$, which contradicts the linear independence of $\beta_1,\ldots,
  \beta_{n-2},\beta_n$.

  Since $\beta_{n-1}$ is a multiple of $\beta_n$, since $\beta_n$
  is primitive, and since $\Phi(p)$ lies in the (relative) interior of
  an edge of $\Phi(M)$, there exists a  positive integer $\lambda$
  such that $\beta_{n-1} = -\lambda \beta_n$. Since $\comp$ is normalized, the weight
  sum formula at $p$ 
  \begin{equation}
    \label{eq:34}
    \Phi(p) = -\sum\limits_{j=1}^{n} \beta_j
  \end{equation}
  \noindent
  implies that
  \begin{equation}
    \label{eq:35}
    0 \leq \langle \Phi(p), \nu_{\min} \rangle = \underbrace{- \sum\limits_{j=1}^{n-2} \langle \beta_j ,
      \nu_{\min} \rangle}_{\leq 0} + \underbrace{(\lambda - 1)}_{\geq 0}
    \underbrace{\langle \beta_n, \nu_{\min} \rangle}_{< 0} \leq 0.
  \end{equation}
  \noindent
  % \todo{23/5/24: there should be no $v$ above this line!}
  Therefore $\langle \Phi(p), \nu_{\min} \rangle= 0$, $\lambda = 1$ and $\langle \beta_j,
  \nu_{\min} \rangle = 0$ for all $j=1,\ldots, n-2$. Since $\langle \Phi(q),
  \nu_{\min} \rangle < \langle \Phi(p), \nu_{\min} \rangle$ by \eqref{eq:60}, since
  $\langle \Phi(q), \nu_{\min} \rangle \in \Z$ by Corollary \ref{cor:monotone_image_isolated}, and
  by \eqref{eq:10}, we have that $\langle \Phi(q),
  \nu_{\min} \rangle = -1$, i.e., $v:=\Phi(q)$ is a vertex of
  $\mathcal{F}_{\min}$. Moreover, since $\alpha_{n-1} = -\beta_n$ and
  since $\langle \beta_n, \nu_{\min} \rangle < 0$, the
  edge incident to $v$ contained in $\Phi(p) + \R \langle \beta_n
  \rangle$ comes out of $\mathcal{F}_{\min}$. Since $\beta_{n-1} = -\lambda \beta_n$, then
  $\beta_{n-1} = \alpha_{n-1}$. 
  We observe that the set (!)
  $\{\beta_1,\ldots,\beta_{n-2}\}$ is precisely the multiset of
  isotropy weights for the $T$-action on the normal bundle to the
  $T$-invariant submanifold $N$ at the point $p$. Hence, this set
  equals $\{\alpha_1,\ldots,\alpha_{n-2}\}$ modulo $\Z \langle \beta_n
  \rangle = \Z \langle \alpha_{n-1} \rangle$, since the latter is the
  multiset of isotropy weights for the $T$-action on the normal bundle
  to the
  $T$-invariant submanifold $N$ at another point. 
  The affine hyperplane $v + \R \langle \alpha_1,\ldots,\alpha_{n-2}\rangle$ contains
  $\mathcal{F}_{\min}$ by \eqref{eq:59}. Hence, $\langle \alpha_j,
  \nu_{\min} \rangle = 0$ for all $j =1,\ldots, n-2$. Since $\langle \alpha_{n-1},
  \nu_{\min}\rangle > 0$ and $\langle \beta_j,
  \nu_{\min} \rangle = 0$ for all $j=1,\ldots, n-2$,
  then $\{\beta_1,\ldots,\beta_{n-2}\} =
  \{\alpha_1,\ldots,\alpha_{n-2}\}$. This completes the proof of the
  result under the hypothesis that $\langle \Phi(p), \nu_{\min}
  \rangle$ is minimal among all isolated fixed points. \\

  {\bf Case 2:} To conclude the proof, it suffices to show that, if $p \in M^T$ is isolated,
  then $\langle \Phi(p), \nu_{\min}
  \rangle$ is minimal. Suppose not; then, by the above argument and
  since $\langle \Phi(p), \nu_{\min}
  \rangle \in \Z_{\geq 0}$, there exists $p' \in M^T$ such
  that $\langle \Phi(p'), \nu_{\min}
  \rangle >0$ and minimal among fixed points with positive pairing. Let $\beta_1',\ldots, \beta_n' \in \ell^*$ be the
  isotropy weights of $p'$. By Lemma
  \ref{lemma:special_weight}, we may assume that $\langle \beta_n',
  \nu_{\min} \rangle < 0$. We consider the sheet
  $(N',\omega_{N'},\Phi_{N'})$ along $\beta_n'$ given by Lemma
  \ref{lemma:sheet_weight} and the point $q' \in M^T \cap N'$
  satisfying the conclusions of Corollary
  \ref{cor:downward_pointing_sheet}. In analogy with \eqref{eq:60}, $\langle \Phi(q'), \nu_{\min} \rangle <
  \langle \Phi(p'), \nu_{\min} \rangle$. Hence, by minimality, $q'$ is either
  isolated and satisfies $\langle \Phi(q'), \nu_{\min} \rangle = 0$, or
   is not isolated. In either
  case, by the arguments used in the special case above, $\beta'_n \in
  \ell^*$ is primitive, $\dim N' = 4$ and $\Phi(p')$ lies
  in the (relative) interior of an edge of $\Phi(M)$. In particular, there is precisely one other
  isotropy weight of $p'$ that is collinear with $\beta'_n$, say
  $\beta'_{n-1}$. Moreover, as above, $\langle \beta'_j, \nu_{\min} \rangle \geq 0$ for all $j =
  1,\ldots, n-2$. Set $\beta'_{n-1} = - \lambda'
  \beta_n$ for some positive integer $\lambda'$\color{black}; since
  $\beta'_n$ is primitive, $\lambda'\geq 1$. We use again the weight sum formula \eqref{eq:34}
  and, in analogy with \eqref{eq:35}, we obtain the following absurd string
  of inequalities
  $$ 0 < \langle \Phi(p'), \nu_{\min} \rangle = \underbrace{- \sum\limits_{j=1}^{n-2} \langle \beta'_j ,
      \nu_{\min} \rangle}_{\leq 0} + \underbrace{(\lambda' - 1)}_{\geq 0}
    \underbrace{\langle \beta'_n, \nu_{\min} \rangle}_{< 0} \leq 0.$$
\end{proof}

\subsection{The space of exceptional orbits and the
  painting}\label{sec:intern-walls-except}
Motivated by Proposition \ref{prop:iso_fixed_pts_monotone}, our first
aim is to prove properties of an isolated fixed point 
with isotropy weights $\alpha_1,\ldots,
\alpha_{n-2},\alpha_{n-1},-\alpha_{n-1}$ in a complexity one
$T$-space (see Lemma \ref{lemma:local_description_exceptional} below). By Remark \ref{rmk local
  normal form}, $\alpha_1,\ldots, \alpha_{n-1}$ form a basis of
$\ell^*$; therefore, by a dimension count, $\alpha_j$ is primitive for all
$j=1,\ldots, n-1$. We define the following subgroups of $T$:
$$
H:=\exp\left( \{\xi \in \g \mid \langle \alpha_i , \xi \rangle = 0, \text{  for all  } i=1,\ldots, n-2\}\right),
$$
which is of dimension 1, and 
$$
T':=\exp\left( \{\xi \in \g \mid \langle \alpha_{n-1} , \xi \rangle = 0\}\right),
$$
which is of codimension 1. 
Observe that $T \simeq T' \times H$; moreover, we use the given inner
product to identify the duals $\mathfrak{h}^*, (\g')^*$ of the Lie
algebras of $H$ and $T$ with $\R \langle \alpha_{n-1}\rangle$ and $\R \langle \alpha_1,\ldots, \alpha_{n-2}
\rangle$ respectively. 

We start by looking at the local model determined by the above
isotropy weights (see Section \ref{sec:exceptional-orbits}). We consider the
following $T$-action on $\C^n$
\begin{equation}\label{eq:55bis}
  \begin{split}
    \exp(\xi)\cdot & (z_1,\ldots,z_{n-2},z_{n-1},z_n)= \\
    & (e^{ 2\pi i \langle \alpha_1 , \xi
      \rangle}z_1,\ldots,
    e^{ 2\pi i \langle \alpha_{n-2} , \xi \rangle}z_{n-2},e^{ 2\pi i \langle \alpha_{n-1} , \xi \rangle} z_{n-1},
    e^{ 2\pi i \langle -\alpha_{n-1} , \xi \rangle} z_{n} ) \text{ for
      } \xi \in \g,
  \end{split}
\end{equation}
with moment map $\Phi_0: \C^n \to \g^*$ given by
\begin{equation}
  \label{eq:41}
  \Phi_0(z_1,\ldots, z_n) = \pi \left(\sum\limits_{j=1}^{n-2}
    \alpha_j |z_j|^2 + \alpha_{n-1}(|z_{n-1}|^2-|z_n|^2) \right).
\end{equation} 
From \eqref{eq:55bis} it is clear that $0 \in \C^n$ is a fixed
point, that the circle $H$ acts trivially on 
$\C^{n-2} =
\C \langle z_1,\ldots, z_{n-2}\rangle $, and that the $(n-2)$-dimensional
torus $T'$ acts trivially on $\C^2=\C \langle z_{n-1}, z_n\rangle$. Therefore
the linear $T$-action on
$\C^n$ of \eqref{eq:55bis} splits as the product of a toric $T'$-action on $\C^{n-2} =
\C \langle z_1,\ldots, z_{n-2}\rangle $, and a complexity one $H$-action on
$\C^2 = \C \langle z_{n-1}, z_n\rangle$. Moreover,
\begin{enumerate}[label=(\arabic*),ref=(\arabic*), leftmargin=*]
\item \label{item:19}
  the stabilizer in $T$ of a point $q := (z_1,\ldots,z_n) \in \C^n$ is the
  product of the stabilizer  in $T'$ of $q_1:= (z_1,\ldots, z_{n-2}) \in \C^{n-2}$
  and of the stabilizer  in $H$ of $q_2:=(z_{n-1},z_n) \in \C^2$,
\item \label{item:25} the symplectic slice representation of
  $q \in \C^n$ for the action of $T$ splits as the product of the
  symplectic slice representations of $q_1\in \C^{n-2}$ for the action of $T'$
  and of $q_2 \in \C^2$ for the action of $H$, and 
\item \label{item:18} a point $q\in \C^n$ is exceptional with
  respect to the action of $T$ if and only if at least
  one of $q_1 \in \C^{n-2}$ and $q_2\in
  \C^2$ is exceptional with respect to the corresponding actions of $T'$ and $H$.
\end{enumerate}
We observe that property \ref{item:18} follows from properties
\ref{item:19} and \ref{item:25}. Hence, in order to understand properties of the product, we consider
each factor separately. This is the content of the following two
results.

\begin{lemma}\label{claim:toric}
Consider $\C^{n-2}$ with the above linear toric $T'$ action. Then
  every
  point in $\C^{n-2}$ is regular for the action of $T'$. Moreover, for each $q_1= (z_1,\ldots, z_{n-2}) \in \C^{n-2}$, 
  the subset $ J:=\{ j \in \{1,\ldots,n-2\} \mid z_j\neq 0\}$ is the unique subset 
  such that
  \begin{itemize}[leftmargin=*]
  \item the moment map image $\pi \sum\limits_{j=1}^{n-2}
    \alpha_j |z_j|^2 \in (\g')^*$ lies in $\R_{> 0} \langle \{\alpha_j
    \mid j \in J \} \rangle$ (if $J = \emptyset$, then $q_1 = 0$ and
    the moment map image equals zero),
  \item the stabilizer of $q_1$ is 
  %(see \eqref{eq:24}), 
  \begin{equation}\label{def KJ}
    K_J:= \exp \left( \{\xi \in \g'\mid \langle \alpha_j , \xi
        \rangle = 0 \text{ for all }j\in J\} \right),
  \end{equation}
    and
  \item the isotropy weights of $q_1$ are $\{\alpha_j
    \mid j \notin J\}\subset (\g')^*$, where we identify $\mathrm{Lie}(K_J)^*$ with $\R\langle \{\alpha_j\mid j\notin J \cup \{n-1\}\} \rangle\subseteq (\g')^*\subset \g^*$.
  \end{itemize}   
  Conversely, given any subset $J \subseteq
  \{1,\ldots,n-2\}$ and any $w \in \R_{> 0} \langle \{\alpha_j
  \mid j \in J \} \rangle$, there exists $q_1 = (z_1,\ldots, z_{n-2}) \in \C^{n-2}$ such
  that $\pi \sum\limits_{j=1}^{n-2}
  \alpha_j |z_j|^2 =w$, the stabilizer of $q_1$ is $K_J$, and  the isotropy weights of $q_1$ are $\{\alpha_j
  \mid j \notin J\}$.

  Finally, the subset of points with trivial stabilizer
  is path-connected and dense.  
\end{lemma}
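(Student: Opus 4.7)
The plan is to exploit the fact that the $T'$-action on $\C^{n-2}$ is the standard toric action, once we verify that $\alpha_1,\ldots,\alpha_{n-2}$ form a $\Z$-basis of $(\ell')^*$. First I would check this integrality. Under the splitting $T \simeq T' \times H$, the dual lattice decomposes as $\ell^* = (\ell')^* \oplus (\ell_H)^*$, where $\ell_H$ denotes the integral lattice of $H$. Since $\alpha_j$ pairs trivially with $\mathfrak{h}$ for $j\leq n-2$ and $\alpha_{n-1}$ pairs trivially with $\g'$, this matches the decomposition; moreover $\alpha_{n-1}$ is primitive and hence a generator of $(\ell_H)^*$. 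Because $\alpha_1,\ldots,\alpha_{n-1}$ is a $\Z$-basis of $\ell^*$ (recorded in the discussion just before the lemma), it follows that $\alpha_1,\ldots,\alpha_{n-2}$ form a $\Z$-basis of $(\ell')^*$.

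Given this, regularity of every point is immediate from Lemma \ref{lemma:regular_connected_stab}, since the $T'$-action is of complexity zero. For the stabilizer of $q_1=(z_1,\ldots,z_{n-2})$, I would observe that $\exp(\xi)\cdot q_1=q_1$ is equivalent to $\langle\alpha_j,\xi\rangle\in\Z$ for every $j\in J$, and then use the $\Z$-basis $\xi^1,\ldots,\xi^{n-2}$ of $\ell'$ dual to $\alpha_1,\ldots,\alpha_{n-2}$ to subtract a lattice element from $\xi$ and reduce to $\langle\alpha_j,\xi\rangle=0$ for $j\in J$, identifying the stabilizer with $K_J$. The claim about the moment map image is immediate from restricting \eqref{eq:41} to $\C^{n-2}$.

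For the isotropy weights, I would split $\C^{n-2}=\C^J\oplus \C^{J^c}$, where $\C^J=\{z_j=0\text{ for }j\notin J\}$. A direct computation with the standard symplectic form shows that the orbit $T'\cdot q_1$ is isotropic and contained in $\C^J$, while $K_J$ acts trivially on $\C^J$. Consequently the symplectic slice at $q_1$ is identified $K_J$-equivariantly with $\C^{J^c}$, and its weights are the restrictions of $\{\alpha_j:j\notin J\}$ to $\mathrm{Lie}(K_J)$, under the identification dual to the basis $\{\alpha_j:j\notin J\}$ of $\mathrm{Lie}(K_J)^*$. The converse is handled constructively: writing $w=\pi\sum_{j\in J} c_j\alpha_j$ with $c_j>0$ (uniquely, by linear independence of $\{\alpha_j:j\in J\}$), the point with $z_j=\sqrt{c_j}$ for $j\in J$ and $z_j=0$ otherwise realizes all required properties.

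Finally, a point has trivial stabilizer if and only if $K_J$ is trivial, which happens if and only if $J=\{1,\ldots,n-2\}$; the corresponding locus is $(\C^*)^{n-2}\subset\C^{n-2}$, which is open, dense and path-connected as a product of path-connected factors. The main (mild) obstacle is the initial verification that $\alpha_1,\ldots,\alpha_{n-2}$ form a $\Z$-basis, rather than merely an $\R$-basis, of $(\ell')^*$; once this integrality is in place, all remaining steps reduce to the well-known structure of the standard toric representation on $\C^{n-2}$.
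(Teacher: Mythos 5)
Your proof is correct and follows essentially the same route as the paper's: regularity from Lemma \ref{lemma:regular_connected_stab} (complexity zero), the stabilizer computation by reducing $\langle \alpha_j,\xi'\rangle \in \Z$ to $\langle \alpha_j,\xi'\rangle = 0$ via a lattice translation, identification of the symplectic slice with the coordinate subspace complementary to $J$, the explicit point construction for the converse, and the locus $(\C^*)^{n-2}$ for the trivial-stabilizer claim. Your preliminary verification that $\alpha_1,\ldots,\alpha_{n-2}$ form a $\Z$-basis of $(\ell')^*$ is precisely the justification the paper compresses into the phrase ``since each $\alpha_j$ is primitive,'' so it is a slightly more careful elaboration of the same step rather than a different argument.
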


\begin{proof}
  By Lemma \ref{lemma:regular_connected_stab}, every point in a complexity zero Hamiltonian space is regular.
  The linear toric $T'$-action on $\C^{n-2}$ is given explicitly
  by
  \begin{equation}
    \label{eq:55}
    \exp(\xi')\cdot (z_1,\ldots,z_{n-2})=(e^{ 2\pi i \langle \alpha_1 , \xi'
      \rangle}z_1,\ldots,
    e^{ 2\pi i \langle \alpha_{n-2} , \xi' \rangle}z_{n-2})\quad\text{for}\;\;\xi'\in \g'\, , 
  \end{equation}
  (cf. \eqref{action Cn}). By definition of $J$, the moment map
  image $\pi \sum\limits_{j=1}^{n-2}
  \alpha_j |z_j|^2 $ lies in $\R_{> 0} \langle \{\alpha_j
  \mid j \in J \} \rangle$.  Since $\alpha_1,\ldots,\alpha_{n-2}$ are linearly independent, $J$ is the only such subset of $\{1,\ldots,n-2\}$.
  This proves the first bullet point.
  
  Next we prove the second bullet point. Let $K$ be the stabilizer
  of $q_1$. By
  \eqref{eq:55}, if $\xi' \in \g'$ then $\exp(\xi') \in K$ if and only if $\langle
  \alpha_j,\xi'\rangle \in \Z$ for all $j \in J$. 
  However, since each $\alpha_j$ is primitive,
  $$K=\exp\left( \{ \xi'\in \g'\mid \langle \alpha_j, \xi'\rangle \in \Z \text{ for all }j\in J\} \right)=\exp\left( \{ \xi'\in \g'\mid \langle \alpha_j, \xi'\rangle =0\text{ for all }j\in J\} \right),$$
  which, by definition, is exactly $K_J$.
  
  We turn to the proof of the third bullet point. The symplectic slice
  representation of $q_1$ is the
  following representation of $K_J$: We set
  $$ \C^{J}: =\{(w_1,\ldots, w_{n-2}) \in \C^{n-2} \mid w_j = 0 \text{
    for all } j \in J\}. $$
  This is a $T'$-invariant complex subspace of $\C^{n-2}$ that can be
  identified symplectically with the symplectic normal to the
  $T'$-orbit of $q_1$, once the tangent space at $q_1$ is
  identified with $\C^{n-2}$. Under this identification, since the
  $T'$-action on $\C^{n-2}$ is linear, the $K_J$-action on $\C^{J}$ is
  given by the restriction of the $T'$-action to $K_J$. The
  isotropy weights of this $K_J$-action are given by the set
  $\{\alpha_j \mid j \notin J\}\subset (\g')^*$.
  
  Conversely, given a subset $J \subseteq \{1,\ldots,n-2\}$ and $w \in \R_{> 0} \langle \{\alpha_j
  \mid j \in J \} \rangle$,  there exist positive constants
  $\lambda_j$ for $j \in J$ such that $w = \sum\limits_{j \in J}
  \lambda_j \alpha_j$. The point $q_1 = (z_1,\ldots, z_{n-2}) \in
  \C^{n-2}$ with coordinates given by 
  \begin{equation*}
    z_j =
    \begin{cases}
      \pi^{-1}\sqrt{\lambda_j} & \text{if } j \in J \\
      0 & \text{if } j \notin J
    \end{cases}
  \end{equation*}
  is such that $\pi \sum\limits_{j=1}^{n-2}
  \alpha_j |z_j|^2 =w$, its stabilizer is $K_J$, and its isotropy weights are $\{\alpha_j 
  \mid j \notin J\}$.
  
  Finally, $q_1 = (z_1,\ldots, z_{n-2}) \in \C^{n-2}$ has trivial stabilizer
  if and only if $z_j \neq 0$ for all $j=1,\ldots, n-2$. The subset
  $$ \{(z_1,\ldots, z_{n-2}) \in \C^{n-2} \mid z_j \neq 0 \text{ for
    all } j =1,\ldots, n-2 \} $$
  is clearly path-connected and dense. 
\end{proof}

\begin{lemma}\label{claim:pm_one}
  Consider $\C^2$ with the above linear complexity one $H$-action. A point $ q_2 \in \C^2 $ is exceptional
  if and only if $q_2 = (0,0)$. Moreover, $q_2 = (0,0)$ is the only point
  stabilized by $H$. In this case, the isotropy weights of $q_2$ are
  $\{\pm \, \alpha_{n-1}\}$. 
\end{lemma}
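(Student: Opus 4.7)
The plan is to exploit the fact that, under the identification $H\simeq S^1$ induced by $\alpha_{n-1}|_{\mathfrak{h}}$, the given action on $\C^2$ is the standard diagonal $S^1$-action with weights $(+1,-1)$. First I would observe that, since $\alpha_1,\ldots,\alpha_{n-1}$ is a basis of $\ell^*$ dual to a basis of $\ell$ whose last vector generates $\ell_{\mathfrak{h}}$, the restriction $\alpha_{n-1}|_{\mathfrak{h}}$ is a generator of $\ell_{\mathfrak{h}}^*$. In particular, the map $\xi\mapsto e^{2\pi i \langle\alpha_{n-1},\xi\rangle}$ is an isomorphism $H\to S^1$, so the $H$-action on $\C^2\langle z_{n-1},z_n\rangle$ is literally $e^{i\theta}\cdot(z_{n-1},z_n)=(e^{i\theta}z_{n-1},e^{-i\theta}z_n)$.

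For the claim that $(0,0)$ is the only point with full stabilizer $H$: if $(z_{n-1},z_n)\in(\C^2)^H$, then $e^{i\theta}z_{n-1}=z_{n-1}$ and $e^{-i\theta}z_n=z_n$ for all $\theta$, which forces $z_{n-1}=z_n=0$. For the characterization of exceptional points, I would treat the two cases separately. At $q_2=(0,0)$ the orbit is a single point, so the symplectic slice is the full tangent space $\C^2$ with the above $H$-action; thus $(\C^2)^H=\{0\}$, which has complex dimension $0<1=k$, and hence $(0,0)$ is exceptional in the sense of Definition \ref{defn:regular_exceptional}. Conversely, if $q_2\neq(0,0)$, then by the previous paragraph its stabilizer is a proper closed subgroup of $S^1$, hence finite; in fact, since the action is the standard $(+1,-1)$-action, on any point with at least one nonzero coordinate the stabilizer is trivial. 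Then the first bullet of Lemma \ref{lemma:regular_connected_stab} shows $q_2$ is regular.

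Finally, the isotropy weights of $(0,0)$ are read off directly from the linear action on $T_{(0,0)}\C^2\cong\C^2$, giving exactly $\{\alpha_{n-1},-\alpha_{n-1}\}$.

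The only mildly delicate point is checking that $\alpha_{n-1}|_{\mathfrak{h}}$ is primitive in $\ell_{\mathfrak{h}}^*$, which follows because $\mathfrak{h}$ is chosen precisely as the annihilator of $\alpha_1,\ldots,\alpha_{n-2}$ and these together with $\alpha_{n-1}$ form a $\Z$-basis of $\ell^*$; everything else is a direct computation with the standard $S^1$-action on $\C^2$.
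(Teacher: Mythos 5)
Your proposal is correct and follows essentially the same route as the paper: fix the isomorphism $H \simeq S^1$ under which $\alpha_{n-1}$ corresponds to $+1$, identify the action with the standard weight-$(+1,-1)$ action on $\C^2$, and conclude by the direct computation together with Lemma \ref{lemma:regular_connected_stab}. You merely spell out the ``simple computation'' (stabilizers, the slice at the origin, and primitivity of $\alpha_{n-1}|_{\mathfrak{h}}$) that the paper leaves implicit.
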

\begin{proof}
  We fix an isomorphism  between $H$ and $S^1$ so that
  $\alpha_{n-1}$ corresponds to $+1$. Under this isomorphism, the above linear $H$-action on
  $\C^2$ can be identified with the linear $S^1$-action on $\C^2$
  with weights equal to $+1$ and $-1$. The result then follows from a
  simple computation 
  and from Lemma \ref{lemma:regular_connected_stab}.
\end{proof}

Theorem \ref{local normal form} and Lemmas \ref{claim:toric},
\ref{claim:pm_one} imply the following result that is central to this section.
    
\begin{lemma}\label{lemma:local_description_exceptional}
  Let $\comp$ be a complexity one $T$-space of dimension $2n$ and let $p \in M^T$
  be isolated with isotropy weights
  $\alpha_1,\ldots,\alpha_{n-2},\alpha_{n-1},-\alpha_{n-1}$. There
  exists an open neighborhood $U$ of $p$ such that the following are
  equivalent:
  \begin{itemize}[leftmargin=*]
  \item $q \in U$ is exceptional, and
  \item $\Phi(q) \in \Phi(p) + \R_{\geq 0}\langle \{ \alpha_j \mid j
    = 1,\ldots, n-2\}\rangle$ and the stabilizer of $q$ contains $H$.
  \end{itemize}
  Moreover, given $q \in U$ exceptional, if $J \subseteq
  \{1,\ldots, n-2\}$ is defined by
  \begin{equation}
    \label{eq:22}
    \Phi(q) \in \Phi(p) + \R_{> 0} \langle \{\alpha_j \mid j \in J \}\rangle,
  \end{equation}
  then
  \begin{enumerate}[label=(\roman*),ref=(\roman*),leftmargin=*]
  \item \label{item:35}  the stabilizer of $q$ is $K_J \times H$, where
    $K_J$ is defined in \eqref{def KJ}, and
  \item \label{item:36}  the isotropy weights of $q$ are
    $$ \{\alpha_j \mid j \notin J\} \cup \{\pm \, 
    \alpha_{n-1}\}, $$
    \noindent
     where we identify
    $\text{Lie}(K_J)^*\subseteq (\g')^* \subset \g^*$ with $\R \langle \{ \alpha_j
    \mid j\notin J \cup \{n-1\} \}\rangle$. 
  \end{enumerate}
  Conversely, given any subset $J \subseteq \{1,\ldots, n-2\}$ and any
  $w \in \Phi(p) + \R_{>0} \langle \alpha_j \mid j \in J \}\rangle$,
  there exists an exceptional point $q \in U$ such that $\Phi(q) = w$,
  the stabilizer of $q$ is $K_J \times H$, and the isotropy weights of
  $q$ are $ \{\alpha_j \mid j \notin J\} \cup \{\pm \, 
  \alpha_{n-1}\} $.
  
  Finally, the subset
  $$ \{q \in U \mid q \text{ is exceptional and has
    stabilizer } H \} $$
  is path-connected and dense in $\{q \in U \mid q \text{ is exceptional}\}$.
\end{lemma}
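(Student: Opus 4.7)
The plan is to reduce the entire statement to the product structure of the local model. By Theorem \ref{local normal form} applied at $p$, there is a $T$-invariant open neighborhood $U$ of $p$ and a $T$-equivariant symplectomorphism from $U$ onto an open neighborhood $V$ of $0 \in \C^n$ equipped with the $T$-action of \eqref{eq:55bis} and moment map $\Phi_0 + \Phi(p)$, where $\Phi_0$ is as in \eqref{eq:41}. This isomorphism preserves stabilizers, symplectic slice representations, and hence the regular/exceptional dichotomy. Thus it suffices to prove the claim for points of $V \subset \C^n$.

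As observed in the paragraph preceding Lemma \ref{claim:toric}, the linear $T$-action on $\C^n$ splits as the product of the toric $T'$-action on $\C^{n-2}$ and the complexity one $H$-action on $\C^2$; correspondingly, stabilizers, symplectic slice representations and the exceptional locus split by properties \ref{item:19}, \ref{item:25} and \ref{item:18}. By Lemma \ref{claim:pm_one}, the only exceptional point in $\C^2$ under the $H$-action is the origin, which is also the only point stabilized by $H$ and whose isotropy weights are $\{\pm\alpha_{n-1}\}$. Combining this with Lemma \ref{claim:toric} and property \ref{item:18}, a point $q = (q_1,q_2) \in V$ is exceptional if and only if $q_2 = (0,0)$, equivalently if and only if $H$ is contained in the stabilizer of $q$. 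When this occurs, $\Phi(q) - \Phi(p) = \pi \sum_{j=1}^{n-2}\alpha_j|z_j|^2$ lies in $\R_{\geq 0}\langle\alpha_1,\ldots,\alpha_{n-2}\rangle$, which gives the characterization in the first bullet pair.

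For the refined description, fix an exceptional $q \in V$ and let $J \subseteq \{1,\ldots,n-2\}$ be determined by \eqref{eq:22}; by Lemma \ref{claim:toric}, $J$ is uniquely the set of indices for which the corresponding coordinate of $q_1$ is non-zero. Applying the second and third bullet points of Lemma \ref{claim:toric} to $q_1$ and combining with property \ref{item:19}, \ref{item:25} and Lemma \ref{claim:pm_one} for $q_2 = (0,0)$, we obtain that the stabilizer of $q$ is $K_J \times H$ and the isotropy weights of $q$ are $\{\alpha_j \mid j \notin J\} \cup \{\pm\alpha_{n-1}\}$, giving \ref{item:35} and \ref{item:36}. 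Conversely, for arbitrary $J \subseteq \{1,\ldots,n-2\}$ and $w \in \Phi(p) + \R_{>0}\langle\{\alpha_j \mid j \in J\}\rangle$, the converse part of Lemma \ref{claim:toric} produces $q_1 \in \C^{n-2}$ with the prescribed moment image, stabilizer $K_J$ and weights $\{\alpha_j \mid j \notin J\}$; the point $q = (q_1,0,0)$ then realizes the desired data, and after shrinking $V$ if needed it lies in the neighborhood.

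Finally, by the splitting, a point $q = (q_1,0,0) \in V$ is exceptional with stabilizer exactly $H$ if and only if $q_1$ has trivial $T'$-stabilizer, and by the last claim of Lemma \ref{claim:toric} such points form a path-connected dense subset of $\C^{n-2} \times \{(0,0)\}$, which is exactly the exceptional locus of $V$. This yields the density and path-connectedness assertion. The proof is essentially bookkeeping once the product decomposition is in hand; the only mildly delicate point is identifying $\mathrm{Lie}(K_J)^*$ with $\R\langle\{\alpha_j \mid j \notin J \cup \{n-1\}\}\rangle$ consistently so that the isotropy weights in \ref{item:36} land in the stated subspace of $\g^*$, but this is forced by the chosen identifications following \eqref{def KJ}.
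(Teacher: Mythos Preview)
Your proof is correct and follows essentially the same route as the paper: reduce via Theorem~\ref{local normal form} to the local model $\C^n$, use the product splitting $\C^{n-2}\times\C^2$ together with properties \ref{item:19}--\ref{item:18}, and then read off everything from Lemmas~\ref{claim:toric} and~\ref{claim:pm_one}. The only wrinkle is your phrase ``after shrinking $V$ if needed it lies in the neighborhood'': shrinking $V$ cannot help place a prescribed $q$ inside it, and in fact the paper simply works in the full local model $\C^n$ (where $U=\C^n$ is legitimate) without commenting on this point; in applications the converse is only ever invoked for $w$ near $\Phi(p)$, so no harm is done.
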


\begin{proof}
  By Theorem \ref{local normal form}, it suffices to consider the
  local model determined by the isotropy weights, $p = 0 \in
  \C^n$ and $\Phi(p) = 0$. By property \ref{item:18}, and Lemmas \ref{claim:toric} and
  \ref{claim:pm_one}, a point $q = (q_1,q_2) \in \C^n$ is exceptional if and only if $q_2 = (0,0)$, which is also equivalent to the
  stabilizer of $q_2$ being $H$. Suppose that $q = (q_1,q_2)$ is exceptional and let $J \subseteq \{1,\ldots,
  n-2\}$ be the subset given by Lemma \ref{claim:toric}. Since $q_2 =
  (0,0)$, by \eqref{eq:41}, $\Phi(q)$ lies in $ \Phi(p) +
  \R_{> 0} \langle \{\alpha_j \mid j \in J \} \rangle$ if and only if $\pi \sum\limits_{j=1}^{n-2}
  \alpha_j |z_j|^2 \in (\g')^*$ lies in $\R_{> 0} \langle \{\alpha_j
  \mid j \in J \} \rangle$. 
  Hence, $J$ is the unique subset of $\{1,\ldots,
  n-2\}$ such that \eqref{eq:22}
  holds. Properties \ref{item:35} and \ref{item:36} in the statement
  follow immediately from \ref{item:19} and \ref{item:25} in the discussion preceding Lemma \ref{claim:toric}, and from
  Lemmas \ref{claim:toric} and \ref{claim:pm_one}. 
  
  Conversely, let $J \subseteq \{1,\ldots,
  n-2\}$ be a subset and $w \in \R_{> 0} \langle \{\alpha_j \mid j \in J \}
  \rangle$. Let $q_1 \in \C^{n-2}$ be the point given by Lemma
  \ref{claim:toric}. By property \ref{item:18} in the discussion preceding Lemma \ref{claim:toric} and Lemma
  \ref{claim:pm_one}, the point $q = (q_1,0,0) \in V$ is exceptional. Moreover, by
  \eqref{eq:41},
  $\Phi(q) = w$. By Lemmas \ref{claim:toric} and \ref{claim:pm_one}, and by
  properties \ref{item:19} and \ref{item:25}, the stabilizer of $q$ is
  $K_J \times H$ and the isotropy weights of $q$
  are $ \{\alpha_j \mid j \notin J\} \cup \{\pm \, 
  \alpha_{n-1}\}$, as desired.
  
  Finally, by Lemmas \ref{claim:toric} and \ref{claim:pm_one},
  \begin{equation}
    \label{eq:63}
    \{q =(q_1,q_2) \in \C^{n-2} \times \C^2 \mid q \text{ is
      exceptional and has
      stabilizer } H \} 
  \end{equation}
  equals
  \begin{equation}
    \label{eq:62}
    \{q = (q_1,0,0) \in \C^{n-2} \times \C^2 \mid q_1 \text{ has
      trivial stabilizer}\}. 
  \end{equation}
  By Lemma \ref{claim:toric}, $\{q_1 \in \C^{n-2} \mid q_1 \text{ has
    trivial stabilizer}\}$ is path-connected and dense in
  $\C^{n-2}$, thus completing the proof.
\end{proof}

The subset $J$ associated to an exceptional point near the isolated
fixed point of Lemma \ref{lemma:local_description_exceptional} has the
following useful property.

\begin{corollary}\label{cor:equal_symp_slice_local}
    Let $\comp$ be a complexity one $T$-space of dimension $2n$ and let $p \in M^T$
    be isolated with isotropy weights
    $\alpha_1,\ldots,\alpha_{n-2},\alpha_{n-1},-\alpha_{n-1}$. Let $U$ be the open neighborhood of $p$ given by
    Lemma \ref{lemma:local_description_exceptional}. Given exceptional points $q, q'
    \in U$, let $J, J' \subseteq
    \{1,\ldots, n-2\}$ be the subsets corresponding to $q, q'$ as in
    Lemma
    \ref{lemma:local_description_exceptional}. The symplectic slice
    representations of $q$ and $q'$ are isomorphic if and only if $J = J'$.
\end{corollary}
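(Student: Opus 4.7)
The plan is to read off the stabilizers and the multisets of isotropy weights of $q$ and $q'$ directly from Lemma \ref{lemma:local_description_exceptional}, and then translate the statement into a purely linear-algebraic one about subspaces of $(\mathfrak{t}')^*$. Throughout, I interpret ``isomorphic symplectic slice representations'' in the sense suggested by Definition \ref{defn:painting}: the stabilizers coincide as subgroups of $T$, and the resulting linear actions on the symplectic slices are unitarily equivalent.

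For the implication $J = J' \Rightarrow$ representations isomorphic, Lemma \ref{lemma:local_description_exceptional} immediately gives that both $q$ and $q'$ have stabilizer $K_J \times H$ and multiset of isotropy weights $\{\alpha_j \mid j \notin J\} \cup \{\pm \alpha_{n-1}\}$. Since $K_J$ is the exponential image of a linear subspace of $\mathfrak{t}'$ and $H \cong S^1$, the stabilizer $K_J \times H$ is connected. Hence the last assertion of Remark \ref{rmk local normal form} applies and the slice representations are unitarily equivalent.

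For the converse, equality of stabilizers forces $K_J \times H = K_{J'} \times H$ as subgroups of $T = T' \times H$, and projecting to $T'$ yields $K_J = K_{J'}$. By the definition \eqref{def KJ}, the Lie algebra of $K_J$ is the annihilator in $\mathfrak{t}'$ of $\mathrm{span}_{\R}\{\alpha_j \mid j \in J\}$. Since $\alpha_1, \ldots, \alpha_{n-1}$ form a basis of $\ell^*$ (Remark \ref{rmk local normal form}) and $\alpha_{n-1}$ spans the annihilator of $\mathfrak{t}'$ in $\ell^*$, the restrictions of $\alpha_1, \ldots, \alpha_{n-2}$ form a basis of $(\mathfrak{t}')^*$. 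Therefore distinct subsets of $\{1, \ldots, n-2\}$ give distinct subspaces of $(\mathfrak{t}')^*$, hence distinct Lie algebras, hence distinct connected subgroups $K_J$. This forces $J = J'$.

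There is no real obstacle here: the statement is essentially an unpacking of Lemma \ref{lemma:local_description_exceptional} combined with the linear independence of $\alpha_1, \ldots, \alpha_{n-2}$ in $(\mathfrak{t}')^*$. The only minor points to verify carefully are connectedness of $K_J \times H$ (needed to apply Remark \ref{rmk local normal form} in the forward direction) and the fact that the full stabilizer, rather than merely its identity component, agrees for $q$ and $q'$, both of which are immediate from the explicit form of $K_J$ as $\exp$ of a linear subspace.
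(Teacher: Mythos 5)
Your proof is correct and takes essentially the same route as the paper's: the forward direction uses that $q$ and $q'$ have equal (connected) stabilizers and equal multisets of isotropy weights, and the converse reduces to $K_J=K_{J'}$ and then to $J=J'$ via the linear independence of the restrictions of $\alpha_1,\ldots,\alpha_{n-2}$ to $(\g')^*$. One small citation slip: the fact that a connected stabilizer together with the multiset of isotropy weights determines the symplectic slice representation up to unitary isomorphism is stated in the text of Section \ref{sec:local-model-local} following equation \eqref{action identity component}, not in Remark \ref{rmk local normal form}, whose final assertion concerns the $\Z$-span of the weights.
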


\begin{proof}
  If $J = J'$, then by parts \ref{item:35} and \ref{item:36}
  of Lemma \ref{lemma:local_description_exceptional}, the points $q$ and
  $q'$ have equal stabilizers and the same isotropy weights. Since
  their common stabilizer is connected, it follows that they have
  isomorphic symplectic slice representations. Conversely, suppose that $q$
  and $q'$ have isomorphic symplectic slice representations. Hence, by
  Lemma \ref{lemma:local_description_exceptional}, they have connected
  stabilizers, so that $K_J = K_{J'}$. 
  Since the dual of the Lie algebra of $K_J$ can be identified with
  $\R \langle \{\alpha_j \mid j\notin J \cup \{n-1\}\}\rangle$, and since $\alpha_1,\ldots, \alpha_{n-2}$ are linearly
  independent, it follows that $J = J'$.  
\end{proof}

Throughout this section, we apply Lemma
\ref{lemma:local_description_exceptional} and Corollary
\ref{cor:equal_symp_slice_local} to an isolated fixed point in a 
normalized monotone tall complexity one $T$-space. In this case, $H =
H_{\mathcal{F}_{\min}}$, the stabilizer of
$(M_{\mathcal{F}_{\min}},\omega_{\mathcal{F}_{\min}},\Phi_{\mathcal{F}_{\min}})$,
see Definition \ref{defn:minimal_facet}. Intuitively speaking, the next result is the `global version' of Lemma
\ref{lemma:local_description_exceptional} for normalized monotone tall
complexity one $T$-spaces.

\begin{lemma}\label{lemma:global_desc_exceptional}
  Let $\comp$ be a normalized monotone
  tall complexity one $T$-space of dimension $2n$. Let $q \in M$
  be exceptional. There exist $p \in M^T$ isolated
  and a unique subset $J \subseteq
  \{1,\ldots, n-2\}$ such that, if
  $\alpha_1,\ldots,\alpha_{n-2},\alpha_{n-1},-\alpha_{n-1}$ are the
  isotropy weights of $p$ as in Proposition
  \ref{prop:iso_fixed_pts_monotone}, then
  \begin{enumerate}[label=(\roman*), ref=(\roman*),leftmargin=*]
  \item \label{item:37} the moment map image $\Phi(q)$ lies in $\Phi(p)
    + \R_{>0} \langle \{ \alpha_j \mid j \in J\} \rangle$,
  \item \label{item:38} the stabilizer of $q$ is $K_J \times
    H_{\mathcal{F}_{\min}}$, where $K_J \leq T'$ is as in
    \eqref{def KJ}, and
  \item \label{item:39} 
    the isotropy weights of $q$ are $\{\alpha_j \mid j \notin J\} \cup \{\pm \, 
    \alpha_{n-1}\}$ (see part \ref{item:36} of Lemma \ref{lemma:local_description_exceptional}). 
  \end{enumerate}
\end{lemma}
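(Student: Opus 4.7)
The plan is to reduce the global statement to the local description of Lemma \ref{lemma:local_description_exceptional} by first identifying an isolated fixed point $p \in M^T$ naturally attached to $q$ through the exceptional sheet containing it.

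First, I would analyze the sheet $N$ through $q$. By Lemma \ref{lemma:exceptional} it is exceptional, hence by Proposition \ref{prop:regular_exceptional} it has complexity zero, so $(N,\omega_N,\Phi_N)$ is a compact symplectic toric space for the effective action of $T/H_0$, where $H_0$ is the stabilizer of $N$. Its moment polytope $\Delta_N := \Phi(N)$ is Delzant, its vertices are the images of $N \cap M^T$, and by Lemma \ref{lemma:exceptional_not_boundary}, $\Delta_N$ is not contained in the boundary of $\Phi(M)$. I would then pick a vertex $w$ of $\Delta_N$ minimizing $\langle \cdot, \nu_{\min} \rangle$, and let $p \in N \cap M^T$ be its preimage. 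The key sub-claim is that $p$ is isolated in $M^T$: if instead $p$ lay on a fixed surface, then by Proposition \ref{prop:tall_vertex} (combined with Proposition \ref{prop:tall=complexity_preserving}), $\Phi(p)$ would be a vertex of $\Phi(M)$; using this, together with Proposition \ref{prop:iso_fixed_pts_monotone}, Corollary \ref{cor:min_facet_no_iso}, and the fact that $\Delta_N$ escapes $\mathcal{F}_{\min}$, I expect to derive a contradiction via the monotone weight-sum formula, possibly after replacing $p$ by a fixed point on an edge of $\Phi(M)$ emerging from $\mathcal{F}_{\min}$ at the chosen vertex.

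Once $p$ is isolated, Proposition \ref{prop:iso_fixed_pts_monotone} supplies the weights $\alpha_1,\ldots,\alpha_{n-2},\alpha_{n-1},-\alpha_{n-1}$ and shows that $H = H_{\mathcal{F}_{\min}}$ in the notation preceding Lemma \ref{lemma:local_description_exceptional}. Applying Lemma \ref{lemma:local_description_exceptional} in a $T$-invariant neighborhood $U$ of $p$, I would pick an orbit in $N \cap U$ containing an exceptional point $q' \in U$ with the same symplectic slice representation as $q$ (this is possible because $N$ is $T$-invariant and connected, so $q$ and $q'$ lie in the same orbit-type stratum by the principal orbit theorem and Remark \ref{rmk:symp_slice_orbit}). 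The lemma then produces the unique $J \subseteq \{1,\ldots,n-2\}$ such that $q'$ satisfies properties (i)--(iii). Since stabilizers and isotropy weights are constant along orbit-type strata, properties (ii) and (iii) transfer from $q'$ to $q$; property (i) then follows because $\Phi(q)$ lies in $\Phi(N)$, whose affine span is generated at $p$ by the weights indexed by $J$ (complemented by the $\pm\alpha_{n-1}$ direction, which only affects moment values within $N$ and hence within $\mathcal{F}_{\min}$-parallel translates, forcing $\Phi(q)\in \Phi(p)+\R_{>0}\langle\{\alpha_j\mid j\in J\}\rangle$). Uniqueness of $J$ is then immediate from Corollary \ref{cor:equal_symp_slice_local}, since $J$ is determined by the symplectic slice representation, hence by the stabilizer of $q$.

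The main obstacle is the sub-claim in the first step: proving that a $\langle \cdot,\nu_{\min}\rangle$-minimizing vertex of $\Delta_N$ can be taken to correspond to an isolated fixed point of $M^T$, rather than to a fixed surface. This is where the monotone hypothesis is genuinely needed, through the rigid description of isolated fixed points in Proposition \ref{prop:iso_fixed_pts_monotone} and the placement of $\mathcal{F}_{\min}$; the rest of the argument is essentially a translation of the local combinatorial data at $p$ into the global picture for $q$.
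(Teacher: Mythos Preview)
Your overall strategy matches the paper's: pass to the exceptional sheet through $q$, locate an isolated fixed point $p$ on it, and then transport the local description at $p$ from Lemma \ref{lemma:local_description_exceptional} back to $q$ via the principal orbit theorem. However, the step you flag as the ``main obstacle'' is in fact a one-liner, and you are making it much harder than necessary. Since the sheet $(N,\omega_N,\Phi_N)$ through $q$ is exceptional, Lemma \ref{lemma:exceptional} says \emph{every} point of $N$ is exceptional; in particular any $p \in N \cap M^T$ (which exists by compactness) is an exceptional fixed point, and Lemma \ref{lemma:fixed_point_exceptional} then says directly that $p$ is isolated. No minimization over $\langle \cdot,\nu_{\min}\rangle$, no contradiction argument via Proposition \ref{prop:tall_vertex} or the weight-sum formula is needed. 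This is exactly what the paper does.

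Your argument for property (i) is also muddled. The sheet $N$ through $q$ is stabilized by $K_J \times H_{\mathcal{F}_{\min}}$, so $\Phi(N)$ lies in the affine subspace $\Phi(p) + \R\langle \{\alpha_j \mid j \in J\}\rangle$; there is no ``$\pm\alpha_{n-1}$ direction'' in $\Phi(N)$ to worry about. The paper's argument is cleaner: from the local picture one gets $\Phi(U \cap N') \subset \Phi(p) + \R_{>0}\langle\{\alpha_j \mid j \in J\}\rangle$, hence by density $\Phi(U \cap N) \subset \Phi(p) + \R_{\geq 0}\langle\{\alpha_j \mid j \in J\}\rangle$, and then convexity of $\Phi(N)$ (Theorem \ref{thm:con_pac}) forces all of $\Phi(N)$ into this cone. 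Since $q$ has stabilizer exactly equal to that of $N$, it is a regular point for $\Phi_N$, so $\Phi(q)$ lies in the relative interior of $\Phi(N)$, giving the strict inequality in (i).
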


\begin{proof}
  By Lemma
  \ref{lemma:exceptional}, the sheet $(N,\omega_N,\Phi_N)$ through $q$ is exceptional. Since
  $N$ is compact, it contains a fixed point $p \in M^T$ that is exceptional and therefore isolated by
  Lemma \ref{lemma:fixed_point_exceptional}. Since $N$ is connected,
  by the principal orbit
  theorem (see \cite[Theorem
  2.8.5]{dk}), 
  there exists a relatively open, dense and connected
  subset $N'$ of $N$ such that, if $q' \in
  N'$, then $q$ and $q'$ have isomorphic symplectic
  slice representations. In particular, if $U$ is the open neighborhood of
  $p$ given by Lemma \ref{lemma:local_description_exceptional}, then $U \cap N'$ is not empty; moreover, for all $q' \in U \cap N'$, the
  symplectic slice representation of $q'$ is isomorphic to that of $q$. By
  Lemma \ref{lemma:local_description_exceptional} and 
  Corollary \ref{cor:equal_symp_slice_local}, there exists a unique
  subset $J \subseteq \{1,\ldots, n-2\}$ such that, for all $q' \in U
  \cap N'$,
  \begin{itemize}[leftmargin=*]
  \item the moment map image $\Phi(q')$ lies in $\Phi(p)
    + \R_{>0} \langle \{ \alpha_j \mid j \in J\} \rangle$,
  \item the stabilizer of $q'$ is $K_J \times
    H_{\mathcal{F}_{\min}}$, where $K_J \leq T'$ is as in
    \eqref{def KJ}, and
  \item the isotropy weights of $q'$ are $\{\alpha_j \mid j \notin J\} \cup \{\pm \, 
    \alpha_{n-1}\}$.
  \end{itemize}
  Since $U \cap N \neq \emptyset$, the second and third
  bullet points imply properties
  \ref{item:38} and \ref{item:39}. To see that
  property \ref{item:37} holds, we observe that, by the first bullet point, $\Phi(U \cap N')$ is contained in $\Phi(p)
  + \R_{>0} \langle \{ \alpha_j \mid j \in J\} \rangle$. Since
  $N_{\mathrm{reg}}$ is dense in $N$, we have that $\Phi(U \cap N)$
  is contained in  $\Phi(p)
  + \R_{\geq 0} \langle \{ \alpha_j \mid j \in J\} \rangle$. On the
  other hand, since the stabilizer of $q$ is $K_J \times
  H_{\mathcal{F}_{\min}}$, the sheet
  $(N,\omega_N,\Phi_N)$ is a compact Hamiltonian $T''$-space, where
  $T'' = T/(K_J \times
  H_{\mathcal{F}_{\min}}) \simeq T' /K_J$. By construction, we may identify the dual of the Lie
  algebra of $T''$ with $\Phi(p) + \R\langle \{\alpha_j \mid j \in J\}
  \rangle$. Hence, by the Convexity Package (Theorem
  \ref{thm:con_pac}), the moment map image $\Phi_N(N) = \Phi(N)$ is a
  convex polytope in $\Phi(p)
  + \R \langle \{ \alpha_j \mid j \in J\} \rangle$. Since $\Phi(p)
  + \R_{\geq 0} \langle \{ \alpha_j \mid j \in J\} \rangle$ is convex
  in $\Phi(p)
  + \R \langle \{ \alpha_j \mid j \in J\} \rangle$ and since $\Phi(U \cap N)$
  is contained in  $\Phi(p)
  + \R_{\geq 0} \langle \{ \alpha_j \mid j \in J\} \rangle$, $\Phi(N)$ is contained in $\Phi(p)
  + \R_{\geq 0} \langle \{ \alpha_j \mid j \in J\} \rangle$. In
  particular, the interior of $\Phi(N)$ is contained in $\Phi(p)
  + \R_{>0} \langle \{ \alpha_j \mid j \in J\} \rangle$. Since $q$ is
  a regular point for the moment map $\Phi_N$, the moment map image
  $\Phi_N(q) = \Phi(q)$ lies in the (relative) interior of $\Phi(N)$,
  as desired.
\end{proof}

\begin{remark}\label{rmk:image_symplectic_slice}
  By Lemma \ref{lemma:global_desc_exceptional}, if $q$ is exceptional,
  then the moment map $\Phi(q)$ can be used to reconstruct the symplectic slice
  representation of $q$. To see this, we observe that, by property
  \ref{item:37} and Proposition \ref{prop:iso_fixed_pts_monotone}, $\Phi(q)$ lies in the affine hyperplane $\Phi(p)+\{w \in \g^*
  \mid \langle w, \nu_{\min} \rangle = 0\}$. 
  We recall that a basis
  for the linear subspace $\{w \in \g^*
  \mid \langle w, \nu_{\min} \rangle = 0\}$ is given by $\alpha_1,\ldots, \alpha_{n-2}$
  (cf. \eqref{eq:59}). Hence, by Lemma
  \ref{lemma:global_desc_exceptional}, $\Phi(q)$ determines the subset $J$
  uniquely, and $J$ determines the stabilizer and the isotropy weights of
  $q$. Since the stabilizer of $q$ is connected, the claim follows.
\end{remark}

Our next aim is to prove Proposition
\ref{lemma:exceptional_sheet_dimension_1}, which plays an
important role in several key results below (e.g., Theorems
\ref{cor:exceptional_orbits} and \ref{thm:possibilities_DH}. We start
with the following result.

\begin{lemma}\label{lemma:iso_fixed_all_edges}
  Let $\comp$ be a normalized monotone tall complexity one
  $T$-space. The following are equivalent:
  \begin{enumerate}[label=(\arabic*),ref=(\arabic*),leftmargin=*]
  \item \label{item:30} there exists an isolated fixed point, and
  \item \label{item:31} for each edge $e$ that comes out of $\mathcal{F}_{\min}$,
    there exists an isolated fixed point $p \in M^T$ such that
    $\Phi(p) \in e$.  
  \end{enumerate}
\end{lemma}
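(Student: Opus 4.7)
The implication $(2) \Rightarrow (1)$ is immediate: $\mathcal{F}_{\min}$ admits a vertex $v$, and by Lemma~\ref{lemma:weight_edge_out_facet} there is an edge of $\Phi(M)$ incident to $v$ and coming out of $\mathcal{F}_{\min}$; assuming (2), there is an isolated fixed point on that edge.

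For $(1) \Rightarrow (2)$, suppose $p \in M^T$ is isolated. By Proposition~\ref{prop:iso_fixed_pts_monotone}, there is a vertex $v_0 \in \mathcal{F}_{\min}$ such that $\Phi(p) = v_0 + \alpha_{n-1}^{v_0}$ is the level-$0$ point of the edge $e_0$ coming out of $\mathcal{F}_{\min}$ at $v_0$. Given any other edge $e'$ out of $\mathcal{F}_{\min}$ at $v' \in \mathcal{F}_{\min}$, I would propagate the existence of an isolated fixed point from $e_0$ to $e'$ by induction on the graph distance from $v_0$ to $v'$ in the $1$-skeleton of the Delzant polytope $\mathcal{F}_{\min}$, which is connected. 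It thus suffices to treat the case that $v_0$ and $v'$ are joined by an edge $f \subset \mathcal{F}_{\min}$.

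The core of the argument takes place in the unique $2$-face $F$ of $\Phi(M)$ spanned by $e_0$ and $f$ at $v_0$. Since $F \cap \mathcal{F}_{\min}$ is a proper face of $F$ containing $f$, it equals $f$; therefore, at $v'$ the edge of $F$ distinct from $f$ comes out of $\mathcal{F}_{\min}$ and hence equals $e'$ by Lemma~\ref{lemma:weight_edge_out_facet}. By Corollary~\ref{cor:comp_preserving}, the sheet $(M_F, \omega_F, \Phi_F)$ is a $6$-dimensional compact tall complexity one $T/H_F \simeq T^2$-space with moment polygon $F$, in which $p$ is an isolated $T/H_F$-fixed point. Order the isotropy weights of $p$ in $\comp$ as $\alpha_1, \ldots, \alpha_{n-2}, \alpha_{n-1}, -\alpha_{n-1}$ with $\alpha_1$ tangent to $f$ at $v_0$; the weights of $p$ in $M_F$ are then $\alpha_1, \pm \alpha_{n-1}$. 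Applying Lemma~\ref{lemma:sheet_weight} inside $M_F$ at $p$ along $\alpha_1$ produces a sheet $N_{\alpha_1} \subseteq M_F$ of dimension at most $2(k+1) = 4$ containing $p$, together with a $T$-fixed point $q$ at the opposite global extremum of $\Phi|_{N_{\alpha_1}}$, with $-\alpha_1$ among its isotropy weights and $\Phi(q) \in \Phi(p) + \R_{>0}\langle \alpha_1\rangle$.

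Since $\langle \alpha_1, \nu_{\min}\rangle = 0$, the image $\Phi(q)$ lies at level $0$ inside $F$. Using the convexity of $F$, the fact that the level-$0$ chord of $F$ is precisely the segment from $\Phi(p)$ to $v' + \alpha_{n-1}^{v'}$, and Propositions~\ref{prop:iso_fixed_pts_monotone} and~\ref{prop:tall_vertex} (which together rule out $T$-fixed points mapping into the relative interior of $F$ away from vertices and edges out of $\mathcal{F}_{\min}$), I would conclude $\Phi(q) = v' + \alpha_{n-1}^{v'}$. The hardest part will be confirming that $q$ is isolated as a $T$-fixed point of $\comp$: the borderline obstruction is the case in which $v' + \alpha_{n-1}^{v'}$ coincides with a vertex of $\Phi(M)$ carrying a fixed surface rather than an isolated fixed point. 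Here I would invoke the invariance of the self-intersection $s$ along the vertices of $\mathcal{F}_{\min}$ (Lemma~\ref{lemma:s_invariant}) together with the weight sum formula applied at this putative fixed surface and at $v_0$, producing an incompatibility with the existence of the isolated fixed point $p$ on $e_0$ and thereby forcing $\Phi^{-1}(\Phi(q)) = \{q\}$.
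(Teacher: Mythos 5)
Your reduction to adjacent vertices of $\mathcal{F}_{\min}$ and your analysis inside the $2$-face $F$ run parallel to the paper's proof; the difference is the mechanism for producing the new fixed point. The paper takes an exceptional point $q$ near $p$ with $\Phi(q)\in\Phi(p)+\R_{>0}\langle\alpha_1\rangle$ (Lemma \ref{lemma:local_description_exceptional}), passes to the sheet through it -- an exceptional, hence toric, sheet containing $p$ -- and takes its far fixed point, which is exceptional and therefore isolated by Lemma \ref{lemma:fixed_point_exceptional}; only then is Proposition \ref{prop:iso_fixed_pts_monotone} used to place its image on $e'$. You instead apply Lemma \ref{lemma:sheet_weight} in $M_F$ along $\alpha_1$, which is fine up to and including the identification $\Phi(q)=v'+\alpha_{n-1}^{v'}$ (the chord claim needs the one-line observation that $F$ has points strictly on both sides of the level-$0$ line, so the chord's interior lies in $\mathrm{relint}(F)$ and its two boundary points are $\Phi(p)\in e_0$ and the level-$0$ point of $e'$; then Propositions \ref{prop:iso_fixed_pts_monotone} and \ref{prop:tall_vertex} exclude the interior, as you say). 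The price of your route is that isolatedness of $q$ is not automatic, and this is exactly where your proposal has a genuine gap.

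Your suggested tools for the borderline case ($v'+\alpha_{n-1}^{v'}$ a vertex carrying a fixed surface $\Sigma'$) do not produce a contradiction. The weight sum formula at $\Sigma'$ is just the reflexivity relation at that vertex, which holds automatically, and Lemma \ref{lemma:s_invariant} is consistent with the configuration: one checks (via Lemma \ref{Lemma:DHnearfixedSurface} and the absence of interior fixed points over $e'$) only that $s=-1$, $\int_{\Phi^{-1}(v')}\omega=1$, $\int_{\Sigma'}\omega=2$, and the self-intersection of $\Sigma'$ in $\Phi^{-1}(e')$ is $+1$ -- no incompatibility with the isolated fixed point $p$ on $e_0$ emerges from these data alone. (Beware also that Corollary \ref{cor:no_iso_fixed_points} cannot be quoted here, as its proof uses the present lemma.) The gap is closable with what you already have: Lemma \ref{lemma:sheet_weight} gives that $-\alpha_1$ is an isotropy weight of $q$. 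If $q$ lay on $\Sigma'=\Phi^{-1}(w')$ with $w'=v'+\alpha_{n-1}^{v'}$ a vertex, then since the space is complexity preserving the nonzero isotropy weights of $\Sigma'$ are the primitive edge directions of $\Phi(M)$ at $w'$ (Corollary \ref{Cor:ConeFace}), so there would be an edge emanating from $w'$ in direction $-\alpha_1$; but $\Phi(M)\cap\mathrm{aff}(F)=F$, so the points $w'-t\alpha_1$ for small $t>0$ lie in the interior of the level-$0$ chord, hence in $\mathrm{relint}(F)$, and an edge of $\Phi(M)$ cannot meet the relative interior of the $2$-face $F$. Hence $q$ is isolated and $\Phi(q)\in e'$. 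Alternatively, replace your sheet by the paper's exceptional sheet, which makes isolatedness automatic and avoids the borderline case altogether.
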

\begin{proof}
  Clearly \ref{item:31} implies \ref{item:30}. Conversely, suppose
  that there exists an isolated fixed point $p \in M^T$. If $\dim M =
  4$, there is nothing to prove, so we may assume that $\dim M \geq
  6$. By
  Proposition \ref{prop:iso_fixed_pts_monotone}, there exists an edge
  $e$ that comes out of $\mathcal{F}_{\min}$ such that $\Phi(p) \in
  e$. Let $v \in \mathcal{F}_{\min}$ be the vertex to which $e$ is
  incident and let $\alpha_1,\ldots, \alpha_{n-2},\alpha_{n-1}$ be the
  non-zero isotropy weights of $\Phi^{-1}(v)$ ordered so that
  \eqref{eq:59} holds.

  For $j=1,\ldots, n-2$, let $v_j \in
  \mathcal{F}_{\min}$ be the vertex that lies on the edge supported on
  $v + \R_{\geq 0}\langle \alpha_j \rangle$ and is not $v$. Let $e_j$
  be the edge that comes out of $\mathcal{F}_{\min}$ that is incident
  to $v_j$. We claim that there exists an isolated fixed point $p_j$
  such that $\Phi(p_j) \in e_j$ (see Figure \ref{Pic1ImageMexc}). To this end, by Lemma
  \ref{lemma:local_description_exceptional}, there exists an
  exceptional point $q \in M$
  arbitrarily close to $p$ such that $\Phi(q) \in \Phi(p) + \R_{>0} \langle
  \alpha_j \rangle$; moreover, the stabilizer of $q$ has codimension
  1. Let $(N,\omega_N,\Phi_N)$ be the sheet through $q$. Since $q$ is
  exceptional, so is $(N,\omega_N,\Phi_N)$; furthermore, $p \in N$ by construction. Hence,
  $(N,\omega_N,\Phi_N)$ is a compact symplectic toric manifold with
  moment map image contained in $\Phi(p) + \R_{\geq 0} \langle
  \alpha_j \rangle$. Let $p_j \in M^T \cap N$ be the unique fixed
  point such that $\Phi(p_j) \in \Phi(p) + \R_{>0} \langle \alpha_j
  \rangle$. Since $(N,\omega_N,\Phi_N)$ is exceptional, so is
  $p_j$; moreover, by Lemma \ref{lemma:fixed_point_exceptional}, $p_j$
  is isolated. Hence, by Proposition
  \ref{prop:iso_fixed_pts_monotone}, the image $\Phi(p_j)$ lies on an
  edge that comes out of $\mathcal{F}_{\min}$. This edge is
  necessarily $e_j$: To see this, we observe that the moment map image
  $\Phi(N)$ is contained in the affine two-dimensional plane $v + \R
  \langle \alpha_j,\alpha_{n-1}\rangle$. This plane supports a
  two-dimensional face of $\Phi(M)$ that contains $e$ and the edge
  of $\mathcal{F}_{\min}$ that is incident to both $v$ and
  $v_j$. Hence, there exists only one other edge that is incident to
  $v_j$ that is contained in this affine plane. Since this plane
  intersects $\mathcal{F}_{\min}$ precisely in the edge that is
  incident to both $v$ and $v_j$, by \eqref{eq:59} applied to the
  non-zero isotropy weights of $\Phi^{-1}(v_j)$, the other edge that
  is incident to $v_j$ and contained in the above affine plane must
  come out of $\mathcal{F}_{\min}$, i.e., it must be $e_j$. 
 
  \begin{figure}[htbp]
  	\begin{center}
  		\includegraphics[width=7cm]{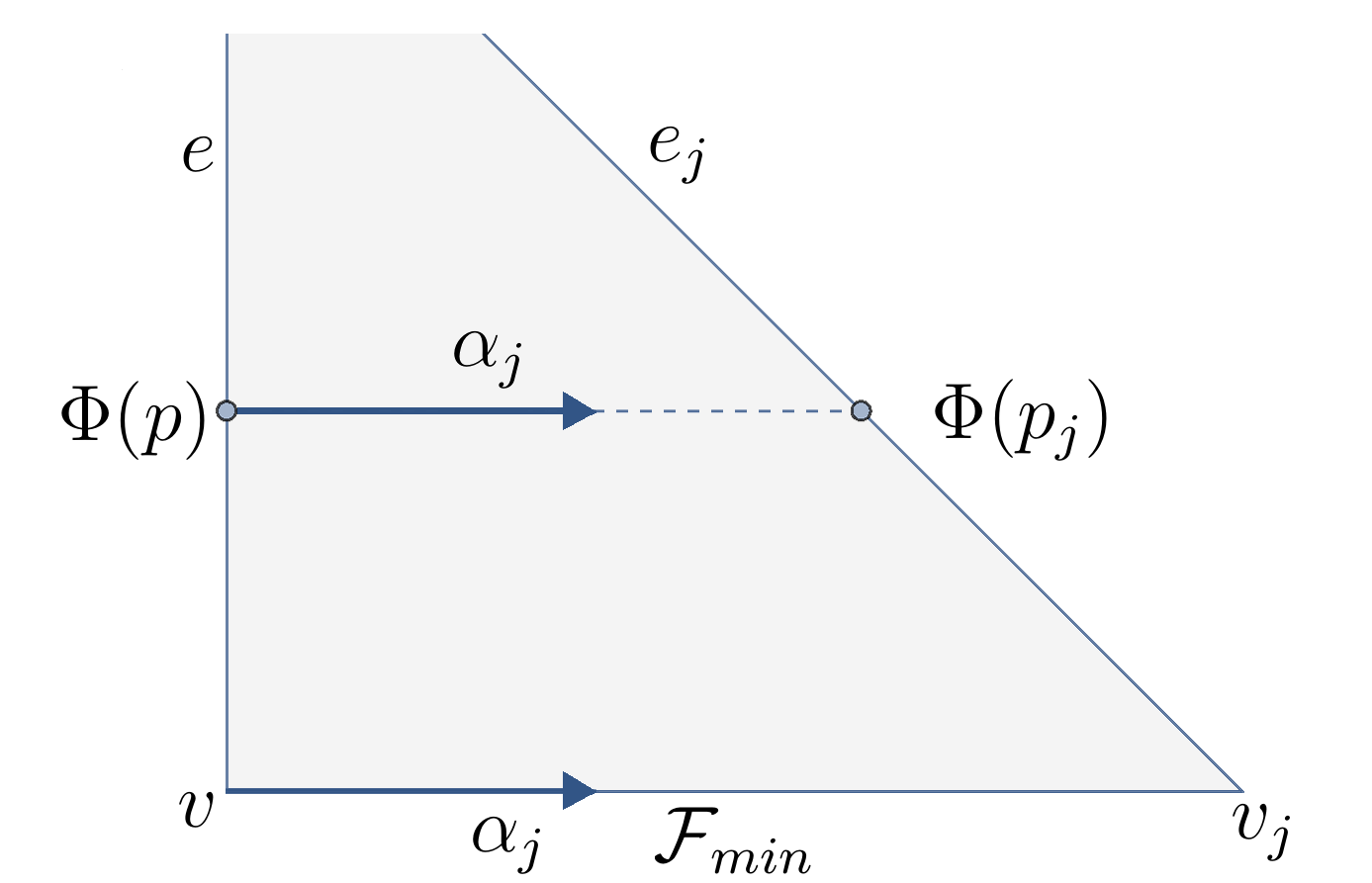}
  		\caption{A schematic illustration of the argument in
                  the proof of Lemma \ref{lemma:iso_fixed_all_edges}.}
  		\label{Pic1ImageMexc}
  	\end{center}
  \end{figure}

  By the last paragraph, \ref{item:31} holds for each edge that comes out of
  $\mathcal{F}_{\min}$ that is incident to a vertex of
  $\mathcal{F}_{\min}$ that is adjacent to $v$ in $\mathcal{F}_{\min}$
  (i.e., there exists an edge of $\mathcal{F}_{\min}$ that is incident
  to both vertices). We define the following relation on the set of
  vertices of $\mathcal{F}_{\min}$:
  $$ v_1 \sim v_2 \Leftrightarrow \text{ either } v_1=v_2 \text{ or
  } v_1 \text{ is adjacent to } v_2. $$
  Since the transitive closure of the above
  relation has one equivalence class and since there is a one-to-one
  correspondence between edges that come out of $\mathcal{F}_{\min}$
  and vertices of $\mathcal{F}_{\min}$, \ref{item:31} holds.
\end{proof}

As a consequence of Lemma \ref{lemma:iso_fixed_all_edges}, 
we obtain the following sufficient
condition for a normalized monotone tall complexity one
$T$-space to be without isolated fixed points.

\begin{corollary}\label{cor:no_iso_fixed_points}
  Let $\comp$ be a normalized monotone tall complexity one $T$-space. If
  there is a vertex of $\Phi(M)$ on the linear hyperplane $\{w
  \in \g^* \mid \langle w, \nu_{\min} \rangle =
  0\}$, then there are no isolated fixed points. Moreover,
  $M_{\mathrm{exc}} = \emptyset$.
\end{corollary}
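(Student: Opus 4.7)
The plan is to argue by contradiction: assuming there exists an isolated fixed point, I will use the vertex $v^* \in \Phi(M) \cap \{w \in \g^* \mid \langle w, \nu_{\min}\rangle = 0\}$ to locate an isolated fixed point whose image is $v^*$ itself, then invoke tallness (via the complexity-preserving property) to obtain a contradiction.

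Specifically, suppose $M^T$ contains an isolated fixed point. By Lemma~\ref{lemma:iso_fixed_all_edges}, every edge $e$ of $\Phi(M)$ coming out of $\mathcal{F}_{\min}$ carries an isolated fixed point in its preimage, whose image is, by Proposition~\ref{prop:iso_fixed_pts_monotone}, precisely the intersection $e \cap \{\langle \cdot, \nu_{\min}\rangle = 0\}$. It thus suffices to exhibit an edge coming out of $\mathcal{F}_{\min}$ whose level-zero point is $v^*$.

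The heart of the argument is the combinatorial claim that $v^*$ is joined by such an edge to a vertex of $\mathcal{F}_{\min}$. I plan to prove this using that $\Phi(M)$ is reflexive Delzant (Proposition~\ref{prop:mom_map_image}). Let $\beta_1, \ldots, \beta_d \in \ell^*$ be the weights at $v^*$; they form a basis of $\ell^*$ and, by Proposition~\ref{prop reflexive eq}, satisfy $\sum_i \beta_i = -v^*$, so pairing with $\nu_{\min}$ gives $\sum_i \langle \beta_i, \nu_{\min}\rangle = 0$. Since the $\beta_i$ span $\g^*$, they cannot all lie in $\{\langle \cdot, \nu_{\min}\rangle = 0\}$, so at least one pairing is strictly negative. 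On the other hand, the far endpoint of the $\beta_i$-edge at $v^*$ is a lattice point of $\Phi(M) \subset \{\langle \cdot, \nu_{\min}\rangle \geq -1\}$, which, combined with integrality (in the spirit of Lemma~\ref{lemma:weight_edge_out_facet}), forces each negative pairing to equal exactly $-1$ and the corresponding edge to have lattice length one. Hence at least one edge incident to $v^*$ runs from $v^*$ (at level $0$) to a vertex on $\mathcal{F}_{\min}$ (at level $-1$); this is an edge coming out of $\mathcal{F}_{\min}$ whose level-zero intersection is $v^*$.

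Combining the two steps, the assumed isolated fixed point would have image $v^*$. But $\comp$ is complexity preserving by Proposition~\ref{prop:tall=complexity_preserving}, so by Corollary~\ref{cor:comp_preserving} the preimage $\Phi^{-1}(v^*)$ has dimension $2$, i.e.\ is a fixed surface, contradicting isolatedness. This proves there are no isolated fixed points. To then conclude $M_{\mathrm{exc}} = \emptyset$ I would chain three earlier facts: Lemma~\ref{lemma:fixed_point_exceptional} (in complexity one, a fixed point is isolated iff it is exceptional), Lemma~\ref{lemma:necessary_sufficient_existence_exceptional} (exceptional sheets exist iff exceptional fixed points do), and Lemma~\ref{lemma:exceptional} together with condition~(2) of Proposition~\ref{prop:regular_exceptional} (any exceptional point with its own stabilizer makes the sheet through it exceptional). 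The main obstacle is the combinatorial step pinning down that $v^*$ is the endpoint of an edge coming out of $\mathcal{F}_{\min}$; once this is secured, the tallness contradiction is immediate.
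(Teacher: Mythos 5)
Your proposal is correct and takes essentially the same route as the paper: both arguments produce an edge of $\Phi(M)$ coming out of $\mathcal{F}_{\min}$ whose level-zero point is the given vertex (the paper gets the downward direction from Lemma \ref{lemma:special_weight} applied to $\Phi^{-1}(v)$, you get it from the weight sum formula of Proposition \ref{prop reflexive eq}, but the integrality argument forcing the adjacent vertex onto $\mathcal{F}_{\min}$ is identical), and then combine Lemma \ref{lemma:iso_fixed_all_edges}, Proposition \ref{prop:iso_fixed_pts_monotone} and the fact that vertex preimages are fixed surfaces to exclude isolated fixed points. The concluding chain of lemmas giving $M_{\mathrm{exc}} = \emptyset$ is the same as in the paper.
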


\begin{proof}
  Let $v \in \Phi(M)$ be a vertex of $\Phi(M)$ such that $\langle v, \nu_{\min} \rangle =
  0$. First we show that there is an edge $e$ that comes out of $\mathcal{F}_{\min}$ that is
  incident to $v$. Let $\alpha_1,\ldots, \alpha_{n-1}$ be
  the non-zero isotropy weights of $\Phi^{-1}(v)$. By
  Lemma \ref{lemma:special_weight}, we may assume that $\langle \alpha_{n-1}, \nu_{\min} \rangle <
  0$. Let $e$ be the edge that is contained in $v + \R_{\geq
    0} \langle \alpha_{n-1} \rangle$ and let $v' \in \Phi(M)$ be the
  other vertex to which $e$ is incident. By construction, $\langle v', \nu_{\min} \rangle <
  0$. Moreover, since $\comp$ is normalized monotone, $\Phi(M)$ is
  integral. Therefore, by \eqref{eq:10}, $\langle v',
  \nu_{\min} \rangle = -1$, i.e., $v'$ is a vertex of
  $\mathcal{F}_{\min}$. Since $\langle \alpha_{n-1}, \nu_{\min} \rangle <
  0$, $e$ is an edge of $\Phi(M)$ that comes out of
  $\mathcal{F}_{\min}$. Hence, $v$ is the only element in the
  intersection of $e$ and $\{w
  \in \g^* \mid \langle w, \nu_{\min} \rangle =
  0\}$. By Theorem \ref{thm:con_pac} and Proposition
  \ref{prop:iso_fixed_pts_monotone}, there is no isolated fixed point
  that is mapped to $e$ under $\Phi$. Hence, by Lemmas
  \ref{lemma:fixed_point_exceptional}, 
  \ref{lemma:necessary_sufficient_existence_exceptional} and
  \ref{lemma:iso_fixed_all_edges}, the result follows.
\end{proof}

The next result plays a key role throughout the paper.

\begin{proposition}\label{lemma:exceptional_sheet_dimension_1}
  Let $\comp$ be a normalized monotone tall complexity one $T$-space of dimension $2n$. If
  $(N,\omega_N,\Phi_N)$ is an exceptional sheet that is stabilized by a
  one-dimensional subgroup $H$, then $H = H_{\mathcal{F}_{\min}}$ and
  \begin{equation}
    \label{eq:53}
    \Phi(N) = \Phi(M) \cap \{w \in \g^* \mid \langle w, \nu_{\min}
    \rangle = 0\}. 
  \end{equation}
\end{proposition}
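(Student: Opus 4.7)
The plan is to extract a generic orbit of $N$, use Lemma \ref{lemma:global_desc_exceptional} to pin down its stabilizer, and then propagate this information along the $1$-skeleton of $\mathcal{F}_{\min}$ to identify the moment image. First, by the principal orbit theorem (compare Corollary \ref{cor:preimage_face}), pick a generic $q\in N$ whose stabilizer is exactly $H$; being in an exceptional sheet, $q$ is exceptional by Lemma \ref{lemma:exceptional}. Lemma \ref{lemma:global_desc_exceptional} then forces this stabilizer to be of the form $K_J\times H_{\mathcal{F}_{\min}}$ for some $J\subseteq\{1,\ldots,n-2\}$, where $\dim K_J=(n-2)-|J|$ since $\alpha_1,\ldots,\alpha_{n-2}$ are linearly independent in $(\g')^*$. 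Requiring $\dim H=1$ forces $|J|=n-2$, so $K_J$ is trivial and $H=H_{\mathcal{F}_{\min}}$.

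For the inclusion $\Phi(N)\subseteq\Phi(M)\cap\{\langle\cdot,\nu_{\min}\rangle=0\}$, I apply Lemma \ref{lemma:global_desc_exceptional} to the same $q$: there is an isolated fixed point $p\in M$ such that $\Phi(q)\in\Phi(p)+\R_{>0}\langle\alpha_1,\ldots,\alpha_{n-2}\rangle$. Proposition \ref{prop:iso_fixed_pts_monotone} places $\Phi(p)$ on the hyperplane $\{\langle\cdot,\nu_{\min}\rangle=0\}$, while \eqref{eq:59} gives $\langle\alpha_j,\nu_{\min}\rangle=0$ for $j=1,\ldots,n-2$. Hence $\langle\Phi(q),\nu_{\min}\rangle=0$; density of the set of such $q$ in $N$ together with continuity of $\Phi$ yields the stated inclusion.

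For the reverse inclusion I first observe, via Proposition \ref{prop:regular_exceptional}, that $(N,\omega_N,\Phi_N)$ has complexity zero, so it is a compact symplectic toric $(T/H_{\mathcal{F}_{\min}})$-space and $\Phi(N)$ is a Delzant polytope of dimension $d-1$. Since $M_{\mathrm{exc}}\neq\emptyset$, Corollary \ref{cor:no_iso_fixed_points} forbids any vertex of $\Phi(M)$ from lying on $\{\langle\cdot,\nu_{\min}\rangle=0\}$; consequently every edge of $\Phi(M)$ which crosses this hyperplane must come out of $\mathcal{F}_{\min}$, and by Lemma \ref{lemma:weight_edge_out_facet} its intersection with the hyperplane is the lattice point $v+\alpha_{n-1}$ for a vertex $v\in\mathcal{F}_{\min}$. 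These points are exactly the vertices of the polytope $\Phi(M)\cap\{\langle\cdot,\nu_{\min}\rangle=0\}$, and by Proposition \ref{prop:iso_fixed_pts_monotone} each is the image of an isolated fixed point. The crux of the argument is then to show that one may choose such an isolated fixed point inside $N$ for every vertex of $\mathcal{F}_{\min}$. Starting from $p_0\in N\cap M^T$ (which exists by compactness of $N$, and is isolated by Lemmas \ref{lemma:exceptional} and \ref{lemma:fixed_point_exceptional}), I adapt the construction from the proof of Lemma \ref{lemma:iso_fixed_all_edges}: for each vertex $v_1$ of $\mathcal{F}_{\min}$ adjacent to $v_0:=\Phi(p_0)-\alpha_{n-1}$, Lemma \ref{lemma:local_description_exceptional} produces an exceptional point $q$ arbitrarily close to $p_0$ with stabilizer $K_{\{j\}}\times H_{\mathcal{F}_{\min}}$ corresponding to the weight $\alpha_j$ along the edge of $\mathcal{F}_{\min}$ joining $v_0$ to $v_1$. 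Since this stabilizer contains $H_{\mathcal{F}_{\min}}$ and $q$ lies near $p_0\in N$, $q$ belongs to $N$; the $4$-dimensional exceptional toric sheet through $q$, being the connected component of $M^{K_{\{j\}}\times H_{\mathcal{F}_{\min}}}$ through $q$, is contained in $M^{H_{\mathcal{F}_{\min}}}$ and hence in $N$. That sheet carries a second isolated fixed point $p_1\in N$ with $\Phi(p_1)=v_1+\alpha_{n-1}^{(1)}$. Iterating along the (connected) $1$-skeleton of $\mathcal{F}_{\min}$ yields, for every vertex $v$ of $\mathcal{F}_{\min}$, an isolated fixed point in $N$ over $v+\alpha_{n-1}$. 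Convexity of $\Phi(N)$ from Theorem \ref{thm:con_pac} then delivers $\Phi(N)\supseteq\Phi(M)\cap\{\langle\cdot,\nu_{\min}\rangle=0\}$.

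The main obstacle is this component-tracking in the iteration: a priori there could be several exceptional sheets with stabilizer $H_{\mathcal{F}_{\min}}$, each accounting for only some of the isolated fixed points. The resolution is that every intermediate $4$-dimensional toric sheet is forced into $M^{H_{\mathcal{F}_{\min}}}$ by its stabilizer, while being produced near a fixed point already in $N$; connectedness of each such sheet then locks it into the single component $N$, and the induction on the $1$-skeleton of $\mathcal{F}_{\min}$ closes the argument.
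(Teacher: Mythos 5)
Your proof is correct, and its first half follows the same path as the paper: the identification $H=H_{\mathcal{F}_{\min}}$ via Lemma \ref{lemma:global_desc_exceptional} together with connectedness of $K_J$, and the inclusion $\Phi(N)\subseteq\Phi(M)\cap\{w\in\g^*\mid\langle w,\nu_{\min}\rangle=0\}$ (which the paper obtains implicitly from convexity and its vertex comparison, while you get it directly from density of the points with stabilizer exactly $H$). For the reverse inclusion you take a genuinely different route. The paper notes that both $\Phi(N)$ and the slice are full-dimensional convex polytopes in the hyperplane, shows via Proposition \ref{prop:iso_fixed_pts_monotone} that every vertex of $\Phi(N)$ is a vertex of the slice, and then rules out a strict inclusion by the local identity $V\cap\Phi(N)=V\cap\{w\in\g^*\mid\langle w,\nu_{\min}\rangle=0\}$ of \eqref{eq:61}, obtained from Lemma \ref{lemma:local_description_exceptional} at the isolated fixed point over a vertex of $\Phi(N)$. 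You instead rerun the propagation argument of Lemma \ref{lemma:iso_fixed_all_edges} with component tracking: each auxiliary exceptional point $q$ produced near an isolated fixed point of $N$ has stabilizer containing $H_{\mathcal{F}_{\min}}$ and so lies in $N$ (a connected component of $M^{H_{\mathcal{F}_{\min}}}$ is open in it), the exceptional toric sheet through $q$ is a connected subset of $M^{H_{\mathcal{F}_{\min}}}$ meeting $N$ and hence contained in $N$, and iterating over the connected $1$-skeleton of $\mathcal{F}_{\min}$ places an isolated fixed point of $N$ over every vertex of the slice; convexity of $\Phi(N)$ then finishes. Your version proves along the way a statement the paper only extracts later (in the proof of Theorem \ref{cor:exceptional_orbits}), namely that an exceptional sheet stabilized by $H_{\mathcal{F}_{\min}}$ contains an isolated fixed point over every slice vertex, at the cost of the component-tracking bookkeeping that the paper's local convexity argument avoids. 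Two harmless slips: the intermediate sheets through the points $q$ have stabilizer of codimension one in $T$, so they are two-dimensional spheres rather than four-dimensional; and Proposition \ref{prop:iso_fixed_pts_monotone} only says that images of isolated fixed points lie among the slice vertices, so the existence of an isolated fixed point over each slice vertex should be credited to your iteration (equivalently to Lemma \ref{lemma:iso_fixed_all_edges}) rather than to that proposition.
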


\begin{proof}
  Since $(N,\omega_N,\Phi_N)$ is exceptional, every point in $N$ is
  exceptional. Moreover, since $(N,\omega_N,\Phi_N)$ is stabilized by
  $H$, there exists a point $q \in N$ with stabilizer equal to
  $H$. Hence, by part \ref{item:38} of Lemma \ref{lemma:global_desc_exceptional}, there exist $p \in M^T$
  isolated and a unique subset $J \subseteq \{1,\ldots, n-2\}$ such 
  that, if $\alpha_1,\ldots,\alpha_{n-2},\alpha_{n-1},-\alpha_{n-1}$ are the
  isotropy weights of $p$ as in Proposition
  \ref{prop:iso_fixed_pts_monotone}, then the stabilizer of $q'$ is $K_J \times
  H_{\mathcal{F}_{\min}}$, where $K_J \leq T'$ is as in
  \eqref{def KJ}. By definition, $K_J$ is
  connected. Hence, if the dimension of the stabilizer of $q$ is one,
  then it must be $H_{\mathcal{F}_{\min}}$, thus proving the first
  statement.
  
  By Proposition \ref{prop:mom_map_image}, $\Phi(M)$ is a reflexive
  Delzant polytope and therefore the origin lies in the interior of
  $\Phi(M)$ (see Lemma
  \ref{lemma:reflexive_interior}). 
  Hence, the interior of $\Phi(M) \cap \{w \in \g^* \mid  \langle w, \nu_{\min}
  \rangle = 0\}$ in $\{w \in \g^* \mid  \langle w, \nu_{\min}
  \rangle = 0\}$ is non-empty. Since both
  $M$ and $N$ are compact, and since $\{w \in \g^* \mid \langle w, \nu_{\min}
  \rangle = 0\}$ is a linear hyperplane in $\g^*$, by the
  Convexity Package (Theorem \ref{thm:con_pac}), both $\Phi(M) \cap \{w \in \g^* \mid \langle w, \nu_{\min}
  \rangle = 0\}$ and $\Phi_N(N) = \Phi(N)$ are convex
  polytopes. Therefore, in order to prove that \eqref{eq:53} holds, it
  suffices to show that $\Phi(M) \cap \{w \in \g^* \mid \langle w, \nu_{\min}
  \rangle = 0\}$ and $\Phi(N)$ have the same vertices. 

  Since $M_{\mathrm{exc}} \neq \emptyset$, by Corollary
  \ref{cor:no_iso_fixed_points}, there is no vertex of $\Phi(M)$ lying
  on $\{w \in \g^* \mid \langle w, \nu_{\min}
  \rangle = 0\}$. Hence, a point $\hat{v} \in \Phi(M) \cap \{w \in \g^* \mid \langle w, \nu_{\min}
  \rangle = 0\}$ is a vertex if and only if there exists an edge $e$
  of $\Phi(M)$
  that comes out of $\mathcal{F}_{\min}$ such that $\hat{v}$ is the
  intersection of $e$ with $ \{w \in \g^* \mid \langle w, \nu_{\min}
  \rangle = 0\}$.  On the other hand, since $(N,\omega_N,\Phi_N)$ is exceptional and since the
  complexity of $\comp$ is one, by Proposition
  \ref{prop:regular_exceptional}, the complexity of
  $(N,\omega_N,\Phi_N)$ is zero, i.e., it is a compact symplectic
  toric manifold. Therefore, $\hat{v} \in \Phi(N)$ is a vertex if and only
  if there exists an isolated fixed point $p \in N$ such that
  $\Phi(p) = \hat{v}$.

  Let $\hat{v} \in \Phi(N)$ be a vertex and let $p \in N$ be as
  above. By Proposition \ref{prop:iso_fixed_pts_monotone}, there
  exists $\hat{v}$ an edge $e$ of $\Phi(M)$
  that comes out of $\mathcal{F}_{\min}$ such that $\hat{v}$ is the
  intersection of $e$ with $ \{w \in \g^* \mid \langle w, \nu_{\min}
  \rangle = 0\}$. Hence, each vertex of $\Phi(N)$ is a vertex of $\Phi(M) \cap \{w \in \g^* \mid \langle w, \nu_{\min}
  \rangle = 0\}$. Moreover, by Lemma
  \ref{lemma:local_description_exceptional}, there exists an open
  neighborhood $U$ of $p$ such that $U \cap N$ is precisely the subset
  of exceptional points in $U$ and
  \begin{equation}
    \label{eq:51}
    \Phi(U \cap N) = \Phi(U) \cap \{w \in \g^* \mid \langle w, \nu_{\min}
    \rangle = 0\}.
  \end{equation}  
  Set $V:= \Phi(U)$. By the Convexity Package (Theorem
  \ref{thm:con_pac}), $V$ is an open neighborhood of
  $\hat{v}$. Moreover, by \eqref{eq:51},
  \begin{equation}
    \label{eq:61}
    V \cap \Phi(N) = V \cap \{w \in \g^* \mid \langle w, \nu_{\min}
    \rangle = 0\},
  \end{equation}
  (see Figure \ref{Pic2ImageMexc}).
  
  \begin{figure}[htbp]
  	\begin{center}
  		\includegraphics[width=12cm]{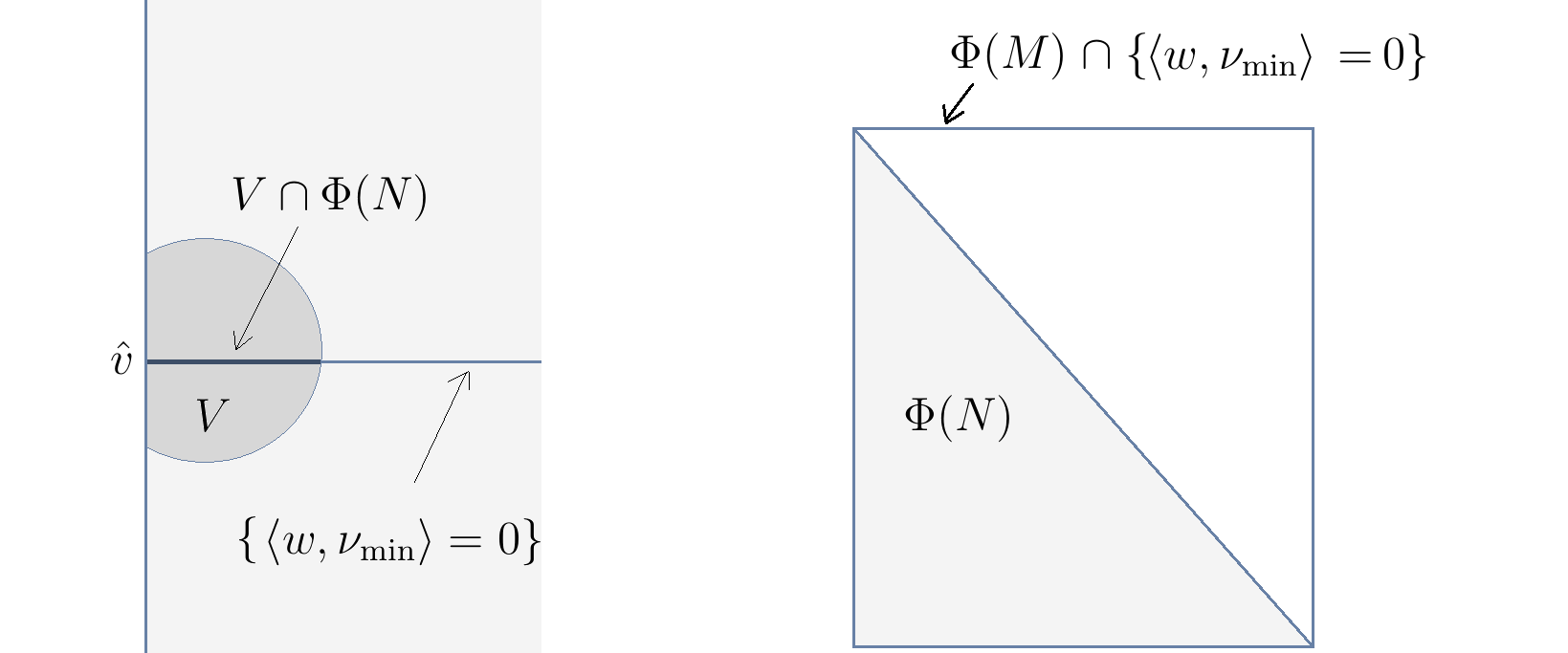}
  		\caption{The two possibilities in the proof of
                  Proposition
                  \ref{lemma:exceptional_sheet_dimension_1}. The
                  figure on the left illustrates the case in which
                  $\hat{v}$ is a vertex of $\Phi(N)$, while that on
                  the right shows the (absurd) case in which not all
                  vertices of $\Phi(M) \cap \{w \in \g^* \mid \langle w, \nu_{\min}
                  \rangle = 0\}$ are vertices of $\Phi(N)$.}
  		\label{Pic2ImageMexc}
  	\end{center}
  \end{figure}
  Suppose that there exists a vertex of $\Phi(M) \cap \{w \in \g^* \mid \langle w, \nu_{\min}
  \rangle = 0\}$ that is not a vertex of $\Phi(N)$. Since both $\Phi(M) \cap \{w \in \g^* \mid \langle w, \nu_{\min}
  \rangle = 0\}$ and $\Phi(N)$ are convex polytopes of full dimension in $\{w \in \g^* \mid \langle w, \nu_{\min}
  \rangle = 0\}$ and since the vertices of the latter are a subset of
  those of the former, there exists a vertex $\hat{v}$ of $\Phi(N)$
  such that for any open neighborhood $V$ of $\hat{v}$
  \begin{equation*}
    V \cap \Phi(N) \subsetneq V \cap \{w \in \g^* \mid \langle w, \nu_{\min}
    \rangle = 0\},
  \end{equation*}
  (see Figure \ref{Pic2ImageMexc}). By \eqref{eq:61}, this is a contradiction.
\end{proof}

Our penultimate aim in this section is to prove Theorem
\ref{cor:exceptional_orbits} below. To this end, first we prove the
following result.

\begin{lemma}\label{lemma:exceptional_in_max_sheet}
  Let $\comp$ be a normalized monotone tall complexity one $T$-space of dimension $2n$. If $q \in M$ is
  exceptional, then there exists a unique exceptional sheet
  $(N,\omega_N,\Phi_N)$ that is stabilized by $H_{\mathcal{F}_{\min}}$ such that $q \in N$. 
\end{lemma}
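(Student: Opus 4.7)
The plan is to construct $N$ explicitly as the connected component of $M^{H_{\mathcal{F}_{\min}}}$ containing $q$, verify that it is a sheet stabilized by $H_{\mathcal{F}_{\min}}$, show that it is exceptional, and then observe that uniqueness is automatic. Since $q$ is exceptional, part (ii) of Lemma \ref{lemma:global_desc_exceptional} says its stabilizer is $K_J \times H_{\mathcal{F}_{\min}}$ for some $J \subseteq \{1,\ldots,n-2\}$, so $q$ lies in $M^{H_{\mathcal{F}_{\min}}}$; let $N$ denote the connected component containing $q$. By Lemma \ref{Lem:fixedSubmainfolds}, $N$ is a connected, $T$-invariant, embedded symplectic submanifold of $(M,\omega)$.

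To see that $N$ is actually a sheet stabilized by $H_{\mathcal{F}_{\min}}$, I would argue that the isolated fixed point $p$ supplied by Lemma \ref{lemma:global_desc_exceptional} lies in $N$. Indeed, the proof of that lemma exhibits $p$ as a fixed point in the (smaller) sheet through $q$ stabilized by $K_J \times H_{\mathcal{F}_{\min}}$; this sheet is connected, sits inside $M^{K_J \times H_{\mathcal{F}_{\min}}} \subseteq M^{H_{\mathcal{F}_{\min}}}$, and contains both $q$ and $p$, forcing $p \in N$. Now I would apply Lemma \ref{lemma:local_description_exceptional} at $p$ with the subset $J' = \{1,\ldots,n-2\}$: the relations \eqref{eq:59} and \eqref{eq:16} identify the one-dimensional subgroup denoted $H$ in that lemma with $H_{\mathcal{F}_{\min}}$, and for this choice of $J'$ one has $K_{J'} = \{e\}$. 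The conclusion of the lemma then produces exceptional points arbitrarily close to $p$ whose stabilizer is exactly $H_{\mathcal{F}_{\min}}$; such points lie in $M^{H_{\mathcal{F}_{\min}}}$ and close to $p \in N$, so they lie in $N$ by connectedness.

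To prove that $N$ is exceptional, I would compute $\dim N$ via the local model at $p$. The tangent space $T_pM \simeq \C^n$ carries a $T$-representation with weights $\alpha_1,\ldots,\alpha_{n-2},\alpha_{n-1},-\alpha_{n-1}$; their restrictions to $H_{\mathcal{F}_{\min}} = \exp(\R\langle \nu_{\min}\rangle)$ are the integers $\langle \alpha_j,\nu_{\min}\rangle$, which by \eqref{eq:59} and \eqref{eq:16} equal $0,\ldots,0,1,-1$. Thus $(T_pM)^{H_{\mathcal{F}_{\min}}}$ has complex dimension $n-2$, hence $\dim N = 2n-4$, and the complexity of $N$ as a Hamiltonian $T/H_{\mathcal{F}_{\min}}$-space is
\begin{equation*}
\tfrac{1}{2}(2n-4) - (\dim T - 1) = (n-2) - (n-2) = 0,
\end{equation*}
strictly less than the complexity one of $\comp$. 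By Proposition \ref{prop:regular_exceptional}, $N$ is exceptional.

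Finally, uniqueness is essentially free from Definition \ref{defn:sheet}: any sheet stabilized by $H_{\mathcal{F}_{\min}}$ is, by construction, a connected component of $M^{H_{\mathcal{F}_{\min}}}$, so any sheet stabilized by $H_{\mathcal{F}_{\min}}$ that contains $q$ coincides with the unique component containing $q$, namely $N$. The only step requiring genuine care is the localization argument placing $p$ inside the same connected component of $M^{H_{\mathcal{F}_{\min}}}$ as $q$; this is the bridge that allows me to transport the local description of Lemma \ref{lemma:local_description_exceptional} at the fixed point $p$ back to the arbitrary exceptional point $q$.
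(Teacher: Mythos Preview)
Your proof is correct and follows essentially the same approach as the paper's: both locate an isolated fixed point $p$ in the same component of $M^{H_{\mathcal{F}_{\min}}}$ as $q$ via the smaller sheet through $q$, then use the local description at $p$ (Lemma~\ref{lemma:local_description_exceptional}) to produce nearby exceptional points with stabilizer exactly $H_{\mathcal{F}_{\min}}$. The paper organizes this slightly differently---it builds $N$ as the sheet through such a nearby point $q'$ and then argues $N' \subseteq N$ to place $q$ in $N$---whereas you define $N$ upfront as the component of $M^{H_{\mathcal{F}_{\min}}}$ containing $q$ and verify its properties; your dimension count for exceptionality is also more direct than the paper's point-based argument. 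One cosmetic point: you invoke ``the proof of'' Lemma~\ref{lemma:global_desc_exceptional} to place $p$ in the sheet through $q$, but you can (and the paper does) simply take the sheet through $q$, note it is compact and contained in $M^{H_{\mathcal{F}_{\min}}}$, and pick any $T$-fixed point in it---this avoids appealing to another proof's internals.
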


\begin{proof}
  First we prove uniqueness. Let $(N_i, \omega_i, \Phi_i)$ be an exceptional sheet that is stabilized by $H_{\mathcal{F}_{\min}}$ such that $q \in N_i$ for $i=1,2$. Hence, both $N_1$ and $N_2$ are connected components of
  $M^{H_{\mathcal{F}_{\min}}}$. Since $q \in N_1 \cap N_2$, it follows
  that $N_1 = N_1 \cup N_2 = N_2$, as desired.

  Next we prove existence. Let $(N',\omega',\Phi')$ be the sheet
  through $q$. Since $q$ is exceptional, by Lemma
  \ref{lemma:exceptional}, $(N',\omega',\Phi')$
  is exceptional. Since $N'$ is compact, there exists $p \in
  M^T \cap N'$ that, by Lemma \ref{lemma:fixed_point_exceptional}, is
  isolated. We claim that there exists an exceptional sheet
  $(N,\omega_N,\Phi_N)$ stabilized by $H_{\mathcal{F}_{\min}}$ such
  that $p \in N$. To this end, we use Proposition
  \ref{prop:iso_fixed_pts_monotone} and Lemma \ref{lemma:local_description_exceptional}.
  Let $U$ be the open neighborhood of $p$ given by Lemma
  \ref{lemma:local_description_exceptional} and let $U_1$ be the subset of $U$
  consisting of exceptional points stabilized by
  $H_{\mathcal{F}_{\min}}$, which is path-connected and dense in the subset
  of $U$ consisting of exceptional points. 
  In particular,
  $U_1 \neq \emptyset$. Given any point $q' \in U_1$, we consider the
  sheet $(N,\omega_N,\Phi_N)$ through $q'$. By Lemma
  \ref{lemma:exceptional}, $(N,\omega_N,\Phi_N)$ is
  exceptional, while, by definition, it is stabilized by
  $H_{\mathcal{F}_{\min}}$. Since $U_1$ is dense in the subset
  of $U$ consisting of exceptional points and since $p
  \in U$ is exceptional by Lemma \ref{lemma:fixed_point_exceptional}, it
  follows that $p \in N$, thus proving the claim.

  Hence, $N' \cap N \neq
  \emptyset$. Moreover, since any point in $N'$ is exceptional, by
  Lemma \ref{lemma:global_desc_exceptional} the stabilizer of any point
  in $N'$ contains $H_{\mathcal{F}_{\min}}$. Since both $N'$ and $N$
  are connected and since $N$ is a connected component of
  $M^{H_{\mathcal{F}_{\min}}}$, it follows that $N' \cup N = N$, so
  that $N'$ is contained in $N$. Hence, $q \in N$, as desired.
\end{proof}

In fact, the proof of Lemma \ref{lemma:exceptional_in_max_sheet}
yields a slightly stronger result, namely that, under the hypotheses
of the lemma, any exceptional sheet is contained in one that
stabilized by $H_{\mathcal{F}_{\min}}$.

\begin{theorem}\label{cor:exceptional_orbits}
  Let $\comp$ be a normalized monotone tall complexity one $T$-space of dimension $2n$.
  Each connected component of $M_{\mathrm{exc}}$ is mapped
  homeomorphically to $ \Phi(M) \cap
  \{w \in \g^* \mid \langle w, \nu_{\min} \rangle = 0 \}$
  by the orbital moment map. In particular, each connected component
  of $M_{\mathrm{exc}}$ is contractible. Moreover, 
  \begin{itemize}[leftmargin=*]
  \item if  $m$ is the
    number of vertices of the minimal facet, then the number of
    connected components of $M_{\mathrm{exc}}$ is precisely the number
    of isolated fixed points divided by $m$, and
  \item if $e$ is an
    edge of  $\Phi(M)$ that comes out of
    $\mathcal{F}_{\mathrm{min}}$, then the number of isolated fixed
    points lying on $\Phi^{-1}(e)$ equals the number of connected
    components of $M_{\mathrm{exc}}$.
  \end{itemize}
\end{theorem}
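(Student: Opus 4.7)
The plan is to identify the connected components of $M_{\mathrm{exc}}$ with the images under the orbit map of the exceptional sheets stabilized by $H_{\mathcal{F}_{\min}}$. First I would observe that, by Lemma \ref{lemma:exceptional_in_max_sheet}, every exceptional point lies in a unique such sheet, so $M_{\mathrm{exc}} = \bigsqcup_i N_i/T$, where $\{N_i\}$ enumerates the exceptional sheets stabilized by $H_{\mathcal{F}_{\min}}$. The sets $N_i$ are pairwise disjoint, since they are distinct connected components of the compact submanifold $M^{H_{\mathcal{F}_{\min}}}$, and hence are finite in number; meanwhile Lemma \ref{lemma:exceptional} ensures $N_i/T \subseteq M_{\mathrm{exc}}$. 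Thus each $N_i/T$ is a connected, closed, and (by finiteness of the decomposition) open subset of $M_{\mathrm{exc}}$, so these are precisely its connected components.

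Next I would describe each component in detail. By Proposition \ref{prop:regular_exceptional}, since $\comp$ has complexity one and $N_i$ is an exceptional sheet, the Hamiltonian space $(N_i,\omega_{N_i},\Phi_{N_i})$ has complexity zero; since $H_{\mathcal{F}_{\min}}$ acts trivially on $N_i$, it is a compact symplectic toric manifold for the quotient torus $T/H_{\mathcal{F}_{\min}}$, and $N_i/T$ coincides with its orbit space. The Convexity Package, together with the fact that reduced spaces are points in complexity zero, then identifies $N_i/T$ homeomorphically with $\Phi(N_i)$ via the orbital moment map; by Proposition \ref{lemma:exceptional_sheet_dimension_1}, this image is $\Phi(M) \cap \{w \in \g^* \mid \langle w, \nu_{\min} \rangle = 0\}$, a convex polytope and therefore contractible. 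This settles the first two assertions.

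For the counting, I would first dispose of the trivial case $M_{\mathrm{exc}} = \emptyset$, in which Lemmas \ref{lemma:fixed_point_exceptional} and \ref{lemma:necessary_sufficient_existence_exceptional} force both sides of each identity to vanish. Assume henceforth that $M_{\mathrm{exc}} \neq \emptyset$. Since $\Phi(M)$ is reflexive Delzant (Proposition \ref{prop:mom_map_image}), at each vertex of $\mathcal{F}_{\min}$ there are exactly $d$ incident edges of $\Phi(M)$, of which $d-1$ lie in $\mathcal{F}_{\min}$ and exactly one comes out; hence the number of edges coming out of $\mathcal{F}_{\min}$ is $m$. By Lemma \ref{lemma:weight_edge_out_facet}, each such edge crosses the hyperplane $\{\langle w, \nu_{\min} \rangle = 0\}$ in a single interior point, while Corollary \ref{cor:no_iso_fixed_points} prevents any vertex of $\Phi(M)$ from lying on this hyperplane. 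Therefore the section polytope $\Phi(M) \cap \{w \in \g^* \mid \langle w, \nu_{\min} \rangle = 0\}$ has exactly $m$ vertices. Each toric component $N_i/T$ has as many $T/H_{\mathcal{F}_{\min}}$-fixed points as its polytope has vertices, namely $m$; all these are $T$-fixed points in $M$, and they are isolated by Lemma \ref{lemma:fixed_point_exceptional}. Conversely, every isolated $T$-fixed point of $M$ is exceptional (Lemma \ref{lemma:fixed_point_exceptional}), hence lies in a unique $N_i$. This yields the identity $\#(\text{isolated fixed points}) = m \cdot \#(\text{components of } M_{\mathrm{exc}})$. For the last bullet, Proposition \ref{prop:iso_fixed_pts_monotone} shows that the isolated fixed points mapping into an edge $e$ coming out of $\mathcal{F}_{\min}$ are precisely those mapping to the unique vertex $e \cap \{\langle w, \nu_{\min} \rangle = 0\}$ of the section polytope; since each component contributes exactly one fixed point above this vertex, the count equals the number of connected components of $M_{\mathrm{exc}}$.

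The main obstacle is conceptual rather than computational: one must leverage Lemma \ref{lemma:exceptional_in_max_sheet} carefully to see that the partition of $M_{\mathrm{exc}}$ into the pieces $N_i/T$ exhausts it and that these pieces are disjoint despite the fact that a single exceptional point may lie in sheets stabilized by strictly larger subgroups (e.g.\ at an isolated fixed point). Once this identification is clean, the remaining content is a matching between vertices of the section polytope, fixed points of the toric sheets, and isolated $T$-fixed points of $M$, bookkept via Proposition \ref{prop:iso_fixed_pts_monotone} and Lemma \ref{lemma:weight_edge_out_facet}.
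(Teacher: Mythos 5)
Your proposal is correct and follows essentially the same route as the paper: both identify the connected components of $M_{\mathrm{exc}}$ with the exceptional sheets stabilized by $H_{\mathcal{F}_{\min}}$ via Lemma \ref{lemma:exceptional_in_max_sheet}, use Proposition \ref{lemma:exceptional_sheet_dimension_1} together with the fact that each such sheet is a compact symplectic toric manifold to get the homeomorphism onto $\Phi(M) \cap \{w \in \g^* \mid \langle w, \nu_{\min} \rangle = 0\}$, and then count isolated fixed points by matching them with the $m$ vertices of this hyperplane section in each sheet. Your explicit verification that the section has exactly $m$ vertices (via the edges coming out of $\mathcal{F}_{\min}$ and Corollary \ref{cor:no_iso_fixed_points}) only spells out a step the paper cites implicitly from the proof of Proposition \ref{lemma:exceptional_sheet_dimension_1}.
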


\begin{proof}
  First we show that the image of $\{p \in M^{H_{\mathcal{F}_{\min}}}
  \mid p \text{ is exceptional}\}$  under the quotient map $M \to M/T$
  equals $M_{\mathrm{exc}}$ and that number of connected components of
  the latter equals the number of exceptional sheets that are
  stabilized by $H_{\mathcal{F}_{\min}}$. Clearly, the image of
  $\{p \in M^{H_{\mathcal{F}_{\min}}}
  \mid p \text{ is exceptional}\}$ under the quotient map
  $M \to M/T$ is contained in $M_{\mathrm{exc}}$. Conversely, given an
  exceptional orbit $\mathcal{O} \in M_{\mathrm{exc}}$, every point in
  $\mathcal{O}$ is exceptional by Remarks \ref{rmk:symp_slice_orbit} and \ref{rmk:exc_orbits}.
  Fix a point $p \in \mathcal{O}$. By Proposition 
  \ref{lemma:exceptional_sheet_dimension_1} and Lemma \ref{lemma:exceptional_in_max_sheet}, $p \in
  M^{H_{\mathcal{F}_{\min}}}$. Since $M^{H_{\mathcal{F}_{\min}}}$ is
  $T$-invariant, it follows that $\mathcal{O}$ is contained in
  $M^{H_{\mathcal{F}_{\min}}}$. Hence, $M_{\mathrm{exc}}$ is contained
  in the image of
  $\{p \in M^{H_{\mathcal{F}_{\min}}}
  \mid p \text{ is exceptional}\}$ under the quotient map
  $M \to M/T$ and the first claim follows. To prove the second claim, we observe that the connected
  components of $\{p \in M^{H_{\mathcal{F}_{\min}}}
  \mid p \text{ is exceptional}\}$ are
  exactly the exceptional sheets that are
  stabilized by $H_{\mathcal{F}_{\min}}$. Since sheets are
  $T$-invariant and $T$ is connected, the restriction of the quotient map
  $M \to M/T$ to $\{p \in M^{H_{\mathcal{F}_{\min}}}
  \mid p \text{ is exceptional}\}$ induces a
  bijection between the connected components of
  $\{p \in M^{H_{\mathcal{F}_{\min}}}
  \mid p \text{ is exceptional}\}$ and those of
  $ M_{\mathrm{exc}}$.

  By Proposition \ref{lemma:exceptional_sheet_dimension_1}, the image
  of an exceptional sheet that is stabilized by
  $H_{\mathcal{F}_{\min}}$ is precisely $ \Phi(M) \cap
  \{w \in \g^* \mid \langle w, \nu_{\min} \rangle = 0 \}$. Moreover,
  any such sheet is a compact symplectic toric manifold, so that the
  corresponding orbital moment map image is a homeomorphism onto its
  image. Hence, the first claim follows.
  
  By the above argument, in order to prove the bulleted statements, it
  suffices to show that the number of exceptional sheets that are
  stabilized by $H_{\mathcal{F}_{\min}}$ is precisely the number of
  isolated fixed points divided by $m$. We begin by observing that
  $M_{\mathrm{exc}} = \emptyset$ if and only if there are no isolated
  fixed points. This is a consequence of Lemma
  \ref{lemma:necessary_sufficient_existence_exceptional} and the
  complexity of $\comp$ being one. So we may assume that $M_{\mathrm{exc}} \neq \emptyset$.
  Let $(N,\omega_N,\Phi_N)$ be such an exceptional
  sheet that is stabilized by $H_{\mathcal{F}_{\min}}$. Since $(N,\omega_N,\Phi_N)$ is a compact symplectic toric manifold, the number of vertices
  in $\Phi_N(N) = \Phi(N)$ is precisely the number of fixed points. By Proposition
  \ref{lemma:exceptional_sheet_dimension_1}, 
  $\Phi(N)$ equals $ \Phi(M) \cap
  \{w \in \g^* \mid \langle w, \nu_{\min} \rangle = 0 \}$; moreover,
  there is a bijection between the vertices of $\Phi(N)$ and those of
  $\mathcal{F}_{\min}$. Hence, the cardinality of $M^T \cap
  N$ equals $m$. 

  Moreover, if $(N',\omega',\Phi')$ is another sheet stabilized by
  $H_{\mathcal{F}_{\min}}$, then either $N = N'$ or $N \cap N' =
  \emptyset$. In particular, if $N \neq N'$, the subsets $M^T \cap
  N$ and $M^T \cap
  N'$ are disjoint. Finally, since the set of isolated fixed points is a
  subset of the set of exceptional points by Lemma
  \ref{lemma:fixed_point_exceptional}, by Lemma
  \ref{lemma:exceptional_in_max_sheet}, $M^T_{\mathrm{isolated}}$ equals
  the disjoint union over all exceptional sheets $(N,\omega_N,\Phi_N)$ stabilized
  by $H_{\mathcal{F}_{\min}}$ of the intersections
  $M_{\mathrm{isolated}}^T \cap N$. Since any such sheet is exceptional,
  by Lemma \ref{lemma:fixed_point_exceptional}, $M^T
  \cap N$ equals the intersection of $N$ with the set of isolated
  fixed points for any exceptional sheet $(N,\omega_N,\Phi_N)$ stabilized
  by $H_{\mathcal{F}_{\min}}$. Putting the above facts together, the
  bulleted statements follow.
\end{proof}

We conclude this section with the following important result.

\begin{theorem}\label{prop:trivial_painting}
  The equivalence class of paintings of a compact normalized monotone tall complexity one $T$-space $\comp$ is trivial.
\end{theorem}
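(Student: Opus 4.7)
The plan is to exploit the information about $M_{\mathrm{exc}}$ gathered in Theorem \ref{cor:exceptional_orbits}, combined with the vanishing of the genus (Lemma \ref{lemma:chern}), to reduce the triviality of the painting to a statement about maps into a configuration space. By Lemma \ref{lemma:chern}, the surface $\Sigma$ of Proposition \ref{prop 2.2} may be taken to be $S^{2}$. Let $C_{1}, \dots, C_{k}$ be the connected components of $M_{\mathrm{exc}}$ and set $P := \Phi(M) \cap \{w \in \g^{*} \mid \langle w, \nu_{\min}\rangle = 0\}$. By Theorem \ref{cor:exceptional_orbits}, the restriction $\overline{\Phi}_{i} := \overline{\Phi}|_{C_{i}} \colon C_{i} \to P$ is a homeomorphism for each $i$, and $P$ is a convex polytope, hence contractible.

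Given any painting $f \colon M_{\mathrm{exc}} \to S^{2}$ representing the equivalence class, I would associate to it the continuous map
\begin{equation*}
  F_{f} \colon P \to \mathrm{Conf}_{k}(S^{2}), \qquad F_{f}(x) = \bigl(f(\overline{\Phi}_{1}^{-1}(x)), \dots, f(\overline{\Phi}_{k}^{-1}(x))\bigr),
\end{equation*}
where $\mathrm{Conf}_{k}(S^{2})$ denotes the space of ordered $k$-tuples of pairwise distinct points in $S^{2}$. The injectivity requirement in Definition \ref{defn:painting} is precisely the condition that $F_{f}$ lands in $\mathrm{Conf}_{k}(S^{2})$ rather than in $(S^{2})^{k}$, and conversely any continuous map $P \to \mathrm{Conf}_{k}(S^{2})$ arises in this way from a painting (with the understanding that paintings are identified with their composition with $\overline{\Phi}_{i}^{-1}$ on each component). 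In particular, a painting is trivial in the sense of Definition \ref{defn:trivial_painting} if and only if the associated map $P \to \mathrm{Conf}_{k}(S^{2})$ is constant.

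Now I would choose any $k$ pairwise distinct points $c_{1}, \dots, c_{k} \in S^{2}$ and let $f_{0}\colon M_{\mathrm{exc}} \to S^{2}$ be the painting that is constantly $c_{i}$ on $C_{i}$; its associated map $F_{f_{0}}$ is the constant map at $(c_{1}, \dots, c_{k})$. Since $P$ is contractible and $\mathrm{Conf}_{k}(S^{2})$ is path-connected (indeed, the diagonal action of the connected group of orientation-preserving homeomorphisms of $S^{2}$ is transitive on $\mathrm{Conf}_{k}(S^{2})$), the map $F_{f}$ is homotopic to $F_{f_{0}}$ through continuous maps $P \to \mathrm{Conf}_{k}(S^{2})$. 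Unpacking this homotopy via the homeomorphisms $\overline{\Phi}_{i}$ produces a homotopy from $f$ to $f_{0}$ in which every intermediate map is a painting. Taking both the isomorphism of exceptional orbits and the homeomorphism of $S^{2}$ in Definition \ref{defn:painting} to be the identity, this shows that $f$ and $f_{0}$ represent the same equivalence class, establishing triviality.

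The only non-formal input here is Theorem \ref{cor:exceptional_orbits}, which supplies both the contractibility of each $C_{i}$ and the fact that the $\overline{\Phi}$-images of all components coincide; the rest is essentially a homotopy-theoretic observation. Consequently there is no substantial obstacle once Theorem \ref{cor:exceptional_orbits} is in hand; the main care required is verifying that the homotopy obtained from contractibility of $P$ remains injective between components (i.e.\ lands in $\mathrm{Conf}_{k}(S^{2})$), which is automatic from the chosen target space.
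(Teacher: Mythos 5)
Your proposal is correct and takes essentially the same route as the paper: both use Theorem \ref{cor:exceptional_orbits} to identify each connected component of $M_{\mathrm{exc}}$ with the contractible polytope $P=\Phi(M)\cap\{w\in\g^*\mid \langle w,\nu_{\min}\rangle=0\}$ and then slide the given painting along a contraction $H_t$ of $P$, the injectivity of the original painting being exactly what keeps every intermediate map a painting. Your configuration-space reformulation $F_f\colon P\to \mathrm{Conf}_k(S^2)$ is a clean repackaging of the paper's direct verification that $f_t=f\circ\overline{\Phi}_i^{-1}\circ H_t\circ\overline{\Phi}$ is a painting for each $t$ (the paper's homotopy is precisely $F_f\circ H_t$ in your language); the only extra, harmless step is invoking path-connectedness of $\mathrm{Conf}_k(S^2)$ to reach an arbitrarily chosen constant tuple, which the paper avoids by simply stopping at the constants $f(\overline{\Phi}_i^{-1}(w_0))$.
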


\begin{proof}
  By Lemma \ref{lemma:chern}, the genus of $\comp$ is zero. Let $f :
  M_{\mathrm{exc}} \to S^2$ be a painting of $\comp$. We need to
  construct a painting $f' : M_{\mathrm{exc}} \to S^2$ that is
  constant on each connected component of $M_{\mathrm{exc}}$ and that
  is equivalent to $f$. If $M_{\mathrm{exc}} = \emptyset$, there is
  nothing to prove, so we may assume that $M_{\mathrm{exc}} \neq
  \emptyset$.

  Since $\Phi(M) \cap \{ w \in \g^* \mid \langle w, \nu_{\min} \rangle
  = 0\}$ is a convex polytope, it is contractible. We fix a point
  $w_0$ in $\Phi(M) \cap \{ w \in \g^* \mid \langle w, \nu_{\min} \rangle
  = 0\}$ (for instance, the origin). Since $\Phi(M) \cap \{ w \in \g^* \mid \langle w, \nu_{\min} \rangle
  = 0\}$ is contractible, there exists a deformation retraction of $\Phi(M) \cap \{ w \in \g^* \mid \langle w, \nu_{\min} \rangle
  = 0\}$ onto
  $w_0$ that we denote by $H_t$, where $H_0 = \mathrm{id}$ and $H_1(w)
  = w_0$ for all $w \in \Phi(M) \cap \{ w \in \g^* \mid \langle w, \nu_{\min} \rangle
  = 0\}$.

  Let $k >0$ be the number of connected components of
  $M_{\mathrm{exc}}$. We have that
  $$M_{\mathrm{exc}} = \coprod\limits_{i=1}^k M_i,$$
  where $M_i$ is a connected component of $M_{\mathrm{exc}}$ for
  $i=1,\ldots, k$. By Theorem \ref{cor:exceptional_orbits}, the
  restriction of $\overline{\Phi}\colon M/T \to \g^*$ to $M_i$ is a homeomorphism onto $\Phi(M) \cap \{ w \in \g^* \mid \langle w, \nu_{\min} \rangle
  = 0\}$; let $\overline{\Phi}^{-1}_i$ be the inverse to this homeomorphism. We denote the restriction of $f$ to $M_i$ by $f_i$
  and, for any $0 \leq t \leq 1$, we define $f_{i,t} : M_i \to S^2$ as
  the composite $f_i \circ \overline{\Phi}_i^{-1} \circ H_t \circ
  \overline{\Phi}$. Since $M_{\mathrm{exc}}$ is the disjoint union of
  the $M_i$'s, for any $0 \leq t \leq 1$, there exists a unique
  continuous map $f_t : M_{\mathrm{exc}} \to S^2$ such that the
  restriction of $f_t$ to $M_i$ equals $f_{i,t}$ for all $i=1,\ldots,
  k$. 

  We claim that $f_t : M_{\mathrm{exc}} \to S^2$ is a painting of
  $\comp$ for all $0\leq t \leq 1$. To this end, we fix $0 \leq t \leq
  1$, consider the map $(\overline{\Phi},f_t)$ and two distinct points
  $[q_i],[q_j] \in M_{\mathrm{exc}}$ belonging to $M_i$ and $M_j$
  respectively. We aim to show that $(\overline{\Phi}([q_i]),f_t ([q_i])) \neq
  (\overline{\Phi}([q_j]),f_t ([q_j])) $. If $i = j$, since the restriction of $\overline{\Phi}$
  to $M_i = M_j$ is a homeomorphism and since $[q_i] \neq [q_j]$, it
  follows that $\overline {\Phi}([q_i]) \neq \overline{\Phi}([q_j])$,
  so that the result follows. Suppose next that $i \neq
  j$. If $\overline {\Phi}([q_i]) \neq \overline{\Phi}([q_j])$, then
  the result follows, so we may assume that  $\overline {\Phi}([q_i])
  = \overline{\Phi}([q_j]) =: w$. If $f_t ([q_i]) =
  f_t([q_j])$, then the points $\overline{\Phi}^{-1}_i(H_t(w)) \in
  M_i$ and $\overline{\Phi}^{-1}_j(H_t(w)) \in
  M_j$ are distinct (since $i \neq j$), but are such that
  $$
  (\overline{\Phi}(\overline{\Phi}^{-1}_i(H_t(w))),f(\overline{\Phi}^{-1}_i(H_t(w))))
  = 
  (\overline{\Phi}(\overline{\Phi}^{-1}_j(H_t(w))),f(\overline{\Phi}^{-1}_j(H_t(w)))). $$
  \noindent
  This implies that $f$ is not a painting, which is absurd. Hence, $f_t ([q_i]) \neq
  f_t([q_j])$, which implies that $(\overline{\Phi}([q_i]),f_t ([q_i])) \neq
  (\overline{\Phi}([q_j]),f_t ([q_j])) $, as desired.

  We set $f':= f_1$. Since $f_t : M_{\mathrm{exc}} \to S^2$ is a painting of
  $\comp$ for all $0\leq t \leq 1$, $f = f_0$ and $f'$ are homotopic
  through paintings. Moreover, the restriction of $f'$ to $M_i$ is
  constant by construction for any $i=1,\ldots, k$, thus completing
  the proof.
\end{proof}

\subsection{The Duistermaat-Heckman function}\label{sec:duist-heckm-funct-1}
In order to prove the main result of this section (see Theorem
\ref{thm:possibilities_DH}), we prove the following intermediate
result. To this end, we
recall that \eqref{eq:10} and \eqref{eq:59}
hold, that for any vertex $v$ and any edge $e$ of $\Phi(M)$, the
preimages $\Phi^{-1}(v)$ and $\Phi^{-1}(e)$ are a two dimensional
sphere and a four dimensional manifold respectively. Moreover, 
by Lemma \ref{lemma:s_invariant}, there exists $s \in \Z$
such that, if $v \in \mathcal{F}_{\min}$ and $e$ is the edge that
comes out of $\mathcal{F}_{\min}$ that is incident to $v$, then the self-intersection
of $\Phi^{-1}(v)$ in $\Phi^{-1}(e)$ equals $s$ (and does not depend on
$v$).

\begin{proposition}\label{lemma:number_iso_fixed_points}
  Let $\comp$ be a compact normalized monotone tall complexity one $T$-space of dimension $2n$. Let $v \in \mathcal{F}_{\min}$ be a vertex and let $e$ be
  the edge incident to $v$ that comes out of $\mathcal{F}_{\min}$. Let
  $s \in \Z$ be as in Lemma \ref{lemma:s_invariant} and let $k \geq 0$ be the number of isolated fixed
  points contained in $\Phi^{-1}(e)$. Then the restriction of the Duistermaat-Heckman
  function $DH \comp$ to $e$ is the function
  \begin{equation}
    \label{eq:38}
    \begin{split}
      e &\to \R \\
      w &\mapsto 2 - s \langle w, \nu_{\min} \rangle - k \rho(w),
    \end{split}
  \end{equation}
  \noindent
  where $\rho : \g^* \to \R$ is the function given by
  $$ w \mapsto
  \begin{cases}
    0 & \text{if } \langle w, \nu_{\min} \rangle \leq 0 \\
    \langle w, \nu_{\min} \rangle & \text{if } \langle w, \nu_{\min} \rangle \geq 0.
  \end{cases}
  $$
\end{proposition}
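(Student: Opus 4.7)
The approach is to restrict to the sheet $M_e := \Phi^{-1}(e)$, which by Lemma \ref{Lem:FacesSubmanifolds} and Proposition \ref{prop:tall_vertex} is a four-dimensional compact tall complexity-one $T/H_e$-space with $T/H_e \simeq S^1$, and to analyze its Duistermaat-Heckman function. By Corollary \ref{cor:DH_boundary_comp_pres}, this function agrees with $DH\comp|_e$. I would parametrize $e$ by $w = v + t\alpha_{n-1}$ for $t \in [0, t_{\max}]$, with $t_{\max} \in \Z_{>0}$ and $\langle \alpha_{n-1},\nu_{\min}\rangle = 1$ by Lemma \ref{lemma:weight_edge_out_facet}, so that $\langle w, \nu_{\min}\rangle = -1+t$. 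By Proposition \ref{prop:iso_fixed_pts_monotone} applied to each of the $k$ isolated fixed points in $\Phi^{-1}(e)$, all of them map to the single point $w_0$ with $t=1$, and this is the only interior singular value of $\Phi_e$ on $e$. Hence $e$ decomposes into exactly two chambers, $[0,1]$ and $[1,t_{\max}]$, on each of which the DH function is affine by Remark \ref{rmk:polynomial}.

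On the chamber $[0,1]$, I would apply Lemma \ref{Lemma:DHnearfixedSurface} to the fixed surface $\Sigma := \Phi^{-1}(v)$. By Proposition \ref{prop:minimum_facet}, $c_1(L_i)[\Sigma]=0$ for $i=1,\dots,n-2$, and by Lemma \ref{lemma:s_invariant}, $c_1(L_{n-1})[\Sigma] = s$. Since $\comp$ is normalized monotone $([\omega]=c_1)$ and $\Sigma \simeq S^2$ (Lemma \ref{lemma:chern}),
\[\int_\Sigma \omega = c_1[\Sigma] = c_1(T\Sigma)[\Sigma] + \sum_{i=1}^{n-1} c_1(L_i)[\Sigma] = 2 + s.\]
Lemma \ref{Lemma:DHnearfixedSurface} then gives, for $t \geq 0$ sufficiently small, $DH\comp(v+t\alpha_{n-1}) = (2+s) - st$, and since both sides are affine on $[0,1]$ this extends to the entire chamber. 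Rewriting yields $DH\comp(w) = 2 - s\langle w,\nu_{\min}\rangle$, matching \eqref{eq:38} since $\rho(w) = 0$ there.

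On $[1, t_{\max}]$ I would invoke the wall-crossing formula (Theorem \ref{Thm: GLS-wall-crossing-formula}) at $w_0$, with $\xi$ the primitive generator of the $T/H_e$-lattice in the direction of increasing $t$ (so $\langle \alpha_{n-1},\xi\rangle = 1$). At each of the $k$ isolated fixed points $p$, the $T$-weights are $\alpha_1,\dots,\alpha_{n-2},\pm\alpha_{n-1}$ by Proposition \ref{prop:iso_fixed_pts_monotone}; only the $\pm\alpha_{n-1}$ directions lie in $T_pM_e$, so the $S^1$-weights on the normal bundle to $\{p\} \subset M_e$ are $\pm 1$, giving $\kappa = 2$ and $\prod \alpha_s = -1$. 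The reduction of the point $\{p\}$ at $w_0$ is a point, so $f_{ij}\equiv 1$; moreover $P_{ij}$ must vanish because $\xi \cdot P_{ij}(y-w_0)$ would contribute to $f_+-f_-$ at degree $\geq 2$, contradicting the a priori affineness of both $f_\pm$. Summing over the $k$ fixed points gives
\[f_+(w) - f_-(w) = -k\langle w - w_0, \xi\rangle = -k(t-1) = -k\rho(w),\]
and combining with the previous paragraph yields $DH\comp(w) = 2-s\langle w,\nu_{\min}\rangle - k\rho(w)$ on all of $e$. The main delicate points will be pinning down the sign of $\xi$ and justifying $P_{ij}\equiv 0$, both of which hinge on the normalization provided by Proposition \ref{prop:iso_fixed_pts_monotone} together with the degree bound of Remark \ref{rmk:polynomial}.
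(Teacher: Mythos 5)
Your proposal is correct in substance and follows the same overall reduction as the paper: restrict to the four-dimensional sheet $(M_e,\omega_e,\Phi_e)$, identify $DH\comp|_e$ with $DH(M_e,\omega_e,\Phi_e)$ via Corollary \ref{cor:DH_boundary_comp_pres}, compute $\int_\Sigma\omega = 2+s$ from monotonicity, Lemma \ref{lemma:chern} and Proposition \ref{prop:minimum_facet}, and use Proposition \ref{prop:iso_fixed_pts_monotone} to place all $k$ isolated fixed points at the level $t=1$ with weights $\pm 1$. Where you differ is the key quantitative step: the paper cites Karshon's Lemma 2.12 (for compact four-dimensional Hamiltonian $S^1$-spaces), which yields the formula $DH(t)=\int_\Sigma\omega_e - t\,c_1(L_e)[\Sigma] - k\Theta(t-1)$ on all of $[0,t_{\max}]$ in one stroke, whereas you obtain the first chamber from Lemma \ref{Lemma:DHnearfixedSurface} plus affineness (Remark \ref{rmk:polynomial}) and the slope jump $-k$ at $t=1$ from the wall-crossing formula together with a degree argument. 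Your route is self-contained within the tools already stated in the paper and essentially re-derives the special case of Karshon's lemma that is needed; the paper's route is shorter but leans on an external reference.

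Two points deserve care. First, Theorem \ref{Thm: GLS-wall-crossing-formula} as stated in the paper requires the crossing point $x$ to be a regular value of all the sheet moment maps $\Phi_{ij}$; in your application the relevant sheets are the isolated fixed points themselves, whose (constant) moment maps have $x=1$ as their image, so the hypothesis fails literally and one must invoke the version of the jump formula that allows isolated fixed points on the wall (this is exactly the classical Duistermaat--Heckman/GLS jump for an $S^1$-level containing isolated fixed points of weights $\pm1$, i.e., the content of Karshon's Lemma 2.12). The mathematics is fine, but as written your argument applies the paper's theorem outside its stated hypotheses, which is presumably why the authors cite Karshon instead. Second, your degree argument only forces the \emph{sum} $\xi\sum_{ij}P_{ij}$ to vanish, not each $P_{ij}$ individually; this is all you need, but the claim "$P_{ij}$ must vanish" should be phrased accordingly. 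With these adjustments (and noting that $k\geq 1$ forces $t_{\max}\geq 2$, so $t=1$ is genuinely interior whenever there is a wall to cross), your proof is complete and equivalent to the paper's.
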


\begin{proof}
  Let $\alpha_1,\ldots, \alpha_{n-1}$ be the
  non-zero isotropy weights of $\Phi^{-1}(v)$ ordered so
  that \eqref{eq:59} holds; in particular, $\langle \alpha_{n-1},
  \nu_{\min} \rangle = 1$ (see Lemma
  \ref{lemma:weight_edge_out_facet} and \eqref{eq:16}). Since $e$ is the edge that comes out of
  $\mathcal{F}_{\min}$ and is incident to $v$, by Lemma
  \ref{lemma:weight_edge_out_facet}, there exists $t_{\max} \in \Z_{>0}$
  such that
  $$ e = \{ v + t\alpha_{n-1} \mid 0 \leq t \leq t_{\max}\}. $$

  Let $(M_e, \omega_e,
  \Phi_e)$ be the sheet corresponding to $e$ as in
  \eqref{eq:11}. Since $\alpha_{n-1} \in \ell^*$ is primitive, the
  stabilizer $H_e$ of $(M_e, \omega_e,
  \Phi_e)$ is precisely $\exp\left(\mathrm{Ann}\left( \R \langle
      \alpha_{n-1}\rangle \right) \right)$. Since $\comp$ is tall and compact,
  $(M_e, \omega_e,
  \Phi_e)$ is a compact tall complexity one Hamiltonian $T/H_e \simeq S^1$-space by
  property \ref{item:12} of Corollary
  \ref{cor:comp_preserving} and by Proposition \ref{prop:tall=complexity_preserving}. % (Observe that this uses the fact that
  % $\alpha_{n-1} \in \ell^*$ is primitive to have that the $T/H_e$-action is
  % effective.)
  In what follows, $\Phi_e$ is chosen so that $\Phi_e(M_e)
  = [0,t_{\max}]$. Moreover, the isolated fixed points for the $S^1$-action
  on $M_e$ are precisely the isolated fixed points for the $T$-action contained in
  $M_e$. By Corollary \ref{cor:DH_boundary_comp_pres}, the restriction
  of the Duistermaat-Heckman function $DH \comp$ to $e$ equals $DH
  (M_e,\omega_e,\Phi_e)$. Since $\langle v +
  t\alpha_{n-1},\nu_{\min} \rangle = t-1$, in order to
  prove that \eqref{eq:38} holds, it
  suffices to check that the Duistermaat-Heckman function of
  $(M_e,\omega_e,\Phi_e)$ is the function $[0,t_{\max}] \to \R$ given
  by
  \begin{equation}
    \label{eq:42}
    t \mapsto 2 - s(t-1) -k \Theta(t-1),
  \end{equation}
  where $\Theta: \R \to \R$ is the function 
  $$ t \mapsto
  \begin{cases}
    0 & \text{if } t \leq 0 \\
    t & \text{if } t \geq 0.
  \end{cases}
  $$

  By Proposition
  \ref{prop:iso_fixed_pts_monotone} and the definition of $\Phi_e$, any isolated fixed point $p \in M_e$ for the
  $S^1$-action satisfies $\Phi_e(p) = 1$ and its isotropy weights are
  $+1,-1$. In particular, if $k > 1$, then $t_{\max} > 1$. Since $(M_e,\omega_e,\Phi_e)$ is tall and since the domain
  of the Duistermaat-Heckman function is $\Phi_e(M_e) = [0,t_{\mathrm{max}}]$ (see Definition
  \ref{Def: DHfucntion}), by \cite[Lemma
  2.12]{karshon}, we have that
  \begin{equation}
    \label{eq:39}
    DH (M_e,\omega_e,\Phi_e)(t) =
    \int\limits_{\Phi_e^{-1}(v)} \omega_e - t
    c_1(L_e)[\Phi_e^{-1}(v)] - k \Theta(t-1) \qquad \text{for all } t
    \in [0,t_{\mathrm{max}}],
  \end{equation}
  \noindent
  where $c_1(L_e)$ is the first Chern class of
  the normal bundle $L_e$ to $\Phi_e^{-1}(v)$ in $M_e$.
  Since $\comp$ is normalized monotone, since $\Phi^{-1}_e(v) =
  \Phi^{-1}(v)$ is a sphere by Lemma \ref{lemma:chern}, and
  by Lemma \ref{prop:minimum_facet}, we have that
  \begin{equation}
    \label{eq:40}
    \int\limits_{\Phi_e^{-1}(v)} \omega_e = c_1(M)[\Phi^{-1}(v)] = 2 +
    \sum\limits_{j=1}^{n-1}c_1(L_j)[\Phi^{-1}(v)] = 2 + c_1(L_{n-1})[\Phi^{-1}(v)],
  \end{equation}
  \noindent
  where $L_1\oplus \ldots \oplus L_{n-1}$ is the $T$-equivariant
  splitting of the normal bundle to $\Phi^{-1}(v)$ in $M$, and the
  last equality follows from Proposition \ref{prop:minimum_facet}.
  Combining equations \eqref{eq:39} and \eqref{eq:40}, and observing
  that $L_e = L_{n-1}$, we have that
  $$ DH (M_e,\omega_e,\Phi_e)(t) =  2 - s(t-1) - k \Theta(t-1), $$
  as desired.
\end{proof}

By Theorem \ref{cor:exceptional_orbits}, the number of isolated fixed points
that lies in the preimage of an edge that comes ouf of
$\mathcal{F}_{\min}$ equals the number of connected components of
$M_{\mathrm{exc}}$. Hence, we can use Proposition
\ref{lemma:number_iso_fixed_points} to prove the following result.

\begin{theorem}\label{thm:possibilities_DH}
  Let $\comp$ be a compact normalized monotone tall complexity one $T$-space of dimension $2n$. Let $s \in
  \Z$ be as in Lemma \ref{lemma:s_invariant} and let $k \geq 0$
  be the number of
  connected components of $M_{\mathrm{exc}}$. The Duistermaat-Heckman
  function $DH \comp : \Phi(M) \to \R$ is given by
  \begin{equation}
    \label{eq:57}
    DH \comp (w) = 2 - s \langle w, \nu_{\min} \rangle - k \rho(w),
  \end{equation}
  \noindent
  where $\rho : \g^* \to \R$ is the function
  given by
  \begin{equation}
    \label{eq:58}
    w \mapsto
    \begin{cases}
      0 & \text{if } \langle w, \nu_{\min} \rangle \leq 0 \\
    \langle w, \nu_{\min} \rangle & \text{if } \langle w, \nu_{\min} \rangle \geq 0.
    \end{cases}
  \end{equation}
\end{theorem}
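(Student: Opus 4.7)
The plan is to show that $DH\comp$ coincides with the candidate
$F(w) := 2 - s\langle w,\nu_{\min}\rangle - k\rho(w)$ by first
establishing a very simple chamber decomposition of $\Phi(M)$, and
then pinning down the affine piece of $DH\comp$ on each chamber. By
Proposition \ref{prop:concave} and Remark \ref{rmk:polynomial},
$DH\comp$ is continuous and piecewise affine. The walls of the
chamber decomposition arise from sheets stabilized by one-dimensional
subgroups (Lemma \ref{lemma:walls}); those on $\partial\Phi(M)$ yield
facets (Lemma \ref{lemma:facets_are_included}), while interior walls
must come from exceptional sheets (Lemma
\ref{lemma:exceptional_not_boundary}). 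By Proposition
\ref{lemma:exceptional_sheet_dimension_1}, every such exceptional
sheet with one-dimensional stabilizer has image equal to $W :=
\Phi(M) \cap \{w : \langle w,\nu_{\min}\rangle = 0\}$, so $\Phi(M)$
has at most two chambers $\Phi(M)_{\pm} := \Phi(M) \cap
\{\pm\langle w,\nu_{\min}\rangle \geq 0\}$, with only one chamber
$\Phi(M)$ itself when $k = 0$ (by Lemmas \ref{lemma:one_chamber},
\ref{lemma:necessary_sufficient_existence_exceptional}, and
\ref{lemma:fixed_point_exceptional}).

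Next I would verify that $DH\comp$ and $F$ agree on sufficiently
large subsets of each chamber. Theorem \ref{cor:exceptional_orbits}
identifies the number of isolated fixed points on any edge $e$ coming
out of $\mathcal{F}_{\min}$ with $k$; combined with Proposition
\ref{lemma:number_iso_fixed_points} this gives $DH\comp|_e = F|_e$.
For the restriction to $\mathcal{F}_{\min}$, I would use Proposition
\ref{prop:minimum_facet} (constancy on $\mathcal{F}_{\min}$), Lemma
\ref{Lemma:DHnearfixedSurface}, the monotonicity relation $[\omega] =
c_1$, the fact that $\Sigma := \Phi^{-1}(v)$ is a sphere
(Lemma \ref{lemma:chern}), and the identification $s =
c_1(L_{n-1})[\Sigma]$ from Lemma \ref{lemma:s_invariant}, to
compute
\[
  DH\comp(v) = \int_{\Sigma} \omega = c_1(TM)[\Sigma]
  = 2 + \sum_{i=1}^{n-1} c_1(L_i)[\Sigma] = 2 + s
\]
for every vertex $v \in \mathcal{F}_{\min}$ (the Chern numbers
$c_1(L_i)[\Sigma]$ for $i = 1,\ldots, n-2$ vanish by Proposition
\ref{prop:minimum_facet}). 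This matches $F|_{\mathcal{F}_{\min}}$
since $\langle w,\nu_{\min}\rangle \equiv -1$ and $\rho \equiv 0$ on
$\mathcal{F}_{\min}$.

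To finish, on $\Phi(M)_-$ both $DH\comp$ and $F$ are affine and agree
on $\mathcal{F}_{\min}$ (of dimension $d-1$) together with an edge
$e$ coming out of $\mathcal{F}_{\min}$. Since $\langle
\alpha_{n-1},\nu_{\min}\rangle = 1$, the edge direction is transverse
to the affine hull of $\mathcal{F}_{\min}$, so the agreement set
affinely spans $\g^*$ and $DH\comp = F$ on $\Phi(M)_-$. When $k \geq
1$, continuity at $W = \Phi(M)_- \cap \Phi(M)_+$ gives $DH\comp|_W =
F|_W$; $W$ is $(d-1)$-dimensional because the origin lies in the
interior of $\Phi(M)$ by Lemma \ref{lemma:reflexive_interior}, so it
is not contained in any proper affine subspace of $\{\langle
w,\nu_{\min}\rangle=0\}$. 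Combined with agreement on $e \cap
\Phi(M)_+$ (non-degenerate because the isolated fixed point on $e$
lies strictly in the interior of $e$, so $e$ extends past $W$), this
pins down the affine function on $\Phi(M)_+$ as $F$. The main
technical point is this affine-spanning step on each chamber; it
reduces to checking that $\mathcal{F}_{\min}$ and $W$ are codimension
one in $\g^*$ and transverse to $\alpha_{n-1}$, both of which follow
from reflexivity of $\Phi(M)$ and the structural results of
Section \ref{sec:stuff-that-we}.
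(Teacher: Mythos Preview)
Your proposal is correct and follows essentially the same approach as the paper's proof: both identify the chamber structure via Proposition \ref{lemma:exceptional_sheet_dimension_1} (at most two chambers $\Phi(M)_\pm$, separated by $\{\langle w,\nu_{\min}\rangle=0\}$), use constancy on $\mathcal{F}_{\min}$ together with Proposition \ref{lemma:number_iso_fixed_points} on an edge $e$ coming out of $\mathcal{F}_{\min}$ to pin down the affine piece on $\Phi(M)_-$, and then use continuity across $W$ plus the edge data on $\Phi(M)_+$. The only cosmetic difference is that the paper computes the coefficients $c^\pm,\lambda^\pm$ of $f^\pm(w)=c^\pm+\lambda^\pm\langle w,\nu_{\min}\rangle$ explicitly by matching along $e$, whereas you phrase the determination as an affine-spanning argument; the paper also makes the case split $t_{\max}=1$ versus $t_{\max}\geq 2$ explicit (your remark that $e$ extends past $W$ when $k\geq 1$ is exactly the content of the $t_{\max}\geq 2$ case, via Corollary \ref{cor:no_iso_fixed_points}).
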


\begin{proof}
  First, we show that the interior of the
  intersection
  $$\Phi(M) \cap \{w \in \g^* \mid \pm \langle w, \nu_{\min} \rangle > 0\}$$
  consists entirely
  of regular values of $\Phi$. To this end, suppose that
  $w \in \g^* \smallsetminus \partial \Phi(M)$ is a singular value of
  $\Phi$. Hence, there exists $q \in \Phi^{-1}(w)$ that has a stabilizer
  has positive dimension. By \cite[Corollary 4.9]{kt1}, $q$ is
  exceptional. Thus, by Proposition
  \ref{lemma:exceptional_sheet_dimension_1} and Lemma
  \ref{lemma:exceptional_in_max_sheet}, $\Phi(q) \in
  \{w \in \g^* \mid \langle w, \nu_{\min} \rangle = 0\}$, as
  desired. Hence, by Remark \ref{rmk:polynomial}, the restriction of $DH \comp$ to
  $\Phi(M) \cap \{w \in \g^* \mid \pm \langle w, \nu_{\min}
  \rangle > 0\}$ is the restriction of an affine
  function $f^{\pm}: \g^* \to \R$ of the form $f^{\pm}(w) =
  c^{\pm} + \langle w,\beta^{\pm}\rangle $ for some $c^{\pm} \in \R$
  and some $\beta^{\pm} \in
  \ell$. 

  We fix a vertex $v \in \mathcal{F}_{\min}$, we let $e$ be the edge
  that comes out of $v$ and $\alpha_1,\ldots, \alpha_{n-1}$ be the non-zero
  isotropy weights of $\Phi^{-1}(v)$ ordered so that \eqref{eq:10} and
  \eqref{eq:59} hold.  Since $DH \comp$ is continuous (by Theorem
  \ref{thm:DH_function_cts}) and 
  since the restriction of $DH \comp$ to $\mathcal{F}_{\min}$ is
  constant by Proposition \ref{prop:minimum_facet}, the
  restriction of the affine function $f^-$ to the affine hyperplane
  supporting $\mathcal{F}_{\min}$ is constant. Since
  $f^-$ is an affine function and since $\mathcal{F}_{\min}$  is
  supported on the hyperplane $v + \R \langle \alpha_1,\ldots,
  \alpha_{n-2} \rangle$, the restriction of the linear part of
  $f^-$ to $\R \langle \alpha_1,\ldots, \alpha_{n-2} \rangle$
  is identically zero. In other words, $\beta^- \in
  \mathrm{Ann}\left(\R\langle \alpha_1,\ldots, \alpha_{n-2}\rangle
  \right)$. Hence, since $\nu_{\min} \in \mathrm{Ann}\left(\R\langle \alpha_1,\ldots, \alpha_{n-2}\rangle
  \right)$, since $\beta^-, \nu_{\min} \in \ell$, and since
  $\nu_{\min}$ is primitive, there exists $\lambda^- \in \Z$ such that
  $\beta^- = \lambda^- \nu_{\min}$. 

  By Lemma \ref{lemma:weight_edge_out_facet}, there exists $t_{\max} \in  \Z_{>0}$
  such that
  $$ e = \{ v + t\alpha_{n-1} \mid 0 \leq t \leq t_{\max}\}. $$
  By \eqref{eq:16} and since $v \in \mathcal{F}_{\min}$, $\langle v + t\alpha_{n-1},\nu_{\min} \rangle =
  -1+t$ for all $0
  \leq t \leq t_{\max}$. Therefore, by Proposition \ref{lemma:number_iso_fixed_points}, the restriction
  of $DH \comp$ to $e \cap \{w \in \g^* \mid  \langle w, \nu_{\min}
  \rangle < 0\}$ is given
  by the function that sends $t$ to $2 + s -st$, where $0 \leq t < 1$. Hence,
  \begin{equation}
    \label{eq:46}
     c^- + \lambda^-(-1+t) = 2 + s -s t \text{ for all } 0 < t < 1.
  \end{equation}
  Equation \eqref{eq:46} readily implies that $\lambda^- = - s$
  and $c^- = 2$. Hence, $f^-(w) = 2 - s\langle w, \nu_{\min} \rangle$. 

  We split the remainder of the proof in two cases, depending on
  whether $t_{\max} = 1$ or $t_{\max} \geq 2$. In the former case, the other vertex $v'$ of $\Phi(M)$ that is incident
  to $e$ lies on the linear hyperplane $\{w \in \g^* \mid \langle
  w,\nu_{\min} \rangle = 0\}$. Hence, by Corollary
  \ref{cor:no_iso_fixed_points}, there are no isolated fixed
  points. Therefore, by Lemma
  \ref{lemma:no_isolated_implies_affine_DH}, the function $DH\comp$ is
  the restriction of an affine function. Since the interior of $\Phi(M) \cap \{w \in
  \g^* \mid - \langle w, \nu_{\min} \rangle > 0\}$ is not empty and
  since the restriction of $DH\comp$ to this subset equals the affine
  function $f^-(w) = 2 - s\langle w, \nu_{\min} \rangle$, it
  follows that
  $$ DH \comp (w) = 2 - s\langle w, \nu_{\min} \rangle = 2
  - s\langle w, \nu_{\min} \rangle - k \rho(w) \quad \text{for all }
  w \in \Phi(M), $$
  where the last equality follows from the fact that $k=0$, since
  there are no isolated fixed points (see Lemma
  \ref{lemma:fixed_point_exceptional} and Corollary \ref{cor:exc=exc}).

  It remains to consider the case $t_{\max} \geq 2$. Since $DH \comp$ is continuous, then the restriction of $f^-$ and
  $f^+$ to $\Phi(M) \cap \{w \in \g^* \mid  \langle w, \nu_{\min}
  \rangle = 0\}$ are equal. Hence, the
  restriction of the affine function $f^+$ to $\Phi(M) \cap \{w \in \g^* \mid  \langle w, \nu_{\min}
  \rangle = 0\}$ equals 2. \color{black} By Proposition \ref{prop:mom_map_image}, $\Phi(M)$ is a reflexive
  Delzant polytope; thus, by Lemma
  \ref{lemma:reflexive_interior}, the origin lies in the interior of
  $\Phi(M)$. Therefore, the interior of $\Phi(M) \cap \{w \in \g^* \mid  \langle w, \nu_{\min}
  \rangle = 0\}$ 
  in $\{w \in \g^* \mid  \langle w, \nu_{\min}
  \rangle = 0\}$ is non-empty. Hence, the restriction of the linear part of $f^+$ to the
  hyperplane $\{w \in \g^* \mid  \langle w, \nu_{\min}
  \rangle = 0\}$ is identically zero. Arguing as above, this implies
  that there exists $\lambda^+ \in \Z$ such that $\beta^+ = \lambda^+ \nu_{\min}$.

  Since $t_{\max} \geq 2$, the intersection $e \cap \{w \in \g^* \mid  \langle w, \nu_{\min}
  \rangle > 0\}$ is not empty. By Proposition \ref{lemma:number_iso_fixed_points}, the restriction
  of $DH \comp$ to $e \cap \{w \in \g^* \mid  \langle w, \nu_{\min}
  \rangle > 0\}$ is
  the function that sends $t$ to $2 + s - st -k(t-1)$ for $ 1 < t \leq
  t_{\max}$. Hence, we have
  that
  \begin{equation}
    \label{eq:56}
    \lambda^+(-1+t) + 2 = 2 + s - st - k(t-1) \text{ for all } 1 < t < t_{\max}. 
  \end{equation}
  Equation \eqref{eq:56} readily implies that $\lambda^+ = - s -
  k$. Hence, $f^+(w) = 2 -s\langle w, \nu_{\min} \rangle-k\langle w, \nu_{\min} \rangle$.

  Thus the restriction of $DH \comp$ to $\Phi(M) \smallsetminus
  \{w \in \g^* \mid  \langle w, \nu_{\min}
  \rangle = 0\}$ is the function given by
  $$ w \mapsto
  \begin{cases}
    2 - s \langle w, \nu_{\min} \rangle & \text{if } \langle w, \nu_{\min} \rangle < 0 \\
    2 - s \langle w, \nu_{\min} \rangle - k \langle w, \nu_{\min} \rangle & \text{if } \langle w, \nu_{\min} \rangle >0.
  \end{cases} $$
  This function equals the restriction of \eqref{eq:57} on the dense
  subset  $\Phi(M) \smallsetminus
  \{w \in \g^* \mid  \langle w, \nu_{\min}
  \rangle = 0\}$ of $\Phi(M)$. Since $DH\comp$ is continuous,
  equation \eqref{eq:57} holds. 
\end{proof}

\subsection{The Duistermaat-Heckman function is a complete invariant}\label{sec:class-finit-result}
We can proceed with the proof of our first main result.

\begin{proof}[Proof of Theorem \ref{thm:DH_classifies}]\label{proof theorem thm:DH_classifies}
  Let $\comp$ and $(M',\omega',\Phi')$ be compact monotone tall complexity one $T$-spaces of
  dimension $2n$. If they are isomorphic, then they have equal
  Duistermaat-Heckman measures. By Theorem
  \ref{thm:DH_function_cts}, they have equal
  Duistermaat-Heckman functions.

  Suppose conversely that they have equal Duistermaat-Heckman
  functions. First, we show that it suffices to show the result under
  the additional assumption that both spaces are normalized. To this
  end, assume this special case. Since $\comp$ and $(M',\omega',\Phi')$ are monotone, by Corollary
  \ref{cor:mono_normalized}, there exist $\lambda, \lambda' >0$ and
  $v, v' \in \g^*$ such that $(M,\lambda\omega, \lambda \Phi+v)$ and
  $(M',\lambda'\omega', \lambda'\Phi'+v')$ are normalized monotone. Moreover,
  since $\comp$ and $(M',\omega',\Phi')$ are tall and have equal
  Duistermaat-Heckman functions, there exists a Delzant polytope
  $\Delta$ such that $\Phi(M) = \Delta = \Phi'(M')$. Since $(M,\lambda\omega, \lambda \Phi+v)$ and
  $(M',\lambda'\omega', \lambda'\Phi'+v')$ are normalized monotone, by
  Proposition \ref{prop:mom_map_image}, the polytopes $\lambda \Delta + v$ and $\lambda' \Delta + v'$ are
  reflexive Delzant. We observe that there exists at most one
  reflexive Delzant polytope obtained from $\Delta$ by rescaling and
  translating because the vertices of a reflexive Delzant polytope
  are determined by the tangent cones to its vertices (see
  Proposition \ref{prop reflexive eq}). Hence, $\lambda = \lambda'$ and $v = v'$; in particular,
  $\lambda \Delta + v = \lambda' \Delta + v'$. On the other hand, we
  observe that, for any $w \in \lambda \Delta + v = \lambda' \Delta +
  v'$,
  \begin{equation}
    \label{eq:82}
    \begin{split}
      DH (M,\lambda\omega, \lambda \Phi+v)(w) & = \lambda
      DH\comp(\lambda w+v) \\
      DH (M',\lambda'\omega', \lambda' \Phi'+v')(w) & = \lambda'
      DH (M',\omega',\Phi') (\lambda' w+v'). 
  \end{split}
  \end{equation}
  This is an immediate consequence of the definition of
  the Duistermaat-Heckman function, of the fact that its restriction to
  the set of regular values is given by \eqref{eq:1}, and of the
  complexity of both spaces being one. Hence, $DH
  (M,\lambda\omega, \lambda \Phi+v) = DH (M',\lambda'\omega', \lambda'
  \Phi'+v')$. By assumption, $(M,\lambda\omega, \lambda \Phi+v)$ and $(M',\lambda'\omega', \lambda'
  \Phi'+v')$ are isomorphic. Therefore, by Lemma
  \ref{lemma:monotone_normalized}, it follows that  $\comp$ and
  $(M',\omega',\Phi')$ are isomorphic, as desired.

  It remains to prove the result under the additional assumption that
  the spaces are normalized. By Remark \ref{rmk:thm_enough} and Theorem
  \ref{thm:DH_function_cts}, the two spaces have equal Duistermaat-Heckman
  measures. Moreover, both spaces
  have genus zero by Lemma
  \ref{lemma:chern}, and equal moment map images, both of which are equal to a reflexive Delzant
  polytope $\Delta$ by Proposition
  \ref{prop:mom_map_image}. Since the Duistermaat-Heckman functions
  are equal , there is a
  facet $\mathcal{F}_{\min}$ of $\Delta$ that is a minimal facet for
  both spaces. Let $s, s' \in \Z$ be the integers as in Lemma
  \ref{lemma:s_invariant} for $\comp$ and $(M',\omega',\Phi')$ respectively.
  Since the Duistermaat-Heckman functions of
  $\comp$ and $(M',\omega',\Phi')$ are equal, by Theorem
  \ref{thm:possibilities_DH},
  \begin{equation}
    \label{eq:18}
     2 - s\langle w ,\nu_{\min} \rangle - k \rho(w) =  2 - s'\langle w ,\nu_{\min} \rangle - k' \rho(w) \text{ for
       all } w \in \Delta,
   \end{equation}
   where $\rho : \g^* \to \R$ is the function of equation
   \eqref{eq:58}. Since the interior of $\Delta$ is not empty and
   by \eqref{eq:10}, there exists a point $w \in \Delta$ such
   that $-1 < \langle w, \nu_{\min} \rangle < 0$. Evaluating both sides of \eqref{eq:18} at
   this point, we obtain that $s = s'$. By the same argument and since $\Delta$ is reflexive so that the origin is an
   interior point of $\Delta$, there exists $w' \in \Delta$
   such that
   $\langle w', \nu_{\min} \rangle > 0$. Evaluating both sides of \eqref{eq:18} at $w'$, we
   obtain that $k = k'$. Hence, either both $M_{\mathrm{exc}}$ and
   $M'_{\mathrm{exc}}$ are empty or neither is.

   We suppose first that neither $M_{\mathrm{exc}}$ nor $M'_{\mathrm{exc}}$
   is empty. As in the proof of Theorem
   \ref{prop:trivial_painting}, we write
   $$ M_{\mathrm{exc}} = \coprod\limits_{j=1}^k M_j \qquad \text{ and } \qquad
   M'_{\mathrm{exc}} = \coprod\limits_{j=1}^k M'_j,$$
   \noindent
   where $M_j$ is a connected component of
   $M_{\mathrm{exc}}$ for all
   $j=1,\ldots, k$, and analogously for $M'_j$ and $M_{\mathrm{exc}}'$. 
   By Theorem
   \ref{prop:trivial_painting}, there exist trivial paintings $f :
   M_{\mathrm{exc}} \to S^2$ and $f':
   M'_{\mathrm{exc}} \to S^2$ of $\comp$ and $(M',\omega',\Phi')$
   respectively (see Definition \ref{defn:trivial_painting}). Since
   $f, f'$ are paintings, if $i \neq j$,
   $f(M_i) \neq f(M_j)$ and $f'(M'_i) \neq f'(M'_j)$. In
   particular, the images of both $f$ and $f'$ consist of $k$
   distinct points in $S^2$. We claim that we may
   assume that $f(M_j) = f'(M'_j)$
   for all $j = 1,\ldots, k$. To see this, we can
   use the argument of \cite{DeVito} to construct an
   orientation-preserving diffeomorphism $\xi : S^2 \to S^2$ such that
   $\xi(f(M_j)) = f'(M'_j)$ for all $j =1,\ldots, k$. The composite
   $\xi \circ f$ is a trivial painting of $\comp$ that is equivalent to $f$. 

   By Theorem \ref{cor:exceptional_orbits}, for all $j=1,\ldots, n$,
   the orbital moment maps $\overline{\Phi}$ (respectively $\overline{\Phi}'$) maps
   each connected component of $M_j$ (respectively
   $M'_j$) homeomorphically onto
   $$ \Delta \cap \{ w \in \g^* \mid \langle w, \nu_{\min} \rangle =
   0\}, $$
   \noindent
   where we use the fact that $\Phi(M) = \Delta = \Phi'(M')$. If $\overline{\Phi}_j$ (respectively
   $\overline{\Phi}'_j$) denotes the restriction of the orbital moment
   map to $M_j$ (respectively $M_j'$), then the composite $i_j:= (\overline{\Phi}'_j)^{-1}
   \circ \overline{\Phi}_j : M_j \to M'_j$ is
   a homeomorphism that satisfies $\overline{\Phi} =
   \overline{\Phi}' \circ i_j$. Hence, since $M_{\mathrm{exc}}$ and
   $M'_{\mathrm{exc}}$ have the same number of connected components,
   there is a unique map $i : M_{\mathrm{exc}} \to M'_{\mathrm{exc}}$
   that, when restricted to $M_j$, equals $i_j$ for all
   $j=1,\ldots,n$. Thus $i$ is a homeomorphism such that $\overline{\Phi} =
   \overline{\Phi}' \circ i$. Moreover, since the moment map images of
   $\comp$ and $(M',\omega',\Phi')$ are equal, by Remark
   \ref{rmk:image_symplectic_slice}, the homeomorphism $i$ maps each orbit to an orbit with
   the same symplectic slice representation. Hence, $i :
   M_{\mathrm{exc}} \to M'_{\mathrm{exc}}$ is an isomorphism of
   exceptional orbits such that $f' = i \circ f$. Thus $\comp$ and
   $(M',\omega',\Phi')$ have equivalent paintings.

   If $M_{\mathrm{exc}} = \emptyset = M'_{\mathrm{exc}}$, then $\comp$ and of
   $(M',\omega',\Phi')$ also have equivalent paintings
   (trivially). Hence, in either case, the result follows by Theorem \ref{sue
     and yael}.
\end{proof}

\begin{remark}
  We observe that, by Corollary \ref{cor:DH_polytope}, Theorem
  \ref{thm:DH_classifies} can be restated equivalently as saying that
  two compact monotone tall complexity one $T$-spaces are isomorphic
  if and only if they have equal Duistermaat-Heckman polytopes.
\end{remark}

\section{Realizability, extension to toric and finiteness results}\label{sec:an-expl-real}

\subsection{Necessary conditions for the realization and a finiteness
  result}\label{sec:necess-cond-finit}
To state the first result of this subsection, we recall that,
by Lemma \ref{lemma:s_invariant}, there exists $s \in \Z$ such that, if $v \in \mathcal{F}_{\min}$ is a
vertex and $e$ is the edge of $\Phi(M)$ that comes out of
$\mathcal{F}_{\min}$ and is incident to $v$, then the
self-intersection  of the sphere $\Phi^{-1}(v)$ in the four-dimensional submanifold $\Phi^{-1}(e)$ is
$s$.

\begin{proposition}\label{prop:necessary}
  Let $\comp$ be a normalized monotone tall complexity one Hamiltonian
  $T$-space, let $s \in \Z$ be as above and let $k \in \Z$ be the
  number of connected components of $M_{\mathrm{exc}}$. The pair
  $(s,k) \in \Z^2$ belongs to the set
  $$\{(0,0), (-1,0), (-1,1), (-1,2) \}.$$
\end{proposition}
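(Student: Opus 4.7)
The plan is to apply Theorem \ref{thm:possibilities_DH}, which provides the explicit formula
\[
DH\comp(w) = 2 - s\,\langle w, \nu_{\min}\rangle - k\,\rho(w),
\]
and to exploit two features of the minimal facet: on $\mathcal{F}_{\min}$ (where $\langle w, \nu_{\min}\rangle = -1$ and $\rho(w) = 0$) this formula evaluates to $2 + s$, and by Proposition \ref{prop:minimum_facet} this constant value is the global minimum of $DH\comp$ on $\Phi(M)$. The argument then splits into bounding $s$ first, and then bounding $k$ as a function of $s$.

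To bound $s$, I would fix a vertex $v \in \mathcal{F}_{\min}$ and consider the symplectic sphere $\Sigma := \Phi^{-1}(v)$ (which has genus zero by Lemma \ref{lemma:chern}). Its symplectic area equals $DH\comp(v) = 2 + s$, which must be positive, so $s \geq -1$. For the upper bound, Lemma \ref{Lemma:DHnearfixedSurface} gives $c_1(L_i)[\Sigma] \leq 0$ for every $i$; combined with Proposition \ref{prop:minimum_facet} (only the normal line bundle $L_{n-1}$ in the direction exiting $\mathcal{F}_{\min}$ has a possibly non-zero Chern number, equal to the self-intersection $s$), this yields $s \leq 0$. Hence $s \in \{-1, 0\}$.

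To bound $k$, I would use that $DH\comp(w) \geq 2 + s$ for every $w \in \Phi(M)$, which after rearranging the explicit formula becomes
\[
-s\bigl(\langle w, \nu_{\min}\rangle + 1\bigr) \geq k\,\rho(w) \qquad \text{for all } w \in \Phi(M).
\]
Since $\Phi(M)$ is a reflexive Delzant polytope by Proposition \ref{prop:mom_map_image}, the origin lies in its interior by Lemma \ref{lemma:reflexive_interior}, so $\langle \cdot, \nu_{\min}\rangle$ attains a strictly positive integer maximum $M^* \geq 1$ at some vertex of $\Phi(M)$ (integrality coming from reflexivity). Evaluating the displayed inequality at this maximizer gives $-s(M^* + 1) \geq k M^*$. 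If $s = 0$, the left-hand side vanishes and $k M^* \geq k$ forces $k = 0$; if $s = -1$, it reduces to $(k-1)M^* \leq 1$, whence $k \leq 2$ since $M^* \geq 1$. Together with $k \geq 0$, this produces exactly the four pairs listed.

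The main conceptual obstacle, more than any computation, is ensuring the correct viewpoint on the case that would a priori look like $(s,k) = (0,1)$: such a configuration would force the minimum of $DH\comp$ to be attained on the opposite facet $\{\langle w, \nu_{\min}\rangle = +1\}$ rather than on $\mathcal{F}_{\min}$, contradicting the defining property of $\mathcal{F}_{\min}$; relabelling the minimal facet to that opposite one converts the pair to $(-1, 1)$, which is in the list. The argument above simply reads this off directly from the inequality $-s(M^*+1) \geq kM^*$, without needing to treat this case separately.
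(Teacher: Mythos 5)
Your argument is correct, and while the bound $s\in\{0,-1\}$ is obtained exactly as in the paper (positivity of $c_1[\Sigma]$, equivalently of the area $\int_\Sigma\omega = DH\comp(v)=2+s$, together with $c_1(L_i)[\Sigma]\leq 0$ from Lemma \ref{Lemma:DHnearfixedSurface} and Proposition \ref{prop:minimum_facet}), your bound on $k$ takes a genuinely different and in fact cleaner route. The paper treats $s=0$ and $s=-1$ separately: for $s=0$ it argues as you do (min on $\mathcal{F}_{\min}$ forces $DH\equiv 2$, and the interior origin kills $k$), but for $s=-1$ it first proves $t_{\max}\geq 2$ for an edge coming out of $\mathcal{F}_{\min}$ -- which requires Corollary \ref{cor:no_iso_fixed_points} and Theorem \ref{cor:exceptional_orbits} to rule out a vertex on $\{\langle w,\nu_{\min}\rangle=0\}$ when $k>0$ -- and then evaluates $DH\comp$ at the lattice point $v+2\alpha$ using tallness. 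You instead evaluate the single inequality $DH\comp(w)\geq \min DH\comp = 2+s$ at a vertex maximizing $\langle\cdot,\nu_{\min}\rangle$, where reflexivity (integral vertices plus the origin in the interior, Lemma \ref{lemma:reflexive_interior}) gives $M^*\geq 1$ automatically; this yields $-s(M^*+1)\geq kM^*$ and hence both cases at once, with no appeal to the edge-length argument, to the exceptional-orbit structure, or to strict positivity of $DH\comp$ beyond the minimal-facet property. What the paper's longer route buys is geometric information reused elsewhere (e.g.\ $t_{\max}=2$ in Corollary \ref{cor:k=2}), but as a proof of this proposition yours is complete and more economical. One small caveat: your closing remark about a putative $(s,k)=(0,1)$ forcing the minimum onto the facet $\{\langle w,\nu_{\min}\rangle=+1\}$ is imprecise (the maximum of $\langle\cdot,\nu_{\min}\rangle$ need not be $1$ nor attained on a facet), but it is inessential commentary and the displayed inequality already disposes of that case.
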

\begin{proof}
  We fix a vertex $v \in \mathcal{F}_{\min}$ and we let $e=e_{n-1}$ be the edge
  of $\Phi(M)$ that comes out of $\mathcal{F}_{\min}$ and is incident
  to $v$. The normal bundle $N$ to $\Sigma:=\Phi^{-1}(v)$ splits $T$-equivariantly as 
  $L_1\oplus \cdots \oplus L_{n-1}$. By Proposition
  \ref{prop:minimum_facet}, the first Chern number $c_1(L_j)[\Sigma]$,
  which agrees with the self-intersection of $\Sigma$ in $\Phi^{-1}(e_j)$, equals zero for all $j=1,\ldots,n-2$. 
  Therefore, by Lemma \ref{lemma:chern} and equation \eqref{eq:30} in its proof,
  $$
  0<2+c_1(L_{n-1})[\Sigma]=2+s\,.
  $$
  Moreover, by \eqref{eq: value c1 on Li} in Lemma \ref{Lemma:DHnearfixedSurface}, $s=c_1(L_{n-1})[\Sigma]\leq 0$. Hence,
  we conclude that $s\in \{0,-1\}$.

  Suppose first that $s= 0$. By Theorem \ref{thm:possibilities_DH},
  the Duistermaat-Heckman function $DH \comp : \Phi(M) \to \R$ is given by $w \mapsto
  2 - k\rho(w)$, where $\rho : \g^* \to \R$ is the function of
  \eqref{eq:58}. Since $k \geq 0$, $2 - k\rho(w) \leq 2$
  for all $w \in \Phi(M)$. Since $DH\comp(v) = 2$ and since $v \in
  \mathcal{F}_{\min}$, it follows that $DH\comp(w) = 2$ for all $w \in
  \Phi(M)$. By Proposition \ref{prop:mom_map_image}, $\Phi(M)$ is a
  reflexive (Delzant) polytope. Hence, by Lemma
  \ref{lemma:reflexive_interior}, $\Phi(M)$
  contains the origin in its interior. Thus, by definition of the
  function $\rho$, $k = 0$. 

  Suppose that $s = -1$. We must show that $k \leq 2$. To this end, we
  may assume that $ k >
  0$. Let $\alpha \in \ell^*$ be the isotropy weight of $\Phi^{-1}(v)$
  such that the edge $e$ is contained in the half-ray $v + \R_{\geq 0}
  \langle \alpha \rangle$. By Lemma \ref{lemma:weight_edge_out_facet},
  let $t_{\max} \in \Z_{>0}$ be such that $ e = \{v
  + t \alpha \mid 0 \leq t \leq t_{\max}\}$. First, we prove that
  $t_{\max} \geq 2$. Let $v' \in \Phi(M)$ be the other vertex to which $e$ is incident. If
  $t_{\max} = 1$, then $v'$ is a vertex of
  $\Phi(M)$ that lies on the linear hyperplane $\{w \in \g^* \mid
  \langle w, \nu_{\min} \rangle = 0\}$. By the Convexity Package
  (Theorem \ref{thm:con_pac}), and Proposition
  \ref{prop:iso_fixed_pts_monotone}, there are no isolated fixed
  points in $\Phi^{-1}(e)$. Hence, by Theorem \ref{cor:exceptional_orbits}, $M_{\mathrm{exc}} = \emptyset$,
  a contradiction\. Hence, $t_{\max} \geq 2$, as desired. As a
  consequence, $v + 2\alpha \in \Phi(M)$. To conclude
  the proof, we evaluate $DH\comp$ at $v + 2\alpha$.
  By \eqref{eq:16} and the fact that $v\in \mathcal{F}_{\min}$,
  we obtain that $\langle v+2\alpha, \nu_{\min}\rangle = 1$. Moreover, since $\comp$ is tall,
  $DH\comp(w) > 0 $ for all $w \in \Phi(M)$. Therefore, by
  \eqref{eq:57}, 
  $$DH\comp(v+2\alpha)= 2 + 1- k > 0.$$
  Since $k\in \Z$, it follows that $k \leq 2$.
\end{proof}
  
We proceed with the proof of our second main result. 

\begin{proof}[Proof of Theorem \ref{thm:finitely_many_DH}]
  Suppose that $\Phi(M) = \Delta$ is reflexive Delzant. Since $\comp$
  is monotone, by Lemma \ref{lemma:normalized_complexity_preserving}, $\comp$ is normalized
  monotone. Since there are finitely many facets of $\Phi(M)$ and
  since, by Proposition \ref{prop:necessary}, there are finitely many
  possibilities for $(s,k)$, by Theorem
  \ref{thm:possibilities_DH}, there are finitely many
  possibilities for $DH\comp$. Hence, the result follows from Theorem \ref{thm:DH_classifies}.
\end{proof}

By Theorem \ref{cor:exceptional_orbits} and Proposition
\ref{prop:necessary}, and since there are
precisely as many edges that come out of $\mathcal{F}_{\min}$ as there
are vertices of $\mathcal{F}_{\min}$, we obtain the following bound on
the number of isolated fixed points.

\begin{corollary}\label{cor:total_number_iso_fixed_points}
  Let $\comp$ be a normalized monotone tall complexity one $T$-space of dimension $2n$. If
  $m$ is the number of vertices of a minimal facet, then there are precisely
  either zero, $m$ or $2m$ isolated fixed points in $M$.
\end{corollary}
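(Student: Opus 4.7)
The plan is to apply Theorem \ref{cor:exceptional_orbits} and Proposition \ref{prop:necessary} directly. Let $k$ denote the number of connected components of $M_{\mathrm{exc}}$. By Theorem \ref{cor:exceptional_orbits}, the number of isolated fixed points in $M$ equals $k \cdot m$ (since it equals the number of connected components of $M_{\mathrm{exc}}$ multiplied by $m$, the number of vertices of $\mathcal{F}_{\min}$). Alternatively, one may view this by noting that a reflexive Delzant polytope is smooth at every vertex, so the number of edges incident to each vertex of $\mathcal{F}_{\min}$ that come out of $\mathcal{F}_{\min}$ is exactly one; hence there are precisely $m$ edges of $\Phi(M)$ coming out of $\mathcal{F}_{\min}$, and the second bullet of Theorem \ref{cor:exceptional_orbits} contributes $k$ isolated fixed points per such edge.

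Next, invoke Proposition \ref{prop:necessary}: the admissible pairs $(s,k)$ belong to
\begin{equation*}
  \{(0,0),\,(-1,0),\,(-1,1),\,(-1,2)\},
\end{equation*}
so in particular $k \in \{0,1,2\}$. Therefore the total number of isolated fixed points equals one of $0,\ m,\ 2m$, which is precisely the claim.

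The only subtle point to address in the write-up is that Theorem \ref{cor:exceptional_orbits} requires the ambient space to be normalized monotone, and this hypothesis is already in the statement of the corollary. No further obstacle is expected: the argument is a two-line bookkeeping consequence of the two results cited, together with the observation (already used in Proposition \ref{prop:minimum_facet} and the discussion around \eqref{eq:59}) that each vertex of $\mathcal{F}_{\min}$ is the endpoint of exactly one edge of $\Phi(M)$ that is not contained in $\mathcal{F}_{\min}$.
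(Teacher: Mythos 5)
Your proof is correct and follows essentially the same route as the paper: the remark preceding the corollary derives it precisely by combining Theorem \ref{cor:exceptional_orbits} (isolated fixed points number $k\cdot m$, using that edges coming out of $\mathcal{F}_{\min}$ correspond bijectively to vertices of $\mathcal{F}_{\min}$) with Proposition \ref{prop:necessary} (so $k\in\{0,1,2\}$). Nothing further is needed.
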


The next result gives a combinatorial property of the moment map image
of a normalized monotone tall complexity one $T$-space such that 
$M_{\mathrm{exc}}$ has two connected components.

\begin{corollary}\label{cor:k=2}
  Let $\comp$ be a normalized monotone tall complexity one $T$-space of dimension $2n$. Let
  $\mathcal{F}_{\min}$ be a minimal facet of $\Phi(M)$ supported on
  the affine hyperplane $\{w \in \g^* \mid \langle w, \nu_{\min} \rangle =
  -1\}$. If $M_{\mathrm{exc}}$ has two connected components, then
  there exists a minimal facet $\mathcal{F}'_{\min}$ of $\Phi(M)$ supported on the affine hyperplane $\{w \in \g^* \mid \langle w, -\nu_{\min} \rangle =
  -1\}$. In particular, $\Phi(M)$ is contained in the strip $\{w \in
  \g^* \mid -1 \leq \langle w, \nu_{\min} \rangle \leq 1\}$. 
\end{corollary}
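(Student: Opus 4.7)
The plan is to leverage the explicit formula for the Duistermaat--Heckman function supplied by Theorem \ref{thm:possibilities_DH}. Since $M_{\mathrm{exc}}$ has two connected components, $k = 2$, and by Proposition \ref{prop:necessary} this forces $s = -1$, so
$$
DH\comp(w) = 2 + \langle w, \nu_{\min}\rangle - 2\rho(w),
$$
which simplifies to $2 - \langle w,\nu_{\min}\rangle$ on $\Phi(M)\cap\{\langle w,\nu_{\min}\rangle\geq 0\}$.

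First I would determine the maximum $M_{\max}$ of $\langle\cdot,\nu_{\min}\rangle$ on $\Phi(M)$. Tallness gives $DH\comp>0$, whence $M_{\max}<2$; since $\Phi(M)$ is reflexive Delzant by Proposition \ref{prop:mom_map_image}, its vertices lie in $\ell^*$ and pair to integers with $\nu_{\min}\in\ell$, so $M_{\max}\leq 1$. The reverse bound is already embedded in the proof of Proposition \ref{prop:necessary}: for any vertex $v\in\mathcal{F}_{\min}$ and the weight $\alpha$ generating the edge $e$ out of $\mathcal{F}_{\min}$ at $v$, the argument there (valid once $k>0$) shows $t_{\max}\geq 2$; hence $v+2\alpha\in\Phi(M)$ and $\langle v+2\alpha,\nu_{\min}\rangle=1$. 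Thus $M_{\max}=1$ and $\Phi(M)\subset\{\langle w,\nu_{\min}\rangle\leq 1\}$ with equality attained.

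Next I would set $\mathcal{F}'_{\min} := \Phi(M)\cap\{\langle w,\nu_{\min}\rangle=1\}$ and show that it is a facet. At any vertex $v'\in\mathcal{F}'_{\min}$, the non-zero isotropy weights $\alpha_1,\dots,\alpha_{n-1}$ of $\Phi^{-1}(v')$ satisfy $\langle\alpha_i,\nu_{\min}\rangle\leq 0$ (by maximality, applied to the tangent cone of Corollary \ref{Cor:ConeFace}) and each such pairing is an integer; meanwhile the weight sum formula of Proposition \ref{prop:weight_sum} yields
$$
\sum_{i=1}^{n-1}\langle\alpha_i,\nu_{\min}\rangle = -\langle v',\nu_{\min}\rangle = -1.
$$
Hence exactly one weight (say $\alpha_{n-1}$) pairs to $-1$ and the remaining $n-2$ pair to $0$, so the tangent cone of $\mathcal{F}'_{\min}$ at $v'$ is $v'+\R_{\geq 0}\langle\alpha_1,\dots,\alpha_{n-2}\rangle$, which has dimension $n-2$. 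Thus $\mathcal{F}'_{\min}$ is a facet, supported on $\{\langle w,-\nu_{\min}\rangle=-1\}$.

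Finally I would check that $\mathcal{F}'_{\min}$ is minimal in the sense of Definition \ref{defn:minimal_facet}, i.e.\ satisfies both conclusions of Proposition \ref{prop:minimum_facet}. The DH-formula gives $DH\comp|_{\mathcal{F}'_{\min}}\equiv 1$, matching the value on $\mathcal{F}_{\min}$ and equal to the global minimum (immediate from the piecewise-affine description); by Corollary \ref{cor:DH_boundary_comp_pres} this coincides with $DH(M_{\mathcal{F}'_{\min}},\omega_{\mathcal{F}'_{\min}},\Phi_{\mathcal{F}'_{\min}})$. For each edge $e_j\subset\mathcal{F}'_{\min}$ incident to $v'$ in direction $\alpha_j$ ($j=1,\dots,n-2$), constancy of $DH\comp$ along $e_j$ combined with Lemma \ref{Lemma:DHnearfixedSurface} forces $c_1(L_j)[\Phi^{-1}(v')]=0$, i.e.\ the self-intersection of $\Phi^{-1}(v')$ in $\Phi^{-1}(e_j)$ vanishes; this secures the second bullet of Proposition \ref{prop:minimum_facet}. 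The strip inclusion is then immediate from $\mathcal{F}_{\min}$ and $\mathcal{F}'_{\min}$ being facets supported on the opposite hyperplanes $\{\langle w,\nu_{\min}\rangle=\pm 1\}$. The only delicate step is the upgrade from \emph{maximum attained} to \emph{facet} in the third paragraph, which relies essentially on the combination of the tangent cone constraint with the normalized weight sum formula to force all but one weight at $v'$ to be orthogonal to $\nu_{\min}$.
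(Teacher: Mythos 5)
Your proposal is correct, and its first half coincides with the paper's argument: both start from Theorem \ref{thm:possibilities_DH} with $(s,k)=(-1,2)$, both import the bound $t_{\max}\geq 2$ from the last paragraph of the proof of Proposition \ref{prop:necessary}, and both use that the minimum of $DH\comp$ equals $1$. Where you diverge is in how the facet on $\{\langle w,\nu_{\min}\rangle=1\}$ is produced. The paper stays on the single edge $e$: from $DH\comp(v')=3-t_{\max}\geq 1$ it gets $t_{\max}=2$, so the far vertex $v'=v+2\alpha$ is a minimizer of $DH\comp$, and then it invokes Proposition \ref{prop:minimum_facet} (more precisely, its proof, which applies at any minimizing vertex) to obtain a minimal facet through $v'$, identifying its supporting hyperplane as the level set $(DH\comp)^{-1}(1)\cap\{\langle w,\nu_{\min}\rangle>0\}$. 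You instead show that $\max_{\Phi(M)}\langle\cdot,\nu_{\min}\rangle=1$ (upper bound from $DH>0$ plus integrality of the reflexive polytope, lower bound from $t_{\max}\geq 2$), define $\mathcal{F}'_{\min}$ as the exposed face where the maximum is attained, and prove it is a facet by the tangent-cone-plus-weight-sum argument: at a vertex of this face all weights pair non-positively and integrally with $\nu_{\min}$ and sum to $-1$, so exactly one pairs to $-1$, forcing the face to have dimension $d-1$. You then re-verify minimality of $\mathcal{F}'_{\min}$ by hand (constancy of $DH$ at the minimal value via the formula and Corollary \ref{cor:DH_boundary_comp_pres}, and vanishing self-intersections via Lemma \ref{Lemma:DHnearfixedSurface}), which in effect reproves the relevant special case of Proposition \ref{prop:minimum_facet} rather than citing it. Both routes are sound; the paper's is shorter because Proposition \ref{prop:minimum_facet} already packages the ``minimizing vertex lies on a minimal facet'' step, while yours is more self-contained on the combinatorial side and identifies $\mathcal{F}'_{\min}$ directly as the full level-one face. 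Two small points you leave implicit and should state: vertices of the exposed face $\Phi(M)\cap\{\langle w,\nu_{\min}\rangle=1\}$ are vertices of $\Phi(M)$, and tallness (via Proposition \ref{prop:tall=complexity_preserving}) is what guarantees that $\Phi^{-1}(v')$ is a fixed surface with exactly $n-1$ non-zero weights equal to the edge directions, which you need both for the weight count and to apply Lemma \ref{Lemma:DHnearfixedSurface}.
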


\begin{proof}
  We fix a vertex $v \in \mathcal{F}_{\min}$. Since the number $k$ of
  connected components of $M_{\mathrm{exc}}$ is 2, by Proposition
  \ref{prop:necessary}, it follows that $s=-1$. In particular, by
  Theorem \ref{thm:possibilities_DH}, the Duistermaat-Heckman function
  $DH \comp: \Phi(M) \to \R$ of $\comp$ is given by 
  \begin{equation}
    \label{eq:71}
    DH \comp(w) = 2 + \langle w, \nu_{\min} \rangle - 2 \rho(w),
  \end{equation}
  \noindent
  where $\rho : \Phi(M) \to \R$ is the non-negative function given by
  \eqref{eq:58}. Since $v \in \mathcal{F}_{\min}$, $DH
  \comp(v) = 1$. Hence, since $\mathcal{F}_{\min}$ is a minimal facet, the minimal value of $DH \comp$ equals 1.

  Let $e$ be the edge of $\Phi(M)$ that comes out of
  $\mathcal{F}_{\min}$ and is incident to $v$, and let $\alpha \in
  \ell^*$ be the isotropy weight of $\Phi^{-1}(v)$ so that $e$ is
  contained in the half-ray $v + \R_{\geq 0} \langle \alpha
  \rangle$. By \eqref{eq:16} and
  Proposition \ref{prop:minimum_facet}, 
   $\langle \alpha, \nu_{\min}
  \rangle =1$ and there exists $t_{\max} \in
  \Z_{>0}$ such that $ e = \{v + t\alpha \mid 0 \leq t \leq t_{\max}
  \}$. We set $v':= v + t_{\max} \alpha$; this is a vertex of
  $\Phi(M)$. Moreover, we observe that $\langle v', \nu_{\min} \rangle
  = t_{\max} -1$. Since $s=-1$ and $k =2$, arguing as in the last
  paragraph of the proof of
  Proposition \ref{prop:necessary}, $t_{\max} \geq
  2$. In particular, by \eqref{eq:71} and since the minimal
  value of $DH\comp$ is 1, 
  $$ DH\comp (v') = 3 - t_{\max} \geq 1,$$
  \noindent
  whence $t_{\max} \leq 2$. Hence, $t_{\max} = 2$ and $ DH\comp (v')
  =1$, so that $DH \comp$ attains its minimum at $v'$. By Proposition
  \ref{prop:minimum_facet}, $v'$ lies on a minimal facet
  $\mathcal{F}_{\min}'$. In fact, $\mathcal{F}_{\min}'$ is contained
  in the connected component of the level set $(DH \comp)^{-1}(1)$
  that contains $v'$. By \eqref{eq:71}, the latter is given by the
  affine hyperplane $\{w \in \g^* \mid \langle w, -\nu_{\min} \rangle =
  -1\}$. Since the affine span of a facet is an affine hyperplane, the
  first statement follows. The second statement follows at once from
  the first and the fact that $\Phi(M)$ is contained in the
  intersection $\{w \in \g^* \mid \langle w, \nu_{\min} \rangle \geq 
  -1\} \cap \{w \in \g^* \mid \langle w, -\nu_{\min} \rangle \geq
  -1\}$. 
\end{proof}

The next result is the most important building block of the main finiteness result of this paper,
Corollary \ref{prop:finitely_many}. 
\begin{corollary}\label{cor:finite_normalized}
  Given a reflexive Delzant polytope $\Delta$ in $\g^*$, there are
  finitely many isomorphism classes of normalized monotone tall complexity one $T$-spaces with moment map image equal to
  $\Delta$.
\end{corollary}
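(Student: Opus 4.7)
The plan is to use Theorem \ref{thm:possibilities_DH} together with Proposition \ref{prop:necessary} to bound the number of possible Duistermaat-Heckman functions, and then invoke the classification in Theorem \ref{thm:DH_classifies}. Indeed, if $(M,\omega,\Phi)$ is a normalized monotone tall complexity one $T$-space with $\Phi(M)=\Delta$, then by Proposition \ref{prop:minimum_facet} $\Phi(M)$ admits at least one minimal facet $\mathcal{F}_{\min}$, and, by Theorem \ref{thm:possibilities_DH}, the Duistermaat-Heckman function is given by
\begin{equation*}
DH\comp(w) = 2 - s\langle w, \nu_{\min}\rangle - k\rho(w),
\end{equation*}
where $\nu_{\min}\in\ell$ is the primitive inward normal to $\mathcal{F}_{\min}$, the function $\rho$ is as in \eqref{eq:58}, $s\in\mathbb{Z}$ is the self-intersection invariant of Lemma \ref{lemma:s_invariant}, and $k\in\mathbb{Z}_{\geq 0}$ is the number of connected components of $M_{\mathrm{exc}}$.

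Next I would bound the finite amount of combinatorial data that determines $DH\comp$. Since $\Delta$ is fixed and any polytope has finitely many facets, there are only finitely many candidates for the minimal facet $\mathcal{F}_{\min}$, and hence finitely many candidates for $\nu_{\min}$. By Proposition \ref{prop:necessary}, the pair $(s,k)$ lies in the four-element set $\{(0,0),(-1,0),(-1,1),(-1,2)\}$. Combining these two observations, the formula above yields only finitely many possibilities for $DH\comp$ as a function $\Delta\to\mathbb{R}$.

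Finally, since any normalized monotone tall complexity one $T$-space is in particular a compact monotone tall complexity one $T$-space, Theorem \ref{thm:DH_classifies} guarantees that the isomorphism class of $\comp$ is determined by its Duistermaat-Heckman function. Hence, the finite number of possibilities for $DH\comp$ translates into a finite number of isomorphism classes with $\Phi(M)=\Delta$, as required. Essentially no obstacle arises here, as all the heavy lifting has already been done in Sections \ref{sec:stuff-that-we} and \ref{sec:necess-cond-finit}; the corollary is a direct consequence of Theorem \ref{thm:possibilities_DH}, Proposition \ref{prop:necessary}, and Theorem \ref{thm:DH_classifies}.
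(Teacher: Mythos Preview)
Your proof is correct and follows essentially the same approach as the paper: bound the possible Duistermaat--Heckman functions via the finite choices of minimal facet (finitely many facets of $\Delta$) and of the pair $(s,k)$ (Proposition~\ref{prop:necessary}), then apply Theorem~\ref{thm:DH_classifies}. The paper organizes the count by fixing $(\mathcal{F},s,k)$ and observing that at most one isomorphism class corresponds to each choice, but this is merely a cosmetic rephrasing of your argument.
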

\begin{proof}
  Let $\mathcal{F}$ be a facet of $\Delta$ and let $(s,k) \in \{(0,0), (-1,0), (-1,1), (-1,2) \}$. Since $\Delta$ has
  finitely many facets and by Proposition \ref{prop:necessary}, it suffices to show that there are finitely
  many isomorphism classes of normalized monotone tall complexity one $T$-spaces with moment map image equal to
  $\Delta$ such that
  \begin{itemize}[leftmargin=*]
  \item $\mathcal{F}$ is a minimal facet of $\Phi(M)$,
  \item given any vertex $v \in \mathcal{F}$ and the edge $e$ of
    $\Phi(M)$ that comes out of $\mathcal{F}$ and is incident to $v$,
    the self-intersection of $\Phi^{-1}(v)$ in $\Phi^{-1}(e)$ equals
    $s$ (cf. Lemma \ref{lemma:s_invariant}), and 
  \item the set of exceptional orbits has precisely $k$ connected
    components. 
  \end{itemize}
  If there is no compact normalized monotone tall complexity one $T$-space with the above properties, there is
  nothing to prove, so we may assume that there exists such a space $\comp$. By Theorem \ref{thm:possibilities_DH}, the data $\Delta,
  \mathcal{F}$ and $(s,k)$ determine uniquely the Duistermaat-Heckman
  function of $\comp$. Hence, by Theorem \ref{thm:DH_classifies},
  there is exactly one isomorphism class of compact normalized monotone tall complexity one $T$-space with the above properties, as desired.
\end{proof}

Theorem \ref{thm:DH_classifies} and Corollary
\ref{cor:finite_normalized} allow us to prove Corollary
\ref{prop:finitely_many}, thus answering a question posed to us by
Yael Karshon. We recall that two Hamiltonian $T$-spaces $(M_1,\omega_1,\Phi_1)$ and
$(M_2,\omega_2,\Phi_2)$ are {\bf equivalent} if there exists a
symplectomorphism $\Psi : (M_1,\omega_1) \to (M_2,\omega_2)$ and an affine transformation $a \in \mathrm{GL}(\ell^*) \ltimes
\mathfrak{t}^*$ such that $\Phi_2
\circ \Psi  = a \circ \Phi_1$. In this case, we write $(M_1,\omega_1,\Phi_1) \sim (M_2,\omega_2,\Phi_2)$.

\begin{remark}\label{rmk:equivalence}
  \mbox{}
  \begin{itemize}[leftmargin=*]
  \item In the above notion of equivalence, the reason why we restrict to elements in $\mathrm{GL}(\ell^*) \ltimes
    \mathfrak{t}^*$ is the following: Given an effective Hamiltonian $T$-space
    $\comp$ and an affine transformation $a$ of $\mathfrak{t}^*$, the
    triple $(M,\omega, a \circ \Phi)$ is an effective Hamiltonian $T$-space if
    and only if $a \in \mathrm{GL}(\ell^*) \ltimes
    \mathfrak{t}^*$.
  \item Isomorphic Hamiltonian $T$-spaces
    in the sense of Definition \ref{def hamiltonian space}
    are necessarily equivalent, but the converse need not hold.
  \end{itemize}
\end{remark}

\begin{proof}[Proof of Corollary \ref{prop:finitely_many}]
  We fix $n$ and we denote the set of equivalence classes of compact tall
  complexity one $T$-spaces of dimension $2n$ with first Chern class equal to the class
  of the symplectic form by $\mathcal{M}_n$. By Definition \ref{monotone ham space}, any normalized monotone tall
  complexity one $T$-space of dimension $2n$ is such that its first
  Chern class equals the class of the symplectic form. We define an
  auxiliary equivalence relation $\approx$ on the set of normalized monotone tall
  complexity one $T$-spaces of dimension $2n$ as follows: Given two
  such spaces
  $(M_1,\omega_1,\Phi_1), (M_2,\omega_2,\Phi_2)$, we say that $
  (M_1,\omega_1,\Phi_1) \approx (M_2,\omega_2,\Phi_2)$ if there exists
  a linear transformation $l \in \mathrm{GL}(\ell^*)$ such
  that  $\Phi_2 \circ \Psi  = l \circ \Phi_1$. We denote the set of
  $\approx$-equivalence classes of normalized monotone tall
  complexity one $T$-spaces of dimension $2n$ by $\mathcal{NM}_n$.

  We observe that there is a natural map $\mathcal{NM}_n \to
  \mathcal{M}_n$ sending the $\approx$-equivalence class of $\comp$ to
  its $\sim$-equivalence class. Moreover, we claim that this map is a
  bijection. First, we show that it is
  injective. Suppose that $(M_1,\omega_1,\Phi_1),
  (M_2,\omega_2,\Phi_2) $ are normalized monotone tall
  complexity one $T$-spaces of dimension $2n$ such that
  $(M_1,\omega_1,\Phi_1) \sim (M_2,\omega_2,\Phi_2)$. Then there
  exists $a \in \mathrm{GL}(\ell^*) \ltimes
  \mathfrak{t}^*$ such that $a (\Phi_1(M_1)) = \Phi_2(M_2)$. We write
  $a = (l, v)$ for unique $l \in \mathrm{GL}(\ell^*)$ and $v \in
  \mathfrak{t}^*$. It suffices to show that $v =
  0$. By Proposition \ref{prop:mom_map_image}, both $\Phi_1(M_1)$ and
  $\Phi_2(M_2)$ are reflexive (Delzant) polytopes. In particular, all
  vertices of $\Phi_1(M_1)$ and of 
  $\Phi_2(M_2)$ lie in $\ell^*$. Since both polytopes have at least
  one vertex and since $l \in \mathrm{GL}(\ell^*)$, $v
  \in \ell^*$. Moreover, by Lemma \ref{lemma:reflexive_interior}, the
  origin is the only interior lattice point in both $\Phi_1(M_1)$ and
  $\Phi_2(M_2)$. Hence, since $a (\Phi_1(M_1)) =
  \Phi_2(M_2)$, the lattice point $v$ lies in the interior of
  $\Phi_2(M_2)$. Thus $v = 0$, as desired. Next we prove
  surjectivity. To this end, let $\comp$ be a compact tall
  complexity one $T$-space of dimension $2n$ with $c_1(M) = [\omega]$. By Proposition \ref{prop:weight_sum}, there exists
  (a unique) $v \in \mathfrak{t}^*$ such that $(M,\omega,\Phi +v)$ is
  normalized. Hence, 
  $\comp \sim (M,\omega,\Phi +v)$, as desired.

  Therefore it suffices to prove that $\mathcal{NM}_n$ is
  finite. To this end, we denote the orbit space of the standard
  $\mathrm{GL}(\ell^*)$-action on the set of reflexive Delzant
  polytopes in $\g^*$ by $\mathcal{RD}_n$. By Proposition
  \ref{prop:mom_map_image}, the map $p: \mathcal{NM}_n \to \mathcal{RD}_n$
  that sends the $\approx$-equivalence class of $\comp$ to the
  $\mathrm{GL}(\ell^*)$-orbit of $\Phi(M)$ is surjective. By Corollary
  \ref{cor:finite}, $\mathcal{RD}_n$ is
  finite. Hence, it suffices to prove that the fibers of the above map
  are finite. We fix a reflexive Delzant polytope $\Delta$ and we
  consider the  map from the set of isomorphism classes of normalized monotone tall complexity one $T$-spaces with moment map image equal to
  $\Delta$ to $p^{-1}([\Delta])$ that sends the isomorphism class of
  $\comp$ to its $\approx$-equivalence class. This map is surjective:
  If $\comp$ is such that $[\Phi(M)] = [\Delta]$, then
  there exists $l \in \mathrm{GL}(\ell^*)$ such that $\Delta =
  l(\Phi(M))$. The isomorphism class of $(M, \omega, l \circ \Phi)$ is
  then mapped to the $\approx$-equivalence class of $\comp$. The
  result now follows from Corollary \ref{cor:finite_normalized}. 
\end{proof}

\subsection{Sufficient conditions for the realization and extension to
a toric action}\label{sec:comb-real-result}
Let $\comp$ be a normalized monotone tall complexity one $T$-space. By
the results of Section
\ref{sec:stuff-that-we}, there exists a quadruple
$(\Delta,\mathcal{F},s,k)$ determined by $\comp$, where $\Delta = \Phi(M)$, $\mathcal{F}$ is
a facet of $\Delta$ that is a minimal facet of $\comp$ (see Definition
\ref{defn:minimal_facet}, $s \in \Z$ is the the self intersection of 
the sphere $\Phi^{-1}(v)$ in $\Phi^{-1}(e)$, where $v \in \mathcal{F}$
is any vertex and $e$ is an edge of $\Delta$ that comes out of
$\mathcal{F}$ and is incident to $v$, and $k \in \Z$ is the number of
connected components of $M_{\mathrm{exc}}$. The overall aim of this section is
to determine which quadruples arise in this fashion (see Corollary \ref{cor:realizability}). So far, we have
established the following necessary conditions:
 \begin{itemize}[leftmargin=*]
 \item[(i)] $\Delta$ is a full-dimensional reflexive Delzant polytope in $\g^*$ (Propositions \ref{prop:mom_map_image} and \ref{prop:tall=complexity_preserving}).
 \item[(ii)] $\mathcal{F}$ is a facet of $\Delta$ that is supported on the affine hyperplane $ \{w \in \g^* \mid \langle w,\nu \rangle =
  -1\}$.
  \item[(iii)] The pair $(s,k)$ belongs to the set $\{(0,0), (-1,0), (-1,1), (-1,2) \}$ (Proposition \ref{prop:necessary}).
  \item[(iv)] If there is a vertex of $\Phi(M)$ on the linear hyperplane $\{w
  \in \g^* \mid \langle w, \nu \rangle =
  0\}$, then $k=0$ (Corollary \ref{cor:no_iso_fixed_points}).
  \item[(v)] If $k=2$ then there exists a facet $\mathcal{F}'$ supported on the hyperplane 
  $\{w\in \g^* \mid \langle w, -\nu \rangle =
  -1\}$ (Corollary \ref{cor:k=2}).
\end{itemize}
The first step towards proving which quadruples are associated to a normalized monotone tall complexity one $T$-space 
and whether the $T$ action extends to a toric action 
(Corollary \ref{cor:realizability})
is
establishing its combinatorial analogue, namely Theorem
\ref{thm:comb_real}. To this end, we introduce the following
terminology.

\begin{definition}\label{defn:types_polytopes}
  We say that a quadruple $(\Delta, \mathcal{F},s,k)$ consisting of a polytope $\Delta$
  in $\g^*$, a facet $\mathcal{F}\subset \Delta$, and integers $s,k$, is {\bf admissible} if it satisfies conditions (i)--(v) above.
\end{definition}

\begin{remark}\label{rmk:k=2}
  If $(\Delta, \mathcal{F}, -1,2)$ is admissible, then, by
  the proof of Corollary \ref{cor:k=2}, the polytope $\Delta$ is
  contained in the strip $\{w \in
  \g^* \mid -1 \leq \langle w, \nu \rangle \leq 1\}$. 
\end{remark}

\begin{lemma}\label{lemma:k_greater_0}
  Let $k \in \{1,2\}$. If $(\Delta, \mathcal{F}, -1,k)$ is admissible,
  then for any edge $e$ of $\Delta$ that intersects the linear
  hyperplane $ \{w \in \g^* \mid \langle w,\nu \rangle =
  0\}$, there exists a vertex $v \in \mathcal{F}$ and a weight
  $\alpha_v \in \ell^*$ at $v$ such that
  $$ e \cap \{w \in \g^* \mid \langle w,\nu \rangle =
  0\} = \{v + \alpha_v\}.$$
  In particular, $ e \cap \{w \in \g^* \mid \langle w,\nu \rangle =
  0\}$ is contained in $\ell^*$.
\end{lemma}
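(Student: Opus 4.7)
The plan is to exploit two facts guaranteed by admissibility: $\Delta$ is reflexive (so in particular integral), and condition (iv) forbids any vertex of $\Delta$ from lying on $\{w \in \g^* \mid \langle w, \nu\rangle = 0\}$ when $k \geq 1$. Combined, these force any edge that meets this hyperplane to cross it transversely with its two vertices landing on opposite sides, and the side with negative pairing will automatically lie on $\mathcal{F}$.

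First, I would fix an edge $e$ of $\Delta$ meeting $\{\langle w, \nu\rangle = 0\}$ and consider its endpoints $v, v'$. Since $\Delta$ is reflexive, both $v, v' \in \ell^*$, so $\langle v, \nu\rangle, \langle v', \nu \rangle \in \Z$; since $\Delta \subset \{\langle w, \nu\rangle \geq -1\}$ (this is the half-space defining $\mathcal{F}$) and by condition (iv) neither pairing can be zero, both must be nonzero integers of absolute value at least one, with $\langle v, \nu \rangle \geq -1$ and $\langle v', \nu \rangle \geq -1$. If both pairings were positive, the whole edge would lie in $\{\langle w, \nu \rangle \geq 1\}$ and miss the hyperplane; if both equalled $-1$, the edge would lie inside $\mathcal{F}$ and again miss the hyperplane. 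Hence, up to swapping, $\langle v, \nu\rangle = -1$ and $\langle v', \nu\rangle \geq 1$, so $v \in \mathcal{F}$ and $e$ is an edge that comes out of $\mathcal{F}$ at $v$.

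Next I would apply Lemma \ref{lemma:weight_edge_out_facet}: it supplies a weight $\alpha_v \in \ell^*$ at $v$ with $\langle \alpha_v, \nu \rangle = 1$, and an integer $t_{\max} \in \Z_{>0}$ such that $e = \{v + t\alpha_v \mid 0 \leq t \leq t_{\max}\}$. A direct evaluation then gives $\langle v + t\alpha_v, \nu \rangle = -1 + t$, so
\[
e \cap \{w \in \g^* \mid \langle w, \nu\rangle = 0\} = \{v + \alpha_v\},
\]
as required. Finally, $v + \alpha_v \in \ell^*$ since both $v$ and $\alpha_v$ lie in $\ell^*$, which yields the \emph{in particular} clause.

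There is no serious obstacle: the only slightly delicate point is ruling out the degenerate cases where an endpoint of $e$ could fall on the hyperplane or both endpoints could sit in $\mathcal{F}$, and these are handled respectively by condition (iv) of admissibility and by the observation that an edge with both endpoints in a supporting affine hyperplane of $\Delta$ is contained in that hyperplane. Once these are dispensed with, the result is a one-line consequence of Lemma \ref{lemma:weight_edge_out_facet}. Note that the hypothesis $k \in \{1,2\}$ is used only through condition (iv); the stronger constraint from Remark \ref{rmk:k=2} (that $\Delta$ sits in the strip $|\langle w,\nu\rangle|\leq 1$) is not needed here, although it would further force $\langle v',\nu\rangle=1$ when $k=2$.
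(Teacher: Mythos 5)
Your argument is correct and follows essentially the same route as the paper: use condition (iv) plus integrality of the reflexive polytope to force one endpoint of $e$ onto $\mathcal{F}$ (pairing $-1$) and the other to have pairing $\geq 1$, then invoke Lemma \ref{lemma:weight_edge_out_facet} to write $e = \{v + t\alpha_v \mid 0 \leq t \leq t_{\max}\}$ with $\langle \alpha_v,\nu\rangle = 1$ and read off the intersection point $v+\alpha_v \in \ell^*$. The only cosmetic difference is that the paper phrases the endpoint analysis by first noting the intersection lies in the relative interior of $e$, while you rule out the degenerate endpoint configurations directly; the substance is identical.
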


\begin{proof}
  Fix such an edge $e$. Since $(\Delta, \mathcal{F}, -1,k)$ is
  admissible and $k >0$, $\Delta$ has no vertex on the linear
  hyperplane $ \{w \in \g^* \mid \langle w,\nu \rangle =
  0\}$. Hence, $e$ intersects $ \{w \in \g^* \mid \langle w,\nu \rangle =
  0\}$ in the relative interior of $e$, so that the intersection $ e \cap \{w \in \g^* \mid \langle w,\nu \rangle =
  0\}$ consists of one element, which is not a vertex since $(\Delta, \mathcal{F}, -1,k)$ is
  admissible. Let $v \in \Delta$ be the vertex that is incident
  to $e$ and satisfies $\langle v, \nu \rangle < 0$. Since $\Delta$ is
  integral and is contained in the upper-half plane $\{w \in \g^* \mid
  \langle w,\nu \rangle \geq -1\}$, and since $\nu \in \ell^*$ is
  primitive, $\langle v, \nu \rangle = -1$, i.e., $v
  \in \mathcal{F}$. Moreover, $e$ is the edge that comes out of
  $\mathcal{F}$ that is incident to $v$. Let $\alpha_v \in \ell^*$ be
  the weight of $v$ such that $e$ is contained in the half-ray $v +
  \R_{\geq 0} \langle \alpha_v \rangle$. By Lemma
  \ref{lemma:weight_edge_out_facet}, $\langle v+ \alpha_v, \nu \rangle
  = 0$,  whence $ e \cap \{w \in \g^* \mid \langle w,\nu \rangle =
  0\} = \{v + \alpha_v\}$. Since $v, \alpha_v \in \ell^*$, the result
  follows. 
\end{proof}

Admissible quadruples encode the abstract analogs of the functions
given by \eqref{eq:57} in Theorem \ref{thm:possibilities_DH}.

\begin{definition}\label{defn:admissible_DH}
  Let $(\Delta, \mathcal{F}, s,k)$ be an admissible quadruple. The
  {\bf abstract Duistermaat-Heckman function} determined
  by $(\Delta, \mathcal{F}, s,k)$ is the map $\Delta \to \R$ that sends $w \in \Delta$ to
\begin{equation}
    \label{eq:66}
    DH (w) = 2 - s \langle w, \nu \rangle - k \rho(w), 
  \end{equation}
  \noindent
  where $\rho : \g^* \to \R$ is given by
  \begin{equation*}
    w \mapsto
    \begin{cases}
      0 & \text{if } \langle w, \nu \rangle \leq 0 \\
    \langle w, \nu \rangle & \text{if } \langle w, \nu \rangle \geq 0.
    \end{cases}
  \end{equation*}
\end{definition}

\begin{remark}\label{rmk:abstract_determined_quadruple}
  Let $(\Delta, \mathcal{F}, s,k)$ and $(\Delta', \mathcal{F}',
  s',k')$ be admissible quadruples. By the arguments in the proof of
  Theorem \ref{thm:DH_classifies} (see page \pageref{proof theorem thm:DH_classifies}), if the abstract Duistermaat-Heckman
  functions determined by $(\Delta, \mathcal{F}, s,k)$ and $(\Delta', \mathcal{F}',
  s',k')$ are equal, then $(\Delta, \mathcal{F}, s,k) = (\Delta', \mathcal{F}',
  s',k')$.
\end{remark}

In what follows, we fix the map
$\mathrm{pr} : \g^* \times \R \to \g^*$ given by projection to the first
component and we denote the Lebesgue measure on $\R$ by $dy$. Given a polytope $\Delta'$ in $\g^* \times \R$, the projection
$\mathrm{pr}(\Delta')$ is a polytope in $\g^*$. On such a projection we
define the combinatorial analog of the function constructed in Example
\ref{exm:induced_DH} (the terminology used below is not standard).

\begin{definition}\label{defn:height_function}
  Let $\Delta'$ be a polytope in $\g^* \times \R$. The {\bf height
    function} of $\Delta :=\mathrm{pr}(\Delta')$ is the map $\Delta
  \to \R$ that sends $w \in \Delta$ to
  $$ \mathrm{Length}(\Delta'_w):=\int\limits_{\Delta'_w} dy, $$
  \noindent
  where $\Delta'_w:= \mathrm{pr}^{-1}(w) \cap \Delta$.
\end{definition}

The combinatorial realizability result is as follows.
\begin{theorem}\label{thm:comb_real}
  For each admissible quadruple $(\Delta, \mathcal{F}, s,k)$, there exists a
  reflexive Delzant polytope $\Delta'$ in $\g^* \times \R$ such that
  $\mathrm{pr}(\Delta') = \Delta$ and the height function
  of $\Delta$ equals the abstract Duistermaat-Heckman function determined
  by $(\Delta, \mathcal{F}, s,k)$.
\end{theorem}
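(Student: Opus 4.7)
The plan is to handle each of the four admissible values of $(s,k) \in \{(0,0), (-1,0), (-1,1), (-1,2)\}$ separately by exhibiting $\Delta'$ explicitly as the solution set of a list of linear inequalities in $\g^* \times \R$. Writing $\Delta$ in its minimal representation with primitive inward normals $\nu_1,\ldots,\nu_l \in \ell$ and all constants equal to $-1$ (reflexivity), the polytope $\Delta'$ will always contain the lifted inequalities $\langle (w,y), (\nu_i,0)\rangle \geq -1$ together with some subset of $\{-1 \leq y,\ \langle w,\nu\rangle-1 \leq y,\ -y \geq -1,\ -y + \langle w,\nu\rangle \geq -1\}$: namely the first and third for $(0,0)$, the first and fourth for $(-1,0)$, the first, third and fourth for $(-1,1)$, and all four for $(-1,2)$. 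By inspection every inward normal so listed lies in $\ell \times \Z$ and is primitive, and every constant is $-1$, so as soon as we know the listed inequalities cut out genuine facets, $\Delta'$ will be reflexive in the sense of Definition \ref{defn:reflexive}.

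The height function computation is then direct: in each case one writes down the fiber $\Delta'_w$ as an interval of the form $[\max(\cdot), \min(\cdot)]$ and checks that its length reproduces the piecewise affine formula of Definition \ref{defn:admissible_DH}. The fiber is nonempty because $\langle w, \nu \rangle \geq -1$ on $\Delta$, and because Remark \ref{rmk:k=2} forces $\Delta$ into the strip $-1 \leq \langle w,\nu\rangle \leq 1$ in the fourth case. That the top and bottom inequalities truly give $d$-dimensional facets of $\Delta'$ follows in the last three cases from the fact that $0 \in \g^*$ lies in the interior of $\Delta$ by Lemma \ref{lemma:reflexive_interior}, so $\Delta$ meets both half-spaces $\{\pm\langle w,\nu\rangle \geq 0\}$ in top-dimensional subsets.

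The crux of the argument is showing that $\Delta'$ is Delzant at every vertex. A vertex of $\Delta'$ lies either over a vertex of $\Delta$ (Possibility 1) or over a relatively interior point of some edge of $\Delta$ (Possibility 2). In Possibility 1, a small case check shows that only one of the top/bottom inequalities is active at each vertex of $\Delta'$ over a vertex $v$ of $\Delta$; the $d$ primitive normals of $\Delta$ at $v$ form a basis of $\ell$ by the Delzant condition for $\Delta$, and adjoining any single vector of the form $(0,\pm 1)$ or $(\pm \nu, \mp 1)$ preserves the basis property by expansion along the $\R$-coordinate. In Possibility 2, two top/bottom inequalities must be simultaneously active, which a direct elimination shows forces $\langle w,\nu\rangle = 0$; here admissibility condition (iv) rules out the case that $w$ is a vertex of $\Delta$ (since $k \geq 1$ in the two cases where this could arise), and Lemma \ref{lemma:k_greater_0} identifies the offending points as $v + \alpha_v \in \ell^*$, where $v$ is a vertex of $\mathcal{F}$ and $\alpha_v$ is the weight at $v$ supporting the edge out of $\mathcal{F}$.

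The hard part will be verifying the Delzant condition at these crease vertices $(v+\alpha_v,\pm 1)$. There the active inward normals are the $d-1$ primitive normals of the facets of $\Delta$ containing the edge out of $\mathcal{F}$ through $v$, together with two of $(0,\pm 1)$, $(\pm \nu, \mp 1)$. Subtracting one from the other replaces the pair by $(\nu,0)$ and $(0,\pm 1)$, and the key observation is that the $d-1$ normals in question, together with $\nu$, are exactly the Delzant basis of $\ell$ at the vertex $v$ of $\Delta$, because $\nu$ is the inward normal to $\mathcal{F}$ and the remaining $d-1$ facets of $\Delta$ at $v$ are precisely those containing the edge out of $\mathcal{F}$. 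Hence the full collection of $d+1$ vectors is a basis of $\ell \times \Z$, completing the verification. Once this construction is in place in all four cases, the theorem follows; and the polytope $\Delta'$ produced is visibly the candidate moment polytope of the toric extension whose existence is asserted in Theorem \ref{thm:extension}.
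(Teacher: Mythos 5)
Your construction produces exactly the same polytopes as the paper: in each of the four cases your inequality system is precisely the paper's $\Delta'_{(s,k)}$ (the region between the graphs of the relevant affine functions over $\Delta$), so the height-function computation and the identification of the facet normals agree verbatim. Where you genuinely diverge is in how smoothness is established for $k=1,2$: the paper obtains $\Delta'_{(-1,1)}$ and $\Delta'_{(-1,2)}$ as combinatorial blow-ups of $\Delta'_{(0,0)}$ and $\Delta'_{(-1,1)}$ along the codimension-two ``crease'' faces (Definition \ref{defn:comb_blow-up}), and invokes Remark \ref{rmk:blow-up_smooth} (after checking the size-$\epsilon$ condition on vertices) to get the Delzant property for free, only then listing vertex types to verify integrality and reflexivity; you instead verify the Delzant condition by hand at every vertex of $\Delta'$, which forces you to analyze the crease vertices $(v+\alpha_v,\pm1)$ directly. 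Your key observation there is correct and is exactly what makes the direct route work: at such a vertex the active normals are the $d-1$ lifted normals of the facets of $\Delta$ containing the edge $e$ out of $\mathcal{F}$ (respectively out of $\mathcal{F}'$) together with a pair from $\{(0,\pm1),(\pm\nu,\mp1)\}$, and a single row subtraction reduces this to the Delzant basis of $\ell$ at the endpoint of $e$ together with $(0,\pm1)$; condition (iv) and Lemma \ref{lemma:k_greater_0} are used exactly as you say to rule out degenerate coincidences and to place the crease points in $\ell^*$. What each approach buys: the paper's blow-up route is shorter at the vertex check and makes the geometric interpretation (iterated equivariant blow-up of the toric extension) transparent, while yours is self-contained, uniform over the four cases, and avoids importing the McDuff--Tolman remark. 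Two small points you should make explicit when writing this up: (a) that each listed inequality is irredundant (you argue this via Lemma \ref{lemma:reflexive_interior} for the top/bottom ones; for the lifted normals note the height function is $\geq 1$ on $\Delta$, so each facet of $\Delta$ lifts to a facet of $\Delta'$), and (b) that integrality of $\Delta'$ --- which Definition \ref{defn:reflexive} requires in addition to primitive normals with constants $-1$ --- follows either from your vertex list (heights are $\pm1$ or $\pm1+\langle v,\nu\rangle$ with $v\in\ell^*$, and crease points lie in $\ell^*$ by Lemma \ref{lemma:k_greater_0}) or automatically once smoothness is proved, since at each vertex the active normals form a lattice basis and the vertex is the unique solution of the corresponding equations with right-hand side $-1$.
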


In Figure \ref{Figure: ExtensionReflexiveSquare} we provide the
complete list of reflexive Delzant polytopes $\Delta'$ such that the
projection is the
reflexive square $\Delta$ of Figure \ref{Figure:ReflexiveSquare}. Before turning to the proof of Theorem \ref{thm:comb_real}, following
\cite[Section 2.4]{mcduff_tolman}, we introduce an important
construction on smooth polytopes.

\begin{definition}\label{defn:comb_blow-up}
  Let $\Delta$ be a Delzant polytope in $\g^*$ given by $\Delta=\bigcap_{i=1}^l \, \{w\in \g^* \mid \langle w,\nu_i \rangle \geq
  c_i\}$, let $\mathcal{F}$ be a face of $\Delta$ of codimension at
  least two, and let $I \subset \{1,\ldots, l\}$ be the subset of those
  indices corresponding to the facets containing $\mathcal{F}$. We set
  $\nu_0:= \sum\limits_{i \in I} \nu_i$ and, given
  $\epsilon > 0$, we also set $c_0:= \epsilon + \sum\limits_{i \in I}
  c_i$. For any $\epsilon >0$ such that any vertex $v$ of $\Delta$ not
  lying on $\mathcal{F}$ satisfies $\langle v, \nu_0 \rangle > c_0$,
  we define the {\bf blow-up of $\Delta$ along $\mathcal{F}$ of size
    $\epsilon$} to be the polytope 
  $$ \Delta \cap \{w \in \g^* \mid \langle w, \nu_0 \rangle \geq c_0\}.$$
\end{definition}

\begin{remark}\label{rmk:blow-up_smooth}
  As remarked in \cite[Section 2.4]{mcduff_tolman}, any blow-up of a Delzant
  polytope (along any face and of any size) is a Delzant polytope (so
  long as the face has codimension at least two and the size satisfies
  the condition stated in Definition \ref{defn:comb_blow-up}). 
\end{remark}

\begin{proof}[Proof of Theorem \ref{thm:comb_real}]
  We fix an admissible quadruple $(\Delta, \mathcal{F}, s,k)$. We
  split the proof in two cases, depending depending on whether $k=0$
  or not.\\
  
  {\bf Case 1: $k=0$}. The abstract Duistermaat-Heckman function is $DH(w) = 2 - s\langle w, \nu
  \rangle$ for all $w \in \Delta$. We set
  \begin{equation}
    \label{eq:67}
    \Delta_{(s,0)}':=\{(w,y) \in \g^* \times \R \mid w \in \Delta \, , \, -1
    \leq y \leq DH(w) -1\}, 
  \end{equation}
  where $DH : \Delta \to \R$ is the abstract Duistermaat-Heckman
  function determined by $(\Delta, \mathcal{F},
  s,k)$ -- see Figure \ref{ToricExtension1}. First, we claim that $\mathrm{pr}(\Delta_{(s,0)}') = \Delta$. To 
  this end, it suffices to prove that 
  $DH(w) \geq 1$ for all $w \in \Delta$. This follows immediately from the fact that,
  by Definition \ref{defn:types_polytopes}, $s
  \in \{0,-1\}$ and $\Delta$ is contained in the upper half-space of $\g^*$ 
  given by $ \{w \in \g^* \mid \langle w, \nu \rangle \geq
  -1\}$. By
  construction, the height function of $\Delta_{(s,0)}'$ equals $DH$, so it
  remains to show that $\Delta_{(s,0)}'$ is a reflexive Delzant polytope. To this
  end, we write $\Delta$ in its minimal representation (see \eqref{eq:3})
  \begin{equation*}
    \Delta=\bigcap_{i=1}^l \, \{w \in \g^* \mid \langle w, \nu_i \rangle \geq
    -1\},
  \end{equation*}
  where $\nu_i \in \ell^*$ is primitive for
  all $i=1,\ldots, l$ and, without loss of generality, the hyperplane supporting
  $\mathcal{F}$ is $\{w\in \g^* \mid \langle w,\nu_1\rangle \geq
  -1\}$, i.e., $\nu_1 = \nu$. By \eqref{eq:67},
  \begin{equation}
    \label{eq:68}
    \begin{split}
      \Delta_{(s,0)}' & =\{(w,y) \in \g^* \times \R \mid \langle (w,y), (0,1)
      \rangle \geq -1\} \\
      & \cap \{(w,y)
      \in \g^* \times \R \mid \langle (w,y),(-s\nu,-1) \rangle \geq -1\} \\
      & \cap \bigcap_{i=1}^l \, \{(w,y) \in \g^* \times \R \mid \langle (w,y),(\nu_i,0) \rangle \geq
      -1\},
    \end{split}
  \end{equation}
  where, by a slight abuse of notation, we denote the natural pairing
  between $\g^* \times \R$ and $\g \times \R$ also by $\langle \cdot,
  \cdot \rangle$. Therefore, $\Delta_{(s,0)}'$ is a polytope (see
  Section \ref{sec:conventions}). Moreover, the vertices of $\Delta_{(s,0)}'$ are precisely the elements of
  the set
  \begin{equation}
    \label{eq:69}
    \{(v, -1) \in \g^* \times \R \mid v \in \Delta \text{
      vertex} \} \, \cup \, \{(v, 1 - s\langle v, \nu\rangle) \in \g^*
    \times \R \mid v \in \Delta \text{
      vertex} \}. 
  \end{equation}
  We observe that, since $\Delta$ is reflexive, any vertex of $\Delta$
  lies in $\ell^*$; since $\nu \in \ell$, it follows that, if $v \in
  \Delta$ is a vertex, then $1 - s\langle v, \nu\rangle \in \Z$. Hence, by \eqref{eq:69}, any vertex of $\Delta_{(s,0)}'$
  lies in $\ell^* \times \Z$, i.e., $\Delta_{(s,0)}'$ is integral. Since
  $\nu_i \in \ell^*$ is primitive, $(\nu_i,0) \in \ell^* \times \Z$ is
  primitive. Moreover, since $\nu
  = \nu_1$ and since $s \in \{0,-1\}$, $(-s\nu,-1) \in \ell^* \times
  \Z$ is also primitive. Hence, by \eqref{eq:68}, $\Delta_{(s,0)}'$ is
  reflexive. Finally, to see that $\Delta_{(s,0)}'$ is Delzant, we fix a
  vertex $v \in \Delta$. By \eqref{eq:68}, the set of inward normals of the facets of
  $\Delta_{(s,0)}'$ that contain $(v,-1)$ (respectively $(v, 1 - s\langle
  v,\nu\rangle)$) consists of $(0,1)$
   (respectively $(-s\nu, -1)$), and of $\{(\nu_v,
  0)\}$, where $\{\nu_v\}$ is the set of inward normals of the facets of
  $\Delta$ that contain $v$. Since $\Delta$ is Delzant, it follows
  that $\Delta_{(s,0)}'$ is smooth at $(v,-1)$ (respectively $(v, 1 - s\langle
  v,\nu\rangle)$). Since any vertex of $\Delta_{(s,0)}'$ is equal to  $(v,-1)$
  or $(v, 1 - s\langle
  v,\nu\rangle)$ for some vertex $v$ of $\Delta$,
  $\Delta_{(s,0)}'$ is Delzant, thus completing the proof in this case. \\
  \begin{figure}[htbp]
 \begin{center}
 \includegraphics[width=8cm]{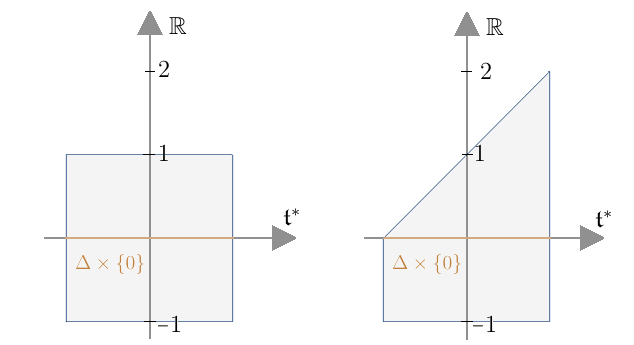}
 \caption{A schematic construction of
   $\Delta_{(0,0)}'$ (left) and of $\Delta_{(-1,0)}'$ (right).}
 \label{ToricExtension1}
 \end{center}
  \end{figure}

{\bf Case 2: $k\neq 0$}. Since $(\Delta, \mathcal{F}, s,k)$ is
  admissible, $(s,k) \in \{(-1,1),(-1,2)\}$. 
  Moreover, by Definition \ref{defn:types_polytopes}, $\Delta$ has no vertices on the linear hyperplane $ \{w \in
  \g^* \mid \langle w, \nu \rangle = 0 \}$ and the quadruple $(\Delta,
  \mathcal{F}, 0,0)$ is also admissible. Hence, by Case 1, there exists a reflexive Delzant polytope $\Delta_{(0,0)}'$ satisfying the conclusions
  of the statement for the admissible quadruple $(\Delta,
  \mathcal{F}, 0,0)$. We deal with the cases $(s,k) = (-1,1)$ and $(s,k) = (-1,2)$
  separately. \\

 {\bf $\bullet$ Suppose that $(s,k) = (-1,1)$.} By \eqref{eq:68}, the reflexive
  Delzant polytope $\Delta_{(0,0)}'$ has a codimension two face
  $\tilde{\mathcal{F}}$ given by the intersection of the facets
  supported by the affine hyperplanes
  $\{(w,y) \in \g^* \times \R \mid \langle (w,y), (0,-1) \rangle = -1 \}$ and
  $\{(w,y) \in \g^* \times \R \mid \langle (w,y), (\nu,0) \rangle = -1
  \}$. This is a copy of $\mathcal{F}$ on the affine hyperplane
  $\{(w,y) \in \g^* \times \R \mid y = 1\}$. We wish to perform the blow-up of $\Delta'_{(0,0)}$ along
  $\tilde{\mathcal{F}}$ of size $1$ (see Figure \ref{ToricExtension2}). To this end, with the
  notation in Definition \ref{defn:comb_blow-up}, $\nu_0 = (\nu,-1)$
  and $c_0 = -1$. By \eqref{eq:69} and since $s = -1$, a vertex of $\Delta'_{(0,0)}$
  that does not lie on $\tilde{\mathcal{F}}$ is either of the form
  $(v,-1)$ for some vertex $v$ of $\Delta$ or of the form $(v,1)$ for
  some vertex $v$ of $\Delta$ that does not lie on
  $\mathcal{F}$. Since $\langle w, \nu \rangle \geq -1$ for any $w \in
  \Delta$, if $v$ is a vertex of $\Delta$, then
  \begin{equation}
    \label{eq:72}
    \langle (v,-1),(\nu,-1) \rangle \geq -1 + 1 = 0 > -1 = c_0. 
  \end{equation}
  On the other hand, since there are no vertices of $\Delta$ lying on
  the linear hyperplane $ \{w \in \g^* \mid \langle w, \nu \rangle = 0
  \}$ and since $\Delta$ is integral, if $v$ is a vertex of $\Delta$
  that does not lie on $\mathcal{F}$, then $\langle v, \nu \rangle
  \geq 1$. Hence, in this case,
  \begin{equation}
    \label{eq:73}
    \langle (v,-1),(\nu,1) \rangle \geq 1 -1 > -1 = c_0. 
  \end{equation}
  By \eqref{eq:72} and \eqref{eq:73}, we can perform the the blow-up of $\Delta'_{(0,0)}$ along
  $\tilde{\mathcal{F}}$ of size $1$ that we denote by
  $\Delta'_{(-1,1)}$, i.e.,
  \begin{equation}
    \label{eq:74}
    \Delta'_{(-1,1)} = \Delta'_{(0,0)} \cap \{(w,y) \in \g^* \times
    \R \mid \langle (w,y),(\nu,-1) \rangle \geq -1\}. 
  \end{equation}
  By \eqref{eq:67} and \eqref{eq:74}, and since $s =
  -1$,
  \begin{equation}
    \label{eq:75}
    \Delta'_{(-1,1)} = \{(w,y) \in \g^* \times \R \mid w \in \Delta \, , \, -1
    \leq y \leq \min(1,1 + \langle w, \nu \rangle)\}. 
  \end{equation}  
  Since $\Delta'_{(0,0)}$ is Delzant, by Remark
  \ref{rmk:blow-up_smooth}, $\Delta'_{(-1,1)}$ is also Delzant. By
  \eqref{eq:75}, it can be checked directly that a vertex of
  $\Delta'_{(-1,1)}$ is one of the following three types:
  \begin{itemize}[leftmargin=*]
  \item $(v, \min(1,1 + \langle v, \nu \rangle))$ for some vertex $v$
    of $\Delta$,
  \item $(w,1)$, where $w$ lies on an edge of $\Delta$ and satisfies $\langle w, \nu
    \rangle = 0$, or
  \item $(v,-1)$ for some vertex $v$ of $\Delta$.  
  \end{itemize}
  Since $\Delta$ is integral and since $\nu \in \ell^*$, if $v$ is a
  vertex of $\Delta$, then $(v, \min(1,1 + \langle v, \nu \rangle))$ and $
  (v,-1)$ belong to $\ell^* \times \Z$. Moreover, by Lemma
  \ref{lemma:k_greater_0}, if $w$ lies on an edge
  of $\Delta$ and satisfies $\langle w, \nu \rangle = 0$, then $w \in
  \ell^*$. Hence, $(w,1) \in \ell^* \times \Z$, so that
  $\Delta'_{(-1,1)}$ is integral. Moreover, by
  \eqref{eq:68} and \eqref{eq:74}, $\Delta'_{(-1,1)}$
  is reflexive. Since $(s,k) = (-1,1)$, it follows that the map $\Delta \to \R$ that
  sends $w$ to $\min(1,1 + \langle w, \nu \rangle)$ equals $DH - 1$,
  where $DH : \Delta \to \R$ is the abstract Duistermaat-Heckman
  function determined by $(\Delta, \mathcal{F},
  -1,1)$. This implies both that $\mathrm{pr}(\Delta'_{(-1,1)}) =
  \Delta$ (since the minimal value of the above map on $\Delta$ is
  $0$), and that the height function of $\Delta$ equals the abstract Duistermaat-Heckman
  function  determined by $(\Delta, \mathcal{F},
  -1,1)$, as desired. \\
  
  \begin{figure}[htbp]
  	\begin{center}
  		\includegraphics[width=8cm]{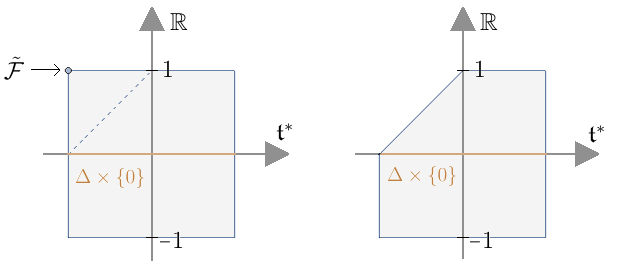}
  		\caption{A schematic representation of $\Delta_{(-1,1)}'$ as a blow-up of $\Delta_{(0,0)}'$ along $\tilde {\mathcal{F}}$
  		of size $1$.}
  		\label{ToricExtension2}
  	\end{center}
  \end{figure}

{\bf $\bullet$ Suppose that $(s,k) = (-1,2)$:} Since $(\Delta, \mathcal{F},
  -1,2)$ is admissible, there exists a facet $\mathcal{F}'$ of
  $\Delta$ supported on the affine hyperplane $ \{w \in \g^* \mid \langle w,-\nu \rangle =
  -1\}$ and the quadruple $(\Delta, \mathcal{F},
  -1,1)$ is also admissible. Let $\Delta'_{(-1,1)}$ be the reflexive
  Delzant polytope constructed from $(\Delta, \mathcal{F},
  -1,1)$ as above. Hence, by \eqref{eq:68} and \eqref{eq:74}, the reflexive
  Delzant polytope $ \Delta'_{(-1,1)}$ has a codimension two face
  $\tilde{\mathcal{F}}'$ given by the intersection of the facets
  supported by the affine hyperplanes $\{(w,y) \in \g^* \times \R \mid \langle (w,y), (0,1) \rangle = -1 \}$ and
  $\{(w,y) \in \g^* \times \R \mid \langle (w,y), (-\nu,0) \rangle =
  -1\}$. This is a copy of $\mathcal{F}'$ on the affine hyperplane
  $\{(w,y) \in \g^* \times \R \mid y = -1\}$. We wish to perform the blow-up of $\Delta'_{(-1,1)}$ along
  $\tilde{\mathcal{F}}'$ of size $1$ (see Figure \ref{ToricExtension3}). To this end, with the
  notation in Definition \ref{defn:comb_blow-up}, $\nu_0 = (-\nu,1)$
  and $c_0 = -1$. A vertex of $\Delta'_{(-1,1)}$
  that does not lie on $\tilde{\mathcal{F}}'$ is of one of three types:
  \begin{itemize}[leftmargin=*]
  \item $(v, \min(1,1 + \langle v, \nu \rangle))$ for some vertex $v$
    of $\Delta$,
  \item $(w,1)$, where $w \in \Delta$ lies on an edge of $\Delta$ and satisfies $\langle w, \nu
    \rangle = 0$, or
  \item $(v,-1)$ for some vertex $v$ of $\Delta$ that does not lie on
    $\mathcal{F}'$,
  \end{itemize}
  (see the proof in the case $(s,k) = (-1,1)$.) In the first case, we have that 
  \begin{equation}
    \label{eq:76}
    \langle (v, \min(1,1 + \langle v, \nu \rangle)), (-\nu,1) \rangle
    = \min(1- \langle v, \nu \rangle,1) > -1 = c_0,
  \end{equation}
  where the inequality follows from the fact that $\Delta$ is
  contained in the strip $\{w \in \g^* \mid -1 \leq \langle w, \nu
  \rangle \leq 1\}$ (see Remark \ref{rmk:k=2}). In the second case, we
  have that
  \begin{equation}
    \label{eq:77}
    \langle (w,1), (-\nu,1)\rangle = 1 > -1 = c_0.
  \end{equation}
  As in the case $(s,k)=(-1,1)$, if $v$ is a vertex of
  $\Delta$, 
  then $\langle v, -\nu
  \rangle \geq 1$. Hence, in the third case, we have that
  \begin{equation}
    \label{eq:78}
    \langle (v,-1), (-\nu,1) \rangle \geq 0 > -1 =c_0.
  \end{equation}
  By \eqref{eq:76}, \eqref{eq:77} and \eqref{eq:78}, we can perform the the blow-up of $\Delta'_{(-1,1)}$ along
  $\tilde{\mathcal{F}}'$ of size $1$ that we denote by
  $\Delta'_{(-1,2)}$, i.e.,
  \begin{equation}
    \label{eq:80}
    \Delta'_{(-1,2)} = \Delta'_{(-1,1)} \cap \{(w,y) \in \g^* \times
    \R \mid \langle (w,y),(-\nu,1) \rangle \geq -1\}. 
  \end{equation}
  By \eqref{eq:75}, we have that
  \begin{equation}
    \label{eq:79}
    \Delta'_{(-1,2)} = \{(w,y) \in \g^* \times \R \mid w \in \Delta \,
    , \, \max(-1,-1 + \langle w, \nu \rangle)
    \leq y \leq \min(1,1 + \langle w, \nu \rangle)\}. 
  \end{equation}
  Since $\Delta'_{(-1,1)}$ is smooth, by Remark
  \ref{rmk:blow-up_smooth}, $\Delta'_{(-1,2)}$ is also
  smooth. Moreover, by 
  \eqref{eq:79}, it can be checked directly that a vertex of
  $\Delta'_{(-1,2)}$ is one of the following three types:
  \begin{itemize}[leftmargin=*]
  \item $(v, \min(1,1 + \langle v, \nu \rangle))$ for some vertex $v$
    of $\Delta$,
  \item $(w, \pm 1)$, where $w$ lies on an edge of $\Delta$ and satisfies $\langle w, \nu
    \rangle = 0$, or
  \item $(v,\max(-1,-1+\langle v, \nu \rangle)$ for some vertex $v$ of $\Delta$.  
  \end{itemize}
  As in the case $(s,k)=(-1,1)$, it follows that
  $\Delta'_{(-1,2)}$ is integral. Moreover, since
  $\Delta'_{(-1,1)}$ is reflexive, by \eqref{eq:74} $\Delta'_{(-1,1)}$
  is reflexive. Since $\Delta$ is contained in the strip $\{w \in \g^* \mid -1 \leq \langle w, \nu
  \rangle \leq 1\}$, the maximal (respectively
  minimal) value of the map
  $\Delta \to \R$ that takes $w$ to $\max(-1,-1 + \langle w, \nu
  \rangle)$ (respectively $\min(1,1 + \langle w, \nu \rangle)$) is
  zero. Hence, $\mathrm{pr}(\Delta'_{(-1,2)}) =
  \Delta$. Moreover, the height function of $\Delta$ is the map
  $\Delta \to \R$ that sends $w \in \Delta$ to
  $$ \min (1,1 + \langle w, \nu \rangle) - \max(-1,-1 + \langle w, \nu
  \rangle) = \min(2 - \langle w, \nu \rangle, 2 + \langle w, \nu
  \rangle). $$
  Since $(s,k) = (-1,2)$, the above map equals the abstract
  Duistermaat-Heckman function determined by $(\Delta, \mathcal{F},
  -1,2)$, as desired.
  \begin{figure}[htbp]
  	\begin{center}
  		\includegraphics[width=8cm]{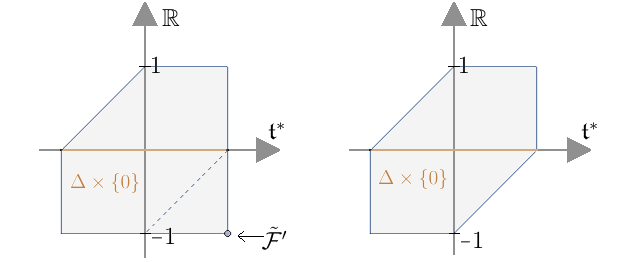}
  		\caption{A schematic representation of $\Delta_{(-1,2)}'$ as a blow-up of $\Delta_{(-1,1)}'$ along $\tilde {\mathcal{F}'}$
  			of size $1$.}
  		\label{ToricExtension3}
  	\end{center}
  \end{figure}
\end{proof}

Theorem \ref{thm:comb_real} and Delzant's classification of compact
symplectic toric manifolds \cite{delzant} yield the following
geometric realizability and extension result.

\begin{corollary}\label{cor:realizability}
If $(\Delta, \mathcal{F}, s,k)$ is an admissible quadruple, then
there exists a normalized monotone tall
  complexity one $T$-space $\comp$ such that
  \begin{itemize}[leftmargin=*]
  \item $\Phi(M) = \Delta$ and the
    Duistermaat-Heckman function of $\comp$ equals the abstract
    Duistermaat-Heckman function determined by $(\Delta,
    \mathcal{F}, s,k)$, and
  \item the Hamiltonian $T$-action extends to an effective Hamiltonian $(T \times
    S^1)$-action. 
  \end{itemize}
\end{corollary}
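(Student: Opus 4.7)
The plan is to deduce Corollary \ref{cor:realizability} directly from Theorem \ref{thm:comb_real} together with Delzant's classification of compact symplectic toric manifolds. Given an admissible quadruple $(\Delta, \mathcal{F}, s, k)$, I would first invoke Theorem \ref{thm:comb_real} to obtain a reflexive Delzant polytope $\Delta' \subset \g^* \oplus \R$ such that $\mathrm{pr}(\Delta') = \Delta$ and the height function of $\Delta'$ equals the abstract Duistermaat--Heckman function determined by $(\Delta, \mathcal{F}, s, k)$. By Delzant's theorem, there exists a compact symplectic toric $(T \times S^1)$-manifold $(M, \omega, \widetilde{\Phi})$ with moment polytope $\Delta'$, and by Proposition \ref{prop:monotone_toric} this space is normalized monotone.

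Next, I would restrict the $(T \times S^1)$-action to the subtorus $T \times \{1\} \cong T$ and set $\Phi := \mathrm{pr}_{\g^*} \circ \widetilde{\Phi}$. The resulting Hamiltonian $T$-action is effective (if $(t,1)$ acts trivially, then $(t,1) = (1,1)$ by effectiveness of the toric action) and has complexity one, since $\dim M = 2(d+1)$. By construction $\Phi(M) = \mathrm{pr}(\Delta') = \Delta$, and $c_1 = [\omega]$ carries over from the toric setting. Restricting the equivariant class identity $c_1^{T \times S^1} = [\omega - \widetilde{\Phi}]$ to $T$-equivariant cohomology yields $c_1^{T} = [\omega - \Phi]$; evaluating both sides at any $T$-fixed point gives the weight sum formula for $\Phi$, so that $\comp$ is normalized monotone. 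Moreover, by Example \ref{exm:induced_DH} (in particular equation \eqref{eq:20}), the Duistermaat--Heckman function of $\comp$ coincides with the height function of $\Delta'$, which by the choice of $\Delta'$ is precisely the abstract Duistermaat--Heckman function determined by $(\Delta, \mathcal{F}, s, k)$.

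The remaining point is tallness: I must show that no reduced space is a point, equivalently that the height function is strictly positive on $\Delta$. For each of the four admissible pairs $(s,k) \in \{(0,0), (-1,0), (-1,1), (-1,2)\}$, a direct inspection of the formula in Definition \ref{defn:admissible_DH} yields $DH(w) \geq 1$ on $\Delta$; for the last case this uses Remark \ref{rmk:k=2}, which bounds $|\langle w, \nu \rangle| \leq 1$ on $\Delta$. Hence $\comp$ is a compact normalized monotone tall complexity one $T$-space with all required properties, and the $T$-action tautologically extends to the toric $(T \times S^1)$-action on $(M,\omega)$.

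No step here is a serious obstacle, since the combinatorial hard work has already been done in Theorem \ref{thm:comb_real}; the only mild subtlety is verifying the weight sum formula for the restricted moment map, which I would handle via the equivariant cohomological identity above rather than by checking it point-by-point.
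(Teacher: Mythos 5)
Your proposal is correct and follows essentially the same route as the paper: apply Theorem \ref{thm:comb_real}, realize the resulting reflexive Delzant polytope $\Delta'$ by a toric $(T\times S^1)$-manifold via Delzant's classification and Proposition \ref{prop:monotone_toric}, restrict to $T$, and identify the Duistermaat--Heckman function with the height function via Example \ref{exm:induced_DH}. The only (harmless) differences are that you verify the weight sum formula by restricting the equivariant identity $c_1^{T\times S^1}=[\omega-\widetilde{\Phi}]$ and check tallness explicitly by showing the height function is at least $1$, where the paper instead deduces normalization from $\Phi(M)=\Delta$ being reflexive Delzant and asserts tallness by construction.
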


\begin{proof}
  By Theorem \ref{thm:comb_real}, there exists a reflexive Delzant polytope $\Delta'$ in $\g^* \times \R$ such that
  $\mathrm{pr}(\Delta') = \Delta$ and the height function
  of $\Delta$ equals the abstract Duistermaat-Heckman function determined
  by $(\Delta, \mathcal{F}, s,k)$. By \cite{delzant}, there exists a compact
  complexity zero $(T \times S^1)$-space $(M,\omega, \tilde{\Phi} =
  (\Phi,\Psi))$ such that the moment map image $\tilde{\Phi}(M) =
  \Delta'$, where we identify $(\g \times \R)^*$ with $\g^* \times
  \R$. We claim that $\comp$ satisfies the desired properties. To see this, we observe that, by construction, it is
  tall and has complexity one, and the $T$-action extends to an effective Hamiltonian $(T \times
  S^1)$-action. Moreover, since $\Delta'$ is a reflexive Delzant
  polytope, by Proposition \ref{prop:monotone_toric}, $(M,\omega,
  \tilde{\Phi})$ is normalized monotone so that, in particular,
  $(M,\omega)$ satisfies $c_1 = [\omega]$. Since $\Delta$ is reflexive
  Delzant and since $\mathrm{pr}(\Delta') = \Delta$,
  $\Phi(M) = \Delta$, so that $\Phi$ satisfies the weight sum
  formula. Hence, $\comp$ is normalized monotone. Finally, by Example
  \ref{exm:induced_DH}, the Duistermaat-Heckman function
  of $\comp$ equals the height function of $\Delta$. Since the latter equals the abstract Duistermaat-Heckman function determined
  by $(\Delta, \mathcal{F}, s,k)$, the result follows. 
\end{proof}

\begin{remark}
  In fact, the constructions in the proof of Theorem
  \ref{thm:comb_real} have geometric counterparts that allow to give
  an explicit geometric description of $\comp$ in Corollary
  \ref{cor:realizability}. For instance, the case $k=0$ is described
  explicitly in \cite[Example 1.6]{kt3}, while it is well-known that
  the combinatorial blow-up of a polytope along a face corresponds to
  an equivariant symplectic blow-up (see \cite{mcduff_tolman}). 
\end{remark}

We can prove another important result of this paper.

\begin{proof}[Proof of Theorem \ref{thm:extension}]
  By Corollary \ref{cor:mono_normalized}, we may assume that $\comp$ is normalized monotone. Hence,
  $\Delta := \Phi(M)$ is a reflexive Delzant polytope by Proposition \ref{prop:mom_map_image}. Let
  $\mathcal{F}_{\min} \subset \Phi(M)$ be a minimal facet and let 
  $(s,k)$ be as in the statement of
  Proposition \ref{prop:necessary}. By construction, the quadruple $(\Delta,
  \mathcal{F}_{\min}, s,k)$ is admissible. Hence, by Corollary
  \ref{cor:realizability}, there exists a normalized monotone tall
  complexity one $T$-space $(M',\omega', \Phi')$ such that
  \begin{itemize}[leftmargin=*]
  \item its Duistermaat-Heckman function equals the abstract
    Duistermaat-Heckman function associated to
    $(\Delta, \mathcal{F}_{\min}, s,k)$, and
  \item the Hamiltonian $T$-action extends to an effective Hamiltonian $(T \times
    S^1)$-action.
  \end{itemize}
  By construction, $\comp$ and $(M',\omega', \Phi')$ have equal
  Duistermaat-Heckman functions. Hence, by Theorem
  \ref{thm:DH_classifies}, they are isomorphic and the result follows.
\end{proof}

\subsection{Compact monotone tall complexity one spaces are
  equivariantly Fano}\label{sec:comp-monot-tall}
In this section, we prove the last main result of our paper, Theorem
\ref{thm:Fano}. To this end, we recall that a compact complex manifold
$(Y,J)$ is Fano if and only if there exists a K\"ahler form $\sigma
\in \Omega^{1,1}(Y)$ such that $c_1(Y) = [\omega]$.

\begin{proof}[Proof of Theorem \ref{thm:Fano}]
  By Corollary \ref{cor:mono_normalized}, there is no loss of
  generality in assuming that $\comp$ is normalized monotone. By Theorem \ref{thm:extension}, the Hamiltonian $T$-action extends to an effective
  Hamiltonian $(T \times S^1)$-action. We denote the corresponding
  normalized monotone symplectic toric manifold by $(M,\omega, \tilde{\Phi} =
  (\Phi,\Psi))$. By the classification of compact
  symplectic toric manifolds in \cite{delzant} that there exists an
  integrable almost complex structure $J$ on $M$ that is compatible with $\omega$ and 
  $(T \times S^1)$-invariant; moreover, $\omega$ equals the K\"ahler
  form of $(M,J)$. By Proposition
  \ref{prop:monotone_toric}, $[\omega] = c_1(M) > 0$,
  so that the K\"ahler
  manifold $(M,J)$ is Fano. Finally, by \cite[Theorems 4.4 and
  4.5]{gs-kahler}, the $T$-action extends to an effective holomorphic $T_{\C}$-action,
  as desired.
\end{proof}

%%%%%%%%%%%%%%%%%%%%%%%%%%%%%%%%%%%%%%%%%%%%%%%%%%%%%%%%%%%%%%%%%%%%%%%%%%
%%%%%%%%%%%%%%%%%%%%%  references  %%%%%%%%%%%%%%%%%%%%%%%%%%%%%%%%%
%%%%%%%%%%%%%%%%%%%%%%%%%%%%%%%%%%%%%%%%%%%%%%%%%%%%%%%%%%%%%%%%%%%%

\end{document}